\documentclass[reqno,11pt]{amsart}
\usepackage[export]{adjustbox}
\usepackage{amsmath,amsthm,amsfonts,amssymb}
\usepackage{parskip}
\usepackage{fullpage}

\usepackage{hyperref}
\usepackage[dvipsnames]{xcolor}
\usepackage{comment}
\usepackage{graphicx,soul}
\usepackage{geometry}
\usepackage[shortlabels]{enumitem}
\usepackage{pgfplots}
\pgfplotsset{compat=1.18}
\usepackage{subcaption}
\hypersetup{
    colorlinks=true,
    citecolor=red,
    pdftitle    = {Diagonal Specht ideals and their varieties},
    pdfauthor   = {Anna Escofet, Cordian Riener, Hugues Verdure},
    pdfkeywords = {Specht ideals, diagonal action, symmetric group, radical ideals, Cohen--Macaulay rings},
    pdfcreator  = {\LaTeX}
}

\usepackage{float}
\usepackage{booktabs}
\usepackage{tikz}
\usetikzlibrary{shapes.geometric, arrows, positioning}
\tikzset{
  node/.style={draw=none, inner sep=0pt},
}
\usepackage{tikz-cd}
\usepackage{ytableau}
\usepackage[maxbibnames=3,giveninits=true,sorting=nyt,style=ieee,dashed=false,doi=false,url=false,isbn=false]{biblatex}
\newcommand{\C}{\mathbb C}
\newcommand{\K}{\mathbb K}
\newcommand{\N}{\mathbb N}

\newcommand{\Q}{\mathbb Q}
\newcommand{\fP}{\mathfrak{P}}
\newcommand{\fQ}{\mathfrak{Q}}
\newcommand{\balpha}{\boldsymbol{\alpha}}
\newcommand{\bbeta}{\boldsymbol{\beta}}
\newcommand{\bgamma}{\boldsymbol{\gamma}}
\newcommand{\bdelta}{\boldsymbol{\delta}}
\newcommand{\partn}{\mathcal{P}_n}
\renewcommand{\leq}{\leqslant}
\renewcommand{\geq}{\geqslant}

\DeclareMathOperator{\ccTab}{ccTab}

\DeclareMathOperator{\Col}{Col}
\DeclareMathOperator{\depth}{depth}
\DeclareMathOperator{\diag}{diag}
\DeclareMathOperator{\DR}{DR}
\DeclareMathOperator{\ev}{ev}
\DeclareMathOperator{\GL}{GL}
\DeclareMathOperator{\id}{id}
\DeclareMathOperator{\ini}{in}
\DeclareMathOperator{\len}{len}
\DeclareMathOperator{\mdeg}{mdeg}
\DeclareMathOperator{\Mon}{Mon}

\DeclareMathOperator{\rad}{rad}

\DeclareMathOperator{\sgn}{sgn}
\DeclareMathOperator{\sh}{sh}
\DeclareMathOperator{\SP}{SP}
\DeclareMathOperator{\spe}{sp}
\DeclareMathOperator{\Spec}{Spec}
\DeclareMathOperator{\Stab}{Stab}

\DeclareMathOperator{\Supp}{Supp}
\DeclareMathOperator{\Sym}{Sym}
\DeclareMathOperator{\SYT}{SYT}
\DeclareMathOperator{\Tab}{Tab}
\DeclareMathOperator{\Cont}{Cont}
\DeclareMathOperator{\Mdeg}{Mdeg}

\theoremstyle{plain}
\newtheorem{thm}{Theorem}[section]
\newtheorem{lemma}[thm]{Lemma}
\newtheorem{prop}[thm]{Proposition}
\newtheorem{corollary}[thm]{Corollary}
\newtheorem{conjecture}[thm]{Conjecture}
\newtheorem{mainthm}{Theorem}

\theoremstyle{definition}
\newtheorem{definition}[thm]{Definition}
\newtheorem{example}[thm]{Example}
\theoremstyle{remark}
\newtheorem{remark}[thm]{Remark}

\title{Diagonal Specht ideals and their varieties}
\author{Anna Escofet, Cordian Riener, Hugues Verdure}
\date{}

\begin{document}

\begin{abstract}
We study the diagonal Specht ideals $I_\lambda\subseteq\mathcal{R}_{m,n}:=\C[\mathbf{x}_1,\dots,\mathbf{x}_m]$, $\mathbf{x}_r=(x_{r,1},\dots,x_{r,n})$, generated by the $\lambda$-isotypic component of $\mathcal{R}_{m,n}$ for the diagonal action of $S_n$. We characterize the set of zeros of $I_\lambda$ as $V_\lambda=\bigcup_{\mu\not\trianglelefteq\lambda}H_\mu$ and prove that $\lambda\mapsto V_\lambda$ is an isomorphism of posets from $(\partn,\trianglelefteq)$ to $(\{V_\lambda:\lambda\in \partn\},\supseteq)$ for all $m$, while $\lambda\mapsto I_\lambda$ is an isomorphism of posets from $(\partn,\trianglelefteq)$ to $(\{I_\lambda:\lambda\in \partn\},\subseteq)$ if and only if $m=1$ or $n\leq3$. Unlike the case $m=1$, where Specht ideals are always radical, radicality in the diagonal setting depends on $\lambda$ and $m$. We prove radicality for hook partitions of length at most three by providing explicit Gr\"obner bases, and we develop three criteria for non-radicality via content, multidegree, and total degree, showing in particular that $I_\lambda$ is not radical for any non-hook partition $\lambda$ when $m\geq\len(\lambda)$. Using the $\GL_m(\C)$-action on $\mathcal{R}_{m,n}$, we show that $I_\lambda$ is radical for all $m$ if and only if it is radical for $m=n$. Finally, for $m\geq2$, we prove that $\mathcal{R}_{m,n}/I_\lambda$ is Cohen--Macaulay if and only if $\lambda=(n)$ or $\lambda=(n-1,1)$, and the same is true for $\mathcal{R}_{m,n}/\rad(I_\lambda)$.
\end{abstract}
\maketitle

\section{Introduction}

The representation theory of the symmetric group $S_n$ is classically well understood and has beautiful connections to combinatorics. The irreducible representations of $S_n$ are indexed by the partitions $\lambda\vdash n$, and a classical construction of Specht~\cite{specht1935irreduziblen} realizes them inside the polynomial ring
\[
\mathcal{R}_{1,n} := \C[x_1,\dots,x_n]
\]
via the so-called \emph{Specht polynomials}, which are products of Vandermonde determinants. More concretely, given $\lambda\vdash n$, a \emph{Young tableau} $T$ of shape $\lambda$ is a filling of the Young diagram of $\lambda$ with the elements of $[n]:=\{1,\dots,n\}$, each used exactly once. Writing $C_1,\dots,C_{\lambda_1}$ for the sets of entries in the columns of $T$, the Specht polynomial associated to $T$ is
\[
\spe_T := \prod_{j=1}^{\lambda_1} \Delta_{C_j},
\qquad\text{where}\qquad
\Delta_{C} := \prod_{\substack{a<b\\ a,b\in C}} (x_a - x_b).
\]
It is homogeneous of degree $\sum_i (i-1)\lambda_i$, and $S_n$ permutes the Specht polynomials of a fixed shape by acting on the variables. Their linear span is an $S_n$-submodule of $\mathcal{R}_{1,n}$, the \emph{Specht module} $S^\lambda$; it is irreducible, and the Specht polynomials of the standard Young tableaux form a basis of it. As $\lambda$ ranges over all partitions of $n$, the modules $S^\lambda$ form a complete set of pairwise non-isomorphic irreducible representations of $S_n$.

The ideal $I_\lambda\subseteq\mathcal{R}_{1,n}$ generated by the Specht polynomials of shape $\lambda$ is the \emph{Specht ideal} of shape $\lambda$. Specht ideals provide a concrete link between the representation theory of $S_n$, symmetric polynomial systems, and the stratification of $\C^n$ with respect to stabilizer subgroups, and they were studied from this point of view by Moustrou, Riener, and Verdure~\cite{moustrou2021symmetric}. Given a partition  $\mu=(\mu_1,\ldots,\mu_k)\vdash n$, we consider the set of  points $H_\mu\subseteq\C^n$ whose stabilizer is conjugate to $S_{\mu_1}\times\cdots\times S_{\mu_k}$. Note that this just means that the points in $H_\mu$ have exactly $k$ distinct coordinates and that the multiplicity of occurrence of the distinct coordinates corresponds to $(\mu_1,\ldots,\mu_k)$. It is shown in~\cite{moustrou2021symmetric} that the zero set of $I_\lambda$ is
\[
V_\lambda = \bigcup_{\mu\not\trianglelefteq\lambda} H_\mu,
\]
and that the maps $\lambda\mapsto V_\lambda$ and $\lambda\mapsto I_\lambda$ identify the dominance order on partitions of $n$ with reverse inclusion of the varieties and with inclusion of the ideals, respectively. Furthermore, in characteristic zero, Specht ideals are radical and have strong Gr\"obner basis properties; see Haiman--Woo~\cite{Woo2005} and Murai--Ohsugi--Yanagawa~\cite{Murai2022}. We further note that the notion of Specht ideals has recently been studied beyond the symmetric groups: Debus, Moustrou, Riener, and Verdure developed an analogue for the hyperoctahedral groups $B_n$~\cite{debus2023poset}, and Debus and Gottwald completed the analogous study for all infinite families of essential real reflection groups~\cite{debus2025posetsspechtidealsessential}. These results in particular  highlight that the interplay between representation-theoretic generators, orbit-type stratifications, and posets of invariant ideals extends naturally beyond type~$A$. In the present paper we take a different direction: instead of changing the acting reflection group, we keep $S_n$ and pass from one set of variables to several sets, equipped with the diagonal action. The guiding question is how much of the classical picture survives this passage. In short, the answer is: set-theoretically all of it, scheme-theoretically very little. Before we state our main results we briefly  set the stage by defining the objects:

\paragraph{\bf Diagonal Specht ideals}
As explained above, in one set of variables the Specht polynomials, which realize the irreducible representations, give a natural definition of the Specht ideals. In the diagonal setting we proceed differently.  
Let
\[
\mathcal{R}_{m,n} := \C[\mathbf{x}_1,\dots,\mathbf{x}_m],
\qquad
\mathbf{x}_r = (x_{r,1},\dots,x_{r,n}).
\]
The symmetric group $S_n$ acts diagonally, by simultaneously permuting the second indices, and $\GL_m:=\GL_m(\C)$ acts by linear transformations among the $m$ rows $\mathbf{x}_1,\dots,\mathbf{x}_m$ of variables; the two actions commute. Geometrically, we regard a point of $\C^{mn}$ as an $m\times n$ matrix $\mathbf{u}=(u_{r,i})$, that is, as an ordered configuration of $n$ column vectors $\mathbf{u}_{\bullet,1},\dots,\mathbf{u}_{\bullet,n}\in\C^m$; then $S_n$ permutes the columns and $\GL_m$ acts on the rows. Throughout, we work over $\C$; the arguments only use that the ground field is algebraically closed of characteristic zero, and all results remain valid over any such field (see also Remark~\ref{rem:char0}).
For a partition $\lambda\vdash n$, let $\mathcal{R}_{m,n,\lambda}$ denote the $\lambda$-isotypic component of $\mathcal{R}_{m,n}$ for the diagonal action of $S_n$. For $\lambda=(n)$ this is the ring of invariants; for $\lambda\neq(n)$ it is not a ring, but only a module over the invariants. We define the \emph{diagonal Specht ideal} of shape $\lambda$ to be
\[
I_\lambda := \langle \mathcal{R}_{m,n,\lambda}\rangle \subseteq \mathcal{R}_{m,n}.
\]
For $m=1$ this agrees with the usual Specht ideal (Proposition~\ref{prop:defequiv}). Defining the ideal through isotypic components is particularly natural in the diagonal setting: there is no single preferred family of Specht polynomials in several sets of variables, whereas the $S_n$-isotypic decomposition is intrinsic and compatible with the commuting $\GL_m$-action.

One instance of this construction is classical. For $m=2$ and $\lambda=(1^n)$, the ideal \[I_{(1^n)}\subseteq\C[x_1,\dots,x_n,y_1,\dots,y_n]\] is generated by the $S_n$-alternating polynomials, and Haiman proved that it coincides with the ideal \[\bigcap_{1\leq i<j\leq n}\langle x_i-x_j,\,y_i-y_j\rangle\] of the big diagonal $D_{2,n}=\bigcup_{i<j}\{(x_i,y_i)=(x_j,y_j)\}\subseteq(\C^2)^n$; this is the case $d=1$ of \cite[Corollary~3.8.3]{Haiman2001}. In particular, this diagonal Specht ideal is radical. Our approach differs in that the ideals are defined representation-theoretically, and in that we treat every partition $\lambda\vdash n$ and every $m\geq1$.

\paragraph{\bf Main results}
Our first result is that the set-theoretic geometry of Specht ideals is unchanged in the diagonal setting. For $\mu\vdash n$, let $H_\mu\subseteq\C^{mn}$ be the locus of matrices whose $n$ columns take exactly $\len(\mu)$ distinct values in $\C^m$, with multiplicities given by the parts of $\mu$; for $m=1$ these are the strata introduced above. For a set partition $\Pi$ of $[n]$, let $W_\Pi\subseteq\C^{mn}$ be the linear subspace on which columns indexed by the same block of $\Pi$ coincide.

\begin{mainthm}[{Theorem~\ref{thm:charzeros}, Corollaries~\ref{cor:linearspaces} and~\ref{cor:posetvar}}]\label{thmA}
The zero set of $I_\lambda\subseteq\mathcal{R}_{m,n}$ is
\[
V_\lambda=\bigcup_{\mu\not\trianglelefteq\lambda}H_\mu.
\]
Consequently, for every $m\geq1$, the map $\lambda\longmapsto V_\lambda$ is an isomorphism of posets from $(\partn,\trianglelefteq)$ to $(\{V_\lambda:\lambda\in\partn\},\supseteq)$, and the irreducible components of $V_\lambda$ are the subspaces $W_\Pi$ whose shape is minimal, with respect to refinement, among the partitions not dominated by $\lambda$.
\end{mainthm}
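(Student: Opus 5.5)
The plan is to reduce the zero-set computation to the classical one-variable statement of Moustrou--Riener--Verdure, using the fact that the $S_n$-isotypic structure at a point only depends on which columns coincide.

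\textbf{Step 1: vanishing on $H_\mu$ with $\mu\trianglelefteq\lambda$ fails.} Fix $\mathbf{u}\in H_\mu$ whose columns take distinct values $\mathbf{v}_1,\dots,\mathbf{v}_k$ on the blocks of a set partition $\Pi$ of shape $\mu$. Choose a generic linear form $\ell:\C^m\to\C$ separating $\mathbf{v}_1,\dots,\mathbf{v}_k$, and consider the $S_n$-equivariant ring map $\phi:\mathcal{R}_{m,n}\to\mathcal{R}_{1,n}$ dual to $t\mapsto(\text{column } i\mapsto t_i\mathbf{w})$ for a suitable vector. Better: pull back. The map $\psi:\mathcal{R}_{1,n}\to\mathcal{R}_{m,n}$, $x_i\mapsto \ell(\mathbf{x}_{\bullet,i})$, is $S_n$-equivariant, hence sends the $\lambda$-isotypic component of $\mathcal{R}_{1,n}$ (the classical Specht module span) into $\mathcal{R}_{m,n,\lambda}$. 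Evaluating at $\mathbf{u}$ gives evaluating classical Specht polynomials at the point $(\ell(\mathbf{v}_{\Pi(i)}))_i\in H_\mu\subseteq\C^n$. By the $m=1$ result, this point is not a common zero when $\mu\trianglelefteq\lambda$. So $V_\lambda\subseteq\bigcup_{\mu\not\trianglelefteq\lambda}H_\mu$.

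\textbf{Step 2: vanishing on $H_\mu$ with $\mu\not\trianglelefteq\lambda$.} Here I would use representation theory at the point: for $\mathbf{u}$ with stabilizer $S_\mu$ (a Young subgroup up to conjugacy), the evaluation map $\ev_{\mathbf{u}}$ composed with the $S_n$-action gives an $S_n$-equivariant map $\mathcal{R}_{m,n}\to \C[S_n\cdot\mathbf{u}]\cong \mathrm{Ind}_{S_\mu}^{S_n}\mathbf{1}=M^\mu$, $f\mapsto (\sigma\mathbf{u}\mapsto f(\sigma\mathbf{u}))$. If some $f\in\mathcal{R}_{m,n,\lambda}$ had $f(\mathbf{u})\neq0$, the image of the $\lambda$-isotypic component would be a nonzero $\lambda$-isotypic submodule of $M^\mu$, so $S^\lambda$ would occur in $M^\mu$, i.e.\ the Kostka number $K_{\lambda\mu}\neq0$, forcing $\lambda\trianglerighteq\mu$ (Young's rule). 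Contradiction; thus $H_\mu\subseteq V_\lambda$. Note this argument actually also gives Step 1's converse direction only partially (nonvanishing needs surjectivity), which is why Step 1 uses the projection trick. This proves $V_\lambda=\bigcup_{\mu\not\trianglelefteq\lambda}H_\mu$.

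\textbf{Step 3: consequences.} Since $\overline{H_\mu}$ is the union of $W_\Pi$ over $\Pi$ of shape $\mu$, and the set $\{\mu:\mu\not\trianglelefteq\lambda\}$ is an up-set under dominance, hence closed under coarsening, $V_\lambda$ is closed and equals $\bigcup W_\Pi$ over $\Pi$ with $\sh(\Pi)\not\trianglelefteq\lambda$; $W_\Pi\subseteq W_{\Pi'}$ iff $\Pi$ coarsens $\Pi'$, and distinct $W_\Pi$ are irreducible linear spaces, so components are the refinement-minimal ones. For the poset isomorphism: if $\lambda\trianglelefteq\lambda'$ then $\{\mu\not\trianglelefteq\lambda'\}\subseteq\{\mu\not\trianglelefteq\lambda\}$, giving $V_\lambda\supseteq V_{\lambda'}$; conversely if $\lambda\not\trianglelefteq\lambda'$ then $H_{\lambda}$ meets... rather $\lambda\not\trianglelefteq\lambda'$ means $H_\lambda\subseteq V_{\lambda'}$ but $H_\lambda\cap V_\lambda=\emptyset$, so $V_\lambda\not\supseteq V_{\lambda'}$. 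The strata $H_\mu$ are disjoint and nonempty for all $m\ge1$, which gives injectivity. The main obstacle is Step 1's reliance on the $m=1$ theorem; if one wanted self-containment, the hard part would be constructing an explicit Specht polynomial nonvanishing on $H_\mu$, e.g.\ from a tableau of shape $\lambda$ whose columns meet each block of $\Pi$ at most once, which exists exactly when $\mu\trianglelefteq\lambda$ (Gale--Ryser type argument).
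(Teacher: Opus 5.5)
Your proof is correct. Step~1 and the consequences in Step~3 are essentially the paper's arguments: the generic linear combination $\psi$ transports the $m=1$ result, $V_\lambda$ is reduced to a union of the $W_\Pi$, and the poset argument uses a point of $H_\lambda$.

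The genuine difference is Step~2. The paper proves $H_\mu\subseteq V_\lambda$ for $\mu\not\trianglelefteq\lambda$ by hand. Lemma~\ref{lem:cstab} is a pigeonhole count showing that for every $T\in\Tab(\lambda)$ the column group $C(T)$ contains a transposition fixing $P$, so $\alpha(T)f$ vanishes at $P$ by sign cancellation. This is combined with Proposition~\ref{prop:genIlambda}, which uses the Garsia--E identity (Theorem~\ref{thm:theorem16}) to show that $I_\lambda$ is generated by the $\varepsilon_T(p)$.

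Your equivariant restriction to the finite orbit $S_n\mathbf{u}$, whose function space is the permutation module $M^\mu$, together with Young's rule, bypasses both the generating set and the counting lemma. The counting lemma is in effect a hand proof of the easy half of ``$K_{\lambda\mu}\neq0\Rightarrow\mu\trianglelefteq\lambda$''. What the paper's route buys is an elementary, pointwise vanishing mechanism (a column of $T$ meeting a coincidence block twice kills $\alpha(T)f$), which it reuses later, e.g.\ in the minimal-degree computation of Proposition~\ref{prop:degreegen}.

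Your remark that surjectivity is an obstacle is unfounded. Restriction $\mathcal{R}_{m,n}\to\C^{S_n\mathbf{u}}$ is surjective by interpolation on a finite set. Being equivariant, it commutes with the isotypic projector $\pi_\lambda$, so it maps $\mathcal{R}_{m,n,\lambda}$ onto the $\lambda$-isotypic part of $M^\mu$. Hence $\mathbf{u}\in V_\lambda$ if and only if $K_{\lambda\mu}=0$. This gives both inclusions at once and removes the dependence on the $m=1$ theorem altogether.

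In Step~3 you should say explicitly why minimality of the set partition $\Pi$ (among those with $\sh(\Pi)\not\trianglelefteq\lambda$) is the same as minimality of its shape. A strictly finer integer partition $\nu\prec\sh(\Pi)$ can be realized by splitting blocks of $\Pi$. Conversely, a strictly finer set partition has more blocks, hence a strictly finer shape.
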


\noindent In particular, $\rad(I_\lambda)$ is an intersection of prime ideals generated by linear forms. We also compute $\dim V_\lambda=m(n-\lambda_1)$ for $\lambda\neq(n)$, and show that $V_\lambda$ is equidimensional exactly when $\lambda$ is a hook or $\lambda_1=\dots=\lambda_{\len(\lambda)-1}$ (Propositions~\ref{prop:dimension} and~\ref{prop:equidim}).

\medskip
The scheme-theoretic behavior is substantially different for $m\geq2$: diagonal Specht ideals need not be radical, and radicality depends on both $\lambda$ and $m$. The dependence on $m$ is controlled by the $\GL_m$-action. Radicality can only be lost as $m$ grows (Proposition~\ref{prop:bound}), and it suffices to check the case $m=n$.

\begin{mainthm}[{Theorem~\ref{thm:radicalinRnn}}]\label{thmB}
If $I_\lambda\subseteq\mathcal{R}_{n,n}$ is radical, then $I_\lambda\subseteq\mathcal{R}_{m,n}$ is radical for every $m\geq1$.
\end{mainthm}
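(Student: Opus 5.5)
Two facts together give the theorem. For $m\leq n$ we use the monotonicity of Proposition~\ref{prop:bound}: radicality can only be lost as $m$ grows. So if $I_\lambda\subseteq\mathcal{R}_{n,n}$ is radical, then $I_\lambda\subseteq\mathcal{R}_{m,n}$ is radical for every $m\leq n$. The mechanism is that setting the rows $\mathbf{x}_{m+1},\dots,\mathbf{x}_n$ to zero is an $S_n$-equivariant retraction $\mathcal{R}_{n,n}\to\mathcal{R}_{m,n}$. It maps the $\lambda$-isotypic component onto the $\lambda$-isotypic component, so it sends $I_\lambda^{(n)}$ onto $I_\lambda^{(m)}$. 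Moreover $I_\lambda^{(n)}\cap\mathcal{R}_{m,n}=I_\lambda^{(m)}$, and hence a nilpotent class in $\mathcal{R}_{m,n}/I_\lambda^{(m)}$ stays nilpotent in $\mathcal{R}_{n,n}/I_\lambda^{(n)}$. The real content is therefore the case $m>n$.

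For $m>n$ we use the $\GL_m$-action, which commutes with $S_n$ and so preserves $I_\lambda$. Suppose $f\in\rad(I_\lambda)$, say $f^k\in I_\lambda$. We must show $f\in I_\lambda$. By Theorem~\ref{thmA}, $\rad(I_\lambda)$ is the ideal of $V_\lambda$, which is $\GL_m$-stable, so $\rad(I_\lambda)$ is a $\GL_m$-submodule. Both $I_\lambda$ and $\rad(I_\lambda)$ are multigraded for the torus and decompose into finite-dimensional polynomial $\GL_m$-representations. It therefore suffices to show that every highest weight vector of $\rad(I_\lambda)$ lies in $I_\lambda$, because $I_\lambda$ is a $\GL_m$-stable subspace containing all of them and hence equal to the whole module.

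The key observation is that a polynomial $\GL_m$-representation occurring in $\mathcal{R}_{m,n}=\Sym(\C^m\otimes\C^n)$ has highest weights with at most $n$ nonzero parts. This is Cauchy's formula: $\Sym(\C^m\otimes\C^n)=\bigoplus_\nu S_\nu(\C^m)\otimes S_\nu(\C^n)$ with $\len(\nu)\leq n$. A highest weight vector of weight $\nu$ with $\len(\nu)\leq n$ has multidegree supported on the first $n$ rows. It therefore involves only the variables $\mathbf{x}_1,\dots,\mathbf{x}_n$, and so lies in $\mathcal{R}_{n,n}\subseteq\mathcal{R}_{m,n}$.

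Now let $g$ be such a highest weight vector in $\rad(I_\lambda^{(m)})$. Then $g\in\mathcal{R}_{n,n}$ vanishes on $V_\lambda^{(m)}\cap\C^{n\times n}$, which is $V_\lambda^{(n)}$ by the explicit description in Theorem~\ref{thmA}. Hence $g\in\rad(I_\lambda^{(n)})=I_\lambda^{(n)}\subseteq I_\lambda^{(m)}$, using the radicality hypothesis. This gives $\rad(I^{(m)}_\lambda)\subseteq I^{(m)}_\lambda$.

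The main obstacle is the highest weight reduction: justifying that $\rad(I_\lambda)$ is generated as a $\GL_m$-module (in fact as a vector space, via $\GL_m$-translates) by its highest weight vectors, and that these live in the first $n$ rows. I would handle it by working degree by degree, where each graded piece is a finite-dimensional rational $\GL_m$-module and so semisimple. The weight-support claim would come from Cauchy's decomposition, or directly from the fact that $U$-invariants (for $U$ the upper unipotent group) of weight $\nu$ in $\Sym(\C^m\otimes\C^n)$ vanish unless $\len(\nu)\leq n$.
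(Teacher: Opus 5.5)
Your proposal is correct and follows the paper's proof essentially step for step. For $m\leq n$ it uses Proposition~\ref{prop:bound}. For $m>n$ it reduces $I(V_\lambda)=\rad(I_\lambda)$ to its $\GL_m$-highest weight vectors, which lie in $\mathcal{R}_{n,n}$ by the Cauchy decomposition (Lemma~\ref{lem:Rnn}), and concludes from $I(V_\lambda^{(m)})\cap\mathcal{R}_{n,n}=I(V_\lambda^{(n)})=I_\lambda^{(n)}\subseteq I_\lambda^{(m)}$ (Lemma~\ref{lem:intersections}) together with $\GL_m$-stability of $I_\lambda^{(m)}$. The only cosmetic difference is in the case $m\leq n$: your argument via injectivity of $\mathcal{R}_{m,n}/I_\lambda^{(m)}\to\mathcal{R}_{n,n}/I_\lambda^{(n)}$ needs only part (1) of Lemma~\ref{lem:intersections}, whereas the paper's proof of Proposition~\ref{prop:bound} also uses part (2).
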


\noindent Consequently, every $\lambda\vdash n$ has a well-defined \emph{radicality threshold}
\[
m(\lambda) := \min\{\,m\geq1 : I_\lambda\subseteq\mathcal{R}_{m,n}\ \text{is not radical}\,\}\ \in\ \{2,\dots,n\}\cup\{\infty\}
\]
(Definition~\ref{def:threshold}), where $\min\emptyset:=\infty$: the ideal $I_\lambda\subseteq\mathcal{R}_{m,n}$ is radical if and only if $m<m(\lambda)$, and $m(\lambda)=\infty$ as soon as $m(\lambda)>n$. Our remaining results on radicality determine or bound this threshold.

For the smallest hooks the threshold is infinite. Two cases are elementary: $I_{(n)}=\mathcal{R}_{m,n}$, since the trivial isotypic component contains the constants, and $V_{(n-1,1)}$ is the small diagonal $\{\mathbf{u}_{\bullet,1}=\dots=\mathbf{u}_{\bullet,n}\}$, whose prime ideal is generated by the linear forms $x_{r,i}-x_{r,j}\in\mathcal{R}_{m,n,(n-1,1)}$, so that $I_{(n-1,1)}$ is this prime ideal. The first substantial case is $\lambda=(n-2,1,1)$, where $V_\lambda$ is the union of the $2^{n-1}-1$ linear subspaces on which the columns take at most two distinct values.

\begin{mainthm}[{Theorems~\ref{thm:Ia1} and~\ref{thm:a11}}]\label{thmC}
Let $\lambda\vdash n$ be a hook partition with $\len(\lambda)\leq 3$. Then $I_\lambda\subseteq\mathcal{R}_{m,n}$ is radical for every $m\geq1$, that is, $m(\lambda)=\infty$. For $\lambda=(n-2,1,1)$ we give an explicit Gr\"obner basis of $I_\lambda$.
\end{mainthm}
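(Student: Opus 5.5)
By Theorem~\ref{thmB} it suffices to prove radicality for $m=n$, but the plan below works for arbitrary $m$ directly. Hooks of length at most three are $(n)$, $(n-1,1)$ and $(n-2,1,1)$, plus the small cases where $n\leq 3$, such as $(1,1,1)$. The first two were handled in the discussion preceding the statement: $I_{(n)}=\mathcal{R}_{m,n}$, and $I_{(n-1,1)}$ is the prime ideal of the small diagonal. The whole content is therefore $\lambda=(n-2,1,1)$.

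\textbf{Set-up.} By Theorem~\ref{thmA}, $V_\lambda$ is the union of the subspaces $W_\Pi$ with $\Pi$ having at most two blocks. Hence $\rad(I_\lambda)=\bigcap_{\Pi}\mathfrak p_\Pi$, where $\mathfrak p_\Pi$ is generated by $x_{r,i}-x_{r,j}$ for $i,j$ in the same block. The strategy is to exhibit explicit polynomials in $I_\lambda$, show they form a Gr\"obner basis, and compare the resulting Hilbert function (or standard monomials) with that of $\rad(I_\lambda)$. Since $I_\lambda\subseteq\rad(I_\lambda)$, equality of Hilbert functions gives equality of the ideals.

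\textbf{Candidate generators.} For $m=1$, the Specht polynomials of shape $(n-2,1,1)$ are the Vandermonde products $(x_a-x_b)(x_a-x_c)(x_b-x_c)$, up to products with the first column. Polarizing under the $\GL_m$-action gives the natural candidates
\[
\det\begin{pmatrix}1&1&1\\ x_{r,a}&x_{r,b}&x_{r,c}\\ x_{s,a}&x_{s,b}&x_{s,c}\end{pmatrix},
\]
the $3\times 3$ minors of the matrix obtained by adjoining a row of ones to $\mathbf{x}$. One also needs the full polarizations of the Vandermonde-type cubic with the variable-row multiset $\{r,r\}$, $\{r,s\}$, or $\{r,s,t\}$ in the squared slot.

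These polynomials lie in the $(n-2,1,1)$-isotypic component, because the $S_n$-span of a polarized standard Specht polynomial is a copy of $S^\lambda$. They vanish on all $W_\Pi$ with at most two blocks, because they vanish whenever the columns $a,b,c$ take at most two values. Geometrically, the minors cut out the locus where all columns are affinely collinear, which is strictly larger than $V_\lambda$ for $m\geq 2$. The cubic polarizations are what remove that excess.

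\textbf{Key steps, in order.}
\begin{enumerate}[(i)]
\item Show that the candidates generate $I_\lambda$. The isotypic component is generated as a module over the invariants by polarizations of the Specht polynomials of degree $3$; I would invoke the description of isotypic components from the earlier sections. Lower-degree copies of $S^\lambda$ do not exist, since the minimal degree for $S^{(n-2,1,1)}$ is $3$ and higher copies lie in the ideal generated by the lower ones. This last claim is delicate, and I would verify it via the Gr\"obner computation itself: any element of $\rad(I_\lambda)$ reduces to zero modulo the candidates.
\item Choose a lex order with $x_{r,1}>x_{r,2}>\cdots$ and prove Buchberger's criterion by reducing S-pairs. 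The $S_n$- and $\GL_m$-equivariance should cut the S-pair check down to finitely many configurations involving at most about $5$ columns and $3$ rows.
\item Describe the standard monomials and count them in each degree.
\item Compute the Hilbert function of $\bigcap\mathfrak p_\Pi$ via inclusion–exclusion over the lattice of two-block set partitions, or by a direct basis argument. A polynomial vanishing on all two-valued configurations is determined by its values modulo the ideal, and one can decompose by the support of monomials.
\end{enumerate}

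\textbf{Expected obstacle.} Step (ii) together with the matching in (iv) is the real difficulty: certifying the Gr\"obner basis uniformly in $n$ and $m$. I would first try to reduce to $n\leq 5$ and $m\leq 3$. Leading terms involve at most three columns and three rows, so S-pairs involve at most six columns and six rows. The paper's $\GL_m$-reduction handles the rows, and $S_n$-symmetry restricts attention to the finitely many column patterns. After that reduction I would verify the finitely many cases by hand or by computer.
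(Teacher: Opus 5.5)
\textbf{There is a genuine gap.} Your overall architecture matches the paper's: exhibit explicit elements of $I_\lambda$ and show they form a Gr\"obner basis of $I(V_\lambda)$. The step that carries all the weight, however, is missing. A finite computer check of Buchberger's criterion only shows that your candidates are a Gr\"obner basis of the ideal \emph{they} generate. To conclude radicality you still need, uniformly in $n$, that every leading monomial of $I(V_\lambda)$ is divisible by the leading monomial of a candidate. Equivalently, you need a lower bound for the Hilbert function of $\mathcal{R}_{m,n}/I(V_\lambda)$. The $\GL_m$-reduction bounds only the number of rows, not the number of columns, so it cannot supply this.

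Your step (iv) proposes to get this bound by inclusion--exclusion over the lattice of two-block set partitions. That cannot work, because the Hilbert function of a union of linear subspaces is not determined by its intersection lattice. The formula already fails in the smallest case, $m=1$, $n=3$. There $V_{(1,1,1)}$ is the union of the three planes $x_i=x_j$ through the common line $x_1=x_2=x_3$. Inclusion--exclusion predicts $\dim(\mathcal{R}_{1,3}/I(V))_d=3(d+1)-3+1=3d+1$. But $I(V)$ is generated by the Vandermonde cubic, so the true value is $\binom{d+2}{2}-\binom{d-1}{2}=3d$ for $d\geq1$. The alternative ``direct basis argument'' you mention is exactly the proof that is missing.

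\textbf{How the paper gets the bound.} It works by induction on $n$, never computing a Hilbert function and never checking S-pairs for $n>3$. The monomial order is the block order comparing columns from $n$ downwards, graded inside each column.
\begin{itemize}
\item Lemma~\ref{lem:inductionstepge} writes a multihomogeneous $f\in I(V_{(n-2,1,1)})$ as $\sum_{\bbeta}f_{\bbeta}\mathbf{x}_{\bullet,n}^{\bbeta}$ with $f_{\bbeta}\in I(V_{(n-2,1)})$. Hence $\ini(f)=\mathbf{x}_{\bullet,n}^{\balpha}\ini(f_{\balpha})$, where $\ini(f_{\balpha})$ is divisible by some $x_{r,i}$ with $1<i<n$ by Theorem~\ref{thm:Ia1}. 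This settles the case $|\balpha|\geq2$.
\item If $\balpha$ has a single nonzero entry $\alpha_l=1$, then either a type (v) monomial divides $\ini(f)$, or one specializes the rows $r>l$ to $1$. A point of $V_{(n-3,1,1)}$ can be extended by either of its two column values, and this places $\ev_{\mathbf 1}(f_{\balpha})$ in $I(V_{(n-3,1,1)})\subseteq\mathcal{R}_{l,n-1}$. Lemma~\ref{lem:phi} then transports the leading monomials.
\item If $\balpha=0$, then $f\in I(V_{(n-3,1,1)})$ directly (Proposition~\ref{prop:initial}).
\item The base case $n=3$ is Lemma~\ref{lem:GBa=1}.
\end{itemize}
Finally, the elements $\varepsilon_{T_{ij}}(p)$ have leading monomials exactly (i)--(v), which finishes the proof. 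Your unspecified lex order gives no such inductive control.

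\textbf{Two smaller corrections.}
\begin{itemize}
\item For $m\geq2$ the minimal degree of $I_{(n-2,1,1)}$ is $2$, not $3$ (Proposition~\ref{prop:degreegen}). Your own minors with a row of ones are degree-$2$ elements $\varepsilon_T(x_{s,b}x_{r,c})$; they are the paper's $q_{r,s}$. Step (i) is unnecessary in any case: radicality only needs candidates lying in $I_\lambda$ that generate $I(V_\lambda)$.
\item Neither $\GL_m$ nor $S_n$ preserves a monomial order, so neither can shrink the S-pair check. Order-preserving relabellings of rows (Lemma~\ref{lem:iota}) can.
\end{itemize}
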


\noindent We conjecture that the threshold is infinite for every hook partition (Conjecture~\ref{conj:hook}). Besides Theorem~\ref{thmC}, this is known for $\lambda=(1^n)$ and $m=2$ by Haiman's result quoted above, and for $\lambda=(1^4)$, $\lambda=(2,1^3)$ and all $m$ by a computation in $\mathcal{R}_{4,4}$ and $\mathcal{R}_{5,5}$, respectively, combined with Theorem~\ref{thmB} (Remark~\ref{rmk:haiman}).

On the negative side, we develop three criteria for non-radicality, based on contents, on multidegrees, and on total degrees (Proposition~\ref{prop:contentcriterion}, Corollaries~\ref{cor:mdegcriterion} and~\ref{cor:totaldegreecriterion}). They rest on a single mechanism. For $\mu\triangleleft\lambda$ we have $I_\mu\subseteq I(V_\mu)\subseteq I(V_\lambda)$ by Theorem~\ref{thmA}, so every polynomial of $S_n$-type $\mu$ lies in the radical of $I_\lambda$; if such a polynomial occurs in a content, a multidegree, or a total degree that cannot be reached from the $\lambda$-isotypic component by multiplication with monomials, then it does not lie in $I_\lambda$. The three criteria are successively coarser and successively easier to apply. The last one rests on an explicit formula for the minimal degree of $I_\lambda$ (Proposition~\ref{prop:degreegen}) and yields a purely combinatorial condition on $\lambda$ and $m$ for non-radicality (Theorem~\ref{thm:nonradical}), whose simplest instance is the following.

\begin{mainthm}[{Corollary~\ref{cor:nonhooks}}]\label{thmD}
Let $\lambda\vdash n$ be a partition that is not a hook. Then $m(\lambda)\leq\len(\lambda)$; that is, $I_\lambda\subseteq\mathcal{R}_{m,n}$ is not radical for every $m\geq\len(\lambda)$.
\end{mainthm}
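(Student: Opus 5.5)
The plan is to run the total-degree mechanism from the introduction. I would exhibit a partition $\mu\triangleleft\lambda$ and a nonzero polynomial $f\in\mathcal{R}_{m,n,\mu}$ of total degree strictly smaller than the minimal degree of a nonzero element of $I_\lambda$. By Theorem~\ref{thmA} and $V_\lambda\subseteq V_\mu$ we have $f\in I(V_\mu)\subseteq I(V_\lambda)=\rad(I_\lambda)$. On the other hand $I_\lambda$ is generated by homogeneous elements whose degrees are at least $d_{\min}(\lambda)$, so $f\notin I_\lambda$. Hence $I_\lambda$ is not radical. By Proposition~\ref{prop:bound} (radicality can only be lost as $m$ grows), it then suffices to treat the single case $m=\len(\lambda)=:\ell$.

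The first key step is the minimal degree of $I_\lambda$ in $\mathcal{R}_{\ell,n}$, which is Proposition~\ref{prop:degreegen}. Heuristically, the cheapest copy of $S^\lambda$ is obtained by putting each column of a tableau of shape $\lambda$ into a Vandermonde-type alternant built from $\ell$ sets of variables. A column of length $c$ needs $c$ distinct monomials in $m$ variables, and with the smallest possible degrees these contribute $\delta_m(c)$, the sum of the $c$ smallest monomial degrees in $m$ variables. Since $m=\ell\geq c$, a column of length $c$ costs exactly $c-1$: one constant and $c-1$ linear monomials. So
\[d_{\min}(\lambda)=\sum_j(\lambda'_j-1)=n-\lambda_1.\]
This is indeed the generic lower bound, since every element of $\mathcal{R}_{m,n,\lambda}$ with $\lambda\ne(n)$ has degree at least $n-\lambda_1$ (equality occurring only for $m\ge\ell$), consistent with $\dim V_\lambda=m(n-\lambda_1)$. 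I would simply quote Proposition~\ref{prop:degreegen}.

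The second step is to choose $\mu\triangleleft\lambda$ with $\mu_1=\lambda_1$ but degree smaller than $n-\lambda_1$. That target fails, because the minimal degree of $\mu$ is also at least $n-\mu_1$, so the approach has to be refined. The real content is that Proposition~\ref{prop:degreegen} gives the minimal degree of $I_\lambda$, while $I_\lambda$ may contain $\mu$-isotypic elements only in degrees above the degree at which $\mu$ first appears. I would therefore switch to the multidegree or content criterion (Corollary~\ref{cor:mdegcriterion}), or use Theorem~\ref{thm:nonradical} directly.

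Concretely, since $\lambda$ is not a hook, $\lambda_2\geq2$. Take $\mu$ obtained from $\lambda$ by moving one box from the second row to the end of the first column, so $\mu_1=\lambda_1$ and $\len(\mu)=\ell+1$. With $\ell$ rows of variables, $\mu$ has a column of length $\ell+1>m$, which must use a quadratic monomial. Its minimal degree is then $n-\lambda_1+1$, as is that of $\lambda$ in degree just above minimum. So the comparison must be made in multidegree. The elements of $I_\lambda$ of total degree $n-\lambda_1+1$ are linear forms times minimal generators, and their multidegrees are $\mathbf{e}_r$ plus a minimal multidegree of $\lambda$, where the minimal multidegrees of $\lambda$ are those with each row-variable-set used at most $\lambda'_j-1$ times per column. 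I would pick a multidegree $\mathbf{a}$ realized by a $\mu$-polynomial, for instance with all $\ell$ variable sets used linearly by the long column plus a square, and check that $\mathbf{a}$ is not of the form $\mathbf{e}_r+(\text{multidegree of a degree-}(n-\lambda_1)\ \lambda\text{-polynomial})$. The main obstacle is this combinatorial verification of the multidegree sets. I would first try to show that every degree-$(n-\lambda_1)$ element of $\mathcal{R}_{\ell,n,\lambda}$ has multidegree whose $r$-th coordinate is at most the number of columns of $\lambda$ of length greater than $r$, and that $\mathbf{a}$ violates this even after subtracting any $\mathbf{e}_r$. If that fails, I would fall back on the content criterion (Proposition~\ref{prop:contentcriterion}), tracking which columns $i$ carry the extra degree.
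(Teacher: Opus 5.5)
Your proposal has a genuine gap, caused by an off-by-one error in the degree count. It fails at the sentence ``With $\ell$ rows of variables, $\mu$ has a column of length $\ell+1>m$, which must use a quadratic monomial.'' For a fixed column index $i$ there are $N_1=m+1$ distinct monomials of degree at most one, namely $1,x_{1,i},\dots,x_{m,i}$. So a column of length $c$ can be antisymmetrized at cost $c-1$ whenever $c\leq m+1$, not only when $c\leq m$. For $m=\ell$ the first column of $\mu$, of length $\ell+1$, therefore needs no quadratic monomial. Proposition~\ref{prop:degreegen}, with $N_0=1<\ell+1\leq N_1$, gives $D_\mu=n-\mu_1=n-\lambda_1=D_\lambda$, not $n-\lambda_1+1$. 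The smallest case is $\lambda=(2,2)$, $\mu=(2,1,1)$, $m=2$, where both ideals start in degree two, as the introduction notes. The same slip is in your claim that $D_\lambda=n-\lambda_1$ only for $m\geq\ell$; it holds exactly when $m\geq\ell-1$.

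The idea you are missing is that equality of minimal degrees already suffices. A strict inequality $D_\mu<D_\lambda$ can never occur for $\mu\triangleleft\lambda$, because the band weights in Proposition~\ref{prop:degreegen} are weakly increasing in the row index. In degree $D_\lambda$ one has $(I_\lambda)_{D_\lambda}=(\mathcal{R}_{m,n,\lambda})_{D_\lambda}$ by Remark~\ref{rmk:IiffR}, and this meets $\mathcal{R}_{m,n,\mu}$ only in $0$. Hence a nonzero $\mu$-isotypic polynomial of degree $D_\mu=D_\lambda$ lies in $I(V_\mu)\subseteq\rad(I_\lambda)$ but not in $I_\lambda$. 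This is Corollary~\ref{cor:totaldegreecriterion}. The paper's proof is exactly Theorem~\ref{thm:nonradical} with $b=1$, $i=2$, $j=\ell+1$. That produces your $\mu$, with the box taken from the last row of length $\lambda_2$ so that $\mu$ is a partition, and it works directly for all $m\geq\ell$.

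Because of the miscount, you instead set up a multidegree comparison in degree $n-\lambda_1+1$ that you never carry out. The coordinatewise bound you planned to prove there is also false as you stated it. For $\lambda=(2,2)$ and $m=2$, the ordinary Specht polynomial $(x_{2,1}-x_{2,3})(x_{2,2}-x_{2,4})$ lies in $\mathcal{R}_{2,4,(2,2)}$, has degree $2=n-\lambda_1$ and multidegree $(0,2)$. Yet $\lambda$ has no column of length greater than $2$, so your bound would force the second coordinate to be $0$.

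As written, the proposal does not prove the statement. Replacing its second half by the equal-degree argument above repairs it. Your reduction to $m=\ell$ via Proposition~\ref{prop:bound} is then harmless but unnecessary.
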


\noindent This bound is attained for $(2,2,1)$ and $(3,2,1)$, whose ideals are still radical for $m=2$ (Section~\ref{subsec:summary}), but it is not attained in general: for $(2,2,2)$ the threshold is $2$, and Theorem~\ref{thm:nonradical} detects non-radicality below $\len(\lambda)$ for instance for $(3,3,3,3,2,1)$ and $m=2$. Table~\ref{tab:threshold} collects what we know about $m(\lambda)$ for $n\leq7$; the smallest open cases are $(1^5)$ for $m=3,4,5$ and $(2,2,1,1)$ for $m=2,3$.

\medskip
The containment relations between the ideals also differ from the ordinary case: dominance always governs the diagonal Specht varieties, but for $m\geq2$ it does not, in general, govern inclusion among the ideals themselves.

\begin{mainthm}[{Corollary~\ref{cor:posetideals}}]\label{thmE}
The map $\lambda\longmapsto I_\lambda$ is an isomorphism of posets from $(\partn,\trianglelefteq)$ to $(\{I_\lambda:\lambda\in\partn\},\subseteq)$ if and only if $m=1$ or $n\leq3$.
\end{mainthm}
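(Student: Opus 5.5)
We need to prove Theorem E: $\lambda\mapsto I_\lambda$ is a poset isomorphism iff $m=1$ or $n\le 3$.

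The plan splits into three parts. First, the case $m=1$ is exactly the result of Moustrou--Riener--Verdure~\cite{moustrou2021symmetric}, via Proposition~\ref{prop:defequiv}, which identifies our $I_\lambda$ with the classical Specht ideal. Second, we treat $n\le 3$ with arbitrary $m$. Third, for $m\ge2$ and $n\ge4$ we exhibit a failure.

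A general observation handles one direction of every case. By Theorem~\ref{thmA}, $I_\lambda\subseteq I_\mu$ forces $V_\lambda\supseteq V_\mu$, hence $\lambda\trianglelefteq\mu$. So the map is always injective and order-reflecting. The only question is whether $\mu\trianglelefteq\lambda$ implies $I_\mu\subseteq I_\lambda$.

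For $n\le3$ the dominance order is a chain $(1^n)\trianglelefteq\cdots\trianglelefteq(n)$, so we must check each consecutive pair.
\begin{itemize}
\item $I_{(n)}=\mathcal{R}_{m,n}$ contains every other ideal.
\item $I_{(n-1,1)}$ is the prime ideal of the small diagonal, which contains $I_\mu$ for all $\mu\neq(n)$, since each such $I_\mu$ vanishes there.
\item For $n=3$ it remains to show $I_{(1^3)}\subseteq I_{(2,1)}$, which is the same statement as the previous bullet.
\end{itemize}
So every case with $n\le3$ follows from the radicality, indeed primality, of $I_{(n-1,1)}$ together with Theorem~\ref{thmA}.

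For $m\ge2$, $n\ge4$, we look for $\mu\triangleleft\lambda$ with $I_\mu\not\subseteq I_\lambda$, using the degree or multidegree mechanism described before Theorem~\ref{thmD}. The natural candidate is $\lambda=(n-2,2)$ or $(n-2,1,1)$ against $\mu=(1^n)$ or a lower partition. By Proposition~\ref{prop:degreegen}, the minimal degree of $I_\lambda$ is some $d(\lambda,m)$. If some $\mu\triangleleft\lambda$ has minimal degree strictly smaller, with $d(\mu,m)<d(\lambda,m)$, then a minimal-degree element of $\mathcal{R}_{m,n,\mu}$ cannot lie in $I_\lambda$, and containment fails.

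For $m\ge2$ the isotypic component for $\mu$ can appear in lower degree, because mixed-row polynomials are available; for example, $(1^n)$ appears in degree well below $\binom n2$ when $m\ge2$. A concrete choice is therefore $n=4$, $\lambda=(2,2)$, $\mu=(2,1,1)$. In one set of variables these have degrees $2$ and $3$. With $m\ge2$ I would compute via the multigraded Frobenius character (Proposition~\ref{prop:degreegen}) and compare multidegrees rather than total degrees. The idea is to find a multidegree in which $(2,1,1)$ occurs but which does not dominate coordinatewise any multidegree where $(2,2)$ occurs; this is the content of Corollary~\ref{cor:mdegcriterion}.

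To pass to general $n\ge4$, I would take $\lambda=(n-2,2)$ and $\mu=(n-2,1,1)$, or simply add a long first row. The minimal multidegrees then shift uniformly, so the same argument applies. For larger $m$, I would embed via $\mathcal{R}_{2,n}\subseteq\mathcal{R}_{m,n}$ and use a retraction setting the extra rows to zero, which preserves isotypic components.

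The main obstacle is identifying an explicit witness pair $(\mu,\lambda)$ for $n\ge4$, $m\ge2$ whose multidegree supports are incomparable. This requires care with the bigraded $S_n$-characters; I would first verify the case $n=4$ by direct computation and then lift.
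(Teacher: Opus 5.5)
Your handling of $m=1$, of order-reflection via Theorem~\ref{thmA}, and of $n\le 3$ is correct. Your $n\le3$ argument is even leaner than the paper's. The only nontrivial inclusion there is $I_{(1^3)}\subseteq I_{(2,1)}$, and it needs only the radicality of the larger ideal $I_{(2,1)}$ (Theorem~\ref{thm:Ia1}). The paper additionally invokes Theorem~\ref{thm:a11} and its computer-verified base case.

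\textbf{The gap is in the failure direction for $m\ge2$, $n\ge4$.} You never establish it: you call finding the witness ``the main obstacle'' and defer it to an unperformed computation and a vague lifting step. Moreover, the first mechanism you propose cannot work. The weight of row $i$ in Proposition~\ref{prop:degreegen} is its band index, which is weakly increasing in $i$. Abel summation therefore gives $D_\mu\ge D_\lambda$ whenever $\mu\trianglelefteq\lambda$, so a dominated partition never has strictly smaller minimal degree. Your multidegree route has a similar issue. Read with weak domination, it fails for $(2,2)$ versus $(2,1,1)$, because every multidegree of total degree $\ge2$ in $\N^2$ weakly dominates one of $(2,0),(1,1),(0,2)\in\Mdeg((2,2))$. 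Read with the strict order of Corollary~\ref{cor:mdegcriterion}, it succeeds at $\balpha=(1,1)$ only because $(1,1)$ already has minimal total degree, i.e.\ it collapses to the observation below.

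\textbf{The missing idea is equality, not inequality, of minimal degrees.} For $m\ge2$ we have $N_1=m+1\ge3$, so rows $2$ and $3$ lie in the same band. Proposition~\ref{prop:degreegen} then gives $D_{(n-2,1,1)}=D_{(n-2,2)}=2$. (For $m=1$ these are $3$ and $2$, which is the difference you noticed.) By Remark~\ref{rmk:IiffR}, the degree-$2$ part of each ideal equals the degree-$2$ part of its isotypic component. Hence a nonzero degree-$2$ element of $I_{(n-2,1,1)}$ lying in $I_{(n-2,2)}$ would lie in $\mathcal{R}_{m,n,(n-2,1,1)}\cap\mathcal{R}_{m,n,(n-2,2)}=\{0\}$, which is impossible. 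This settles all $n\ge4$ and all $m\ge2$ at once, with no computation, no lifting in $n$, and no retraction in $m$; it is exactly the paper's argument.
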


\noindent The smallest instance occurs for $n=4$ and $m=2$: we have $(2,1,1)\trianglelefteq(2,2)$, but $I_{(2,1,1)}\not\subseteq I_{(2,2)}$, since both ideals start in degree two, where each coincides with its isotypic component, and distinct isotypic components intersect trivially. The failure is tied to non-radicality: the map $\lambda\mapsto I_\lambda$ always reflects dominance, and $I_\mu\subseteq I_\lambda$ holds whenever $\mu\trianglelefteq\lambda$ and $I_\lambda$ is radical (Remark~\ref{rmk:inclusionradicality}).

\medskip
Finally, we study Cohen--Macaulayness. For $m=1$, Yanagawa proved that, in characteristic zero, $\mathcal{R}_{1,n}/I_\lambda$ is Cohen--Macaulay precisely when $\lambda$ is of one of the forms $(a,1^{n-a})$, $(a,b)$ or $(a,a,1)$~\cite{Yanagawa2021}; in particular, all hooks yield Cohen--Macaulay quotients. In the diagonal setting the answer is far more rigid.

\begin{mainthm}[{Theorem~\ref{thm:CM}}]\label{thmF}
Let $\lambda\vdash n$ and $m\geq2$. Then $\mathcal{R}_{m,n}/I_\lambda$ is Cohen--Macaulay if and only if $\lambda=(n)$ or $\lambda=(n-1,1)$. The same holds for $\mathcal{R}_{m,n}/\rad(I_\lambda)$.
\end{mainthm}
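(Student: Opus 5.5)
The "if" direction is immediate. For $\lambda=(n)$ we have $I_{(n)}=\mathcal{R}_{m,n}$, so the quotient is the zero ring, which is Cohen--Macaulay by convention. For $\lambda=(n-1,1)$, $I_{(n-1,1)}$ is the prime ideal of the small diagonal. It is generated by the $m(n-1)$ linear forms $x_{r,i}-x_{r,n}$, so the quotient is a polynomial ring in $m$ variables, which is Cohen--Macaulay. In both cases $I_\lambda$ is radical, so the statement for $\rad(I_\lambda)$ follows as well.

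For the "only if" direction, let $\lambda\neq(n),(n-1,1)$, so $\lambda_1\leq n-2$ (a partition with $\lambda_1\ge n-1$ is one of the two excluded ones). Since Cohen--Macaulay rings are equidimensional, any $\lambda$ for which $V_\lambda$ is not equidimensional (Proposition~\ref{prop:equidim}) is ruled out at once, both for $I_\lambda$ and for $\rad(I_\lambda)$, because they have the same minimal primes. The remaining cases are the hooks and the partitions with $\lambda_1=\dots=\lambda_{\len(\lambda)-1}$. For these I will use Hartshorne's connectedness theorem: if $R/J$ is Cohen--Macaulay with $\dim R/J\ge 2$ (here graded, so we may localize at the irrelevant ideal), then $\Spec$ of the quotient is connected in codimension one. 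This means that for any two irreducible components $W_\Pi$, $W_{\Pi'}$ there is a chain of components in which consecutive members meet in codimension one. The property depends only on the minimal primes, so the same argument handles $\rad(I_\lambda)$.

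To apply it, use Theorem~\ref{thmA}. The components are the $W_\Pi$ with $\sh(\Pi)$ minimal among the shapes not dominated by $\lambda$, and all have dimension $m(n-\lambda_1)$ by Proposition~\ref{prop:dimension}. Two components $W_\Pi$ and $W_{\Pi'}$ intersect in $W_{\Pi\vee\Pi'}$, where $\Pi\vee\Pi'$ is the join in the lattice of set partitions. This has dimension $m\cdot\#\text{blocks}(\Pi\vee\Pi')$, so the codimension of the intersection inside a component is a multiple of $m$. Since $m\ge2$, it is never $1$. Connectedness in codimension one therefore forces $V_\lambda$ to be irreducible, that is, to have a single component $W_\Pi$. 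But $V_\lambda$ is $S_n$-stable, so the $S_n$-orbit of $\Pi$ would have to be a single set partition. The only such set partitions are the one with one block and the one with all singletons. The first would give $V_\lambda$ equal to the small diagonal, which happens only for $\lambda=(n-1,1)$. The second gives $V_\lambda=\C^{mn}$, impossible since $I_\lambda\neq0$. Hence $V_\lambda$ has at least two components. Its dimension $m(n-\lambda_1)\ge 2m\ge 2$, so the connectedness theorem applies and the quotient is not Cohen--Macaulay.

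With this argument the equidimensionality case split becomes unnecessary: the codimension-multiple-of-$m$ observation alone handles every $\lambda$ with at least two components. The main point to check carefully is that the connectedness theorem applies to the non-reduced ring $\mathcal{R}_{m,n}/I_\lambda$. Hartshorne's theorem is stated for any Noetherian local Cohen--Macaulay ring, and it concerns only the topology of $\Spec$, so reducedness is not needed. The other point to verify is that the graded-to-local reduction preserves the dimension bound, which holds because all components pass through the origin, being linear subspaces.
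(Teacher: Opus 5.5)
Your proof is correct, but it takes a different route from the paper's. The paper localizes at one explicitly chosen prime, $\fQ=\mathfrak{p}_\Theta$ with $\Theta=\{\{1,\dots,\lambda_1+2\},\{\lambda_1+3\},\dots,\{n\}\}$. It observes that the minimal primes of $\mathcal{S}_{\overline{\fQ}}$ are the $\mathfrak{p}_\Pi$ with $\Pi\prec\Theta$, and splits the punctured spectrum into $V(\fP')$, where $\fP=\mathfrak{p}_{\Pi^\sharp}$, and the union of the closures of the other minimal primes. It then checks by hand that every other such $\Pi$ links $k+1$ to some $j\le k$, so the two pieces meet only in $\mathfrak{m}$. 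This needs only the basic form of Hartshorne's theorem (depth $\geq 2$ implies a connected punctured spectrum), together with $\dim\mathcal{T}\geq m\geq 2$.

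You instead apply the codimension-one corollary at the origin. You replace the explicit combinatorics with the uniform fact that $\mathfrak{p}_\Pi+\mathfrak{p}_{\Pi'}=\mathfrak{p}_{\Pi\vee\Pi'}$ has height over every minimal prime equal to a positive multiple of $m$. Reducibility of $V_\lambda$ then comes from your $S_n$-symmetry argument rather than from the explicit second component $\Pi'$.

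The two arguments are closely related. The standard derivation of the codimension-one statement localizes at a minimal prime of an intersection of components, and $\fQ$ is exactly such a prime for $W_{\Pi^\sharp}\cap W_{\Pi'}$. Your version makes it transparent that $m\geq 2$ is the essential hypothesis. It also applies verbatim to any ideal whose minimal primes are two or more of the $\mathfrak{p}_\Pi$. The paper's version, in turn, is more self-contained.

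One small slip: you attribute equal dimension $m(n-\lambda_1)$ of all components to Proposition~\ref{prop:dimension}. That proposition only gives $\dim V_\lambda$; equal component dimensions hold only in the equidimensional cases of Proposition~\ref{prop:equidim}. Since you measure codimension inside each component, nothing in your argument depends on this.
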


\noindent In other words, for $m\geq2$ the only Cohen--Macaulay diagonal Specht ideals are the two trivial ones, the unit ideal $I_{(n)}$ and the prime ideal $I_{(n-1,1)}$ of the small diagonal. Already the radical ideal $I_{(n-2,1,1)}$ of Theorem~\ref{thmC} fails to be Cohen--Macaulay, and so does the radical of every other diagonal Specht ideal.

\paragraph{\bf Methods}
Theorem~\ref{thmA} is proved by reduction to the case $m=1$: the ideal $I_\lambda$ is generated by the images $\varepsilon_T(p)$ of monomials $p$ under the Young symmetrizers $\varepsilon_T$ (Proposition~\ref{prop:genIlambda}), and a generic linear combination of the rows of variables transports the classical characterization from $\mathcal{R}_{1,n}$ to $\mathcal{R}_{m,n}$. Theorem~\ref{thmB} relies on the Cauchy decomposition of $\mathcal{R}_{m,n}$ as a $\GL_m\times\GL_n$-module: a $\GL_m$-highest weight vector involves at most $n$ rows of variables, so a $\GL_m$-stable ideal is determined by its intersection with $\mathcal{R}_{n,n}$. Theorem~\ref{thmC} is proved by exhibiting a Gr\"obner basis of $I(V_{(n-2,1,1)})$ with respect to a block monomial order adapted to the columns (Definition~\ref{def:monomialordergen}), by induction on $n$ from a base case $n=3$ that is verified by computer (Lemma~\ref{lem:computationm=3}). Theorem~\ref{thmF} follows from Hartshorne's connectedness theorem for the punctured spectrum of a Cohen--Macaulay local ring, applied to a suitable localization; the argument only uses the minimal primes and therefore applies to $I_\lambda$ and $\rad(I_\lambda)$ alike. All computer calculations were carried out in \texttt{SageMath} 10.6; the code is available at \url{https://github.com/uggtos/diagonal_specht}.

\paragraph{\bf Related work}
The diagonal action of $S_n$ on $\mathcal{R}_{2,n}=\C[x_1,\dots,x_n,y_1,\dots,y_n]$ has attracted considerable attention. The diagonal coinvariant ring $\DR_n$ and the Garsia--Haiman modules $\DR_\mu$ are quotients of $\mathcal{R}_{2,n}$; Haiman proved that $\dim_\C\DR_n=(n+1)^{n-1}$ and that $\dim_\C\DR_\mu=n!$ for every $\mu\vdash n$, the latter being the key step in the proof of the Macdonald positivity conjecture~\cite{Haiman2001,Haiman2002}. Carlsson and Oblomkov constructed a monomial basis of $\DR_n$~\cite{CarlssonOblomkov2018}, and Gillespie developed higher Specht polynomials for the diagonal action and constructed a higher Specht basis of $\DR_\mu$ for hook partitions $\mu$~\cite{Gillespie2024}; the construction of such bases for arbitrary shapes, and for $\DR_n$ itself, remains open. These results concern quotients of $\mathcal{R}_{2,n}$ by the ideal of positive-degree invariants or by the annihilator of a single alternant, and hence a different family of ideals; the point of contact with the present paper is the alternant ideal $I_{(1^n)}$ discussed above.

From the geometric point of view introduced above, conditions requiring specified columns of $\mathbf{u}$ to coincide define linear subspaces of $\C^{mn}$, and their unions form diagonal subspace arrangements. The union of the pairwise diagonals,
\[
D_{m,n} := \bigcup_{1\leq i<j\leq n}\{\mathbf{u}_{\bullet,i}=\mathbf{u}_{\bullet,j}\}\subseteq(\C^m)^n,
\]
is the \emph{big diagonal}, and its complement is the ordered configuration space of $n$ distinct points in $\C^m$. For $m=1$, the collision subspaces are the elements of the intersection lattice of the braid arrangement, which is naturally identified with the lattice of set partitions of $[n]$: a set partition $\Pi$ corresponds to the subspace on which $x_i=x_j$ whenever $i\sim_\Pi j$. The same set partitions index the irreducible components of the diagonal Specht varieties (Corollary~\ref{cor:linearspaces}).

A close precursor of our work is the study by Li and Li of the ideal $J(k,n)$ of polynomials in $\C[x_1,\dots,x_n]$ that vanish whenever $k$ of the variables are set equal, that is, the vanishing ideal of the $k$-equal arrangement~\cite{LiLi}. They proved that $J(k,n)$ is generated by products of Vandermonde factors indexed by set partitions of $[n]$ into $k-1$ blocks, motivated by the observation that membership of the graph polynomial $f_G=\prod_{\{i,j\}\in E(G)}(x_i-x_j)$ in $J(k,n)$ detects whether the independence number of $G$ is smaller than $k$. Lov\'asz reformulated this result in terms of stable sets and discussed a dual statement concerning graph colouring~\cite{Lovasz}. Li and Li also showed that the radical ideal of a union of linear subspaces need not be generated by products of linear forms~\cite{LiLigenerators}. This distinction is relevant here: our varieties are finite unions of linear collision subspaces, but the diagonal Specht ideals need not be radical, and their generators arise from $S_n$-isotypic components rather than from products of linear forms.

Bj\"orner, Peeva, and Sidman placed these questions in the framework of subspace arrangements embedded in hyperplane arrangements and introduced \emph{blocker duality}~\cite{BjornerPeevaSidman}. The blocker dual of the $k$-equal arrangement is the arrangement of all subspaces indexed by set partitions with $k-1$ blocks, which is precisely the ordinary Specht variety of the hook $(n-k+1,1^{k-1})$. This gives an arrangement-theoretic interpretation of hook Specht ideals in one set of variables. In several sets of variables the same collision subspaces occur, but the diagonal Specht ideals are, in general, neither blocker ideals nor generated by products of linear forms. Subspace arrangements have also been studied from homological and computational viewpoints: Derksen and Sidman proved bounds for the Castelnuovo--Mumford regularity of subspace arrangements~\cite{DerksenSidman2002}, and the topology and combinatorics of $k$-equal arrangements are governed by restricted partition lattices; see Bj\"orner and Welker~\cite{BjornerWelker}.

\paragraph{\bf Outline}
The article is structured as follows. 
Section~\ref{sec:preliminaries} recalls the necessary background on partitions, tableaux and Specht polynomials, and on the representation theory of $\GL_m\times S_n$ on $\mathcal{R}_{m,n}$ and its isotypic decomposition. In Section~\ref{sec:spechtideals} we define diagonal Specht ideals and show that the definition extends the classical one. Section~\ref{sec:varieties} describes their zero sets, irreducible components, dimensions and equidimensionality. Section~\ref{sec:radicality} is devoted to radicality: we first show that radicality is controlled by the case $m=n$, then compute Gr\"obner bases of the ideals $I_{(n-1,1)}$ and $I_{(n-2,1,1)}$ and deduce that they are radical, and finally establish general non-radicality criteria and examples. Section~\ref{sec:cohen-macaulayness} studies Cohen--Macaulayness of the diagonal Specht ideals and of their radicals, and Section~\ref{sec:open} collects open questions.

\section{Preliminaries}\label{sec:preliminaries}
In this section we collect preliminary material, such as definitions and statements, helpful later. 
\subsection{Partitions and set partitions}

\begin{definition}\label{def:partitions}
A \emph{partition} of $n\in\N$ is a finite sequence $\lambda=(\lambda_1,\dots,\lambda_\ell)$ of positive integers with $\lambda_1\geq\cdots\geq\lambda_\ell$ and $\lambda_1+\cdots+\lambda_\ell=n$; we write $\lambda\vdash n$. The number $\ell$ is the \emph{length} of $\lambda$, denoted by $\len(\lambda)$, and by convention $\lambda_k=0$ for $k>\ell$. We denote by $\partn$ the set of all partitions of $n$. A partition $\lambda\vdash n$ is a \emph{hook partition} if it is of the form $(a,1^{n-a})$ for some $1\leq a\leq n$, and a \emph{non-hook partition} otherwise.
\end{definition}

\begin{definition}\label{def:orders}
Let $\lambda,\mu\in\partn$.
\begin{enumerate}
    \item We say that $\lambda$ \emph{dominates} $\mu$, and write $\mu\trianglelefteq\lambda$, if $\sum_{i=1}^k\mu_i\leq\sum_{i=1}^k\lambda_i$ for all $k\geq1$. We write $\mu\triangleleft\lambda$ if $\mu\trianglelefteq\lambda$ and $\mu\neq\lambda$.
    \item We say that $\mu$ \emph{refines} $\lambda$, and write $\mu\prec\lambda$, if there is a surjection $\pi:\{1,\dots,\len(\mu)\}\to\{1,\dots,\len(\lambda)\}$ such that $\lambda_j=\sum_{i\in\pi^{-1}(j)}\mu_i$ for all $j=1,\dots,\len(\lambda)$; in other words, if $\lambda$ is obtained from $\mu$ by merging parts. In particular, $\lambda\prec\lambda$.
\end{enumerate}
Both relations are partial orders on $\partn$, called the \emph{dominance order} and the \emph{refinement order}, respectively.
\end{definition}

\begin{remark}\label{rem:refinementdominance}
Merging parts can only increase partial sums, so $\mu\prec\lambda$ implies $\mu\trianglelefteq\lambda$. The converse fails: $(3,3)\trianglelefteq(4,2)$, but $(3,3)\not\prec(4,2)$.
\end{remark}

\begin{example}\label{ex:dominance6}
The dominance order is total for $n\leq5$; for $n=6$ its Hasse diagram is
\begin{center}
\ytableausetup{boxsize=0.15cm}
\begin{tikzpicture}[node distance=2cm and 1.5cm, every node/.style={node}]
  % Nodes
  \node (111111) {\ydiagram{1,1,1,1,1,1}};
  \node (21111) [right of=111111] {\ydiagram{2,1,1,1,1}};
  \node (2211) [right of=21111] {\ydiagram{2,2,1,1}};
  \node (222) [below right of=2211] {\ydiagram{2,2,2}};
  \node (3111) [above right of=2211] {\ydiagram{3,1,1,1}};
  \node (321) [below right of=3111] {\ydiagram{3,2,1}};
  \node (33) [below right of=321] {\ydiagram{3,3}};
  \node (411) [above right of=321] {\ydiagram{4,1,1}};
  \node (42) [below right of=411] {\ydiagram{4,2}};
  \node (51) [right of=42] {\ydiagram{5,1}};
  \node (6) [right of=51] {\ydiagram{6}};
  % Edges
  \draw[-, shorten <=5pt, shorten >=5pt] (111111) -- (21111);
  \draw[-, shorten <=5pt, shorten >=5pt] (21111) -- (2211);
  \draw[-, shorten <=5pt, shorten >=5pt] (2211) -- (222);
  \draw[-, shorten <=5pt, shorten >=5pt] (2211) -- (3111);
  \draw[-, shorten <=5pt, shorten >=5pt] (222) -- (321);
  \draw[-, shorten <=5pt, shorten >=5pt] (3111) -- (321);
  \draw[-, shorten <=5pt, shorten >=5pt] (321) -- (33);
  \draw[-, shorten <=5pt, shorten >=5pt] (321) -- (411);
  \draw[-, shorten <=5pt, shorten >=5pt] (33) -- (42);
  \draw[-, shorten <=5pt, shorten >=5pt] (411) -- (42);
  \draw[-, shorten <=5pt, shorten >=5pt] (42) -- (51);
  \draw[-, shorten <=5pt, shorten >=5pt] (51) -- (6);
\end{tikzpicture}
\ytableausetup{boxsize=0.4cm}
\end{center}
with $(2,2,2)$ and $(3,1,1,1)$, as well as $(3,3)$ and $(4,1,1)$, incomparable. In the refinement order, $(2,2,2)\prec(4,2)$ since $4=2+2$, $(4,1,1)\prec(4,2)$ since $2=1+1$, and $(3,2,1)\prec(3,3)$ since $3=2+1$, whereas $(3,3)$, $(4,2)$ and $(5,1)$ are pairwise incomparable.
\end{example}

\begin{definition}\label{def:posetiso}
A map $\varphi:(A,\leq_A)\longrightarrow(B,\leq_B)$ between partially ordered sets is an \emph{isomorphism of posets} if it is bijective and $x\leq_A y$ if and only if $\varphi(x)\leq_B\varphi(y)$ for all $x,y\in A$.
\end{definition}

\begin{definition}\label{def:setpartitions}
Let $[n]:=\{1,\dots,n\}$. A \emph{set partition} of $[n]$ into $k$ subsets is a set $\Pi=\{\Pi_1,\dots,\Pi_k\}$ of pairwise disjoint nonempty subsets of $[n]$, called the \emph{blocks} of $\Pi$, with $\Pi_1\sqcup\dots\sqcup\Pi_k=[n]$; we denote by $\SP(n,k)$ the set of such set partitions. For $i,j\in[n]$ we write $i\sim_\Pi j$ if $i$ and $j$ lie in the same block of $\Pi$. The \emph{shape} $\sh(\Pi)\vdash n$ of $\Pi$ is the partition obtained by arranging the block sizes $|\Pi_1|,\dots,|\Pi_k|$ in weakly decreasing order, and $\SP_\mu$ denotes the set of set partitions of shape $\mu$. A set partition $\Theta$ is \emph{coarser} than $\Pi$, written $\Pi\prec\Theta$, if $i\sim_\Pi j$ implies $i\sim_\Theta j$.
\end{definition}

\begin{remark}\label{rem:coarserrefines}
If $\Pi\prec\Theta$, then every block of $\Theta$ is a union of blocks of $\Pi$, hence $\sh(\Pi)\prec\sh(\Theta)$.
\end{remark}

\subsection{Tableaux and Young symmetrizers}
 
\begin{definition}\label{def:tableaux}
For a partition $\lambda\vdash n$ a  \emph{Young diagram} of shape $\lambda$ is the left-justified array of boxes with $\lambda_i$ boxes in the $i$-th row from the top. A \emph{tableau} of shape $\lambda$ is a filling of these boxes with the numbers $1,\dots,n$, each appearing exactly once; it is \emph{standard} if its entries increase along every row and down every column. We write $\Tab(\lambda)$ and $\SYT(\lambda)$ for the sets of tableaux and of standard tableaux of shape $\lambda$.
\end{definition}
 
For example, for $\lambda=(3,2,1)$,
\begin{equation*}
A=\begin{ytableau}1&2&3\\4&5\\6\end{ytableau}\in\SYT(\lambda),
\qquad
T=\begin{ytableau}2&5&1\\6&3\\4\end{ytableau}\in\Tab(\lambda)\setminus\SYT(\lambda).
\end{equation*}
The tableau $A$, filled with $1,\dots,n$ row by row from left to right and from top to bottom, is the \emph{superstandard} tableau of shape $\lambda$.
 
\subsubsection*{Notation}
For $T\in\Tab(\lambda)$ and a box $b$ of the Young diagram, $T(b)$ denotes the entry of $T$ in $b$. Conversely, for $1\leq i\leq n$, $b_T(i)$ denotes the box of $T$ containing $i$, and $r_T(i)$ the row of that box; in the example above, $r_T(3)=2$ and $r_T(4)=3$.
 
\begin{definition}\label{def:cocharge}
The \emph{reading word} of $S\in\Tab(\lambda)$ is the word $w(S)=w_1\cdots w_n$ obtained by reading the entries of $S$ row by row from bottom to top, and from left to right within each row. Following~\cite{Gillespie2024}, the \emph{cocharge labels} of a word $w=w_1\cdots w_n$ are assigned as follows: the letter $1$ receives the label $0$, and for $i=2,\dots,n$ in turn one searches for $i$ cyclically to the left of $i-1$; the letter $i$ receives the label of $i-1$ increased by one, unless the search wrapped around the word, in which case it receives the same label as $i-1$. The \emph{cocharge tableau} $\ccTab(S)$ is the filling of the Young diagram of $\lambda$ in which the box occupied by $i$ in $S$ contains the cocharge label of $i$ in $w(S)$.
\end{definition}
 
\begin{example}\label{ex:cocharge}
For $\lambda=(3,2,1)$,
\begin{equation*}
S=\begin{ytableau}1&2&3\\4&6\\5\end{ytableau},
\qquad
w(S)=5_24_16_21_02_03_0,
\qquad
\ccTab(S)=\begin{ytableau}0&0&0\\1&2\\2\end{ytableau},
\end{equation*}
where the subscripts are the cocharge labels.
\end{example}
 
\begin{definition}\label{def:youngsymmetrizer}
Let $T\in\Tab(\lambda)$. The \emph{row stabilizer} $R(T)$ and the \emph{column stabilizer} $C(T)$ of $T$ are the subgroups of $S_n$ consisting of the permutations that preserve the set of entries of every row, respectively of every column, of $T$. The \emph{column antisymmetrizer}, the \emph{row symmetrizer} and the \emph{Young symmetrizer} of $T$ are the elements
\begin{equation*}
\alpha(T):=\sum_{\tau\in C(T)}\sgn(\tau)\,\tau, \qquad \beta(T):=\sum_{\sigma\in R(T)}\sigma, \qquad \varepsilon_T:=\alpha(T)\beta(T)
\end{equation*}
of the group algebra $\C[S_n]$.
\end{definition}

\subsection{The polynomial ring \texorpdfstring{$\mathcal{R}_{m,n}$}{R\_\{m,n\}} and its symmetries}

\begin{definition}\label{def:Rmn}
For $m,n\geq1$ let
\begin{equation*}
\mathcal{R}_{m,n}:=\C[\mathbf{x}_1,\dots,\mathbf{x}_m],\qquad \mathbf{x}_r=(x_{r,1},\dots,x_{r,n}),
\end{equation*}
be the polynomial ring in the $mn$ variables $x_{r,i}$, which we arrange in an $m\times n$ matrix with rows $\mathbf{x}_1,\dots,\mathbf{x}_m$. The symmetric group $S_n$ acts on $\mathcal{R}_{m,n}$ by permuting the columns of this matrix,
\begin{equation}\label{eq:actiononpolynomialsgen}
\sigma\cdot p(x_{1,1},\dots,x_{1,n},\dots,x_{m,1},\dots,x_{m,n})=p(x_{1,\sigma(1)},\dots,x_{1,\sigma(n)},\dots,x_{m,\sigma(1)},\dots,x_{m,\sigma(n)}),
\end{equation}
and the general linear group $\GL_m$ acts by linear substitutions among the rows,
\begin{equation}\label{eq:GLmactionpolynomials}
g\cdot x_{r,i}:=\sum_{s=1}^m g_{sr}\,x_{s,i},
\qquad g=(g_{sr})\in\GL_m,
\end{equation}
extended to an action by algebra automorphisms. Both are left actions, and they commute, so that $\mathcal{R}_{m,n}$ is a $\GL_m\times S_n$-module. For a polynomial ring $\mathcal{R}$ we denote by $\Mon(\mathcal{R})$ the set of its monomials.
\end{definition}

When $m$ is small, we often drop the first index and use different letters for the rows of variables. For $m=2$, for instance, we write $\mathcal{R}_{2,n}=\C[\mathbf{x},\mathbf{y}]$ with $\mathbf{x}=(x_1,\dots,x_n)$ and $\mathbf{y}=(y_1,\dots,y_n)$, so that $\sigma\cdot p(x_1,\dots,x_n,y_1,\dots,y_n)=p(x_{\sigma(1)},\dots,x_{\sigma(n)},y_{\sigma(1)},\dots,y_{\sigma(n)})$.

\subsubsection*{Representations of $\GL_m$}
We refer to \cite[\S8.2]{Fulton_1996} and \cite{GoodmanWallach2009} for the representation theory of $\GL_m$ used in this paper. Restricting~\eqref{eq:GLmactionpolynomials} to the diagonal torus $(\C^\ast)^m\subseteq\GL_m$ gives $\diag(t_1,\dots,t_m)\cdot x_{r,i}=t_r x_{r,i}$, so the eigenvectors of the torus in $\mathcal{R}_{m,n}$ are the multihomogeneous polynomials, and the weight of a multihomogeneous polynomial $g$ is its multidegree
\begin{equation*}
\mdeg(g)=(\deg_{\mathbf{x}_1}(g),\dots,\deg_{\mathbf{x}_m}(g))\in\N^m.
\end{equation*}
Let $B_m\subseteq\GL_m$ be the Borel subgroup of upper triangular matrices.

\begin{definition}\label{def:hwv}
A nonzero multihomogeneous polynomial $g\in\mathcal{R}_{m,n}$ is a \emph{highest weight vector}, of weight $\mdeg(g)$, if $B_m\cdot g=\C^\ast g$.
\end{definition}

\begin{prop}[{\cite[\S8.2]{Fulton_1996}}]\label{prop:GLmfacts}
Let $M\subseteq\mathcal{R}_{m,n}$ be a finite-dimensional $\GL_m$-submodule.
\begin{enumerate}
    \item If $M$ is irreducible, it contains a unique highest weight vector up to scalar, and its weight is a partition $\nu$ with $\len(\nu)\leq m$. We write $M\cong\mathbb{S}_\nu(\C^m)$.
    \item Two irreducible $\GL_m$-submodules of $\mathcal{R}_{m,n}$ are isomorphic if and only if their highest weight vectors have the same weight.
    \item $M$ is a direct sum of irreducible submodules, and it is generated, as a $\GL_m$-module, by its highest weight vectors.
\end{enumerate}
\end{prop}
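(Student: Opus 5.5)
Since these are the standard facts of highest weight theory for $\GL_m$, I would not prove them from scratch. Instead, I would reduce to the classical theory as presented in \cite[\S8.2]{Fulton_1996} and \cite{GoodmanWallach2009}. The first step is to check that $\mathcal{R}_{m,n}$ satisfies the hypotheses of that theory. The action \eqref{eq:GLmactionpolynomials} preserves the total degree in each column $(x_{1,i},\dots,x_{m,i})$. Hence $\mathcal{R}_{m,n}$ is a direct sum of finite-dimensional $\GL_m$-stable pieces. Each piece is isomorphic to a tensor product of symmetric powers $\Sym^{d_1}(\C^m)\otimes\cdots\otimes\Sym^{d_n}(\C^m)$, one factor for each column. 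In particular, every finite-dimensional submodule $M$ is a polynomial representation of $\GL_m$.

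For complete reducibility in (3), I would use either of two standard arguments. One is Weyl's unitarian trick: average an inner product over $U_m$ and use that $U_m$ is Zariski dense in $\GL_m$. The other is Schur--Weyl duality, which gives the explicit decomposition of $\Sym^{d}(\C^m)\otimes\cdots$ into Schur modules $\mathbb{S}_\nu(\C^m)$.

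For (1), I would apply the Lie--Kolchin theorem to the solvable group $B_m$ acting on an irreducible $M$. This gives a common eigenvector $g$, which is in particular a torus weight vector, i.e.\ multihomogeneous. Its weight $\mdeg(g)$ lies in $\N^m$, because $M$ is polynomial. Irreducibility together with the Bruhat decomposition $\GL_m=\bigcup_w B_m^- w B_m$ then gives $M=\C[U^-]\cdot g$. So every weight of $M$ is $\mdeg(g)$ minus a combination of positive roots, and the weight space of $\mdeg(g)$ is the line $\C g$. This yields uniqueness of the highest weight vector up to scalar. The Weyl group $S_m\subseteq\GL_m$ permutes the weights of $M$. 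Since $\mdeg(g)$ is maximal, it must be weakly decreasing, so it is a partition $\nu$, and $\len(\nu)\le m$ holds automatically.

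For (2), the highest weight of an irreducible module is an isomorphism invariant, which gives one direction. For the converse, I would invoke the classification: two irreducibles with the same highest weight $\nu$ are both isomorphic to $\mathbb{S}_\nu(\C^m)$. The cleanest argument considers the submodule of $M\oplus M'$ generated by $(g,g')$. Its $\nu$-weight space is one-dimensional, so it is irreducible, and both projections from it are isomorphisms.

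The generation claim in (3) is then immediate. Write $M=\bigoplus M_j$ with each $M_j$ irreducible. Each $M_j$ is generated by its highest weight vector, since the $\GL_m$-span of that vector is a nonzero submodule of an irreducible module. The highest weight vectors of the summands are highest weight vectors of $M$.

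I expect the main obstacle to be the uniqueness and dominance part of (1), namely showing that the $B_m$-eigenvector generates $M$ through the lower unipotent group alone. I would handle this at the Lie algebra level. Write $\mathfrak{gl}_m=\mathfrak{n}^-\oplus\mathfrak{h}\oplus\mathfrak{n}^+$ and apply PBW. Then $U(\mathfrak{gl}_m)g=U(\mathfrak{n}^-)g$, because $\mathfrak{n}^+$ kills $g$ and $\mathfrak{h}$ acts on $g$ by scalars. This makes the weight bookkeeping transparent.
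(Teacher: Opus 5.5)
Your proposal is correct and takes essentially the paper's approach: the paper gives no argument of its own and simply defers to the classical highest-weight theory in \cite[\S8.2]{Fulton_1996}, and your observation that the action preserves column degrees, so that $\mathcal{R}_{m,n}=\bigoplus\Sym^{d_1}(\C^m)\otimes\cdots\otimes\Sym^{d_n}(\C^m)$ is a sum of finite-dimensional polynomial representations, is exactly the unstated hypothesis check that makes this citation apply. Your sketches of (1)--(3) are the standard arguments; the step $M=\C[U^-]\cdot g$ follows from your Lie-algebra/PBW version or from density of the big cell $B_m^-B_m$, not from the full Bruhat decomposition alone.
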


The diagonal $S_n$-action~\eqref{eq:actiononpolynomialsgen} is the restriction to permutation matrices of the action of $\GL_n$ on $\mathcal{R}_{m,n}$ by right multiplication on the matrix of variables. The decomposition of $\mathcal{R}_{m,n}$ into irreducible $\GL_m\times\GL_n$-modules is given by the Cauchy decomposition.

\begin{thm}[{Cauchy decomposition, \cite[Corollary~5.6.8]{GoodmanWallach2009}}]\label{thm:cauchy}
For every $d\geq0$, as a $\GL_m\times\GL_n$-module,
\begin{equation*}
(\mathcal{R}_{m,n})_d =\Sym^d(\C^m \otimes \C^n)\cong \bigoplus_{\substack{\nu \vdash d\\ \len(\nu)\leq\min(m,n)}}\mathbb{S}_\nu(\C^m) \otimes \mathbb{S}_\nu(\C^n),
\end{equation*}
where $\mathbb{S}_\nu(\C^n)$ denotes the irreducible $\GL_n$-module with highest weight $\nu$.
\end{thm}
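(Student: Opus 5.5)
We would prove this by reducing to Schur--Weyl duality. Write $V=\C^m$ and $W=\C^n$, so that $(\mathcal{R}_{m,n})_1=V\otimes W$, with $\GL_m$ acting on $V$ and $\GL_n$ on $W$ compatibly with~\eqref{eq:GLmactionpolynomials} and with the right-multiplication action described above. Since we are in characteristic zero, the symmetric power is the space of $S_d$-invariants of the tensor power. Regrouping the tensor factors gives, $\GL_m\times\GL_n$-equivariantly,
\[
\Sym^d(V\otimes W)\cong\bigl((V\otimes W)^{\otimes d}\bigr)^{S_d}\cong\bigl(V^{\otimes d}\otimes W^{\otimes d}\bigr)^{S_d}.
\]
Here $S_d$ permutes the $d$ tensor positions simultaneously in both factors.

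Next we apply Schur--Weyl duality to each factor. As a $\GL_m\times S_d$-module,
\[
V^{\otimes d}\cong\bigoplus_{\nu\vdash d,\ \len(\nu)\leq m}\mathbb{S}_\nu(V)\otimes S^\nu,
\]
and similarly for $W^{\otimes d}$ with the bound $\len(\nu')\leq n$. Substituting these decompositions, the space of $S_d$-invariants becomes
\[
\bigoplus_{\nu,\nu'}\mathbb{S}_\nu(V)\otimes\mathbb{S}_{\nu'}(W)\otimes(S^\nu\otimes S^{\nu'})^{S_d}.
\]
Every Specht module $S^\nu$ is self-dual, since it is defined over $\Q$, or equivalently since its character is real. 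Hence
\[
(S^\nu\otimes S^{\nu'})^{S_d}\cong\operatorname{Hom}_{S_d}(S^\nu,S^{\nu'}).
\]
By Schur's lemma this space is one-dimensional if $\nu=\nu'$ and zero otherwise. The surviving terms are exactly those with $\nu=\nu'$ and $\len(\nu)\leq\min(m,n)$, which is the claimed decomposition. Irreducibility and pairwise non-isomorphism of the summands $\mathbb{S}_\nu(V)\otimes\mathbb{S}_\nu(W)$ then follow from Proposition~\ref{prop:GLmfacts}, applied on each side.

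The main input, and the step we expect to be the real obstacle, is Schur--Weyl duality itself. Its proof uses the double centralizer theorem: the image of $\C[S_d]$ in $\operatorname{End}(V^{\otimes d})$ has as its commutant the span of the operators $g^{\otimes d}$. One proves this by polarization, since $\Sym^d(\operatorname{End}V)$ is spanned by $d$-th tensor powers. One also has to identify the resulting multiplicity spaces with the modules $\mathbb{S}_\nu(V)$ of highest weight $\nu$; the vanishing for $\len(\nu)>m$ comes from antisymmetrizing more than $m$ vectors of $V$.

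As an alternative route that avoids the double centralizer theorem, one can compare characters. The torus $(t,s)\in(\C^*)^m\times(\C^*)^n$ acts on $\Sym^d(V\otimes W)$ with character $h_d(\{t_is_j\})$, the complete homogeneous symmetric polynomial in the products $t_is_j$. The Cauchy identity
\[
\prod_{i,j}(1-t_is_j)^{-1}=\sum_\nu s_\nu(t)s_\nu(s)
\]
expresses this character as $\sum_{\nu\vdash d}s_\nu(t)s_\nu(s)$. The Schur polynomial $s_\nu(t_1,\dots,t_m)$ vanishes when $\len(\nu)>m$, and similarly on the $\GL_n$ side. Complete reducibility (Proposition~\ref{prop:GLmfacts}) together with the fact that characters determine rational representations then gives the decomposition.
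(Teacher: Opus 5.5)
Your proof is correct, but it takes a different route from the paper. The paper proves nothing here: it quotes the result from Goodman--Wallach, where it comes out of the $\GL_m$--$\GL_n$ duality theorem for polynomial functions on $m\times n$ matrices.

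You instead pass through tensor powers:
\begin{enumerate}
\item identify $\Sym^d$ with the $S_d$-invariants of $V^{\otimes d}\otimes W^{\otimes d}$;
\item apply Schur--Weyl duality to each factor;
\item reduce $(S^\nu\otimes S^{\nu'})^{S_d}$ to a line or to zero, using self-duality of Specht modules and Schur's lemma.
\end{enumerate}
Here $S_d$ permutes tensor positions and is unrelated to the $S_n\subseteq\GL_n$ used elsewhere in the paper.

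What your route buys is a derivation from the most standard input, in which the bound $\len(\nu)\leq\min(m,n)$ appears transparently: $S^\nu$ must occur in both $V^{\otimes d}$ and $W^{\otimes d}$. This matters because the paper uses the theorem only through the bound $\len(\nu)\leq n$ on the $\GL_m$-weights, in Lemma~\ref{lem:Rnn}. The duality route works directly on the polynomial ring, with no tensor powers or auxiliary symmetric group. There the joint highest weight vector of type $\nu$ is a product of upper-left minors of the matrix of variables, the $k\times k$ one raised to the power $\nu_k-\nu_{k+1}$, so it visibly lies in the first $\len(\nu)$ rows.

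Your character-theoretic alternative is also sound, with two provisos. Take the Cauchy identity from an independent combinatorial proof (RSK or the Cauchy determinant), not from a text that derives it from this very decomposition. Use the Weyl character formula to identify the character of $\mathbb{S}_\nu(\C^m)$ with $s_\nu$.

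One small inaccuracy: Proposition~\ref{prop:GLmfacts} concerns $\GL_m$ alone and does not give irreducibility of $\mathbb{S}_\nu(\C^m)\otimes\mathbb{S}_\nu(\C^n)$ over $\GL_m\times\GL_n$. That is a separate standard fact about external tensor products, and the isomorphism you are asked to prove does not need it.
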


\subsubsection*{Isotypic components}
It is classically known that in characteristic zero every representation of a finite group is completely reducible, i.e., decomposes into a direct sum of irreducible representations. Grouping all isomorphic irreducible representations one obtains the isotypic decomposition (see \cite{serre1977representations} for details). Since $S_n$ acts linearly, every graded piece $(\mathcal{R}_{m,n})_d$ is an $S_n$-representation, and hence we get an \emph{isotypic decomposition} of $\mathcal{R}_{m,n}$,
\begin{equation*}
\mathcal{R}_{m,n}=\bigoplus_{\lambda\vdash n}\mathcal{R}_{m,n,\lambda}.
\end{equation*}
where $\mathcal{R}_{m,n,\lambda}$, the $\lambda$-\emph{isotypic component}, is the sum of all irreducible $S_n$-submodules of $\mathcal{R}_{m,n}$ isomorphic to $S^\lambda$. In particular, $\mathcal{R}_{m,n,\lambda}\cap\mathcal{R}_{m,n,\mu}=\{0\}$ for $\lambda\neq\mu$, and each $\mathcal{R}_{m,n,\lambda}$ is a graded $S_n$-submodule of $\mathcal{R}_{m,n}$.

The commuting $\GL_m$-action leads to a finer decomposition. Restricting the $\GL_n$-action in Theorem~\ref{thm:cauchy} to the subgroup of permutation matrices $S_n\subseteq\GL_n$, each $\mathbb{S}_\nu(\C^n)$ decomposes into irreducible $S_n$-modules. Therefore $(\mathcal{R}_{m,n})_d$ decomposes, as a $\GL_m\times S_n$-module, into irreducible submodules isomorphic to $\mathbb{S}_\nu(\C^m)\otimes S^\lambda$ with $\nu\vdash d$, $\len(\nu)\leq\min(m,n)$ and $\lambda\vdash n$, possibly with multiplicities, and the same holds for every $\GL_m\times S_n$-submodule of $(\mathcal{R}_{m,n})_d$.
\begin{remark}\label{rem:isotypicGL}
For $\lambda\vdash n$, the $\lambda$-isotypic component $\mathcal{R}_{m,n,\lambda}$ is the sum of all irreducible $\GL_m\times S_n$-submodules of $\mathcal{R}_{m,n}$ whose $S_n$-factor is isomorphic to $S^\lambda$; in particular, it is $\GL_m\times S_n$-stable (see also Lemma~\ref{lem:GLminv}).
\end{remark}

\section{Specht ideals}\label{sec:spechtideals}
The Specht ideals in $\mathcal{R}_{1,n}$ were defined in~\cite{moustrou2021symmetric} via Specht polynomials. Let $T\in\Tab(\lambda)$, $\lambda\vdash n$, the \emph{Specht polynomial} associated to $T$ is
\begin{equation*}
\prod_{C\in\Col(T)}\prod_{\substack{i,j\in C\\i\text{ above }j}}(x_i-x_j)\in\mathcal{R}_{1,n},
\end{equation*}
where $\Col(T)$ is the set of columns of $T$.
Equivalently~\cite{Gillespie2024}, up to a nonzero scalar, we define
\begin{equation*}
\spe_T:=\varepsilon_T(\mathbf{x}_T),\qquad\mathbf{x}_T:=\prod_{i=1}^n x_i^{\,r_T(i)-1},
\end{equation*}
where $r_T(i)$ denotes the row of entry $i$ in $T$.

In~\cite{ATY}, the authors define the \emph{higher Specht polynomials}  associated to the pair $(T,S)$ as
\begin{equation*}
\spe_T^S:=\varepsilon_T(\mathbf{x}_T^S),\qquad \mathbf{x}_T^S:=\prod_{i=1}^n x_i^{\,\ccTab(S)(b_T(i))},
\end{equation*}
where $b_T(i)$ denotes the box of $\lambda$ containing entry $i$ in $T$. If $A$ is the superstandard tableau of shape $\lambda$, $\mathbf{x}_T^A=\mathbf{x}_T$ and $\spe_T^A=\spe_T$, recovering the ordinary Specht polynomials. The set $\{\spe_T^S\mid T,S\in\SYT(\lambda),\,\lambda\vdash n\}$ forms a basis of the coinvariant ring $\mathcal{R}_{1,n}/(e_1,\dots,e_n)$, $e_1,\dots,e_n$ being the elementary symmetric polynomials, that is compatible with its decomposition into irreducible $S_n$-modules~\cite{ATY}.

Both $\spe_T$ and $\spe_T^S$ are of the form $\varepsilon_T(p)$. We will see that for any $m\geq1$, any $p\in\mathcal{R}_{m,n}$, and any $T\in\Tab(\lambda)$, $\varepsilon_T(p)\in\mathcal{R}_{m,n,\lambda}$.
The following observation will be used throughout.
\begin{prop}\label{prop:etinvl}
For any $T\in\Tab(\lambda)$, $p\in \mathcal{R}_{m,n}$, $\varepsilon_T(p)\in\mathcal{R}_{m,n,\lambda}$.
\end{prop}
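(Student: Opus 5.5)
The plan is to reduce the statement to a standard fact about the group algebra $\C[S_n]$: the Young symmetrizer $\varepsilon_T$ lies in the minimal two-sided ideal of $\C[S_n]$ corresponding to $\lambda$. Write $\C[S_n]=\bigoplus_{\mu\vdash n}A_\mu$ for the Wedderburn decomposition into simple two-sided ideals (matrix algebras), with central primitive idempotents $e_\mu$. Recall that $A_\lambda$ acts on $S^\lambda$ as the full endomorphism algebra and that $A_\mu$ acts by zero on $S^\lambda$ for $\mu\neq\lambda$.

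The classical result to quote is that $\varepsilon_T^2=c\,\varepsilon_T$ with $c=n!/f^\lambda\neq0$, and that $\C[S_n]\varepsilon_T$ is a minimal left ideal isomorphic to $S^\lambda$ (Fulton--Harris, Lecture 4). Since $\C[S_n]\varepsilon_T\cong S^\lambda$, every $e_\mu$ with $\mu\neq\lambda$ acts on it by zero. Hence $e_\mu\varepsilon_T=0$ for $\mu\neq\lambda$, and therefore $\varepsilon_T=e_\lambda\varepsilon_T\in A_\lambda$. If one prefers not to use idempotents, the same point can be argued directly from the module: the map $\C[S_n]\to\C[S_n]\cdot\varepsilon_T(p)$, $g\mapsto g\varepsilon_T(p)$, is surjective and factors through $\C[S_n]\varepsilon_T\cong S^\lambda$.

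Next, let $p\in\mathcal{R}_{m,n}$. Because each homogeneous component of $p$ is acted on separately and $\varepsilon_T$ preserves degrees, we may assume $p$ is homogeneous of degree $d$; then everything happens inside the finite-dimensional $S_n$-module $(\mathcal{R}_{m,n})_d$. The cyclic submodule $M:=\C[S_n]\cdot\varepsilon_T(p)$ is a quotient of the left $\C[S_n]$-module $\C[S_n]\varepsilon_T\cong S^\lambda$, via $g\varepsilon_T\mapsto g\varepsilon_T(p)$. Since $S^\lambda$ is irreducible, $M$ is either $0$ or isomorphic to $S^\lambda$. In both cases $M\subseteq\mathcal{R}_{m,n,\lambda}$ by the definition of the isotypic component as the sum of all irreducible submodules isomorphic to $S^\lambda$. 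In particular $\varepsilon_T(p)\in M\subseteq\mathcal{R}_{m,n,\lambda}$.

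The main obstacle is the identification $\C[S_n]\varepsilon_T\cong S^\lambda$. This is classical, but one must check that it holds with the conventions of this paper: $\varepsilon_T=\alpha(T)\beta(T)$ (antisymmetrize after symmetrizing), and the action on polynomials is by substitution as in~\eqref{eq:actiononpolynomialsgen}. For the first point, the order $\alpha\beta$ versus $\beta\alpha$ only changes $\C[S_n]\varepsilon_T$ to an isomorphic module, and both versions realize $S^\lambda$. For the second point, one verifies that~\eqref{eq:actiononpolynomialsgen} is a left action, so that $p\mapsto\varepsilon_T(p)$ is a left $\C[S_n]$-module computation; this is asserted in Definition~\ref{def:Rmn}. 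With these checks the argument is purely formal, and it does not depend on $m$.
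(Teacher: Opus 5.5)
Your proposal is correct and is essentially the paper's argument: the cyclic module $\C[S_n]\cdot\varepsilon_T(p)$ is the image of $\C[S_n]\varepsilon_T\cong S^\lambda$ under $a\mapsto a\cdot p$, so it is $0$ or isomorphic to $S^\lambda$, and in either case it lies in $\mathcal{R}_{m,n,\lambda}$. The Wedderburn and central-idempotent paragraph is not needed, and neither is the reduction to homogeneous $p$, since the cyclic module is already finite-dimensional; both do no harm.
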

\begin{proof}
Since $\C[S_n]\varepsilon_T\cong S^\lambda$ (see~\cite[\S 2.3]{S01}), the cyclic module generated by $\varepsilon_T(p)$ is $0$ or isomorphic to $S^\lambda$, hence lies in $\mathcal{R}_{m,n,\lambda}$.
\end{proof}

\begin{definition}
The \emph{Specht ideal} of shape $\lambda$ in $\mathcal{R}_{1,n}$ is
\begin{equation*}
I_\lambda:=\langle\spe_T\mid T\in\Tab(\lambda)\rangle
\subseteq\mathcal{R}_{1,n}.
\end{equation*}
\end{definition}
Equivalently~\cite[Proposition~3.6]{Gillespie2024}, $I_\lambda=\langle\spe_T\mid T\in\SYT(\lambda)\rangle$.

\begin{prop}\label{prop:defequiv}
For $m=1$ and any $\lambda\vdash n$,
\begin{equation*}
I_\lambda=\langle\mathcal{R}_{1,n,\lambda}\rangle=\langle\spe_T^S\,\mid\,T,S\in\SYT(\lambda)\rangle.
\end{equation*}
\end{prop}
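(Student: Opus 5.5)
We will prove the chain of inclusions
\[
I_\lambda=\langle\spe_T\mid T\in\SYT(\lambda)\rangle\subseteq\langle\spe_T^S\mid T,S\in\SYT(\lambda)\rangle\subseteq\langle\mathcal{R}_{1,n,\lambda}\rangle\subseteq I_\lambda .
\]
The first inclusion is immediate: $\spe_T=\spe_T^A$ with $A$ the superstandard tableau, which is standard. The second follows from Proposition~\ref{prop:etinvl}, since each $\spe_T^S=\varepsilon_T(\mathbf{x}_T^S)$ lies in $\mathcal{R}_{1,n,\lambda}$. So everything reduces to the last inclusion: every element of the $\lambda$-isotypic component of $\C[x_1,\dots,x_n]$ lies in the ideal generated by the Specht polynomials of shape $\lambda$.

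For the last inclusion we use the classical structure theorem for $\mathcal{R}_{1,n}$ as a module over the invariants. By Chevalley's theorem (characteristic zero), $\mathcal{R}_{1,n}$ is a free module over $\mathcal{R}_{1,n}^{S_n}=\C[e_1,\dots,e_n]$. Moreover, any graded $S_n$-stable complement $H$ of $(e_1,\dots,e_n)\mathcal{R}_{1,n}$ gives an $S_n$-equivariant isomorphism
\[
\mathcal{R}_{1,n}\cong\mathcal{R}_{1,n}^{S_n}\otimes_\C H .
\]
By~\cite{ATY}, the span of $\{\spe_T^S\}$ is such a complement, and this basis is compatible with the isotypic decomposition: for each fixed $S$ and $\lambda$, the span of $\{\spe_T^S: T\in\SYT(\lambda)\}$ is an irreducible module isomorphic to $S^\lambda$, as it is the image of a Specht-type construction via $\varepsilon_T$. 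Since the invariants act trivially on the $S_n$-structure, taking $\lambda$-isotypic parts of both sides gives
\[
\mathcal{R}_{1,n,\lambda}=\sum_{T,S\in\SYT(\lambda)}\mathcal{R}_{1,n}^{S_n}\,\spe_T^S .
\]
This yields $\langle\mathcal{R}_{1,n,\lambda}\rangle=\langle\spe_T^S\rangle$.

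It remains to show $\spe_T^S\in I_\lambda$. We do this directly from the definition of $\varepsilon_T=\alpha(T)\beta(T)$. The polynomial $\beta(T)(\mathbf{x}_T^S)$ is some polynomial $q$, and $\alpha(T)q$ is antisymmetric in the variables of each column $C$ of $T$. Any polynomial antisymmetric in the variables $\{x_i: i\in C\}$ is divisible by the Vandermonde $\Delta_C$, and the quotient is symmetric in those variables. Since the columns are disjoint, the factors are coprime, so $\alpha(T)q$ is divisible by $\prod_C\Delta_C=\pm\spe_T$. Hence $\spe_T^S\in\langle\spe_T\rangle\subseteq I_\lambda$.

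This divisibility argument actually gives the stronger statement $\varepsilon_T(p)\in I_\lambda$ for every $p$. Combined with the fact that $\mathcal{R}_{1,n,\lambda}$ is spanned by the $\varepsilon_T(p)$, this could bypass~\cite{ATY} entirely. Indeed, any copy of $S^\lambda$ inside $\mathcal{R}_{1,n}$ is $\C[S_n]\varepsilon_T\cdot p$ for a suitable $p$, because $\C[S_n]=\bigoplus_T\C[S_n]\varepsilon_T$ up to the standard decomposition of the group algebra into left ideals. We will present this route as the main argument for $\langle\mathcal{R}_{1,n,\lambda}\rangle\subseteq I_\lambda$ and use~\cite{ATY} only for the middle equality.

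The main obstacle is making rigorous that $\mathcal{R}_{1,n,\lambda}$ is spanned by the elements $\varepsilon_T(p)$. We plan to handle it with the decomposition of the identity of $\C[S_n]$ as a sum of the form $\sum_{\lambda}\frac{f^\lambda}{n!}\sum_{T\in\SYT(\lambda)}c_T\,\varepsilon_T$ modulo elements acting as zero on isotypic components, i.e.\ via the central idempotent $e_\lambda\in\C[S_n]\varepsilon_T\C[S_n]$. Concretely, the central idempotent $e_\lambda$ lies in the two-sided ideal generated by $\varepsilon_T$, so $e_\lambda=\sum_k a_k\varepsilon_T b_k$. For $f\in\mathcal{R}_{1,n,\lambda}$ this gives $f=e_\lambda f=\sum_k a_k\,\varepsilon_T(b_kf)$. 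Each $\varepsilon_T(b_kf)$ lies in $I_\lambda$, and $I_\lambda$ is $S_n$-stable, so $f\in I_\lambda$.
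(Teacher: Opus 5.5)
Your proof is correct. The route you sketch first is exactly the paper's argument: the decomposition $\mathcal{R}_{1,n,\lambda}=\bigoplus_{T,S}\C[e_1,\dots,e_n]\,\spe_T^S$ from \cite{ATY}, followed by divisibility of $\spe_T^S$ by $\spe_T$ via antisymmetry in the columns.

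Your declared main route is genuinely different. The paper uses the basis from \cite{ATY} to get $\langle\mathcal{R}_{1,n,\lambda}\rangle\subseteq\langle\spe_T^S\rangle$. You instead prove $\langle\mathcal{R}_{1,n,\lambda}\rangle\subseteq I_\lambda$ directly: you write $f=e_\lambda f=\sum_k a_k\,\varepsilon_T(b_kf)$ and observe that \emph{every} $\varepsilon_T(p)$, not only $\spe_T^S$, is divisible by $\spe_T$ (together with the $S_n$-stability of $I_\lambda$, which holds because $\sigma\cdot\spe_T=\spe_{\sigma T}$). The ideal generated by the $\spe_T^S$ is then squeezed in your chain. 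This is essentially the case $m=1$ of Proposition~\ref{prop:genIlambda} combined with the paper's antisymmetry step.

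Two simplifications are available. Theorem~\ref{thm:theorem16} already writes $e_\lambda$ as an explicit linear combination of the $\varepsilon_T$, $T\in\Tab(\lambda)$, so you do not need the Wedderburn argument. The loose sentence about $\C[S_n]=\bigoplus_T\C[S_n]\varepsilon_T$ can simply be dropped.

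Your route buys independence from \cite{ATY}. Once the chain closes, the middle equality follows from the sandwich, so, contrary to your remark, you do not need \cite{ATY} even for it. The only external inputs are $I_\lambda=\langle\spe_T\mid T\in\SYT(\lambda)\rangle$ and $\spe_T^A=\spe_T$.

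The paper's route buys a stronger structural statement: the $\spe_T^S$ form a basis of $\mathcal{R}_{1,n,\lambda}$ as a free $\C[e_1,\dots,e_n]$-module. That is more information than the ideal equality requires, but it rests on a deeper cited result.
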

\begin{proof}
We first show $\langle\spe_T^S\,\mid\,T,S\in\SYT(\lambda)\rangle=\langle\mathcal{R}_{1,n,\lambda}\rangle$. From~\cite{ATY}, we know that
\begin{equation*}
\mathcal{R}_{1,n,\lambda}=\bigoplus_{T,S\in\SYT(\lambda)}\C[e_1,\dots,e_n]\cdot\spe_T^S, 
\end{equation*} where $e_1,\dots,e_n$ are the elementary symmetric polynomials. Thus 
$\langle\mathcal{R}_{1,n,\lambda}\rangle\subseteq \langle\spe_T^S\mid T,S\in\SYT(\lambda)\rangle$.
Conversely, since $\spe_T^S\in\mathcal{R}_{1,n,\lambda}$ by Proposition~\ref{prop:etinvl}, $\langle\spe_T^S\mid T,S\in\SYT(\lambda)\rangle\subseteq\langle\mathcal{R}_{1,n,\lambda}\rangle$.
Secondly, we will prove $I_\lambda=\langle\spe_T^S\,\mid\,T,S\in\SYT(\lambda)\rangle$. Let $A$ be the superstandard tableau of shape $\lambda$. Its cocharge tableau $\ccTab(A)$ is filled with $r-1$ in row $r$, so $\mathbf{x}_T^{A}=\prod_{i=1}^n x_i^{r_T(i)-1}=\mathbf{x}_T$, therefore $\spe_T^{A}=\varepsilon_T(\mathbf{x}_T)=\spe_T$, and $I_\lambda\subseteq\langle\spe_T^S\mid T,S\in\SYT(\lambda)\rangle$. Conversely, for any higher Specht polynomial $\spe_T^S$ and any transposition $\rho=(ij)\in C(T)$, we have 
\begin{equation*}
\rho\cdot \spe_T^S=\rho\cdot\left(\sum_{\tau\in C(T)}\sgn(\tau)\tau\right)\cdot \left(\sum_{\sigma\in R(T)}\sigma \right)(\mathbf{x}_T^S)=-\left(\sum_{\rho\tau\in C(T)}\sgn(\rho\tau)\rho\tau\right)\cdot \left(\sum_{\sigma\in R(T)}\sigma \right)(\mathbf{x}_T^S)=-\spe_T^S,
\end{equation*}
so $(x_i-x_j)\mid\spe_T^S$ for every pair $i,j$ in the same column of $T$, and hence the product of the corresponding distinct linear factors --- namely $\spe_T$, up to a nonzero scalar --- divides $\spe_T^S$. Therefore $\spe_T^S\in I_\lambda$.
\end{proof}

For $m=1$, Proposition~\ref{prop:defequiv} shows that the usual Specht ideal can be recovered from the $\lambda$-isotypic component of the polynomial ring.  When $m\geq2$, there is no single distinguished family of ordinary Specht polynomials playing the same role.  The isotypic description, however, remains available and is compatible with the commuting $\GL_m$-action.  This leads to the following definition.
\begin{definition}\label{def:diagspecht}
We define the \emph{diagonal Specht ideal} of shape $\lambda$ as
\begin{equation*}
I_\lambda:=\left\langle \mathcal{R}_{m,n,\lambda}\right\rangle.
\end{equation*}
\end{definition}
\begin{remark}\label{rem:char0}
    All ideals considered in this paper are defined over $\Q$. If $\K/\Q$ is a field extension, then extension of scalars commutes with taking radicals: \[\rad(I)\,\overline{\K}[\mathbf{x}_1,\dots,\mathbf{x}_m] = \rad\bigl(I\, \overline{\K}[\mathbf{x}_1,\dots,\mathbf{x}_m]\bigr).\] Thus radicality and non-radicality of the ideals considered below are unchanged after extending the ground field within characteristic zero.
\end{remark}
The projection onto $\mathcal{R}_{m,n,\lambda}$ is given by the following standard result.
\begin{thm}
[{\cite[Theorem~8]{serre1977representations}}]\label{thm:serre}
Let $G$ be a finite group, $\rho: G\longrightarrow \GL(M)$ be a linear representation of $G$, and let $M=M_1\oplus\cdots\oplus M_k$ be its isotypic decomposition, where $\chi_i$ and $n_i$ are the character and dimension, respectively, of the irreducible constituent of $M_i$. Then the projection $\pi_i$ into $M_i$ is given by
\begin{equation*}
\pi_i=\frac{n_i}{|G|}\sum_{g\in G}\chi_i(g^{-1})\rho(g).
\end{equation*}
\end{thm}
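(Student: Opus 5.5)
The plan is to verify directly that the operator
\[
p_i:=\frac{n_i}{|G|}\sum_{g\in G}\chi_i(g^{-1})\rho(g)
\]
acts as the identity on $M_i$ and as zero on every $M_j$ with $j\neq i$. Since $M=M_1\oplus\cdots\oplus M_k$, this identifies $p_i$ with $\pi_i$. The intended application is the infinite-dimensional graded module $\mathcal{R}_{m,n}$. There the argument is run degree by degree: each $(\mathcal{R}_{m,n})_d$ is a finite-dimensional $G$-stable subspace, and the operator $p_i$ preserves it. So I will assume $M$ is finite-dimensional, or at least a direct sum of finite-dimensional $G$-stable pieces.

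\emph{Step 1: $p_i$ is $G$-equivariant.} For $h\in G$, substitute $g\mapsto hgh^{-1}$ in the sum. Because $\chi_i$ is a class function, $\chi_i\bigl((hgh^{-1})^{-1}\bigr)=\chi_i(g^{-1})$. This gives $\rho(h)p_i\rho(h)^{-1}=p_i$. Consequently $p_i$ maps every $G$-submodule into itself. In particular, for any irreducible submodule $W\subseteq M$ the restriction $p_i|_W$ is a $G$-endomorphism of $W$, and by Schur's lemma (over $\C$) it is a scalar $c_W$.

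\emph{Step 2: compute the scalar by a trace.} Let $\chi_W$ be the character of $W$. Then
\[
c_W\dim W=\operatorname{tr}(p_i|_W)=\frac{n_i}{|G|}\sum_{g\in G}\chi_i(g^{-1})\chi_W(g)=n_i\langle\chi_W,\chi_i\rangle,
\]
where I use $\chi_i(g^{-1})=\overline{\chi_i(g)}$, since $\rho(g)$ has finite order and its eigenvalues are roots of unity. By the orthogonality relations for irreducible characters, $\langle\chi_W,\chi_i\rangle=1$ if $W$ is isomorphic to the $i$-th irreducible and $0$ otherwise. In the first case $\dim W=n_i$. So $c_W=1$ when $W$ is of type $i$ and $c_W=0$ otherwise.

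\emph{Step 3: conclude.} By complete reducibility, $M$ is a direct sum of irreducible submodules. Grouping these by isomorphism type gives the isotypic components $M_j$. Step 2 shows that $p_i$ is the identity on each summand lying in $M_i$ and zero on each summand lying in $M_j$ for $j\neq i$. Hence $p_i=\pi_i$.

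The only genuinely nontrivial input is the orthogonality of irreducible characters in Step 2. Beyond that, the one point needing care is the finite-dimensionality reduction for $\mathcal{R}_{m,n}$, which is handled by working in each graded piece.
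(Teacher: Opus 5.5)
Your proof is correct. The paper gives no proof of its own and simply cites Serre's Theorem~8; your argument is essentially Serre's: the class-function property makes the operator $G$-equivariant, Schur's lemma makes it a scalar on each irreducible summand, and the trace computation with orthogonality of characters shows that scalar is $1$ or $0$. Your degree-by-degree reduction for $\mathcal{R}_{m,n}$ also matches how the paper applies the result.
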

Consider the module $\mathcal{R}_{m,n}$ with the $S_n$-action \eqref{eq:actiononpolynomialsgen}, let $\chi_\lambda$ be the character of the irreducible representation corresponding to $\lambda$, let $n_\lambda$ be its dimension, and $\pi_\lambda$ be the projection onto $\mathcal{R}_{m,n,\lambda}$. Note that $\mathcal{R}_{m,n}=\bigoplus_{d\in\N}(\mathcal{R}_{m,n})_d$ is a direct sum of finite-dimensional $S_n$-modules, therefore we can apply the projection of the theorem degree by degree, and $\pi_\lambda$ is the direct sum of the projections onto each degree $d$ component $(\mathcal{R}_{m,n,\lambda})_d$ of $\mathcal{R}_{m,n,\lambda}$.
\begin{lemma}\label{lem:GLminv}
The isotypic components $\mathcal{R}_{m,n,\lambda}$ are $\GL_m\times S_n$-stable.
\end{lemma}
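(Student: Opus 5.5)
The plan is to use the explicit projector of Theorem~\ref{thm:serre} and the fact that the $\GL_m$- and $S_n$-actions on $\mathcal{R}_{m,n}$ commute. Work degree by degree. Each graded piece $(\mathcal{R}_{m,n})_d$ is a finite-dimensional $S_n$-module, and the projection onto its $\lambda$-isotypic part is
\[
\pi_\lambda=\frac{n_\lambda}{n!}\sum_{\sigma\in S_n}\chi_\lambda(\sigma^{-1})\,\sigma .
\]
The isotypic component $(\mathcal{R}_{m,n,\lambda})_d$ is exactly the image of this operator, equivalently the set of $p$ with $\pi_\lambda(p)=p$.

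\textbf{$S_n$-stability.} This is immediate: $\pi_\lambda$ is a central element of $\C[S_n]$, since $\chi_\lambda$ is a class function. Hence $\tau\pi_\lambda=\pi_\lambda\tau$ for all $\tau\in S_n$. If $p=\pi_\lambda(p)$, then $\tau p=\pi_\lambda(\tau p)\in\mathcal{R}_{m,n,\lambda}$. Alternatively, one can simply note that a sum of submodules isomorphic to $S^\lambda$ is $S_n$-stable by definition.

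\textbf{$\GL_m$-stability.} Let $g\in\GL_m$. We check that $g$ preserves each graded piece and commutes with every $\sigma\in S_n$. The first holds because $g$ acts by linear substitution of the variables. For the second, it suffices to compare $g\sigma$ and $\sigma g$ on the generators $x_{r,i}$, since both are algebra automorphisms. Computing from \eqref{eq:actiononpolynomialsgen} and \eqref{eq:GLmactionpolynomials}, both sides send $x_{r,i}$ to $\sum_s g_{sr}x_{s,\sigma(i)}$. Linearity then gives $g\pi_\lambda=\pi_\lambda g$ on each $(\mathcal{R}_{m,n})_d$. Consequently, if $p\in\mathcal{R}_{m,n,\lambda}$, then
\[
gp=g\pi_\lambda(p)=\pi_\lambda(gp)\in\mathcal{R}_{m,n,\lambda}.
\]
Combining the two parts gives stability under $\GL_m\times S_n$.

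The only point needing care is the commutation check, specifically that the left-action conventions in \eqref{eq:actiononpolynomialsgen} and \eqref{eq:GLmactionpolynomials} really commute on generators. The paper already asserts this in Definition~\ref{def:Rmn}, so it is routine. An alternative route avoids the projector: for fixed $g$, the map $p\mapsto gp$ is an $S_n$-equivariant linear isomorphism of $(\mathcal{R}_{m,n})_d$. It therefore sends each irreducible submodule isomorphic to $S^\lambda$ either to zero or to an isomorphic copy, which lies again in $\mathcal{R}_{m,n,\lambda}$.
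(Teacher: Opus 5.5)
Your proof is correct and takes the same approach as the paper. $S_n$-stability is immediate. $\GL_m$-stability follows from $\pi_\lambda(g\cdot f)=g\cdot\pi_\lambda(f)=g\cdot f$, because the two actions commute. Your explicit check of that commutation on the generators $x_{r,i}$, and your alternative equivariance argument, only supply details the paper leaves implicit.
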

\begin{proof}
$\mathcal{R}_{m,n,\lambda}$ are $S_n$-stable by definition. We want to prove that if $g\in\GL_m$ and $f\in\mathcal{R}_{m,n,\lambda}$, then $g\cdot f\in\mathcal{R}_{m,n,\lambda}$. Since the two actions commute, we have $\pi_\lambda(g\cdot f)=g\cdot\pi_\lambda(f)$, and since $\pi_\lambda(f)=f$, $\pi_\lambda(g\cdot f)=g\cdot f$, therefore $g\cdot f\in\mathcal{R}_{m,n,\lambda}$ and $\mathcal{R}_{m,n,\lambda}$ is $\GL_m$-stable.
\end{proof}

The following result relates the projection $\pi_\lambda$ to the Young symmetrizers.
\begin{thm}[{\cite[Theorem 1.6]{GarsiaE2020}}]\label{thm:theorem16}
Let $\lambda\vdash n$, then
\begin{equation*}
\sum_{\sigma\in  S_n}\chi_\lambda(\sigma)\sigma=\sum_{T\in\Tab(\lambda)}\frac{n_\lambda}{n!}\varepsilon_T.
\end{equation*}
\end{thm}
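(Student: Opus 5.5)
The plan is to identify both sides as central elements of $\C[S_n]$ that live in the $\lambda$-block of the group algebra. Central elements in a single block are determined up to a scalar, so the identity reduces to comparing one coefficient. Set
\[
e_\lambda:=\frac{n_\lambda}{n!}\sum_{\sigma\in S_n}\chi_\lambda(\sigma)\sigma ,
\]
which is the central primitive idempotent of $\C[S_n]$ attached to $S^\lambda$. Characters of $S_n$ are real and $\chi_\lambda(\sigma^{-1})=\chi_\lambda(\sigma)$, so $e_\lambda$ is exactly the operator $\pi_\lambda$ of Theorem~\ref{thm:serre} viewed inside the group algebra. The claim is then equivalent to
\[
\sum_{T\in\Tab(\lambda)}\varepsilon_T=\Bigl(\tfrac{n!}{n_\lambda}\Bigr)^2 e_\lambda .
\]

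\textbf{Step 1: centrality.} Fix $T_0\in\Tab(\lambda)$. Every $T\in\Tab(\lambda)$ is $\sigma T_0$ for a unique $\sigma\in S_n$. Moreover $C(\sigma T_0)=\sigma C(T_0)\sigma^{-1}$ and $R(\sigma T_0)=\sigma R(T_0)\sigma^{-1}$, and conjugation preserves signs, so $\varepsilon_{\sigma T_0}=\sigma\varepsilon_{T_0}\sigma^{-1}$. Hence
\[
\sum_{T}\varepsilon_T=\sum_{\sigma\in S_n}\sigma\varepsilon_{T_0}\sigma^{-1},
\]
which is invariant under conjugation and therefore central.

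\textbf{Step 2: the sum lies in the $\lambda$-block.} The left ideal $\C[S_n]\varepsilon_{T_0}$ is isomorphic to $S^\lambda$, as already used in Proposition~\ref{prop:etinvl}. On this module the idempotents $e_\mu$ with $\mu\neq\lambda$ act by zero and $e_\lambda$ acts as the identity. Applying this to $\varepsilon_{T_0}$ itself gives $e_\lambda\varepsilon_{T_0}=\varepsilon_{T_0}$, and conjugating gives the same for every $\varepsilon_T$. So $\sum_T\varepsilon_T$ is a central element of the block $e_\lambda\C[S_n]\cong\operatorname{End}(S^\lambda)$. The center of that block is $\C e_\lambda$, so $\sum_T\varepsilon_T=c\,e_\lambda$ for some scalar $c$.

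\textbf{Step 3: computing $c$.} I would compare the coefficients of the identity permutation on both sides.
\begin{itemize}
\item A product $\tau\sigma$ with $\tau\in C(T)$ and $\sigma\in R(T)$ equals $\mathrm{id}$ only if $\tau=\sigma^{-1}\in C(T)\cap R(T)=\{\mathrm{id}\}$. So each $\varepsilon_T$ has identity coefficient $1$, and the left side has identity coefficient $|\Tab(\lambda)|=n!$.
\item The identity coefficient of $e_\lambda$ is $n_\lambda\chi_\lambda(\mathrm{id})/n!=n_\lambda^2/n!$.
\end{itemize}
Thus $c\,n_\lambda^2/n!=n!$, which gives $c=(n!/n_\lambda)^2$. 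Multiplying through by $n_\lambda^2/(n!)^2$ and substituting the definition of $e_\lambda$ yields the stated identity.

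The main obstacle is Step 2. It depends on the classical facts that $\C[S_n]\varepsilon_T\cong S^\lambda$, equivalently that $\varepsilon_T$ is a quasi-idempotent whose left ideal is irreducible of type $\lambda$, and that the center of a simple matrix block is one-dimensional. I would cite these from~\cite[\S2.3]{S01} together with Wedderburn's theorem rather than reprove them.
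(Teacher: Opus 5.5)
Your proof is correct. The paper gives no argument of its own for this identity. It is imported from~\cite[Theorem~1.6]{GarsiaE2020} and used only to express $\pi_\lambda$ through Young symmetrizers in Proposition~\ref{prop:genIlambda}. So what you supply is a self-contained derivation, and all three steps check out:
\begin{itemize}
\item $\varepsilon_{\sigma T_0}=\sigma\varepsilon_{T_0}\sigma^{-1}$ gives centrality and $|\Tab(\lambda)|=n!$.
\item $e_\lambda\varepsilon_T=\varepsilon_T$, together with the one-dimensional center of the simple block $e_\lambda\C[S_n]$, gives $\sum_T\varepsilon_T=c\,e_\lambda$.
\item The identity coefficients force $c=(n!/n_\lambda)^2$: each $\varepsilon_T$ contributes $1$ because $C(T)\cap R(T)=\{\mathrm{id}\}$, and $e_\lambda$ has coefficient $n_\lambda^2/n!$. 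This is exactly the normalization in the statement.
\end{itemize}

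For comparison, the other standard route is a trace computation. If $e\in\C[S_n]$ is an idempotent and $\chi$ is the character of $\C[S_n]e$, then computing the trace of $x\mapsto gxe$ on $\C[S_n]$ shows that the coefficient of $g$ in $\sum_{h}heh^{-1}$ is $\chi(g^{-1})$. Applying this to a rescaling of $\varepsilon_{T_0}$ gives the identity in one line. That route needs $\varepsilon_T$ to be a quasi-idempotent, $\varepsilon_T^2=\kappa\,\varepsilon_T$ with $\kappa\neq0$, so that it can be normalized to an honest idempotent.

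Taken as a black box, your route needs neither quasi-idempotence nor irreducibility. It only needs that $\C[S_n]\varepsilon_T$ is $\lambda$-isotypic, which is precisely the input of Proposition~\ref{prop:etinvl}. The scalar, and even $\varepsilon_T\neq0$, then come for free from the identity coefficient.

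The one point to watch is the convention $\varepsilon_T=\alpha(T)\beta(T)$, which puts the column antisymmetrizer first, the reverse of the usual Fulton--Harris $c_\lambda$. The left ideal is still of type $\lambda$ rather than $\lambda'$, as the cases $\lambda=(n)$ and $\lambda=(1^n)$ confirm. That is the form of the fact you should cite.
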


As a consequence, $I_\lambda$ is generated by the polynomials $\varepsilon_T(p)$ with $p$ a monomial.
\begin{prop}\label{prop:genIlambda}
Let $\lambda\vdash n$. Then
\begin{equation*}
I_\lambda=\langle\varepsilon_T(p)\mid p\in\Mon(\mathcal{R}_{m,n}),\,T\in\Tab(\lambda)\rangle.
\end{equation*}
\end{prop}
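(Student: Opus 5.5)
We prove the two inclusions separately. Write $J:=\langle\varepsilon_T(p)\mid p\in\Mon(\mathcal{R}_{m,n}),\,T\in\Tab(\lambda)\rangle$.

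For $J\subseteq I_\lambda$, Proposition~\ref{prop:etinvl} gives $\varepsilon_T(p)\in\mathcal{R}_{m,n,\lambda}$ for every $T\in\Tab(\lambda)$ and every $p$. So each generator of $J$ already lies in $I_\lambda=\langle\mathcal{R}_{m,n,\lambda}\rangle$.

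For the reverse inclusion it suffices to show that every $f\in\mathcal{R}_{m,n,\lambda}$ lies in $J$; in fact we will show it lies in the $\C$-span of the $\varepsilon_T(p)$.

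\emph{Step 1.} Since $\pi_\lambda$ is the identity on $\mathcal{R}_{m,n,\lambda}$, we have $f=\pi_\lambda(f)$. Apply Theorem~\ref{thm:serre} with $G=S_n$ acting on the finite-dimensional homogeneous components. This gives
\[
\pi_\lambda=\frac{n_\lambda}{n!}\sum_{\sigma\in S_n}\chi_\lambda(\sigma^{-1})\,\sigma .
\]
Characters of $S_n$ are real and constant on conjugacy classes, and $\sigma^{-1}$ is conjugate to $\sigma$. Hence $\chi_\lambda(\sigma^{-1})=\chi_\lambda(\sigma)$.

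\emph{Step 2.} By Theorem~\ref{thm:theorem16},
\[
\pi_\lambda=\frac{n_\lambda}{n!}\sum_{T\in\Tab(\lambda)}\frac{n_\lambda}{n!}\,\varepsilon_T ,
\]
which is a nonzero scalar multiple of $\sum_{T}\varepsilon_T$.

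\emph{Step 3.} Expand $f=\sum_p c_p\,p$ in monomials. The $\varepsilon_T$ act linearly, so
\[
f=\pi_\lambda(f)=c\sum_{T\in\Tab(\lambda)}\sum_{p}c_p\,\varepsilon_T(p)
\]
for some nonzero constant $c$. Thus $f$ is a $\C$-linear combination of generators of $J$, and $f\in J$.

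The argument is essentially bookkeeping, and the only delicate point is Step 1. Theorem~\ref{thm:serre} is stated for a representation $\rho$, and the paper's action~\eqref{eq:actiononpolynomialsgen} must be checked to be a genuine left action; the paper asserts that it is. One must also confirm that $\chi_\lambda(\sigma^{-1})=\chi_\lambda(\sigma)$, so that the group-algebra element in Theorem~\ref{thm:theorem16} matches the projector exactly. Finally, Theorem~\ref{thm:theorem16} is an identity in $\C[S_n]$, so it transfers to operators on each finite-dimensional graded piece $(\mathcal{R}_{m,n})_d$, where the projection formula applies degree by degree.
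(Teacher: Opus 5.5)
Your proposal is correct and follows the paper's proof. The inclusion $\supseteq$ comes from Proposition~\ref{prop:etinvl}. For the reverse inclusion, you write each $f\in\mathcal{R}_{m,n,\lambda}$ as $\pi_\lambda(f)$, identify $\pi_\lambda$ with a nonzero multiple of $\sum_{T\in\Tab(\lambda)}\varepsilon_T$ via Theorems~\ref{thm:serre} and~\ref{thm:theorem16}, and expand $f$ in monomials. Your check that $\chi_\lambda(\sigma^{-1})=\chi_\lambda(\sigma)$ makes explicit a step the paper leaves implicit.
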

\begin{proof}
The inclusion $\supseteq$ follows from Proposition~\ref{prop:etinvl}. For $f\in I_\lambda$, by definition $f=\sum_i g_if_i$ where $f_i\in\mathcal{R}_{m,n,\lambda}$ and $g_i\in\mathcal{R}_{m,n}$, and since $f_i=\pi_\lambda(f_i)$, by Theorem~\ref{thm:theorem16} each $f_i$ is a linear combination of $\varepsilon_T(f_i)$, which in turn is a linear combination of $\varepsilon_T(p)$ where $p$ are the monomials of the expansion of $f_i$. Therefore $I_\lambda\subseteq\langle\varepsilon_T(p)\,\mid\,p\in\Mon(\mathcal{R}_{m,n}),\,T\in\Tab(\lambda)\rangle$.
\end{proof}

\section{Specht varieties}\label{sec:varieties}
In this section we extend the characterization of Specht varieties from~\cite{moustrou2021symmetric} to the diagonal setting (Theorem~\ref{thm:charzeros}). As consequences we describe the irreducible components of $V_\lambda$ and the radical of $I_\lambda$ (Corollary~\ref{cor:linearspaces}), and we show that $\lambda\mapsto V_\lambda$ is an isomorphism of posets from $(\partn,\trianglelefteq)$ to $(\{V_\lambda:\lambda\in\partn\},\supseteq)$, exactly as for $m=1$ (Corollary~\ref{cor:posetvar}). In the second part of the section we compute the dimension of $V_\lambda$ and characterize the partitions $\lambda$ for which $V_\lambda$ is equidimensional.

\subsection{The zero set and its irreducible components}

For an ideal $J\subseteq\mathcal{R}_{m,n}$ we write $V(J)\subseteq\C^{mn}$ for its zero set, and for a subset $X\subseteq\C^{mn}$ we write $I(X)\subseteq\mathcal{R}_{m,n}$ for its vanishing ideal.

\begin{definition}\label{def:spechtvariety}
The \emph{Specht variety} of shape $\lambda\vdash n$ is the zero set
\begin{equation*}
V_\lambda:=V(I_\lambda)\subseteq\C^{mn}
\end{equation*}
of the diagonal Specht ideal $I_\lambda\subseteq\mathcal{R}_{m,n}$. When the number of sets of variables needs to be emphasized, we write $I_\lambda^{(m)}$ and $V_\lambda^{(m)}$.
\end{definition}

\begin{definition}\label{def:Hmu}
We identify a point $P\in\C^{mn}$ with the $m\times n$ matrix $(u_{r,i})$ and write $P=(\mathbf{u}_{\bullet,1},\dots,\mathbf{u}_{\bullet,n})$, where $\mathbf{u}_{\bullet,i}=(u_{1,i},\dots,u_{m,i})^T\in\C^m$ is the $i$-th column of $P$. For $\mu\vdash n$, let $H_\mu\subseteq\C^{mn}$ (or $H_\mu^{(m)}$) be the set of points whose columns take exactly $\len(\mu)$ distinct values, with multiplicities $\mu_1,\dots,\mu_{\len(\mu)}$. The sets $H_\mu$, $\mu\vdash n$, are nonempty and partition $\C^{mn}$; we say that $P$ has \emph{type} $\mu$ if $P\in H_\mu$.
\end{definition}

\begin{definition}\label{def:Wpippi}
For a set partition $\Pi$ of $[n]$, let
\begin{equation*}
W_\Pi:=\{P\in\C^{mn}\mid\mathbf{u}_{\bullet,i}=\mathbf{u}_{\bullet,j}\text{ whenever }i\sim_\Pi j\},\quad
\mathfrak{p}_\Pi:=\langle x_{r,i}-x_{r,j}\mid 1\leq r\leq m,\ i\sim_\Pi j\rangle\subseteq\mathcal{R}_{m,n}.
\end{equation*}
\end{definition}

\begin{remark}[Actions on points]\label{rmk:piinstab}
The group $S_n$ acts on $\C^{mn}$ by permuting the columns, and $\GL_m$ by left multiplication:
\begin{equation}\label{eq:actiononpoints}
\sigma\cdot(\mathbf{u}_{\bullet,1},\dots,\mathbf{u}_{\bullet,n}):=(\mathbf{u}_{\bullet,\sigma^{-1}(1)},\dots,\mathbf{u}_{\bullet,\sigma^{-1}(n)}),\qquad\sigma\in S_n,
\end{equation}
\begin{equation}\label{actionGLmonpoints}
g\cdot(\mathbf{u}_{\bullet,1},\dots,\mathbf{u}_{\bullet,n}):=(g\mathbf{u}_{\bullet,1},\dots,g\mathbf{u}_{\bullet,n}),\qquad g\in\GL_m.
\end{equation}
These are compatible with the actions~\eqref{eq:actiononpolynomialsgen} and~\eqref{eq:GLmactionpolynomials} on $\mathcal{R}_{m,n}$ in the following sense: for all $q\in\mathcal{R}_{m,n}$, $P\in\C^{mn}$, $\sigma\in S_n$ and $g\in\GL_m$,
\begin{equation}\label{eq:evalactions}
(\sigma\cdot q)(P)=q(\sigma^{-1}\cdot P)
\qquad\text{and}\qquad
(g\cdot q)(P)=q(g^{T}\cdot P).
\end{equation}
In particular, if $\pi\in\Stab_{S_n}(P)$, then $(\pi\cdot q)(P)=q(P)$.
\end{remark}

We collect the elementary properties of the strata $H_\mu$ and of the subspaces $W_\Pi$ that will be used repeatedly.

\begin{lemma}\label{lem:strata}
Let $\mu,\nu\vdash n$, and let $\Pi,\Theta$ be set partitions of $[n]$.
\begin{enumerate}
    \item Each $H_\mu$ is $\GL_m\times S_n$-stable.
    \item $W_\Pi=V(\mathfrak{p}_\Pi)$ is a linear subspace of dimension $m\len(\sh(\Pi))$, the ideal $\mathfrak{p}_\Pi$ is prime, and $I(W_\Pi)=\mathfrak{p}_\Pi$. Moreover, $W_\Theta\subseteq W_\Pi$ if and only if $\Pi\prec\Theta$.
    \item $\overline{H_\mu}=\bigcup_{\Pi\in\SP_\mu}W_\Pi$, and $H_\nu\subseteq\overline{H_\mu}$ if and only if $\mu\prec\nu$.
\end{enumerate}
\end{lemma}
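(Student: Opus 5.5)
We prove the three items in order; each is elementary, and the only point needing care is the closure computation in (3).

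For (1), we check that the type of a point is preserved by both actions. A permutation $\sigma\in S_n$ only reorders the columns of $P$, so the multiset of column values is unchanged. For $g\in\GL_m$, the map $\mathbf{v}\mapsto g\mathbf{v}$ is injective on $\C^m$. Hence $\mathbf{u}_{\bullet,i}=\mathbf{u}_{\bullet,j}$ if and only if $g\mathbf{u}_{\bullet,i}=g\mathbf{u}_{\bullet,j}$, so the partition of $[n]$ into classes of equal columns is preserved. In both cases the type $\mu$ is preserved, so $H_\mu$ is $\GL_m\times S_n$-stable.

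For (2), choose a representative $i_k$ in each block $\Pi_k$. In the coordinates $x_{r,i_k}$ together with $x_{r,i}-x_{r,i_k}$ for $i\in\Pi_k\setminus\{i_k\}$, which form a linear change of coordinates, $\mathfrak{p}_\Pi$ is generated by a subset of the coordinate variables. Therefore $\mathfrak{p}_\Pi$ is prime, $W_\Pi=V(\mathfrak{p}_\Pi)$ is a linear subspace of dimension $m\cdot\#\text{blocks}=m\len(\sh(\Pi))$, and $I(W_\Pi)=\mathfrak{p}_\Pi$ (a prime ideal is radical, and we apply the Nullstellensatz).

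For the inclusion criterion in (2), suppose first $\Pi\prec\Theta$. Then every generator of $\mathfrak{p}_\Pi$ lies in $\mathfrak{p}_\Theta$, so $W_\Theta\subseteq W_\Pi$. Conversely, suppose $W_\Theta\subseteq W_\Pi$. Take the point $P\in W_\Theta$ whose columns are pairwise distinct across the blocks of $\Theta$; for instance, let the first coordinate of every column in block $k$ equal $k$. If $i\sim_\Pi j$, then $\mathbf{u}_{\bullet,i}=\mathbf{u}_{\bullet,j}$ at $P$ because $P\in W_\Pi$. By the choice of $P$ this forces $i\sim_\Theta j$, so $\Pi\prec\Theta$.

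For (3), we first describe points by set partitions. For a point $P$, let $\Pi(P)$ be the set partition of $[n]$ into classes of equal columns. Then $P\in W_\Theta$ if and only if $\Theta\prec\Pi(P)$, and $P\in H_\mu$ if and only if $\sh(\Pi(P))=\mu$.

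We then compute the closure of $H_\mu$. If $\Pi\in\SP_\mu$, then $H_\mu\cap W_\Pi$ is the set of points with $\Pi(P)=\Pi$. This set is nonempty and Zariski open in the irreducible space $W_\Pi$, since it is the complement of the finitely many proper subspaces $W_\Theta$ with $\Theta$ strictly coarser than $\Pi$. Hence it is dense in $W_\Pi$. Since $H_\mu=\bigcup_{\Pi\in\SP_\mu}(H_\mu\cap W_\Pi)$ is a finite union, taking closures gives $\overline{H_\mu}=\bigcup_{\Pi\in\SP_\mu}W_\Pi$.

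Finally we prove the criterion $H_\nu\subseteq\overline{H_\mu}\iff\mu\prec\nu$. Suppose $H_\nu\subseteq\overline{H_\mu}$ and take $P\in H_\nu$. Then $P\in W_\Pi$ for some $\Pi\in\SP_\mu$, so $\Pi\prec\Pi(P)$, and Remark~\ref{rem:coarserrefines} gives $\mu\prec\nu$. Conversely, suppose $\mu\prec\nu$ and take $P\in H_\nu$. Write each block of $\Pi(P)$ as a union of subsets whose sizes are the parts of $\mu$ merged into the corresponding part of $\nu$, according to the surjection $\pi$. This produces $\Pi\in\SP_\mu$ with $\Pi\prec\Pi(P)$, so $P\in W_\Pi\subseteq\overline{H_\mu}$.
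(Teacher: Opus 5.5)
Your proof is correct and follows essentially the same route as the paper's proof. For (1) you use injectivity of $g$ and the fact that $S_n$ only reorders columns. For (2) you use linear generators together with the Nullstellensatz. For (3) you use density of $H_\mu\cap W_\Pi$ in $W_\Pi$, and splitting or merging blocks for the refinement criterion. Two steps the paper states tersely are made precise in your version: you introduce the set partition $\Pi(P)$ of equal columns, and you give an explicit test point for the converse $W_\Theta\subseteq W_\Pi\Rightarrow\Pi\prec\Theta$.
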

\begin{proof}
(1) For $g\in\GL_m$ we have $g\mathbf{u}_{\bullet,i}=g\mathbf{u}_{\bullet,j}$ if and only if $\mathbf{u}_{\bullet,i}=\mathbf{u}_{\bullet,j}$, and $\sigma\in S_n$ merely permutes the columns; so both actions preserve the type of a point.

(2) The subspace $W_\Pi$ is cut out by the linear forms generating $\mathfrak{p}_\Pi$, so $W_\Pi=V(\mathfrak{p}_\Pi)$, and a point of $W_\Pi$ is determined by one free vector in $\C^m$ for each block of $\Pi$, which gives the dimension. An ideal generated by linear forms is prime, so $I(W_\Pi)=\mathfrak{p}_\Pi$ by the Nullstellensatz. Finally, $W_\Theta\subseteq W_\Pi$ means that every identification $\mathbf{u}_{\bullet,i}=\mathbf{u}_{\bullet,j}$ imposed by $\Pi$ is also imposed by $\Theta$, that is, $i\sim_\Pi j$ implies $i\sim_\Theta j$.

(3) A point $P$ lies in $H_\mu$ if and only if $P\in W_\Pi$ for some $\Pi\in\SP_\mu$ and the columns of $P$ indexed by different blocks of $\Pi$ are pairwise distinct. Hence $H_\mu\subseteq\bigcup_{\Pi\in\SP_\mu}W_\Pi$, and since the right-hand side is closed, so is $\overline{H_\mu}$. Conversely, for $\Pi\in\SP_\mu$ the set $H_\mu\cap W_\Pi$ is the complement in $W_\Pi$ of finitely many proper linear subspaces, hence dense in $W_\Pi$, so $W_\Pi\subseteq\overline{H_\mu}$. For the last claim, if $\mu\prec\nu$ and $P\in H_\nu$, then $P\in W_\Theta$ for some $\Theta\in\SP_\nu$, and splitting the blocks of $\Theta$ yields $\Pi\in\SP_\mu$ with $\Pi\prec\Theta$; thus $P\in W_\Theta\subseteq W_\Pi\subseteq\overline{H_\mu}$. If conversely $H_\nu\subseteq\overline{H_\mu}$, pick $P\in H_\nu$ and $\Pi\in\SP_\mu$ with $P\in W_\Pi$; the type $\nu$ of $P$ is then obtained from $\sh(\Pi)=\mu$ by merging parts, that is, $\mu\prec\nu$.
\end{proof}

The next two lemmas give the inclusion $\supseteq$ in Theorem~\ref{thm:charzeros}.

\begin{lemma}\label{lem:cstab}
Let $\lambda,\mu\vdash n$ with $\mu\not\trianglelefteq\lambda$, let $P\in H_\mu$, and let $T\in\Tab(\lambda)$. Then $C(T)\cap\Stab_{S_n}(P)$ contains a transposition.
\end{lemma}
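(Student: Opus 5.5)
This is a pigeonhole argument: we want two entries of the same column of $T$ whose columns of $P$ coincide.

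Since $P\in H_\mu$, its columns take $k=\len(\mu)$ distinct values. Let $\Pi=\{\Pi_1,\dots,\Pi_k\}$ be the set partition of $[n]$ with $i\sim_\Pi j$ if and only if $\mathbf{u}_{\bullet,i}=\mathbf{u}_{\bullet,j}$, ordered so that $|\Pi_j|=\mu_j$. Then $\Stab_{S_n}(P)$ consists of the permutations preserving every block of $\Pi$, and it contains every transposition $(i\,j)$ with $i\sim_\Pi j$. Such a transposition lies in $C(T)$ exactly when $i$ and $j$ sit in the same column of $T$. So it suffices to find two entries in a common column of $T$ that lie in a common block of $\Pi$.

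We argue by contradiction. Suppose every column of $T$ meets every block $\Pi_j$ in at most one element. Then the $\mu_1$ elements of $\Pi_1$ lie in $\mu_1$ distinct columns; in particular $\mu_1\leq\lambda_1$. More generally, fix $s\geq1$ and consider $\Pi_1\cup\dots\cup\Pi_s$. Each column $c$ of $T$ has length $\lambda'_c$ and contains at most $\min(s,\lambda'_c)$ elements of this union, since it contains at most one from each block. Summing over the columns gives
\[
\mu_1+\dots+\mu_s\leq\sum_{c}\min(s,\lambda'_c)=\lambda_1+\dots+\lambda_s .
\]
The last equality holds because $\sum_c\min(s,\lambda'_c)$ counts the boxes of the Young diagram lying in its first $s$ rows. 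This holds for every $s$, so $\mu\trianglelefteq\lambda$, contradicting the hypothesis. Hence some column of $T$ contains distinct $i,j$ with $i\sim_\Pi j$, and the transposition $(i\,j)$ lies in $C(T)\cap\Stab_{S_n}(P)$.

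There is no real obstacle here. The only step needing care is the counting identity $\sum_c\min(s,\lambda'_c)=\sum_{i\leq s}\lambda_i$, which is the standard row/column count of boxes in a Young diagram. Note also that $\Stab_{S_n}(P)$ is taken for the column-permuting action of Remark~\ref{rmk:piinstab}, so transpositions inside a block of $\Pi$ do fix $P$.
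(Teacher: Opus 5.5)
Your proof is correct and follows essentially the same route as the paper's: argue by contradiction, note that each column of $T$ meets each block of equal columns of $P$ in at most one element, and sum $\min(s,h_c)$ over the columns to obtain $\mu_1+\dots+\mu_s\leq\lambda_1+\dots+\lambda_s$ for all $s$, i.e.\ $\mu\trianglelefteq\lambda$. The only gap is the case $s>\len(\mu)$, which you leave implicit; it is covered by treating the blocks $\Pi_j$ with $j>\len(\mu)$ as empty.
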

\begin{proof}
Let $\mathbf{a}_1,\dots,\mathbf{a}_{\len(\mu)}$ be the distinct values of the columns of $P$, numbered so that $\mathbf{a}_j$ occurs $\mu_j$ times, and let $B_j:=\{i\in[n]\mid\mathbf{u}_{\bullet,i}=\mathbf{a}_j\}$, so that $|B_j|=\mu_j$. A transposition $(i_1\,i_2)$ with $i_1,i_2\in B_j$ fixes $P$, and it lies in $C(T)$ if $i_1$ and $i_2$ occupy the same column of $T$. Suppose, for a contradiction, that no two elements of the same $B_j$ lie in the same column of $T$. Fix $1\leq k\leq\len(\mu)$. The elements of $B_1\cup\dots\cup B_k$ occupy $\sum_{j\leq k}\mu_j$ boxes of $T$, and a column of height $h$ contains at most $\min(k,h)$ of them, at most one from each $B_j$. Summing over the columns of $T$ gives
\begin{equation*}
\sum_{j\leq k}\mu_j\leq\sum_{c}\min(k,h_c)=\sum_{i\leq k}\lambda_i .
\end{equation*}
As this holds for every $k$, we obtain $\mu\trianglelefteq\lambda$, a contradiction.
\end{proof}

\begin{lemma}\label{lem:projectionyoung}
Let $\lambda,\mu\vdash n$ with $\mu\not\trianglelefteq\lambda$, and let $P\in H_\mu$. Then for all $f\in\mathcal{R}_{m,n}$ and all $T\in\Tab(\lambda)$,
\begin{equation*}
(\alpha(T)\cdot f)(P)=0
\qquad\text{and}\qquad
(\varepsilon_T\cdot f)(P)=0 .
\end{equation*}
\end{lemma}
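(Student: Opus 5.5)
By Lemma~\ref{lem:cstab}, there is a transposition $\rho=(i_1\,i_2)\in C(T)\cap\Stab_{S_n}(P)$. The plan is to use $\rho$ to show that $\alpha(T)\cdot f$ is antisymmetric under $\rho$ and at the same time takes the same value at $P$ as its image under $\rho$. This forces the value at $P$ to vanish. The statement for $\varepsilon_T$ then follows by applying the first statement to $\beta(T)\cdot f$.

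The first step is the identity $\rho\,\alpha(T)=-\alpha(T)$ in $\C[S_n]$. Since $\rho\in C(T)$, the map $\tau\mapsto\rho\tau$ is a bijection of $C(T)$, and $\sgn(\rho\tau)=-\sgn(\tau)$. Hence
\[
\rho\,\alpha(T)=\sum_{\tau\in C(T)}\sgn(\tau)\,\rho\tau=-\sum_{\tau'\in C(T)}\sgn(\tau')\,\tau'=-\alpha(T),
\]
which is exactly the computation already carried out in the proof of Proposition~\ref{prop:defequiv}. Setting $q:=\alpha(T)\cdot f$, we therefore get $\rho\cdot q=-q$.

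The second step evaluates at $P$. By Remark~\ref{rmk:piinstab}, since $\rho\in\Stab_{S_n}(P)$, we have $(\rho\cdot q)(P)=q(\rho^{-1}\cdot P)=q(P)$. Combining this with the first step gives $q(P)=-q(P)$, hence $q(P)=0$ because we work in characteristic zero. This proves the first equality.

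For the second equality, write $\varepsilon_T\cdot f=\alpha(T)\cdot(\beta(T)\cdot f)$. The first equality applies to every polynomial, in particular to $\beta(T)\cdot f$, so $(\varepsilon_T\cdot f)(P)=0$. There is no real obstacle here, since the combinatorial input is entirely supplied by Lemma~\ref{lem:cstab}. The only point requiring care is the ordering convention: the group element must act as $\rho\,\alpha(T)$ (multiplication on the left), so that it is absorbed into the column antisymmetrizer; this matches the left action used in the paper.
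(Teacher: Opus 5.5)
Your proof is correct and is essentially the paper's argument. The paper cancels the terms $\tau$ and $\pi\tau$ of $\alpha(T)$ pairwise at $P$, while you package the same pairing as $\rho\,\alpha(T)=-\alpha(T)$ together with $(\rho\cdot q)(P)=q(P)$; the $\varepsilon_T$ case is then handled identically by applying the first identity to $\beta(T)\cdot f$.
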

\begin{proof}
By Lemma~\ref{lem:cstab} there is a transposition $\pi\in C(T)$ fixing $P$. Then $\tau\mapsto\pi\tau$ is a bijection from the even to the odd elements of $C(T)$, and $(\pi\tau\cdot f)(P)=(\tau\cdot f)(P)$ for every $\tau$ by Remark~\ref{rmk:piinstab}. Hence the contributions of the even and of the odd permutations to $(\alpha(T)\cdot f)(P)$ cancel, which proves the first identity. The second follows by applying the first to $\beta(T)\cdot f$, since $\varepsilon_T=\alpha(T)\beta(T)$.
\end{proof}

For $m=1$, the zero set of the Specht ideal was determined in~\cite[Corollary~1]{moustrou2021symmetric}. The following theorem extends this description to every $m\geq1$.

\begin{thm}\label{thm:charzeros}
For every $m\geq1$ and $\lambda\vdash n$,
\begin{equation*}
V_\lambda=\bigcup_{\mu\not\trianglelefteq\lambda}H_\mu .
\end{equation*}
\end{thm}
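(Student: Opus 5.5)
The inclusion $\supseteq$ is immediate from what precedes. By Proposition~\ref{prop:genIlambda}, $I_\lambda$ is generated by the polynomials $\varepsilon_T(p)$ with $p$ a monomial and $T\in\Tab(\lambda)$. By Lemma~\ref{lem:projectionyoung}, each such generator vanishes at every $P\in H_\mu$ with $\mu\not\trianglelefteq\lambda$. Hence $\bigcup_{\mu\not\trianglelefteq\lambda}H_\mu\subseteq V_\lambda$.

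For $\subseteq$, I argue by contraposition. Let $P\in H_\mu$ with $\mu\trianglelefteq\lambda$. I want to produce an element of $I_\lambda$ that does not vanish at $P$, and I will do this by reducing to the case $m=1$.

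Choose a generic linear form $\ell=c_1X_1+\dots+c_mX_m$ on $\C^m$ that separates the finitely many distinct column values of $P$. This is possible since it amounts to avoiding finitely many hyperplanes in $(c_1,\dots,c_m)$. Then the point $Q=(\ell(\mathbf{u}_{\bullet,1}),\dots,\ell(\mathbf{u}_{\bullet,n}))\in\C^n$ has the same type $\mu$ as $P$. Consider the ring homomorphism
\[
\phi:\mathcal{R}_{1,n}\to\mathcal{R}_{m,n},\qquad x_i\mapsto \sum_r c_r x_{r,i}.
\]
It commutes with the $S_n$-action, so it maps $\mathcal{R}_{1,n,\lambda}$ into $\mathcal{R}_{m,n,\lambda}$: the image of a copy of $S^\lambda$ is zero or isomorphic to $S^\lambda$. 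It follows that $\phi(f)\in I_\lambda$ for every Specht polynomial $f=\spe_T$ of shape $\lambda$. Moreover $\phi(f)(P)=f(Q)$.

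It therefore suffices to know that some $\spe_T$ with $T\in\Tab(\lambda)$ is nonzero at $Q\in H^{(1)}_\mu$ with $\mu\trianglelefteq\lambda$. This is the $m=1$ case, \cite[Corollary~1]{moustrou2021symmetric}, which I invoke. I can also verify it directly, which I expect to be the only point needing care. Since $\mu\trianglelefteq\lambda$, the Gale--Ryser theorem (equivalently, $\mu\trianglelefteq\lambda\iff\lambda'\trianglelefteq\mu'$) gives a filling of the diagram of $\lambda$ in which each column contains at most one index from each block $B_j$ of $Q$. Concretely, one places the blocks so that the $j$-th block meets distinct columns; this is the converse of the counting in Lemma~\ref{lem:cstab}. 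For this $T$, every factor $x_a-x_b$ of $\spe_T$ has $a,b$ in different blocks, so $\spe_T(Q)\neq0$. Hence $P\notin V_\lambda$, proving $V_\lambda\subseteq\bigcup_{\mu\not\trianglelefteq\lambda}H_\mu$.
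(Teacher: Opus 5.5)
Your proof is correct and follows essentially the same route as the paper. The inclusion $\supseteq$ comes from Proposition~\ref{prop:genIlambda} and Lemma~\ref{lem:projectionyoung}, and the inclusion $\subseteq$ comes from pulling back along the $S_n$-equivariant substitution $x_i\mapsto\sum_r c_r x_{r,i}$ to the $m=1$ case of \cite[Corollary~1]{moustrou2021symmetric}. The only differences are minor: the paper pulls back an element $\varepsilon_T\cdot g$ with $g$ a monomial, whereas you pull back a Specht polynomial $\spe_T$. Your Gale--Ryser argument also gives a self-contained proof of the cited $m=1$ fact, as the converse of the counting in Lemma~\ref{lem:cstab}.
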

\begin{proof}
For the inclusion $\supseteq$, let $P\in H_\mu$ with $\mu\not\trianglelefteq\lambda$. By Proposition~\ref{prop:genIlambda}, $I_\lambda$ is generated by the polynomials $\varepsilon_T\cdot p$ with $T\in\Tab(\lambda)$ and $p$ a monomial, and these vanish at $P$ by Lemma~\ref{lem:projectionyoung}. Hence $P\in V_\lambda$.

For the inclusion $\subseteq$, let $P\in H_\mu$ with $\mu\trianglelefteq\lambda$; we show that $P\notin V_\lambda$. Write $\mathcal{R}_{1,n}=\C[\mathbf{z}]$ with $\mathbf{z}=(z_1,\dots,z_n)$, and let $\mathbf{a}_1,\dots,\mathbf{a}_{\len(\mu)}$ be the distinct values of the columns of $P$. Choose $t=(t_1,\dots,t_m)\in\C^m$ outside the finitely many hyperplanes $\langle t,\mathbf{a}_j-\mathbf{a}_{j'}\rangle=0$, $j\neq j'$, and consider the algebra homomorphism
\begin{equation*}
\psi:\mathcal{R}_{1,n}\longrightarrow\mathcal{R}_{m,n},\qquad z_i\longmapsto\sum_{r=1}^m t_r x_{r,i}.
\end{equation*}
Since $\psi(\sigma\cdot z_i)=\psi(z_{\sigma(i)})=\sigma\cdot\psi(z_i)$ for all $\sigma\in S_n$ and all $i$, the map $\psi$ is $S_n$-equivariant; in particular $\psi(\varepsilon_T\cdot f)=\varepsilon_T\cdot\psi(f)$ for all $f\in\mathcal{R}_{1,n}$ and $T\in\Tab(\lambda)$. Moreover, evaluating at $P$ gives
\begin{equation*}
\psi(f)(P)=f(Q)\quad\text{for all }f\in\mathcal{R}_{1,n},
\qquad\text{where }Q:=(w_1,\dots,w_n)\in\C^n,\ \ w_i:=\sum_{r=1}^m t_r u_{r,i}.
\end{equation*}
By the choice of $t$, the values $\langle t,\mathbf{a}_1\rangle,\dots,\langle t,\mathbf{a}_{\len(\mu)}\rangle$ are pairwise distinct, so $Q\in H_\mu^{(1)}$, and therefore $Q\notin V_\lambda^{(1)}$ by~\cite[Corollary~1]{moustrou2021symmetric}. By Proposition~\ref{prop:genIlambda} there exist $T\in\Tab(\lambda)$ and a monomial $g\in\mathcal{R}_{1,n}$ with $(\varepsilon_T\cdot g)(Q)\neq0$. Then
\begin{equation*}
(\varepsilon_T\cdot\psi(g))(P)=\psi(\varepsilon_T\cdot g)(P)=(\varepsilon_T\cdot g)(Q)\neq0,
\end{equation*}
and $\varepsilon_T\cdot\psi(g)\in I_\lambda$ by Proposition~\ref{prop:etinvl}. Hence $P\notin V_\lambda$.
\end{proof}

\begin{definition}\label{def:Alambda}
For $\lambda\vdash n$, let $\mathcal{A}_\lambda$ be the set of minimal elements, with respect to the refinement order $\prec$, of the set $\{\mu\vdash n:\mu\not\trianglelefteq\lambda\}$. Since this set is finite, every $\mu\not\trianglelefteq\lambda$ is refined by some element of $\mathcal{A}_\lambda$.
\end{definition}

Since $V_\lambda$ is closed, Theorem~\ref{thm:charzeros} can be rewritten in terms of the closures of the minimal strata.

\begin{corollary}\label{cor:closure}
For every $\lambda\vdash n$,
\begin{equation*}
V_\lambda=\bigcup_{\mu\in\mathcal{A}_\lambda}\overline{H_\mu}.
\end{equation*}
\end{corollary}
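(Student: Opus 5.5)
The plan is to prove two inclusions, using Theorem~\ref{thm:charzeros}, which writes $V_\lambda$ as the union of the strata $H_\mu$ over $\mu\not\trianglelefteq\lambda$, together with Lemma~\ref{lem:strata}(3), which describes closures of strata through the refinement order.

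For $\supseteq$, take $\mu\in\mathcal{A}_\lambda$. Then $\mu\not\trianglelefteq\lambda$, so $H_\mu\subseteq V_\lambda$ by Theorem~\ref{thm:charzeros}. Since $V_\lambda=V(I_\lambda)$ is Zariski closed, it follows that $\overline{H_\mu}\subseteq V_\lambda$.

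For $\subseteq$, let $P\in V_\lambda$. By Theorem~\ref{thm:charzeros}, $P\in H_\nu$ for some $\nu\not\trianglelefteq\lambda$. By Definition~\ref{def:Alambda} there is $\mu\in\mathcal{A}_\lambda$ with $\mu\prec\nu$; this uses only that $\{\mu:\mu\not\trianglelefteq\lambda\}$ is finite and nonempty below $\nu$, since $\nu$ itself belongs to it. Lemma~\ref{lem:strata}(3) then gives $H_\nu\subseteq\overline{H_\mu}$, so $P$ lies in the right-hand side.

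The argument is essentially formal, and no step is a genuine obstacle. The one point to check is the direction of the implication in Lemma~\ref{lem:strata}(3): $\mu\prec\nu$ means that $\nu$ is coarser, and coarser points do lie in the closure of the finer stratum. This matches the lemma as stated.
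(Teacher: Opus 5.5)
Your proposal is correct and follows essentially the same route as the paper: Theorem~\ref{thm:charzeros} together with closedness of $V_\lambda$, then Lemma~\ref{lem:strata}(3) to absorb each stratum $H_\nu$, $\nu\not\trianglelefteq\lambda$, into $\overline{H_\mu}$ for some $\mu\in\mathcal{A}_\lambda$ refining $\nu$. The only cosmetic difference is that you argue pointwise for $\subseteq$. The paper instead first writes $V_\lambda$ as the finite union of all closures $\overline{H_\mu}$ with $\mu\not\trianglelefteq\lambda$ and then discards the non-minimal ones.
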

\begin{proof}
As $V_\lambda$ is closed, Theorem~\ref{thm:charzeros} gives $V_\lambda=\bigcup_{\mu\not\trianglelefteq\lambda}\overline{H_\mu}$. Every $\mu\not\trianglelefteq\lambda$ is refined by some $\nu\in\mathcal{A}_\lambda$, and then $\overline{H_\mu}\subseteq\overline{H_\nu}$ by Lemma~\ref{lem:strata}(3), so the union may be restricted to $\mathcal{A}_\lambda$.
\end{proof}

\begin{example}\label{ex:A321}
Let $n=6$ and $\lambda=(3,2,1)$. From the Hasse diagram in Example~\ref{ex:dominance6}, the partitions not dominated by $(3,2,1)$ are $(4,1,1)$, $(3,3)$, $(4,2)$, $(5,1)$ and $(6)$, and among them $(4,1,1)$ and $(3,3)$ are the minimal ones with respect to refinement. Hence $\mathcal{A}_{(3,2,1)}=\{(4,1,1),(3,3)\}$ and
\begin{equation*}
V_{(3,2,1)}=\overline{H_{(4,1,1)}}\cup\overline{H_{(3,3)}}
\end{equation*}
for every $m$. For $m=1$, for instance, the point $P=(a,a,a,a,b,b)$ with $a\neq b$ has type $(4,2)$ and lies in $\overline{H_{(4,1,1)}}$.
\end{example}

\begin{corollary}\label{cor:GLminvariantvariety}
$V_\lambda$ and $I(V_\lambda)$ are $\GL_m\times S_n$-stable. In particular, $I(V_\lambda)$ is multigraded.
\end{corollary}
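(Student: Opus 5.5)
The plan is to derive everything from the stability of the ideal $I_\lambda$ itself, together with the description of $V_\lambda$ in Theorem~\ref{thm:charzeros}.

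First, $I_\lambda$ is $\GL_m\times S_n$-stable. By Lemma~\ref{lem:GLminv} its generating set $\mathcal{R}_{m,n,\lambda}$ is stable under both actions, and these actions are by algebra automorphisms, so the ideal they generate is stable as well.

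Next, $V_\lambda$ is stable under the actions on points. Alternatively, this is immediate from Theorem~\ref{thm:charzeros} and Lemma~\ref{lem:strata}(1), since each $H_\mu$ is stable and $V_\lambda$ is a union of such strata. The only subtlety is that the compatibility~\eqref{eq:evalactions} involves $g^T$ and $\sigma^{-1}$. These range over all of $\GL_m$ and $S_n$ as $g$ and $\sigma$ do, so no real issue arises.

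Now take $f\in I(V_\lambda)$, $g\in\GL_m$, $\sigma\in S_n$, and $P\in V_\lambda$. By~\eqref{eq:evalactions}, $(g\cdot f)(P)=f(g^T\cdot P)=0$, because $g^T\cdot P\in V_\lambda$. Similarly $(\sigma\cdot f)(P)=f(\sigma^{-1}\cdot P)=0$. Hence $I(V_\lambda)$ is stable.

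For multigradedness, we restrict the $\GL_m$-action to the diagonal torus $(\C^\ast)^m$. This torus acts on a polynomial $f=\sum_{\mathbf{d}} f_{\mathbf{d}}$, decomposed into multihomogeneous components, by
\[
\diag(t_1,\dots,t_m)\cdot f=\sum_{\mathbf{d}}t^{\mathbf{d}}f_{\mathbf{d}}.
\]
Since $I(V_\lambda)$ is a torus-stable linear subspace, every $\sum_{\mathbf{d}} t^{\mathbf{d}}f_{\mathbf{d}}$ lies in it. The characters $t\mapsto t^{\mathbf{d}}$ for distinct $\mathbf{d}$ are distinct, so a Vandermonde-type argument applies. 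Concretely, choose finitely many torus elements for which the matrix $(t^{\mathbf{d}})$ over the finitely many multidegrees occurring in $f$ is invertible; such elements exist by linear independence of distinct characters over the infinite field $\C$. Inverting this matrix shows that each $f_{\mathbf{d}}$ lies in $I(V_\lambda)$.

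The only step needing any care is this last one, extracting homogeneous components from torus stability. It is standard, but it is where finiteness of the support and the linear independence of characters are used.
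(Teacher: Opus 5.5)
Your proof is correct and takes essentially the same route as the paper: stability of $V_\lambda$ from Theorem~\ref{thm:charzeros} and Lemma~\ref{lem:strata}(1), stability of $I(V_\lambda)$ from the identities~\eqref{eq:evalactions}, and multigradedness from stability under the diagonal torus. Your extra first step is the stability of $I_\lambda$ via Lemma~\ref{lem:GLminv}, which together with~\eqref{eq:evalactions} gives $g^T\cdot P,\ \sigma^{-1}\cdot P\in V_\lambda$ for $P\in V_\lambda$ without the zero-set description; this step and your character/Vandermonde extraction of multihomogeneous components spell out details the paper leaves implicit.
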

\begin{proof}
By Theorem~\ref{thm:charzeros} and Lemma~\ref{lem:strata}(1), $V_\lambda$ is a union of $\GL_m\times S_n$-stable sets. For $f\in I(V_\lambda)$ and $P\in V_\lambda$, \eqref{eq:evalactions} gives $(g\cdot f)(P)=f(g^{T}\cdot P)=0$ and $(\sigma\cdot f)(P)=f(\sigma^{-1}\cdot P)=0$, so $I(V_\lambda)$ is stable as well. Stability under the diagonal torus of $\GL_m$ means that $I(V_\lambda)$ is multigraded.
\end{proof}

\begin{corollary}\label{cor:linearspaces}
Let $\lambda\vdash n$. Then
\begin{equation*}
V_\lambda=\bigcup_{\mu\in\mathcal{A}_\lambda}\bigcup_{\Pi\in\SP_\mu}W_\Pi ,
\end{equation*}
and the subspaces $W_\Pi$ with $\sh(\Pi)\in\mathcal{A}_\lambda$ are precisely the irreducible components of $V_\lambda$; in particular, $V_\lambda$ is a finite union of linear subspaces, and $\dim W_\Pi=m\len(\sh(\Pi))$. Moreover,
\begin{equation*}
\rad(I_\lambda)=\bigcap_{\mu\in\mathcal{A}_\lambda}\bigcap_{\Pi\in\SP_\mu}\mathfrak{p}_\Pi ,
\end{equation*}
and the $\mathfrak{p}_\Pi$ with $\sh(\Pi)\in\mathcal{A}_\lambda$ are the minimal primes of $I_\lambda$ and of $\rad(I_\lambda)$.
\end{corollary}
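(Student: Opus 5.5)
The plan is to assemble the statement from Corollary~\ref{cor:closure} and Lemma~\ref{lem:strata}. Corollary~\ref{cor:closure} gives $V_\lambda=\bigcup_{\mu\in\mathcal{A}_\lambda}\overline{H_\mu}$, and Lemma~\ref{lem:strata}(3) gives $\overline{H_\mu}=\bigcup_{\Pi\in\SP_\mu}W_\Pi$. Substituting yields the displayed decomposition of $V_\lambda$ as a finite union of linear subspaces. The dimension formula $\dim W_\Pi=m\len(\sh(\Pi))$ is Lemma~\ref{lem:strata}(2), and that same part shows each $W_\Pi$ is irreducible, since $\mathfrak{p}_\Pi$ is prime.

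To identify the irreducible components, it suffices to show the union is irredundant: for $\Pi\neq\Theta$ with $\sh(\Pi),\sh(\Theta)\in\mathcal{A}_\lambda$, we need $W_\Theta\not\subseteq W_\Pi$. Suppose $W_\Theta\subseteq W_\Pi$. Lemma~\ref{lem:strata}(2) gives $\Pi\prec\Theta$, and Remark~\ref{rem:coarserrefines} then gives $\sh(\Pi)\prec\sh(\Theta)$. By minimality of $\sh(\Theta)$ in $\mathcal{A}_\lambda$ with respect to refinement, $\sh(\Pi)=\sh(\Theta)$. Since $\Pi\prec\Theta$, every block of $\Theta$ is a union of blocks of $\Pi$, so $\Pi$ has at least as many blocks as $\Theta$. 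Equal shapes force equal numbers of blocks, so each block of $\Theta$ is a single block of $\Pi$, and $\Pi=\Theta$. This contradicts $\Pi\neq\Theta$.

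The main subtlety is that distinct minimal shapes in $\mathcal{A}_\lambda$ could a priori yield comparable subspaces. The refinement argument handles this: comparability of the subspaces would force comparability of the shapes, and elements of $\mathcal{A}_\lambda$ are pairwise incomparable. Hence the decomposition is irredundant and its members are exactly the irreducible components.

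For the ideals, the Nullstellensatz gives $\rad(I_\lambda)=I(V_\lambda)$. Moreover $I$ of a finite union is the intersection of the $I(W_\Pi)=\mathfrak{p}_\Pi$, by Lemma~\ref{lem:strata}(2). This yields the stated formula for $\rad(I_\lambda)$. An irredundant intersection of primes has exactly those primes as its minimal primes, because the primes are pairwise incomparable by the irredundancy shown above. Finally, $I_\lambda$ and $\rad(I_\lambda)$ have the same minimal primes, which completes the proof.
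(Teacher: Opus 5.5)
Your proposal is correct and follows essentially the same route as the paper. Both combine Corollary~\ref{cor:closure} with Lemma~\ref{lem:strata}(3), show that the $W_\Pi$ with $\sh(\Pi)\in\mathcal{A}_\lambda$ are pairwise incomparable using Lemma~\ref{lem:strata}(2) and minimality in the refinement order, and then pass to ideals via the Nullstellensatz. Your block-counting argument, that $\Pi\prec\Theta$ together with $\sh(\Pi)=\sh(\Theta)$ forces $\Pi=\Theta$, makes explicit a step the paper leaves implicit when it asserts $\sh(\Pi)\neq\sh(\Theta)$.
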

\begin{proof}
The first equality follows from Corollary~\ref{cor:closure} and Lemma~\ref{lem:strata}(3). Each $W_\Pi$ is irreducible, so the irreducible components of $V_\lambda$ are the maximal ones among the $W_\Pi$ with $\sh(\Pi)\in\mathcal{A}_\lambda$. If $W_\Theta\subsetneq W_\Pi$ for two such subspaces, then $\Pi\prec\Theta$ with $\Pi\neq\Theta$ by Lemma~\ref{lem:strata}(2), hence $\sh(\Pi)\prec\sh(\Theta)$ with $\sh(\Pi)\neq\sh(\Theta)$, contradicting the minimality of $\sh(\Theta)$. So there are no containments among them, all of them are components, and their dimensions are given by Lemma~\ref{lem:strata}(2).

By the Nullstellensatz, $\rad(I_\lambda)=I(V_\lambda)$, which is the intersection of the vanishing ideals $I(W_\Pi)=\mathfrak{p}_\Pi$ of the components. These are the minimal primes of $I(V_\lambda)=\rad(I_\lambda)$, and $I_\lambda$ has the same minimal primes as its radical.
\end{proof}

\begin{corollary}\label{cor:posetvar}
The map $\lambda\mapsto V_\lambda$ is an isomorphism of posets from $(\partn,\trianglelefteq)$ to $(\{V_\lambda:\lambda\in\partn\},\supseteq)$.
\end{corollary}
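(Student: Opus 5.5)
We must show that $\lambda\mapsto V_\lambda$ is a bijection onto its image and that $\mu\trianglelefteq\lambda$ holds if and only if $V_\mu\supseteq V_\lambda$. Injectivity then follows from the order equivalence by antisymmetry of $\trianglelefteq$: if $V_\lambda=V_\mu$ then both $\mu\trianglelefteq\lambda$ and $\lambda\trianglelefteq\mu$, so $\lambda=\mu$. Surjectivity onto the set $\{V_\lambda\}$ is tautological. So the whole content is the equivalence, and the plan is to read it off from Theorem~\ref{thm:charzeros}, using that the strata $H_\nu$, $\nu\vdash n$, are nonempty and pairwise disjoint (Definition~\ref{def:Hmu}).

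For the forward direction, assume $\mu\trianglelefteq\lambda$ and let $\nu\not\trianglelefteq\lambda$. Then $\nu\not\trianglelefteq\mu$, since otherwise transitivity gives $\nu\trianglelefteq\mu\trianglelefteq\lambda$. Hence the index set $\{\nu:\nu\not\trianglelefteq\lambda\}$ is contained in $\{\nu:\nu\not\trianglelefteq\mu\}$, and Theorem~\ref{thm:charzeros} gives $V_\lambda=\bigcup_{\nu\not\trianglelefteq\lambda}H_\nu\subseteq\bigcup_{\nu\not\trianglelefteq\mu}H_\nu=V_\mu$.

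For the converse, assume $V_\lambda\subseteq V_\mu$ and suppose, for a contradiction, that $\mu\not\trianglelefteq\lambda$. Then $H_\mu\subseteq V_\lambda$ by Theorem~\ref{thm:charzeros}, so $H_\mu\subseteq V_\mu$. Since $H_\mu\neq\emptyset$ and the strata are pairwise disjoint, the description $V_\mu=\bigcup_{\nu\not\trianglelefteq\mu}H_\nu$ forces $\mu\not\trianglelefteq\mu$, contradicting reflexivity. Therefore $\mu\trianglelefteq\lambda$.

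The only point that needs care is that each $H_\nu$ is nonempty for every $m\geq1$. This holds because one can take $\len(\nu)$ distinct vectors in $\C^m$ and repeat them with multiplicities $\nu_1,\dots,\nu_{\len(\nu)}$. With this observation, the disjointness argument in the converse goes through, and no real obstacle remains.
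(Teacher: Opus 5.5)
Your proposal is correct and follows the paper's own proof. The forward direction compares the index sets in Theorem~\ref{thm:charzeros}; the converse tests a point of the nonempty stratum $H_\mu$, which lies outside $V_\mu$ because the strata are disjoint. Your argument by contradiction is just the contrapositive of the paper's direct version, and your derivation of injectivity from antisymmetry is what the paper leaves implicit.
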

\begin{proof}
If $\mu\trianglelefteq\lambda$, then $\{\nu:\nu\not\trianglelefteq\lambda\}\subseteq\{\nu:\nu\not\trianglelefteq\mu\}$, so $V_\lambda\subseteq V_\mu$ by Theorem~\ref{thm:charzeros}. Conversely, suppose that $V_\lambda\subseteq V_\mu$ and pick $P\in H_\mu$. By Theorem~\ref{thm:charzeros}, $P\notin V_\mu$, hence $P\notin V_\lambda$, hence $\mu\trianglelefteq\lambda$. Thus $\mu\trianglelefteq\lambda$ if and only if $V_\lambda\subseteq V_\mu$; in particular the map is injective, and it is surjective by definition.
\end{proof}

We will see in Corollary~\ref{cor:posetideals} that, in contrast, $I_\mu\not\subseteq I_\lambda$ may happen for $\mu\trianglelefteq\lambda$ when $m\geq2$.

\subsection{Dimension of the variety}
Having identified the irreducible components as linear subspaces indexed by minimal types, we can read off the dimension of $V_\lambda$ from the lengths of the corresponding partitions.  We next determine the largest possible such length and characterize equidimensionality.
\begin{lemma}\label{lem:length}
Let $\lambda=(\lambda_1,\ldots,\lambda_\ell)$ and $\mu=(\mu_1,\ldots,\mu_s)$ be partitions of $n$. If $\mu\not\trianglelefteq\lambda$, then $s\leq n-\lambda_1$. Moreover, if $\lambda_2>1$ and $s=n-\lambda_1$, then $\mu=\lambda^\sharp:=(\lambda_1+1,1^{n-\lambda_1-1})$.
\end{lemma}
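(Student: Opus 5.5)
The plan is to argue directly from the definition of dominance. Since $\mu\not\trianglelefteq\lambda$, there is a smallest index $k\geq1$ with $\mu_1+\dots+\mu_k>\lambda_1+\dots+\lambda_k$. The case $k=1$ is the cleanest: then $\mu_1\geq\lambda_1+1$, and the remaining $s-1$ parts of $\mu$ are positive integers summing to $n-\mu_1\leq n-\lambda_1-1$. Hence $s-1\leq n-\lambda_1-1$, so $s\leq n-\lambda_1$. Equality forces $\mu_1=\lambda_1+1$ and all other parts equal to $1$, that is, $\mu=\lambda^\sharp$.

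For $k\geq2$ we use minimality of $k$, which gives $\mu_1\leq\lambda_1$. Then $\mu_k>\lambda_k$; otherwise the partial sum at $k-1$ would already exceed that of $\lambda$. All parts $\mu_1,\dots,\mu_k$ are at least $\mu_k\geq\lambda_k+1\geq2$, since $\lambda_k\geq1$: indeed $k\leq\len(\lambda)$, because otherwise $\sum_{i\leq k}\lambda_i=n$ could not be exceeded. The parts $\mu_{k+1},\dots,\mu_s$ are at least $1$. Counting gives
\[
n\geq 2k+(s-k)=s+k,
\]
so $s\leq n-k$. This is not yet in terms of $\lambda_1$, so the main step is a sharper count. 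We have $\sum_{i\leq k}\mu_i\geq\sum_{i\leq k}\lambda_i+1$, and the rest contributes at least $s-k$, so
\[
n\geq\sum_{i\leq k}\lambda_i+1+(s-k).
\]
Since $k\geq2$ and $\lambda_i\geq1$, we get $\sum_{i\leq k}\lambda_i\geq\lambda_1+(k-1)$, and hence $n\geq\lambda_1+s$. Thus $s\leq n-\lambda_1$ in this case too, which proves the first claim.

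For the equality statement with $k\geq2$, equality forces every inequality above to be tight. First, $\lambda_2=\dots=\lambda_k=1$. Since $\lambda_2>1$ and $k\geq2$, this is impossible. So under the hypothesis $\lambda_2>1$, equality $s=n-\lambda_1$ can only occur when $k=1$, and then $\mu=\lambda^\sharp$ as shown above.

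The only delicate point is the bookkeeping in the case $k\geq2$: we must check that $k\leq\len(\lambda)$, so that each $\lambda_i\geq1$ for $i\leq k$. This holds because the partial sums of $\lambda$ equal $n$ from $\len(\lambda)$ onward and cannot be strictly exceeded. It also accounts for the role of the hypothesis $\lambda_2>1$: for hooks, the case $k\geq2$ can produce other partitions $\mu$ attaining length $n-\lambda_1$.
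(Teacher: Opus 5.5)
Your proof is correct and follows essentially the same route as the paper. The key count $n\geq\sum_{i\leq k}\lambda_i+1+(s-k)\geq\lambda_1+s$ is exactly the paper's estimate. Your equality-case analysis (tightness forces $\lambda_2=\dots=\lambda_k=1$, impossible when $\lambda_2>1$ and $k\geq2$) rephrases the paper's contradiction $\sum_{i\leq k}\lambda_i\geq\lambda_1+\lambda_2+(k-2)\geq\lambda_1+k$.

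The differences are cosmetic. You take the minimal witness $k$, which guarantees $s\geq k$ and replaces the paper's separate case $s\leq k$. Your intermediate bound $s\leq n-k$ is never used.
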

\begin{proof}
Since $\mu\not\trianglelefteq\lambda$, there exists an integer $k\geq1$ such that $\sum_{i=1}^k\mu_i>\sum_{i=1}^k\lambda_i$, and necessarily $k<\ell$. Since $\lambda_2,\ldots,\lambda_k>0$, we have
$\sum_{i=1}^k\lambda_i\geq\lambda_1+k-1$, and 
\begin{equation*}
\sum_{i=1}^k\mu_i\geq\sum_{i=1}^k\lambda_i+1\geq\lambda_1+k.
\end{equation*}
If $s>k$, then $\sum_{i=k+1}^s\mu_i=n-\sum_{i=1}^k\mu_i\leq n-\lambda_1-k$, and since each of the $s-k$ remaining parts is positive,
\begin{equation*}
s-k\leq\sum_{i=k+1}^s\mu_i\leq n-\lambda_1-k,
\end{equation*}
hence $s\leq n-\lambda_1$. If $s\leq k$, since $k<\ell$, the parts $\lambda_2,\ldots,\lambda_{k+1}$ are all positive, so $n=\sum_{i=1}^{\ell}\lambda_i\geq\lambda_1+k$ and $s\leq k\leq n-\lambda_1$. Therefore, in all cases, $s\leq n-\lambda_1$.

Suppose now that $\lambda_2\geq2$, and that $s=n-\lambda_1$. Then $\sum_{i=1}^s(\mu_i-1)=n-s=\lambda_1$, so $\sum_{i=1}^k\mu_i\leq k+\lambda_1$ for every $k$, with equality only if $\mu_i=1$ for all $i>k$. Let $k$ be such that $\sum_{i=1}^k\mu_i>\sum_{i=1}^k\lambda_i$. If $k\geq2$, then $\sum_{i=1}^k\lambda_i\geq\lambda_1+\lambda_2+(k-2)\geq\lambda_1+k\geq\sum_{i=1}^k\mu_i$, a contradiction. Hence $k=1$, so $\lambda_1<\mu_1\leq\lambda_1+1$, which forces $\mu_1=\lambda_1+1$ and $\mu_i=1$ for $i>1$, that is, $\mu=\lambda^\sharp$.
\end{proof}

For hook partitions, $\mu\not\trianglelefteq\lambda$ depends only on $\len(\mu)$, and $\mathcal{A}_\lambda$ can be easily described. We will use this in Section~\ref{sec:radicality}.
\begin{lemma}\label{lem:lenhook}
Let $\lambda$ be a hook partition, and $\mu$ a partition of general shape. Then $\mu\not\trianglelefteq\lambda$ if and only if $\len(\mu)<\len(\lambda)$.
\end{lemma}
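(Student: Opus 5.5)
We argue directly from the definition of dominance. Write $\lambda=(a,1^{n-a})$, so $\len(\lambda)=n-a+1$ and the partial sums of $\lambda$ are $\sum_{i=1}^k\lambda_i=a+k-1$ for $1\leq k\leq\len(\lambda)$ and $n$ for $k\geq\len(\lambda)$. Let $\mu=(\mu_1,\dots,\mu_s)\vdash n$, and prove the contrapositive in both directions.

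\emph{Case $s\geq\len(\lambda)$: we show $\mu\trianglelefteq\lambda$.} For $k\geq\len(\lambda)$ there is nothing to check, since the partial sum of $\lambda$ is already $n$. For $k<\len(\lambda)\leq s$, the parts $\mu_{k+1},\dots,\mu_s$ are positive, so
\begin{equation*}
\sum_{i=1}^k\mu_i=n-\sum_{i=k+1}^s\mu_i\leq n-(s-k)\leq n-(\len(\lambda)-k)=n-(n-a+1)+k=a+k-1=\sum_{i=1}^k\lambda_i.
\end{equation*}
Hence $\mu\trianglelefteq\lambda$.

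\emph{Case $s<\len(\lambda)$: we show $\mu\not\trianglelefteq\lambda$.} Take $k=s$. Then $\sum_{i=1}^s\mu_i=n$, while $\sum_{i=1}^s\lambda_i=a+s-1<a+\len(\lambda)-1=n$, because $s<\len(\lambda)$. So the dominance inequality fails at $k=s$.

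Combining the two cases gives $\mu\not\trianglelefteq\lambda$ if and only if $\len(\mu)<\len(\lambda)$. There is no real obstacle; the only care needed is the bookkeeping of the partial sums of the hook, in particular that they equal $a+k-1$ exactly up to $k=\len(\lambda)$. As a consequence, $\mathcal{A}_\lambda$ consists of all $\mu$ with $\len(\mu)=\len(\lambda)-1$, since merging parts decreases length and every partition of smaller length is refined by one of length $\len(\lambda)-1$.
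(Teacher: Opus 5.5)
Your proof is correct and follows essentially the same route as the paper's. For one direction you evaluate the partial sums at $k=\len(\mu)$. For the other you use the bound that each of the remaining parts of $\mu$ is at least $1$, together with $\sum_{i\leq k}\lambda_i=a+k-1$ for $k\leq\len(\lambda)$; you state this direction as a contrapositive, whereas the paper derives $\len(\mu)\leq k+n-\sum_{i\leq k}\mu_i<\len(\lambda)$ directly from a violated inequality.
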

\begin{proof}
Assume $\len(\mu)<\len(\lambda)$, then $\sum_{i=1}^{\len(\mu)}\mu_i=n>\sum_{i=1}^{\len(\mu)}\lambda_i$, therefore $\mu\not\trianglelefteq\lambda$. Assume $\mu\not\trianglelefteq\lambda$, then $\sum_{i=1}^{k}\mu_i>\sum_{i=1}^{k}\lambda_i$ for some $k$, $1\leq k\leq \len(\lambda)$, then $n-\sum_{i=1}^{k}\mu_i<n-\sum_{i=1}^{k}\lambda_i$. Lastly, $\len(\mu)\leq k+n-\sum_{i=1}^{k}\mu_i<\len(\lambda)=k+n-\sum_{i=1}^{k}\lambda_i$.
\end{proof}

\begin{lemma}\label{lem:alambdahooks}
Let $\lambda$ be a hook partition, then $\mathcal{A}_\lambda=\{\mu,\,\len(\mu)=\len(\lambda)-1\}$.
\end{lemma}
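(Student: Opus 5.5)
By Lemma~\ref{lem:lenhook}, the set $\{\mu\vdash n:\mu\not\trianglelefteq\lambda\}$ is exactly $\{\mu\vdash n:\len(\mu)<\len(\lambda)\}$. So the plan is to determine the minimal elements of this set with respect to the refinement order $\prec$, using only how length behaves under refinement.

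The key observation is this: if $\nu\prec\mu$ with $\nu\neq\mu$, then $\mu$ is obtained from $\nu$ by merging parts. Hence the surjection $\pi$ of Definition~\ref{def:orders} is not injective, and $\len(\nu)>\len(\mu)$. Put differently, refinement strictly increases the length unless the two partitions are equal.

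First inclusion: $\{\mu:\len(\mu)=\len(\lambda)-1\}\subseteq\mathcal{A}_\lambda$. Let $\len(\mu)=\len(\lambda)-1$. Then $\mu$ lies in the set, and any $\nu\prec\mu$ with $\nu\neq\mu$ has $\len(\nu)\geq\len(\lambda)$. Such a $\nu$ is therefore not in the set, so $\mu$ is minimal.

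Second inclusion. Let $\mu$ be in the set with $\len(\mu)<\len(\lambda)-1$. I will exhibit a strictly finer element of the set by splitting one part. Since $\len(\mu)<\len(\lambda)\leq n$, some part satisfies $\mu_j\geq2$. Split it into $1$ and $\mu_j-1$ and reorder; the result $\nu$ satisfies $\nu\prec\mu$, $\nu\neq\mu$, and $\len(\nu)=\len(\mu)+1<\len(\lambda)$. Thus $\nu$ lies in the set, and $\mu$ is not minimal.

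The only point needing care is the existence of a part $\mu_j\geq2$, which follows from $\len(\mu)<n$. For the degenerate case $\lambda=(n)$ the set is empty and so is $\{\mu:\len(\mu)=0\}$, so the statement holds there trivially. No step is a real obstacle.
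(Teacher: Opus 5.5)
Your proposal is correct and follows essentially the same route as the paper: reduce via Lemma~\ref{lem:lenhook} to partitions of length $<\len(\lambda)$, show that those of length $\len(\lambda)-1$ are minimal because proper refinement strictly increases length, and show that shorter ones are not minimal by splitting a part $\mu_j\geq2$. Your explicit treatment of the degenerate case $\lambda=(n)$ is a small addition that the paper leaves implicit.
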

\begin{proof}
Let $\lambda=(a,1^{n-a})$, $a>0$, so $\len(\lambda)=n-a+1$. By Lemma~\ref{lem:lenhook}, $\{\mu\not\trianglelefteq\lambda\}=\{\mu,\,\len(\mu)<n-a+1\}$. If $\len(\mu)<n-a<n$, then $\mu_i\geq 2$ for some $i$. Splitting $\mu_i$, we get a new partition $\mu'$ with $\mu'\prec\mu$ and $\len(\mu')<n-a+1$, therefore $\mu$ is not minimal. Let $\mu$ be such that $\len(\mu)=n-a$, and $\mu'$ be such that $\mu'\prec\mu$, $\mu'\neq\mu$, then $\len(\mu')>\len(\mu)=n-a$, and $\mu'\trianglelefteq\lambda$, therefore $\mu$ is minimal in $\{\mu\not\trianglelefteq\lambda\}$.
\end{proof}

\begin{lemma}\label{lem:lambdasharp}
Let $\lambda\vdash n$, $\lambda\neq(n)$, and let $\lambda^\sharp=(\lambda_1+1,1^{n-\lambda_1-1})$. Then $\lambda^\sharp\in\mathcal{A}_\lambda$ and $\len(\lambda^\sharp)=n-\lambda_1$.
\end{lemma}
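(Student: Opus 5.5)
We have to prove two facts: that $\lambda^\sharp\not\trianglelefteq\lambda$, and that $\lambda^\sharp$ is minimal for the refinement order among partitions not dominated by $\lambda$. The length claim is immediate from the definition: $\lambda^\sharp=(\lambda_1+1,1^{n-\lambda_1-1})$ has one part $\lambda_1+1$ and $n-\lambda_1-1$ parts equal to $1$, so $\len(\lambda^\sharp)=n-\lambda_1$. Since $\lambda\neq(n)$, we have $\lambda_1\leq n-1$, so $\lambda_1+1\leq n$ and $\lambda^\sharp$ is indeed a partition of $n$.

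For non-dominance, compare the first partial sums: $\lambda^\sharp_1=\lambda_1+1>\lambda_1$. Hence $\lambda^\sharp\not\trianglelefteq\lambda$.

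For minimality, suppose $\mu\prec\lambda^\sharp$ with $\mu\neq\lambda^\sharp$. Then $\lambda^\sharp$ is obtained from $\mu$ by merging parts. Parts of $\lambda^\sharp$ equal to $1$ cannot arise from a nontrivial merge, so $\mu$ must arise by splitting the part $\lambda_1+1$ into at least two positive pieces, the other parts staying $1$. In particular every part of $\mu$ is at most $\lambda_1$, and $\len(\mu)>\len(\lambda^\sharp)$.

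The main step is to show that such a $\mu$ satisfies $\mu\trianglelefteq\lambda$, so that it lies outside the set $\{\nu:\nu\not\trianglelefteq\lambda\}$. We verify $\sum_{i\le k}\mu_i\le\sum_{i\le k}\lambda_i$ for every $k$.

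For $k=1$ this holds because $\mu_1\leq\lambda_1$.

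For $k\geq2$, use the fact that $\mu$ has exactly one part of size $\geq2$ coming from each piece of the split. More directly, $\mu$ has $\len(\mu)\geq n-\lambda_1+1$ parts summing to $n$. Subtracting $1$ from each part gives
\[
\sum_{i}(\mu_i-1)=n-\len(\mu)\le\lambda_1-1,
\]
and since every $\mu_i-1\geq0$, each partial sum satisfies $\sum_{i\le k}(\mu_i-1)\le\lambda_1-1$. Therefore
\[
\sum_{i\le k}\mu_i\le k+\lambda_1-1 .
\]
On the other side, for $k\le\len(\lambda)$ we have $\sum_{i\le k}\lambda_i\ge\lambda_1+(k-1)$, since each of $\lambda_2,\dots,\lambda_k$ is at least $1$. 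This gives the required inequality. For $k>\len(\lambda)$ the right-hand side equals $n$, which trivially bounds the left-hand side.

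This yields $\mu\trianglelefteq\lambda$, so $\lambda^\sharp$ is minimal and $\lambda^\sharp\in\mathcal{A}_\lambda$. The only delicate point is the partial-sum bound, which is essentially the counting already used in the proof of Lemma~\ref{lem:length}.
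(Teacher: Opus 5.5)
Your proof is correct and follows essentially the paper's argument. Non-dominance comes from $\lambda^\sharp_1=\lambda_1+1>\lambda_1$, and minimality holds because any strict refinement $\mu$ of $\lambda^\sharp$ has $\len(\mu)>n-\lambda_1$ and is therefore dominated by $\lambda$. The paper obtains this last implication by citing Lemma~\ref{lem:length}, whereas you re-derive it directly via $\sum_{i\le k}\mu_i\le k+\lambda_1-1\le\sum_{i\le k}\lambda_i$. The stray sentence claiming $\mu$ has ``exactly one part of size $\geq2$ coming from each piece of the split'' is false in general and is never used, so you should delete it.
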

\begin{proof}
We have $\lambda^\sharp\not\trianglelefteq\lambda$, since $\lambda^\sharp_1=\lambda_1+1>\lambda_1$, and clearly
$\len(\lambda^\sharp)=n-\lambda_1$. If $\mu\prec\lambda^\sharp$ with $\mu\neq\lambda^\sharp$, then $\len(\mu)>n-\lambda_1$, so $\mu\trianglelefteq\lambda$ by Lemma~\ref{lem:length}. Hence $\lambda^\sharp$ is minimal in $\{\mu\not\trianglelefteq\lambda\}$, that is, $\lambda^\sharp\in\mathcal{A}_\lambda$.
\end{proof}

\begin{prop}\label{prop:dimension}
Let $\lambda\vdash n$, $\lambda\neq(n)$. Then $\dim V_\lambda=m(n-\lambda_1)$.
\end{prop}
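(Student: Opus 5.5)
By Corollary~\ref{cor:linearspaces}, $V_\lambda$ is the finite union of the linear subspaces $W_\Pi$ with $\sh(\Pi)\in\mathcal{A}_\lambda$, and $\dim W_\Pi=m\len(\sh(\Pi))$. The dimension of a finite union of irreducible closed sets is the maximum of their dimensions. The proof therefore reduces to showing
\[
\max_{\mu\in\mathcal{A}_\lambda}\len(\mu)=n-\lambda_1 .
\]

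For the upper bound, every $\mu\in\mathcal{A}_\lambda$ satisfies $\mu\not\trianglelefteq\lambda$. The first part of Lemma~\ref{lem:length} then gives $\len(\mu)\leq n-\lambda_1$, so every component has dimension at most $m(n-\lambda_1)$. We do not need the sharper second part of that lemma, because only the maximum matters.

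For the lower bound, the hypothesis $\lambda\neq(n)$ lets us apply Lemma~\ref{lem:lambdasharp}. It gives $\lambda^\sharp=(\lambda_1+1,1^{n-\lambda_1-1})\in\mathcal{A}_\lambda$ with $\len(\lambda^\sharp)=n-\lambda_1$. Since $\SP_{\lambda^\sharp}$ is nonempty, any set partition $\Pi$ of shape $\lambda^\sharp$ yields a component $W_\Pi$ of $V_\lambda$. By Lemma~\ref{lem:strata}(2), this component has dimension $m(n-\lambda_1)$.

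Combining the two bounds gives $\dim V_\lambda=m(n-\lambda_1)$. No step is a real obstacle, since both directions are direct applications of earlier lemmas. The only point needing care is that the upper bound must hold for every element of $\mathcal{A}_\lambda$, not just for $\lambda^\sharp$, and Lemma~\ref{lem:length} covers exactly that.
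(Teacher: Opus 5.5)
Your proposal is correct and follows the paper's proof exactly: it reduces via Corollary~\ref{cor:linearspaces} to computing $\max\{\len(\mu)\mid\mu\in\mathcal{A}_\lambda\}$, bounds this above using Lemma~\ref{lem:length}, and attains the bound with $\lambda^\sharp$ from Lemma~\ref{lem:lambdasharp}. You are also right that only the first part of Lemma~\ref{lem:length} is needed here.
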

\begin{proof}
By Corollary~\ref{cor:linearspaces}, the irreducible components of $V_\lambda$ are $W_\Pi$ with $\sh(\Pi)\in\mathcal{A}_\lambda$, which have dimension $m\len(\sh(\Pi))$, so $\dim V_\lambda=m\max\{\len(\mu)\,\mid\,\mu\in\mathcal{A}_\lambda\}$, and by Lemmas~\ref{lem:length} and~\ref{lem:lambdasharp}, $m\max\{\len(\mu)\,\mid\,\mu\in\mathcal{A}_\lambda\}=m\len(\lambda^\sharp)=m(n-\lambda_1)$.
\end{proof}
\begin{prop}\label{prop:equidim}
Let $\lambda=(\lambda_1,\dots,\lambda_\ell)\vdash n$, $\lambda\neq(n)$. Then the variety $V_\lambda$ is equidimensional in exactly the following cases:
\begin{enumerate}
    \item $\lambda_2=1$,
    \item $\lambda_{\ell-1}=\lambda_1$.
\end{enumerate}
\end{prop}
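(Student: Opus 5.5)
By Corollary~\ref{cor:linearspaces}, the irreducible components of $V_\lambda$ are the $W_\Pi$ with $\sh(\Pi)\in\mathcal{A}_\lambda$, of dimension $m\len(\sh(\Pi))$. So $V_\lambda$ is equidimensional exactly when all elements of $\mathcal{A}_\lambda$ have the same length. By Lemma~\ref{lem:lambdasharp}, $\lambda^\sharp\in\mathcal{A}_\lambda$ with $\len(\lambda^\sharp)=n-\lambda_1$. The statement therefore reduces to a purely combinatorial one: every $\mu\in\mathcal{A}_\lambda$ has length $n-\lambda_1$ if and only if $\lambda_2=1$ or $\lambda_{\ell-1}=\lambda_1$.

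For sufficiency, suppose first $\lambda_2=1$, so $\lambda$ is a hook. Lemma~\ref{lem:alambdahooks} gives $\mathcal{A}_\lambda=\{\mu:\len(\mu)=\len(\lambda)-1=n-\lambda_1\}$, and the claim follows. Now suppose $\lambda_{\ell-1}=\lambda_1=:a$, so $\lambda=(a^{\ell-1},b)$ with $1\leq b\leq a$ and $n=(\ell-1)a+b$. If $\mu\not\trianglelefteq\lambda$, some partial sum satisfies $\sum_{i\leq k}\mu_i>ka$ with $k\leq\ell-1$, so $\mu_1>a$. Conversely, $\mu_1>a$ implies $\mu\not\trianglelefteq\lambda$. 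Hence $\{\mu\not\trianglelefteq\lambda\}=\{\mu:\mu_1\geq a+1\}$. A minimal element under refinement must be $(a+1,1^{n-a-1})=\lambda^\sharp$: if $\mu_1\geq a+1$, split $\mu_1$ into $a+1$ and ones, and split every other part into ones, which yields a refinement of $\mu$ that still has first part at least $a+1$. So $\mathcal{A}_\lambda=\{\lambda^\sharp\}$ and $V_\lambda$ is irreducible, in particular equidimensional.

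For necessity, assume $\lambda_2\geq2$ and $\lambda_{\ell-1}<\lambda_1$. By Lemma~\ref{lem:length}, $\lambda^\sharp$ is the only element of $\mathcal{A}_\lambda$ of length $n-\lambda_1$. It therefore suffices to exhibit some $\mu\not\trianglelefteq\lambda$ that is not refined by $\lambda^\sharp$, i.e.\ has no part of size at least $\lambda_1+1$. Some element of $\mathcal{A}_\lambda$ refines such a $\mu$, and that element is not $\lambda^\sharp$ (refinements only merge parts, so $\lambda^\sharp\prec\mu$ would force a part $\geq\lambda_1+1$ in $\mu$). It thus has length strictly less than $n-\lambda_1$, and $V_\lambda$ is not equidimensional.

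Constructing $\mu$ is the key step. Let $j$ be the first index with $\lambda_{j}<\lambda_1$; then $2\leq j\leq\ell-1$. Set $\mu_i=\lambda_1$ for $i<j$, $\mu_j=\lambda_j+1$, and fill the rest with parts $\leq\lambda_1$ summing to $n-\sum_{i\leq j}\lambda_i-1$. This remainder is at least $\lambda_\ell-1\geq0$, since $j\leq\ell-1$, so it can be filled, for example by ones. Then $\sum_{i\leq j}\mu_i>\sum_{i\leq j}\lambda_i$, so $\mu\not\trianglelefteq\lambda$, while all parts are at most $\lambda_1$ because $\lambda_j+1\leq\lambda_1$. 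One must also check that $\mu$ is a genuine partition, which holds since $\mu_j\leq\lambda_1$ and the remaining parts are ones. The main obstacle is keeping this construction honest in edge cases; the condition $j\leq\ell-1$ is exactly what guarantees room to increase $\lambda_j$, and it is where $\lambda_{\ell-1}<\lambda_1$ is used.
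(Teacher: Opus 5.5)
Your proposal is correct and follows essentially the same route as the paper. Both arguments reduce, via Corollary~\ref{cor:linearspaces} and Lemma~\ref{lem:lambdasharp}, to showing that every element of $\mathcal{A}_\lambda$ has length $n-\lambda_1$; both handle hooks with Lemma~\ref{lem:alambdahooks}, show $\mathcal{A}_\lambda=\{\lambda^\sharp\}$ when $\lambda_{\ell-1}=\lambda_1$, and for necessity use the partition $(\lambda_1,\dots,\lambda_{j-1},\lambda_j+1,1,\dots,1)$ together with the uniqueness part of Lemma~\ref{lem:length}.
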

\begin{proof}
Proposition~\ref{prop:dimension} shows that $\dim V_\lambda=m(n-\lambda_1)$. By Corollary~\ref{cor:linearspaces}, the irreducible components of $V_\lambda$ are $W_\Pi$ with $\sh(\Pi)\in\mathcal{A}_\lambda$, which have dimension $m\len(\sh(\Pi))$, so $V_\lambda$ is equidimensional if and only if for all $\nu\in\mathcal{A}_\lambda$, $\len(\nu)=n-\lambda_1$.
\begin{enumerate}
    \item Let $\lambda_2=1$. By Lemma~\ref{lem:alambdahooks}, every $\nu\in\mathcal{A}_\lambda$ has $\len(\nu)=\len(\lambda)-1=n-\lambda_1$, so $V_\lambda$ is equidimensional.
    \item Let $\lambda_2>1$. Assume first that $\lambda_{\ell-1}<\lambda_1$, and let $t$ be minimal such that $\lambda_t<\lambda_1$. Then $\nu=(\lambda_1,\cdots,\lambda_{t-1},\lambda_t+1,1,\cdots,1)\not\trianglelefteq\lambda$, since $\sum_{i=1}^t\nu_i=\sum_{i=1}^t\lambda_i+1$. Let $\nu'\in\mathcal{A}_\lambda$ with $\nu'\prec\nu$. Then $\nu'_1\leq\nu_1=\lambda_1$, so $\nu'\neq\lambda^\sharp$, and $\len(\nu')<n-\lambda_1$ by Lemma~\ref{lem:length}, so that $V_\lambda$ is not equidimensional. Assume now that $\lambda_{\ell-1}=\lambda_1$, that is, $\lambda_1=\cdots=\lambda_{\ell-1}\geq\lambda_\ell\geq1$. Let $\nu\not\trianglelefteq\lambda$, and let $t$ be such that
    \begin{equation*}
    \sum_{i=1}^t\lambda_i<\sum_{i=1}^t\nu_i.
    \end{equation*}
    As always, $t<\ell$ necessarily. If $\nu_1\leq\lambda_1$, then
    \begin{equation*}
    t\nu_1\geq\sum_{i=1}^t\nu_i>\sum_{i=1}^t\lambda_i=t\lambda_1,
    \end{equation*}
    a contradiction, hence $\nu_1>\lambda_1$. Then, setting $s_0:=0$ and $s_j:=\nu_1+\dots+\nu_j-\lambda_1$ for $j\geq1$, we can write
    \begin{equation*}
    \nu_j=\sum_{i=s_{j-1}+1}^{s_j}\lambda^\sharp_i\qquad\text{for all }j\geq1,
    \end{equation*}
    so that $\lambda^\sharp\prec\nu$. Since this holds for every $\nu\not\trianglelefteq\lambda$, and $\lambda^\sharp\in\mathcal{A}_\lambda$ by Lemma~\ref{lem:lambdasharp}, we get $\mathcal{A}_\lambda=\{\lambda^\sharp\}$, i.e.\ $V_\lambda$ is equidimensional.
\end{enumerate}
\end{proof}

\section{Radicality of Specht ideals}\label{sec:radicality}

In this section we study for which $\lambda$, $m$ and $n$ the ideal $I_\lambda\subseteq\mathcal{R}_{m,n}$ is radical. For $m=1$, Specht ideals are radical for all $\lambda$~\cite{Murai2022}. In the diagonal setting, radicality depends both on the shape of $\lambda$ and on the number of sets of variables  $m$.

\subsection{Dependence on the number of sets of variables}
To compare $I_\lambda$ and $V_\lambda$ for different $m$, we need maps that add or forget sets of variables.

\begin{definition}\label{def:maps}
Let $m'<m$, $\mathbf{v}=(v_{m'+1},\dots,v_m)^T\in\C^{m-m'}$ and consider the maps
\begin{equation*}
\begin{aligned}
\ev_\mathbf{v}:\mathcal{R}_{m,n}&\longrightarrow\mathcal{R}_{m',n}, &\quad 
\iota_{\mathbf{v}}:\C^{m'}&\longrightarrow\C^m,\\
x_{r,i}&\longmapsto\begin{cases}x_{r,i}, & r\leq m',\\ v_r, & r>m',\end{cases} &\quad
\mathbf{c}&\longmapsto(c_1,\dots,c_{m'},v_{m'+1},\dots,v_m)^T.
\end{aligned}
\end{equation*}
Extend $\iota_{\mathbf{v}}$ to points $P=(\mathbf{u}_{\bullet,1},\dots,\mathbf{u}_{\bullet,n})
\in\C^{m'n}$ by 
$\iota_{\mathbf{v}}(P):=(\iota_{\mathbf{v}}(\mathbf{u}_{\bullet,1}),\dots,\iota_{\mathbf{v}}(\mathbf{u}_{\bullet,n}))\in\C^{mn}$.
Note that $p(\iota_{\mathbf{v}}(P))=\ev_{\mathbf{v}}(p)(P)$ for all $p\in\mathcal{R}_{m,n}$ and 
$P\in\C^{m'n}$. Define the truncation $\rho:\C^{m}\to\C^{m'}$, $(c_1,\dots,c_m)^T\mapsto(c_1,\dots,c_{m'})^T$, and extend it to points $P=(\mathbf{u}_{\bullet,1},\dots,\mathbf{u}_{\bullet,n})\in\C^{mn}$  by $\rho(P):=(\rho(\mathbf{u}_{\bullet,1}),\dots,\rho(\mathbf{u}_{\bullet,n}))\in\C^{m'n}$. Note that $\rho\circ\iota_{\mathbf{v}}=\id_{\C^{m'n}}$, and that for all $f\in\mathcal{R}_{m',n}$ and $P\in\C^{mn}$, if we regard $f$ as a polynomial in $\mathcal{R}_{m,n}$, $\mathcal{R}_{m',n}\subseteq\mathcal{R}_{m,n}$,  $f(P)=f(\rho(P))$.

\end{definition}

When the number of sets of variables needs to be emphasized, we write $H_\mu^{(m)}$ for the set of points in $\C^{mn}$ of type $\mu$, $I_\lambda^{(m)}$ for the Specht ideal $I_\lambda\subseteq\mathcal{R}_{m,n}$ and $V_\lambda^{(m)}\subseteq\C^{mn}$ for its zero set. 
\begin{lemma}\label{lem:intersections}
Let $m'<m$, then
\begin{enumerate}
     \item $I_\lambda^{(m')}=I_\lambda^{(m)}\cap\mathcal{R}_{m',n}$,
     \item $I(V_\lambda^{(m')})=I(V_\lambda^{(m)})\cap\mathcal{R}_{m',n}$.
\end{enumerate}
\end{lemma}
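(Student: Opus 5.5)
Both parts follow the same pattern: the inclusion $\subseteq$ is formal, and the reverse inclusion comes from specializing the extra rows. We take $\mathcal{R}_{m',n}\subseteq\mathcal{R}_{m,n}$ as the subring of polynomials in $\mathbf{x}_1,\dots,\mathbf{x}_{m'}$ and use the map $\ev_{\mathbf{v}}$ with $\mathbf{v}=0$. This map is a retraction of rings onto $\mathcal{R}_{m',n}$, and it commutes with the $S_n$-action, since it sends $x_{r,i}$ either to $x_{r,i}$ or to $0$ independently of $i$.

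For (1), the inclusion $I_\lambda^{(m')}\subseteq I_\lambda^{(m)}\cap\mathcal{R}_{m',n}$ holds because the $\lambda$-isotypic component of $\mathcal{R}_{m',n}$ lies in that of $\mathcal{R}_{m,n}$. Indeed, the inclusion $\mathcal{R}_{m',n}\hookrightarrow\mathcal{R}_{m,n}$ is $S_n$-equivariant, so an irreducible submodule isomorphic to $S^\lambda$ stays one. Equivalently, one can use Proposition~\ref{prop:genIlambda} and note that monomials of $\mathcal{R}_{m',n}$ are monomials of $\mathcal{R}_{m,n}$. For the converse, let $f\in I_\lambda^{(m)}\cap\mathcal{R}_{m',n}$. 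By Proposition~\ref{prop:genIlambda} we can write $f=\sum_k g_k\,\varepsilon_{T_k}(p_k)$ with $p_k$ monomials of $\mathcal{R}_{m,n}$. Applying $\ev_0$ fixes $f$ and gives
\[
f=\sum_k \ev_0(g_k)\,\varepsilon_{T_k}(\ev_0(p_k)).
\]
Each $\ev_0(p_k)$ is either $0$ or a monomial in $\mathcal{R}_{m',n}$. Hence every term lies in $I_\lambda^{(m')}$, again by Proposition~\ref{prop:genIlambda} (or Proposition~\ref{prop:etinvl}). The only point to check is that $\ev_0$ commutes with $\varepsilon_T$, and this follows from the $S_n$-equivariance noted above.

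For (2) we use the point maps $\iota_{\mathbf{v}}$ and $\rho$ together with Theorem~\ref{thm:charzeros}. The key fact is that both maps preserve or appropriately control types. The map $\iota_{\mathbf{v}}$ preserves equalities among columns in both directions, so $\iota_{\mathbf{v}}(H_\mu^{(m')})\subseteq H_\mu^{(m)}$. The map $\rho$ can only merge columns, so $\rho(H_\nu^{(m)})\subseteq H_\mu^{(m')}$ for some $\mu$ with $\nu\prec\mu$. Since refinement implies dominance (Remark~\ref{rem:refinementdominance}), $\nu\not\trianglelefteq\lambda$ gives $\mu\not\trianglelefteq\lambda$. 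Therefore $\rho(V_\lambda^{(m)})\subseteq V_\lambda^{(m')}$ and $\iota_0(V_\lambda^{(m')})\subseteq V_\lambda^{(m)}$.

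With these two inclusions, both directions of (2) are immediate. If $f\in I(V_\lambda^{(m')})$ and $P\in V_\lambda^{(m)}$, then $f(P)=f(\rho(P))=0$, so $f\in I(V_\lambda^{(m)})$. Conversely, if $f\in I(V_\lambda^{(m)})\cap\mathcal{R}_{m',n}$ and $Q\in V_\lambda^{(m')}$, then $f(Q)=f(\rho(\iota_0(Q)))=f(\iota_0(Q))=0$. The main subtlety is the type bookkeeping under $\rho$, namely that collapsing columns coarsens the type and hence moves it up in dominance. Everything else is formal.
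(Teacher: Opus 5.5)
Your proposal is correct and follows the paper's proof essentially step by step. For (1) you use the generators from Proposition~\ref{prop:genIlambda} together with the $S_n$-equivariant retraction $\ev_{\mathbf{0}}$. For (2) you use Theorem~\ref{thm:charzeros}, with $\rho$ coarsening types (refinement implies dominance) and $\iota_{\mathbf{0}}$ preserving them.
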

\begin{proof}
\begin{enumerate}
    \item On the one hand, since $\mathcal{R}_{m',n}\subseteq\mathcal{R}_{m,n}$, by Proposition~\ref{prop:genIlambda}, we have
\begin{equation*}
I_\lambda^{(m')}=\langle\varepsilon_T(p)\mid p\in\Mon(\mathcal{R}_{m',n}),\,T\in\Tab(\lambda)\rangle \subseteq\langle\varepsilon_T(p)\mid p\in\Mon(\mathcal{R}_{m,n}),\,T\in\Tab(\lambda)\rangle=I_\lambda^{(m)},
\end{equation*}
and $I_\lambda^{(m')}\subseteq\mathcal{R}_{m',n}$ by definition. On the other hand, for all $\sigma\in S_n$, $p\in\mathcal{R}_{m,n}$, $\ev_{\mathbf{0}}(\sigma\cdot p)=\sigma\cdot\ev_{\mathbf{0}}(p)$, and $\ev_{\mathbf{0}}(\varepsilon_T(p))=\varepsilon_T(\ev_{\mathbf{0}}(p))$, then by Proposition~\ref{prop:genIlambda}
\begin{equation*}
\ev_{\mathbf{0}}(I_\lambda^{(m)})=\langle\varepsilon_T(\ev_{\mathbf{0}}(p))\mid p\in\Mon(\mathcal{R}_{m,n}),\,T\in\Tab(\lambda)\rangle \subseteq\langle\varepsilon_T(p)\mid p\in\Mon(\mathcal{R}_{m',n}),\,T\in\Tab(\lambda)\rangle=I_\lambda^{(m')}.
\end{equation*}
Let $f\in I_\lambda^{(m)}\cap\mathcal{R}_{m',n}$, since $f\in\mathcal{R}_{m',n}$, we have $\ev_{\mathbf{0}}(f)=f$, therefore $f\in\ev_{\mathbf{0}}(I_\lambda^{(m)})\subseteq I_\lambda^{(m')}$.
    \item Let $f\in I(V_\lambda^{(m')})$, that is $f(P')=0$ for all $P'\in H_\mu^{(m')}$ with $\mu\not\trianglelefteq\lambda$. Let $P\in H_\nu^{(m)}$ with $\nu\not\trianglelefteq\lambda$, and consider the truncated point $\rho(P)$, then $\rho(P)\in H_\mu^{(m')}$ with $\nu\prec\mu$, hence $\nu\trianglelefteq\mu$, then $\mu\not\trianglelefteq\lambda$, and $f(P)=f(\rho(P))=0$ since $f\in\mathcal{R}_{m',n}$, so $f\in I(V_\lambda^{(m)})$.
    
    On the other hand, since $P\in H_\mu^{(m')}$ if and only if $\iota_{\mathbf{0}}(P)\in H_\mu^{(m)}$, and by Theorem~\ref{thm:charzeros}, we have
    \begin{equation*}
     \iota_{\mathbf{0}}(V_\lambda^{(m')})\subseteq V_\lambda^{(m)}\subseteq \C^{mn}.
    \end{equation*}
    Let $f\in I(V_\lambda^{(m)})\cap\mathcal{R}_{m',n}$, $P'\in V_\lambda^{(m')}$, since $f\in \mathcal{R}_{m',n}$, $f(P')=f(\iota_{\mathbf{0}}(P'))=0$, and $\iota_{\mathbf{0}}(P')\in V_\lambda^{(m)}$, therefore $f\in I(V_\lambda^{(m')})$.
\end{enumerate}
\end{proof}
\begin{prop}\label{prop:bound}
Let $\lambda\vdash n$.
\begin{enumerate}
    \item If $I_\lambda\subseteq\mathcal{R}_{m,n}$ is radical, then $I_\lambda\subseteq\mathcal{R}_{m',n}$ is radical for all $m'<m$.
    \item If $I_\lambda\subseteq\mathcal{R}_{m,n}$ is not radical, then $I_\lambda\subseteq\mathcal{R}_{m',n}$ is not radical for all $m'>m$.
    \end{enumerate}
\end{prop}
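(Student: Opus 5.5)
Statement (2) is the contrapositive of (1) with the roles of $m$ and $m'$ swapped. So the plan is to prove (1) and deduce (2) immediately. The whole argument rests on Lemma~\ref{lem:intersections}, which says that both $I_\lambda$ and $I(V_\lambda)$ behave well under intersection with the subring $\mathcal{R}_{m',n}\subseteq\mathcal{R}_{m,n}$.

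For (1), assume $I_\lambda^{(m)}$ is radical, and take $m'<m$. By the Nullstellensatz, being radical means $I_\lambda^{(m)}=I(V_\lambda^{(m)})$; the ground field $\C$ is algebraically closed, so this applies. We must show $I(V_\lambda^{(m')})\subseteq I_\lambda^{(m')}$, since the reverse inclusion always holds. Combining the two parts of Lemma~\ref{lem:intersections} gives
\begin{equation*}
I(V_\lambda^{(m')})=I(V_\lambda^{(m)})\cap\mathcal{R}_{m',n}=I_\lambda^{(m)}\cap\mathcal{R}_{m',n}=I_\lambda^{(m')}.
\end{equation*}
Hence $I_\lambda^{(m')}=\rad(I_\lambda^{(m')})$.

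As an alternative route, one could avoid part (2) of the lemma. Since $\mathcal{R}_{m,n}$ is a polynomial extension of $\mathcal{R}_{m',n}$, we have $\rad(J)\cap\mathcal{R}_{m',n}=\rad(J\cap\mathcal{R}_{m',n})$ for any ideal $J$. Concretely, if $f\in\mathcal{R}_{m',n}$ satisfies $f^k\in I_\lambda^{(m')}\subseteq I_\lambda^{(m)}$, then $f\in I_\lambda^{(m)}\cap\mathcal{R}_{m',n}=I_\lambda^{(m')}$ by part (1) of the lemma. This direct radical-contraction argument is perhaps the cleanest, and I would present it.

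For (2), suppose $I_\lambda^{(m)}$ is not radical and $m'>m$. If $I_\lambda^{(m')}$ were radical, then (1) applied to the pair $m<m'$ would make $I_\lambda^{(m)}$ radical, which is a contradiction. There is no real obstacle here. The only point needing care is that the contraction identity $I_\lambda^{(m')}=I_\lambda^{(m)}\cap\mathcal{R}_{m',n}$ is used in the right direction, and that is exactly what Lemma~\ref{lem:intersections}(1) supplies.
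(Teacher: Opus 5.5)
Your first argument is exactly the paper's proof: assume $I_\lambda^{(m)}=I(V_\lambda^{(m)})$, intersect both sides with $\mathcal{R}_{m',n}$, and apply both parts of Lemma~\ref{lem:intersections}. Part (2) is then the contrapositive, as you say.

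The alternative you say you would present is also correct, and it is genuinely more economical. Suppose $f\in\mathcal{R}_{m',n}$ and $f^k\in I_\lambda^{(m')}\subseteq I_\lambda^{(m)}$. Radicality of $I_\lambda^{(m)}$ gives $f\in I_\lambda^{(m)}$; state this step explicitly, since it is the only place the hypothesis enters. Lemma~\ref{lem:intersections}(1) then gives $f\in I_\lambda^{(m')}$.

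This route uses only the contraction identity for the Specht ideals. It does not need Lemma~\ref{lem:intersections}(2), so it avoids both Theorem~\ref{thm:charzeros} and the Nullstellensatz. Also, contraction commutes with radicals for any ring extension $A\subseteq B$, so your appeal to $\mathcal{R}_{m,n}$ being a ``polynomial extension'' is unnecessary.

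The paper's route costs nothing extra in context, because Lemma~\ref{lem:intersections}(2) is needed anyway in the proof of Theorem~\ref{thm:radicalinRnn}. There one passes upward from $n$ to $m>n$ sets of variables, and it is the vanishing ideal, not the Specht ideal, whose contraction must be controlled.
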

\begin{proof}
If $I_\lambda^{(m)}$ is radical, we have $I_\lambda^{(m)}=I(V_\lambda^{(m)})$. Intersecting with $\mathcal{R}_{m',n}$ on both sides we get $I_\lambda^{(m)}\cap\mathcal{R}_{m',n}=I(V_\lambda^{(m)})\cap\mathcal{R}_{m',n}$, and by Lemma~\ref{lem:intersections}, $I_\lambda^{(m')}$ is radical. The second statement is the contrapositive of the first.
\end{proof}

\begin{definition}\label{def:threshold}
Let $\lambda\vdash n$, define
\begin{equation*}
m(\lambda):=\min\bigl\{m\geq1\;\bigm|\;I_\lambda\subseteq\mathcal{R}_{m,n}\text{ is not radical}\bigr\}
\end{equation*}
with $m(\lambda)=\infty$ when $I_\lambda\subseteq\mathcal{R}_{m,n}$ is radical for every $m$.
\end{definition}

We now show that it is enough to check radicality for $m=n$. The key point is that highest weight vectors involve at most $n$ sets of variables.
\begin{lemma}\label{lem:Rnn}
Let $g\in\mathcal{R}_{m,n}$ be a highest weight vector of weight $\nu$. Then $g$ is multihomogeneous of multidegree $\nu$, with $\nu\vdash d$, $\len(\nu)\leq\min(n,m)$, and $g\in\mathcal{R}_{\len(\nu),n}$. In particular, if $n\leq m$, $g\in\mathcal{R}_{n,n}$.
\end{lemma}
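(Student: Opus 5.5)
The plan is to separate the statement into three parts: multihomogeneity with weight equal to multidegree, the constraint $\len(\nu)\leq\min(n,m)$ with $\nu$ a partition, and the claim that $g$ involves only the first $\len(\nu)$ rows of variables.

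\textbf{Multihomogeneity and the shape of $\nu$.} By Definition~\ref{def:hwv}, $g$ is multihomogeneous and its weight is $\mdeg(g)=\nu$. We may take $g$ of total degree $d$; otherwise apply the argument to each homogeneous component, since $B_m$ preserves total degree. Let $M$ be the $\GL_m$-submodule of $(\mathcal{R}_{m,n})_d$ generated by $g$. It is finite-dimensional, and $g$ is a highest weight vector in it. By Proposition~\ref{prop:GLmfacts}, $M$ decomposes into irreducibles. Writing $g$ as a sum of its components in an isotypic decomposition of $M$, each nonzero component is again $B_m$-semi-invariant of weight $\nu$. Hence it is a highest weight vector of some irreducible $\mathbb{S}_\nu(\C^m)$ occurring in $(\mathcal{R}_{m,n})_d$. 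By Proposition~\ref{prop:GLmfacts}(1), $\nu$ is a partition with $\len(\nu)\leq m$. By Theorem~\ref{thm:cauchy}, only $\nu$ with $\len(\nu)\leq\min(m,n)$ occur in degree $d$, so $\nu\vdash d$ and $\len(\nu)\leq\min(m,n)$.

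\textbf{Only the first $\len(\nu)$ rows appear.} Set $\ell=\len(\nu)$. For every $r>\ell$ we have $\nu_r=\deg_{\mathbf{x}_r}(g)=0$. Since $g$ is multihomogeneous, every monomial of $g$ has degree $0$ in the row $\mathbf{x}_r$. So no variable $x_{r,i}$ with $r>\ell$ appears, and $g\in\mathcal{R}_{\ell,n}$. If $n\leq m$, then $\ell\leq n$, hence $\mathcal{R}_{\ell,n}\subseteq\mathcal{R}_{n,n}$.

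\textbf{Main obstacle.} The only step that needs care is the bound $\len(\nu)\leq n$. It must come from the Cauchy decomposition, which holds in every degree. The subtlety is that a $B_m$-semi-invariant vector in a reducible module need not lie in a single irreducible summand. Its components in the different summands, however, all carry weight $\nu$, so the weight $\nu$ must still occur as a highest weight in Theorem~\ref{thm:cauchy}. Once this is in place, the rest is immediate from multidegree bookkeeping.
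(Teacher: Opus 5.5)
Your proposal is correct and follows essentially the paper's argument: the weight is the multidegree by definition, the Cauchy decomposition (Theorem~\ref{thm:cauchy}) gives $\nu\vdash d$ and $\len(\nu)\leq\min(m,n)$, and $\nu_r=0$ for $r>\len(\nu)$ forces $g\in\mathcal{R}_{\len(\nu),n}$. Your isotypic-projection step only makes explicit what the paper states tersely, namely that $g$ generates a copy of $\mathbb{S}_\nu(\C^m)$, so $\nu$ must occur as a highest weight in the decomposition.
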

\begin{proof}
By Theorem~\ref{thm:cauchy}, each graded piece $(\mathcal{R}_{m,n})_d$ is completely reducible as a $\GL_m$-module, and  its decomposition is
\begin{equation*}
(\mathcal{R}_{m,n})_d \cong \bigoplus_{\substack{\nu \vdash d\\
\len(\nu)\leq\min(m,n)}}\mathbb{S}_\nu(\C^m)^{\oplus c_\nu}.
\end{equation*}
A highest weight vector $g$ of $(\mathcal{R}_{m,n})_d$ generates a copy of $\mathbb{S}_\nu(\C^m)$, the weight of $g$ is $\nu\vdash d$ with $\len(\nu)\leq\min(n,m)$ and it is multihomogeneous of multidegree $\nu$. Since $\nu_r=0$ for $r>\len(\nu)$, $g\in\mathcal{R}_{\len(\nu),n}$. 
\end{proof}
\begin{thm}\label{thm:radicalinRnn}
If $I_\lambda\subseteq\mathcal R_{n,n}$ is radical, then $I_\lambda\subseteq\mathcal R_{m,n}$ is radical for every $m$.
\end{thm}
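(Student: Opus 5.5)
Since Proposition~\ref{prop:bound}(1) already gives radicality for all $m<n$, the plan is to treat $m>n$. We show $I(V_\lambda^{(m)})\subseteq I_\lambda^{(m)}$, which suffices because $I_\lambda^{(m)}\subseteq\rad(I_\lambda^{(m)})=I(V_\lambda^{(m)})$. The idea is that both ideals are $\GL_m$-stable graded ideals, so each is generated, even as a vector space, by the $\GL_m$-orbits of its highest weight vectors. Lemma~\ref{lem:Rnn} then places all those vectors inside $\mathcal{R}_{n,n}$, and Lemma~\ref{lem:intersections} compares the two ideals there.

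First we record stability. $I_\lambda^{(m)}$ is $\GL_m$-stable because it is generated by $\mathcal{R}_{m,n,\lambda}$, which is $\GL_m$-stable by Lemma~\ref{lem:GLminv}, and $\GL_m$ acts by algebra automorphisms. $I(V_\lambda^{(m)})$ is $\GL_m$-stable by Corollary~\ref{cor:GLminvariantvariety}. Both ideals are graded, since the first is generated by homogeneous elements and the second is multigraded. So each graded piece $M_d:=I(V_\lambda^{(m)})_d$ is a finite-dimensional $\GL_m$-submodule of $(\mathcal{R}_{m,n})_d$.

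Next, by Proposition~\ref{prop:GLmfacts}(3), $M_d$ is spanned by the elements $g\cdot h$ with $g\in\GL_m$ and $h\in M_d$ a highest weight vector. By Lemma~\ref{lem:Rnn}, each such $h$ lies in $\mathcal{R}_{n,n}$ (here $n\leq m$). Hence
\[
h\in I(V_\lambda^{(m)})\cap\mathcal{R}_{n,n}=I(V_\lambda^{(n)})=I_\lambda^{(n)}=I_\lambda^{(m)}\cap\mathcal{R}_{n,n}.
\]
The first equality is Lemma~\ref{lem:intersections}(2), the second is the hypothesis that $I_\lambda^{(n)}$ is radical (so it equals its vanishing ideal by the Nullstellensatz), and the third is Lemma~\ref{lem:intersections}(1). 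So $h\in I_\lambda^{(m)}$, and by $\GL_m$-stability $g\cdot h\in I_\lambda^{(m)}$. Therefore $M_d\subseteq I_\lambda^{(m)}$ for all $d$, which gives the theorem.

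The main point requiring care is the use of Proposition~\ref{prop:GLmfacts}(3) on $M_d$. We must check that a highest weight vector of the submodule $M_d$ is a highest weight vector in the sense of Definition~\ref{def:hwv}, which holds because that definition is intrinsic to the $B_m$-action. This is what makes Lemma~\ref{lem:Rnn} applicable to it. Finite-dimensionality of $M_d$ is clear, so I expect no real obstacle beyond this bookkeeping.
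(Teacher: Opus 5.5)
Your proposal is correct and follows essentially the same route as the paper. The case $m<n$ comes from Proposition~\ref{prop:bound}. For $m>n$, each graded piece of $I(V_\lambda^{(m)})$ is generated by highest weight vectors, which lie in $\mathcal{R}_{n,n}$ by Lemma~\ref{lem:Rnn}; you then conclude with Lemma~\ref{lem:intersections}, the radicality hypothesis, and the $\GL_m$-stability of $I_\lambda^{(m)}$.
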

\begin{proof}
For $m<n$, apply Proposition~\ref{prop:bound}. Assume $I_\lambda^{(n)}$ is radical and let $m>n$. Since $I_\lambda\subseteq I(V_\lambda)$, it is enough to prove $I(V_\lambda)\subseteq I_\lambda$. Since $I(V_\lambda)\subseteq\mathcal{R}_{m,n}$ is $\GL_m$-stable and graded by total degree by Corollary~\ref{cor:GLminvariantvariety}, each $(I(V_\lambda))_d$ is generated by highest weight vectors by Proposition~\ref{prop:GLmfacts}(3). Let $g$ be a highest weight vector in $I(V_\lambda)$, then by Lemma~\ref{lem:Rnn}, $g\in\mathcal{R}_{n,n}$. By Lemma~\ref{lem:intersections}, and by radicality of $I^{(n)}_\lambda$
\begin{equation*}
g\in I(V_\lambda^{(m)})\cap\mathcal{R}_{n,n}=I(V_\lambda^{(n)})=I_\lambda^{(n)}\subseteq I_\lambda^{(m)},
\end{equation*}
and since $I(V_\lambda)$ is generated, as a $\GL_m$-module, by highest weight vectors, and $I_\lambda^{(m)}$ is $\GL_m$-stable by Lemma~\ref{lem:GLminv}, $I(V_\lambda^{(m)})\subseteq I_\lambda^{(m)}$, and $I_\lambda\subseteq\mathcal{R}_{m,n}$ is radical.
\end{proof}
\subsection{Radicality for hook partitions}
We begin with the three smallest hook families.  Their varieties have particularly simple collision descriptions, and this makes it possible to compare the ideals directly with their radical ideals through explicit Gr\"obner bases.

The ideal $I_{(n)}$ is radical for all $m$, since $1$ is $S_n$-stable, hence $1\in\mathcal{R}_{m,n,(n)}$. $V_{(n)}=\emptyset$ by Theorem~\ref{thm:charzeros}, and $I_{(n)}=\mathcal{R}_{m,n}$, which is radical. We will prove that $I_{(n-1,1)}$ and $I_{(n-2,1,1)}$ are radical in Theorems~\ref{thm:Ia1} and~\ref{thm:a11}. 

Let $<$ be a monomial order on a polynomial ring $\mathcal{R}$. We denote by $\ini_<(f)$ (or $\ini(f)$ if the order is implicitly understood) the initial monomial of $f\in \mathcal{R}$ with respect to $<$. Let $I$ be an ideal of $\mathcal{R}$. A finite subset $G\subseteq I$ is called a Gröbner basis of $I$ with respect to $<$ if $\langle \ini_<(f)\, |\, 0 \neq f\in I\rangle=\langle\ini_<(g),\,g\in G\rangle$. See \cite[\S15]{Eisenbud1995Commutative} for results on Gröbner bases. We will find Gröbner bases for the ideals $I_{(n-1,1)}$ and  $I_{(n-2,1,1)}$ in $\mathcal{R}_{m,n}$ with respect to the monomial order of Definition~\ref{def:monomialordergen}.
\begin{definition}\label{def:orderexp}
Let $\boldsymbol{\alpha}=(\alpha_1,\dots,\alpha_m),\boldsymbol{\beta}=(\beta_1,\dots,\beta_m) \in\mathbb{N}^m$. For $1\leq s\leq m$, we write $|\boldsymbol{\alpha}|_s:=\sum_{r=1}^s\alpha_r$ for the \emph{partial sum} of $\boldsymbol{\alpha}$ up to index $s$, and $|\boldsymbol{\alpha}|:=|\boldsymbol{\alpha}|_m$. We say $\boldsymbol{\beta}<\boldsymbol{\alpha}$ if there exists $t$, $1\leq t\leq m$, such that $|\boldsymbol{\alpha}|_s=|\boldsymbol{\beta}|_s$ for all $s>t$ and $|\boldsymbol{\beta}|_t<|\boldsymbol{\alpha}|_t$.
\end{definition}
\begin{example}
Consider $\boldsymbol{\alpha}=(1,2,1)$ and $\boldsymbol{\beta}=(1,1,2)$ in $\mathbb{N}^3$. Their partial sums are $|\boldsymbol{\alpha}|_3=|\boldsymbol{\beta}|_3=4$, $|\boldsymbol{\beta}|_2=2<3=|\boldsymbol{\alpha}|_2$, so $\boldsymbol{\beta}<\boldsymbol{\alpha}$.
\end{example}
We introduce new notation. Let $p=\prod_{r=1}^m\prod_{i=1}^n x_{r,i}^{\alpha_{r,i}}$ be a monomial in $\mathcal{R}_{m,n}$. For $1\leq i\leq n$, we write $\mathbf{x}_{\bullet,i}:=(x_{1,i},\dots,x_{m,i})^T$ for the vector of $i$-th variables across all sets. For $\boldsymbol{\alpha}_i=(\alpha_{1,i},\dots,\alpha_{m,i})^T\in\mathbb{N}^m$ we write $\mathbf{x}_{\bullet,i}^{\boldsymbol{\alpha}_i}:=x_{1,i}^{\alpha_{1,i}}\cdots x_{m,i}^{\alpha_{m,i}}$, and $p$ can be written as $p=\prod_{i=1}^n\mathbf{x}_{\bullet,i}^{\boldsymbol{\alpha}_i}$. We call $\boldsymbol{\alpha}_i$ the $i$-\emph{multiexponent} of $p$. Note that $\deg(p)=\sum_{i=1}^n|\boldsymbol{\alpha}_i|$.
\begin{definition}\label{def:monomialordergen}
Let $p=\prod_{i=1}^n\mathbf{x}_{\bullet,i}^{\boldsymbol{\alpha}_i}$ and $p'=\prod_{i=1}^n\mathbf{x}_{\bullet,i}^{\boldsymbol{\beta}_i}$ be monomials in $\mathcal{R}_{m,n}$, with $i$-multiexponents $\boldsymbol{\alpha}_i=(\alpha_{1,i},\dots,\alpha_{m,i})$ and $\boldsymbol{\beta}_i=(\beta_{1,i},\dots,\beta_{m,i})$, $1\leq i\leq n$, respectively. We say $p'<p$ if there exists $k$, $1\leq k\leq n$, such that $\boldsymbol{\alpha}_l=\boldsymbol{\beta}_l$ for all $l$ with $k<l\leq n$, and $\boldsymbol{\beta}_k<\boldsymbol{\alpha}_k$.
\end{definition}

Intuitively, we compare the $i$-multiexponents starting from $i=n$ down to $i=1$. When they first differ, we compare the total degree in all $m$ variables, then in the first $m-1$, and so on down to the degree in $x_{1,i}$ alone. Note that Definition~\ref{def:monomialordergen} indeed defines a monomial order: it is a block order, therefore it is total, compatible with multiplication and $1$ is minimal. In particular, $x_{1,i}>x_{2,i}>\dots>x_{m,i}$ for every $i$.
\begin{example}\label{ex:monomialorder}
Consider the polynomial ring $\mathcal{R}_{3,2}:=\C[\mathbf{x},\mathbf{y},\mathbf{z}]$, where $\mathbf{x}=(x_1,x_2)$, $\mathbf{y}=(y_1,y_2)$, $\mathbf{z}=(z_1,z_2)$. Let $p=x_1y_1^2z_1x_2y_2^2$, $p'=x_1y_1z_1^2x_2y_2^2$, with respective $i$-multiexponents for $i=1,2$
\begin{equation*}
\boldsymbol{\alpha}_1=(1,2,1), \quad \boldsymbol{\alpha}_2=(1,2,0),\qquad\text{ and }\qquad\boldsymbol{\beta}_1=(1,1,2),\quad \boldsymbol{\beta}_2=(1,2,0).
\end{equation*}
We compare the $2$-multiexponents first, and since $\boldsymbol{\alpha}_2=\boldsymbol{\beta}_2=(1,2,0)$, we compare the $1$-multiexponents, and since $\boldsymbol{\beta}_1<\boldsymbol{\alpha}_1$, $p'<p$.
\end{example}
From now on, $<$ will refer to the monomial order of Definition~\ref{def:monomialordergen}, where the ambient polynomial ring will be clear from context or specified explicitly.
The order $<$ is compatible with increasing the number $m$ of sets of variables. This lets us transfer Gröbner basis computations from fewer to more sets of variables.
\begin{lemma}\label{lem:iota}
Let $l<m$, $1\leq r_1< \cdots< r_l\leq m$, and consider the injective map 
\begin{equation*}
\iota:\mathcal {R}_{l,n}\longrightarrow\mathcal {R}_{m,n},\qquad x_{k,i}\longmapsto x_{r_k,i}
\end{equation*}
then
\begin{enumerate}
    \item $\iota(\ini(f))=\ini(\iota(f))$ for all $f\in\mathcal{R}_{l,n}$, and
    \item $\iota(\sigma\cdot f)=\sigma\cdot\iota(f)$ for all $\sigma\in S_n$, $f\in\mathcal{R}_{l,n}$. Consequently, $\iota(\varepsilon_T\cdot f)=\varepsilon_T\cdot\iota(f)$ for all $T\in\Tab(\lambda)$ for any $\lambda$, and for all $f\in\mathcal{R}_{l,n}$.
\end{enumerate}
\end{lemma}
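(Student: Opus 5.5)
The plan is to prove both parts by looking at monomials and at how the order of Definition~\ref{def:monomialordergen} compares them. Since $\iota$ is an injective ring homomorphism, it sends monomials to monomials. For a monomial $p=\prod_i\mathbf{x}_{\bullet,i}^{\boldsymbol{\alpha}_i}\in\mathcal{R}_{l,n}$, the image $\iota(p)$ has $i$-multiexponent $\tilde{\boldsymbol{\alpha}}_i\in\N^m$, obtained by placing $\alpha_{k,i}$ in position $r_k$ and $0$ elsewhere.

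For part (1), the key step is to show that $\iota$ is strictly order preserving on monomials: $p'<p$ in $\mathcal{R}_{l,n}$ implies $\iota(p')<\iota(p)$ in $\mathcal{R}_{m,n}$. Both orders compare multiexponents column by column starting from $i=n$, and $\iota$ acts columnwise injectively. So it suffices to check that $\boldsymbol{\beta}<\boldsymbol{\alpha}$ in $\N^l$ (Definition~\ref{def:orderexp}) implies $\tilde{\boldsymbol{\beta}}<\tilde{\boldsymbol{\alpha}}$ in $\N^m$.

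The partial sums satisfy $|\tilde{\boldsymbol{\alpha}}|_s=|\boldsymbol{\alpha}|_k$, where $k$ is the largest index with $r_k\leq s$ (and the partial sum is $0$ if there is none). Suppose $t$ witnesses $\boldsymbol{\beta}<\boldsymbol{\alpha}$, meaning the partial sums agree for indices $>t$ and $|\boldsymbol{\beta}|_t<|\boldsymbol{\alpha}|_t$. Take $s=r_t$. Then $|\tilde{\boldsymbol{\beta}}|_{r_t}<|\tilde{\boldsymbol{\alpha}}|_{r_t}$. For every $s'>r_t$, the associated index $k\geq t$. If $k>t$, agreement holds by hypothesis. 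If $k=t$, the case $s'\in(r_t,r_{t+1})$, we would need equality at $t$, which fails.

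This is the delicate point. I will use instead the largest $s'$ at which the partial sums of $\tilde{\boldsymbol{\alpha}}$ and $\tilde{\boldsymbol{\beta}}$ differ. That index is $r_{t+1}-1$, or $m$ if $t=l$. At that index the partial sums equal $|\boldsymbol{\beta}|_t<|\boldsymbol{\alpha}|_t$, and for all larger indices they agree. So $\tilde{\boldsymbol{\beta}}<\tilde{\boldsymbol{\alpha}}$ still holds.

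With order preservation in hand, write $f=\sum c_p p$. Then $\iota(f)=\sum c_p\iota(p)$ with the $\iota(p)$ distinct monomials, so the largest monomial of $\iota(f)$ is $\iota(\ini(f))$.

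For part (2), the argument is immediate. Both $\iota$ and $\sigma$ are algebra homomorphisms, and on generators $\iota(\sigma\cdot x_{k,i})=x_{r_k,\sigma(i)}=\sigma\cdot\iota(x_{k,i})$. Linearity then extends the identity to $\varepsilon_T$, and more generally to any element of $\C[S_n]$.

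The only real obstacle is the bookkeeping in comparing partial sums at indices between the $r_k$; the rest is formal.
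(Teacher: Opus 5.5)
Your proposal is correct and follows the paper's proof essentially verbatim. In both, one passes to the last column $k$ where the multiexponents differ, moves the witness index from $t$ to $r_{t+1}-1$ (or to $m$ when $t=l$), and checks part (2) on the generators $x_{k,i}$ before extending by linearity. In a final write-up you can drop the abandoned attempt with $s=r_t$ and state the corrected witness directly.
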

\begin{proof}
\begin{enumerate}
    \item It suffices to show that $\iota$ preserves the order $<$. Let $p=\prod_{i=1}^n\mathbf{x}_{\bullet,i}^{\balpha_i}$ and $p'=\prod_{i=1}^n\mathbf{x}_{\bullet,i}^{\bbeta_i}$ be monomials in $\mathcal{R}_{l,n}$, with $i$-multiexponents $\balpha_i,\bbeta_i\in\N^l$, and suppose $p'<p$. Then $\iota(p)=\prod_{i=1}^n\mathbf{x}_{\bullet,i}^{\balpha_i'}$ and $\iota(p')=\prod_{i=1}^n\mathbf{x}_{\bullet,i}^{\bbeta_i'}$, where $\balpha_i',\bbeta_i'\in\N^m$ have the entries of $\balpha_i,\bbeta_i$ in positions $r_1<\dots<r_l$ and zeros elsewhere. If $p'<p$, let $k$ be the largest index such that $\boldsymbol{\alpha}_k\neq\boldsymbol{\beta}_k$, and let $t$ be such that $|\boldsymbol{\alpha}_k|_s=|\boldsymbol{\beta}_k|_s$ for $s>t$ but $|\boldsymbol{\beta}_k|_t<|\boldsymbol{\alpha}_k|_t$. Then $k$ is the largest index such that $\boldsymbol{\alpha'}_k\neq\boldsymbol{\beta'}_k$ and:
    \begin{itemize}
        \item if $t=l$, then we have $|\boldsymbol{\beta}'_k|_m<|\boldsymbol{\alpha}'_k|_m$ and $\iota(p')<\iota(p)$;
        \item if $t<l$, let $t'=r_{t+1}-1$, we have $|\boldsymbol{\beta}'_k|_{t'}<|\boldsymbol{\alpha}'_k|_{t'}$ and $\iota(p')<\iota(p)$.
    \end{itemize}
    Therefore we have proved that $\iota(\ini(f))=\ini(\iota(f))$.
    \item We have $\iota(\sigma\cdot x_{k,i})=\iota(x_{k,\sigma(i)})=x_{r_k,\sigma(i)}=\sigma\cdot x_{r_k,i}=\sigma\cdot\iota(x_{k,i})$, which extends to a general polynomial $f$. Extending by linearity $\iota(\varepsilon_T\cdot f)=\varepsilon_T\cdot\iota(f)$ for all $T\in\Tab(\lambda)$ for any $\lambda$, and for all $f\in\mathcal{R}_{l,n}$.
\end{enumerate}
\end{proof}
As a first case, we consider the hook partitions $(n-1,1)$. Their varieties are easy to describe: they consist exactly of the points whose $n$ columns are all equal, equivalently whose stabilizer is $S_n$.

\begin{thm}\label{thm:Ia1}
Let $\mathcal{B}=\{x_{r,i}-x_{r,1}\mid r=1,\dots,m,\,i=2,\dots,n\}$. Then
\begin{equation*}
I_{(n-1,1)}=I(V_{(n-1,1)})=\langle\mathcal{B}\rangle,
\end{equation*}
in particular $I_{(n-1,1)}$ is radical, and $\mathcal{B}$ is a Gr\"obner basis of $I_{(n-1,1)}$ with respect to $<$.
\end{thm}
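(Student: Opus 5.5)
We prove the chain of inclusions
\[
\langle\mathcal{B}\rangle\subseteq I_{(n-1,1)}\subseteq I(V_{(n-1,1)})\subseteq\langle\mathcal{B}\rangle .
\]
Equality throughout then gives radicality. The Gr\"obner basis claim is handled separately, by a short argument about initial monomials.

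\emph{First inclusion.} Fix $r$ and $i\geq2$. We show that the linear form $x_{r,i}-x_{r,1}$ lies in $\mathcal{R}_{m,n,(n-1,1)}$. The span $L_r$ of $x_{r,1},\dots,x_{r,n}$ is a copy of the permutation representation $\C^n\cong S^{(n)}\oplus S^{(n-1,1)}$. Its trivial summand is spanned by $\sum_j x_{r,j}$. Its $(n-1,1)$ summand is the sum-zero subspace, spanned by the differences $x_{r,i}-x_{r,1}$. Hence each such difference lies in the $(n-1,1)$-isotypic component. Alternatively, one can write it as $\varepsilon_T(x_{r,i})$ up to a nonzero scalar for a suitable $T\in\Tab((n-1,1))$ and invoke Proposition~\ref{prop:etinvl}.

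\emph{Second inclusion.} This holds for any ideal: $I_{(n-1,1)}\subseteq\rad(I_{(n-1,1)})=I(V_{(n-1,1)})$.

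\emph{Third inclusion.} By Theorem~\ref{thm:charzeros} and Lemma~\ref{lem:lenhook}, $V_{(n-1,1)}=\bigcup_{\len(\mu)<2}H_\mu=H_{(n)}$. This is exactly the locus where all columns coincide, so it equals $W_\Pi$ for the one-block set partition $\Pi=\{[n]\}$. By Lemma~\ref{lem:strata}(2), $I(W_\Pi)=\mathfrak{p}_\Pi$. The prime $\mathfrak{p}_\Pi$ is generated by all $x_{r,i}-x_{r,j}$, and each of these is the difference $(x_{r,i}-x_{r,1})-(x_{r,j}-x_{r,1})$ of elements of $\mathcal{B}$. Hence $\mathfrak{p}_\Pi=\langle\mathcal{B}\rangle$.

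\emph{Gr\"obner basis.} Under $<$, columns with larger index dominate, so $\ini(x_{r,i}-x_{r,1})=x_{r,i}$ for $i\geq2$. These leading monomials are pairwise distinct variables, hence pairwise coprime. By Buchberger's criterion, every S-pair reduces to zero, so $\mathcal{B}$ is a Gr\"obner basis.

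The only step needing any care is verifying that the differences are genuinely $(n-1,1)$-isotypic. This is the standard decomposition of the permutation module, so I expect no real obstacle.
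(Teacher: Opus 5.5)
Your proof is correct and takes essentially the same route as the paper. Both identify $V_{(n-1,1)}=H_{(n)}$ with vanishing ideal $\langle\mathcal{B}\rangle$, show $\mathcal{B}\subseteq I_{(n-1,1)}$, sandwich $I_{(n-1,1)}$ between $\langle\mathcal{B}\rangle$ and $I(V_{(n-1,1)})$, and conclude the Gr\"obner basis claim from the pairwise coprime leading monomials $x_{r,i}$. The only difference is cosmetic: the paper certifies isotypic membership through $\varepsilon_{T_i}(x_{r,i})=|R(T_i)|\,(x_{r,i}-x_{r,1})$ and Proposition~\ref{prop:etinvl}, which is your stated alternative, whereas your primary argument uses the decomposition of the permutation module.
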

\begin{proof}
By Theorem~\ref{thm:charzeros}, $V_{(n-1,1)}=H_{(n)}$, and
\begin{equation*}
I(V_{(n-1,1)})=\langle x_{r,i}-x_{r,1}\,|\,r=1,\dots,m,\,i=2,\dots,n\rangle.
\end{equation*}
For $2\leq i\leq n$, let $T_i=\mbox{\tiny$\begin{ytableau}1&\cdots&\\i\end{ytableau}$}\in\SYT((n-1,1))$.
Since $x_{r,i}$ is fixed by $R(T_i)$ and $C(T_i)=\{\id,(1\,i)\}$, we have $\varepsilon_{T_i}(x_{r,i})=|R(T_i)|\,(x_{r,i}-x_{r,1})$, and by Proposition~\ref{prop:etinvl}
\begin{equation*}
I(V_{(n-1,1)})=\langle \varepsilon_{T_i}(x_{r,i})\,|\,r=1,\dots,m,\,i=2,\dots,n\rangle\subseteq I_{(n-1,1)}\subseteq I(V_{(n-1,1)}),
\end{equation*}
and $I_{(n-1,1)}$ is radical. Finally, $\ini(x_{r,i}-x_{r,1})=x_{r,i}$, and these initial monomials are pairwise coprime, so all $S$-polynomials of $\mathcal{B}$ reduce to $0$ and $\mathcal{B}$ is a Gr\"obner basis of $I_{(n-1,1)}$.
\end{proof}

We will find a Gröbner basis of $I(V_{(n-2,1,1)})\subseteq\mathcal{R}_{m,n}$ by induction on $n$. We begin with the base case $n=3$ for all $m\geq1$ in Lemma~\ref{lem:GBa=1}.

\begin{lemma}\label{lem:GBa=1}
Let $p_{r,ij}:=x_{r,i}-x_{r,j}$ for $1\leq r\leq m$, $1\leq i<j\leq 3$, let $A$ be the unique standard Young tableau of shape $(1,1,1)$, let $q_{r,s}:=\varepsilon_A(x_{s,2}x_{r,3})$, and let
\begin{equation*}
\mathcal{B}^{(m)}:=\left\{q_{r,s}\,\mid\,1\leq r<s\leq m\right\}\cup\left\{p_{r,12}\,p_{s,13}\,p_{t,23}\,\mid\,1\leq r\leq s\leq t\leq m\right\}\subseteq\mathcal{R}_{m,3}.
\end{equation*}
Then $\mathcal{B}^{(m)}$ is a Gröbner basis of $I(V_{(1,1,1)})\subseteq\mathcal{R}_{m,3}$ with respect to $<$. In particular, $\ini(I(V_{(1,1,1)}))$ is generated by the families:
\begin{enumerate}
    \item[(i)] $x_{r,2}x_{r,3}^2$, $1\leq r\leq m$;
    \item[(ii)] $x_{r,2}x_{r,3}x_{s,3}$, $1\leq r<s\leq m$;
    \item[(iii)] $x_{r,2}x_{s,3}x_{t,3}$, $1\leq r<s<t\leq m$;
    \item[(iv)] $x_{r,2}x_{s,3}^2$, $1\leq r<s\leq m$;
    \item[(v)] $x_{s,2}x_{r,3}$, $1\leq r<s\leq m$.
\end{enumerate}
\end{lemma}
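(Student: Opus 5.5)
We prove containment, compute leading terms, verify Buchberger's criterion and generation in a bounded number of rows, and then spread to all $m$ with Lemma~\ref{lem:iota} and the $\GL_m$-action.

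\textbf{Containment.} First we check $\mathcal{B}^{(m)}\subseteq I(V_{(1,1,1)})$. By Corollary~\ref{cor:linearspaces} and Lemma~\ref{lem:alambdahooks}, $V_{(1,1,1)}$ is the union of the three subspaces $W_\Pi$ with $\Pi\in\{\{12|3\},\{13|2\},\{23|1\}\}$. Equivalently,
\[
I(V_{(1,1,1)})=\mathfrak{p}_{\{12|3\}}\cap\mathfrak{p}_{\{13|2\}}\cap\mathfrak{p}_{\{23|1\}} .
\]
Each product $p_{r,12}\,p_{s,13}\,p_{t,23}$ contains a linear factor from each of these primes, so it lies in the intersection. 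Each $q_{r,s}=\varepsilon_A(x_{s,2}x_{r,3})$ is alternating under $S_3$, so it vanishes whenever two columns coincide. This is also Lemma~\ref{lem:projectionyoung}, since every point of $V_{(1,1,1)}$ has type not dominated by $(1,1,1)$.

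\textbf{Leading terms.} Next we compute the initial monomials with respect to $<$. In each column the order has $x_{1,i}>\dots>x_{m,i}$, and column $3$ is compared first, then column $2$. Hence $\ini(p_{t,23})=x_{t,3}$, $\ini(p_{s,13})=x_{s,3}$ and $\ini(p_{r,12})=x_{r,2}$. By multiplicativity,
\[
\ini(p_{r,12}\,p_{s,13}\,p_{t,23})=x_{r,2}\,x_{s,3}\,x_{t,3},\qquad r\leq s\leq t .
\]
Splitting into the cases $r=s=t$, $r=s<t$, $r<s<t$ and $r<s=t$ gives exactly the families (i)--(iv). For $q_{r,s}$ with $r<s$, the terms are $\pm x_{s,\sigma(2)}x_{r,\sigma(3)}$ for $\sigma\in S_3$. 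Those with maximal column-$3$ multiexponent contain $x_{r,3}$, since $e_r>e_s$ when $r<s$. Among those, the term with nonzero column-$2$ degree wins, so $\ini(q_{r,s})=x_{s,2}x_{r,3}$, which is family (v).

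\textbf{Gr\"obner basis and generation.} It remains to show that $\mathcal{B}^{(m)}$ generates $I(V_{(1,1,1)})$ and satisfies Buchberger's criterion.

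For Buchberger's criterion, take any two elements of $\mathcal{B}^{(m)}$. Together they involve at most six row indices $r_1<\dots<r_l$. By Lemma~\ref{lem:iota}, the map $\iota$ preserves initial terms and commutes with $\varepsilon_A$. Therefore the $S$-polynomial of the pair, and a standard reduction of it, are $\iota$-images of the corresponding computation in $\mathcal{R}_{l,3}$ with $l\leq6$. Moreover, $\iota$ maps $\mathcal{B}^{(l)}$ into $\mathcal{B}^{(m)}$ order-compatibly, because $r<s$ is preserved. So it suffices to check that all $S$-pairs reduce to $0$ in $\mathcal{R}_{6,3}$, which is a finite computer check.

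For generation, $I(V_{(1,1,1)})$ is $\GL_m$-stable and generated by highest weight vectors (Corollary~\ref{cor:GLminvariantvariety} and Proposition~\ref{prop:GLmfacts}). By Lemma~\ref{lem:Rnn} these lie in $\mathcal{R}_{3,3}$, and by Lemma~\ref{lem:intersections} they lie in $I(V^{(3)}_{(1,1,1)})$. So it suffices to know that $I(V^{(3)}_{(1,1,1)})=\langle\mathcal{B}^{(3)}\rangle$, which is again a finite computation. We then need $\langle\mathcal{B}^{(m)}\rangle$ to be $\GL_m$-stable. This holds because it is the span of $\iota$-images of a $\GL_3$-stable set, together with the $\GL_m$-orbits of these images, which stay in the span of products of row-linear combinations of the $p$'s and $q$'s.

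\textbf{Main obstacle.} The main difficulty is this last $\GL_m$-stability of $\langle\mathcal{B}^{(m)}\rangle$. A cleaner alternative is to avoid it with a Hilbert function argument. We would compute
\[
H_{\mathcal{R}/I(V)}(d)=3\binom{d+2m-1}{2m-1}-2\binom{d+m-1}{m-1}.
\]
This uses inclusion--exclusion, with every sum of two or three of the primes equal to the prime of the small diagonal; that fact needs a short check. We would then show that the number of degree-$d$ monomials avoiding (i)--(v) equals this same number. Combined with $\langle\ini(\mathcal{B}^{(m)})\rangle\subseteq\ini(I(V))$, equality of Hilbert functions gives the Gr\"obner basis property. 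I would try this counting route first and fall back on the computer reduction if the count becomes messy.
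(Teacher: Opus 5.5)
Your main line (containment, the leading terms, Buchberger's criterion via $\iota$ with a finite check in at most six rows, and generation via highest weight vectors in $\mathcal{R}_{3,3}$ plus the $m=3$ computation) is exactly the paper's proof. Your leading-term computations are correct.

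\textbf{The gap.} It is the step you yourself call the main obstacle: $\GL_m$-stability of $\langle\mathcal{B}^{(m)}\rangle$. Without it, for $m>3$ the highest-weight argument only puts the highest weight vectors into $\langle\mathcal{B}^{(m)}\rangle$, not all of $I(V_{(1,1,1)})$. Your sentence about ``products of row-linear combinations'' does not prove stability. Since $g\cdot p_{r,ij}=\sum_s g_{sr}p_{s,ij}$, an element $g\in\GL_m$ sends $p_{r,12}p_{s,13}p_{t,23}$ to a combination of products $p_{a,12}p_{b,13}p_{c,23}$ with arbitrary $a,b,c$, not only $a\leq b\leq c$. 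Those products are not in $\mathcal{B}^{(m)}$. Likewise $g$ sends $q_{r,s}$ to $\sum_{k,l}g_{ks}g_{lr}\,q_{l,k}$ with arbitrary $l,k$.

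The paper closes this with explicit identities. First, $q_{r,s}=p_{s,12}p_{r,13}-p_{r,12}p_{s,13}$, so $q_{r,r}=0$ and $q_{s,r}=-q_{r,s}$. Second, the exchange relation $p_{s,12}p_{r,13}p_{t,23}-p_{r,12}p_{s,13}p_{t,23}=q_{r,s}\,p_{t,23}$ and its two analogues. Together these show $\langle\mathcal{B}^{(m)}\rangle=\mathfrak{p}_{\{12|3\}}\mathfrak{p}_{\{13|2\}}\mathfrak{p}_{\{23|1\}}+\langle q_{r,s}\mid r<s\rangle$, a sum of two visibly $\GL_m$-stable ideals.

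Another valid fix: each $b\in\mathcal{B}^{(m)}$ has degree at most $3$. So every multihomogeneous component of $g\cdot b$ lies in $I(V_{(1,1,1)})$ and involves at most three rows. By a row relabeling, Lemma~\ref{lem:intersections} and the $m=3$ computation, each such component is the $\iota$-image of an element of $\langle\mathcal{B}^{(3)}\rangle$.

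\textbf{The fallback.} Your Hilbert-function route fails as written, because the formula is wrong. For $d=1$ it gives $6m-2m=4m>3m=\dim(\mathcal{R}_{m,3})_1$. Inclusion--exclusion for three ideals requires $(\mathfrak{a}\cap\mathfrak{b})+\mathfrak{c}=(\mathfrak{a}+\mathfrak{c})\cap(\mathfrak{b}+\mathfrak{c})$, and this fails here. Take coordinates $u_r=x_{r,1}-x_{r,2}$, $v_r=x_{r,1}-x_{r,3}$, $w_r=x_{r,1}$. Then $\mathfrak{p}_{\{12|3\}}\cap\mathfrak{p}_{\{13|2\}}=\langle u_rv_s\rangle$. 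Adding $\mathfrak{p}_{\{23|1\}}=\langle v_r-u_r\rangle$ gives $\mathfrak{p}_{\{23|1\}}+\langle u_1,\dots,u_m\rangle^2$, not the prime of the small diagonal. Applying the two-ideal exact sequence twice yields instead
\[
H_{\mathcal{R}_{m,3}/I(V_{(1,1,1)})}(d)=3\binom{d+2m-1}{2m-1}-2\binom{d+m-1}{m-1}-m\binom{d+m-2}{m-1}\qquad(d\geq1).
\]

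With this correction your route works and is worth writing up. A monomial avoids (i)--(v) if and only if, apart from an arbitrary column-$1$ factor, one of the following holds:
\begin{itemize}
\item it has no column-$2$ variables;
\item it has no column-$3$ variables;
\item it has exactly one column-$3$ variable $x_{s,3}$, with $s$ at least every row index occurring in column $2$.
\end{itemize}
Counting these monomials gives exactly the formula above. Combined with $\langle\ini(\mathcal{B}^{(m)})\rangle\subseteq\ini(I(V_{(1,1,1)}))$, this proves the lemma for all $m$ at once. It needs no computer checks (Lemma~\ref{lem:computationm=3}) and no $\GL_m$-argument, which is a genuine improvement over the paper's proof.
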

The proof of Lemma~\ref{lem:GBa=1} relies on the following two computational results.
\begin{lemma}\label{lem:computationm=3}
With the notation of Lemma~\ref{lem:GBa=1}:
\begin{enumerate}
    \item $I(V_{(1,1,1)})^{(l)}=\langle\mathcal{B}^{(l)}\rangle$, for $1\leq l\leq 3$.
    \item In $\mathcal{R}_{l,3}$ for $1\leq l\leq 6$, for all $f,g\in\mathcal{B}^{(l)}$, $S(f,g)$ reduces to $0$ modulo $\mathcal{B}^{(l)}$. 
\end{enumerate}
\end{lemma}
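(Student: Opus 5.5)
The plan is to verify both parts by exact computation over $\Q$ in \texttt{SageMath}. Before computing, I will reduce each part to a finite, fixed-size check, so that the computer work covers exactly what is needed later in the proof of Lemma~\ref{lem:GBa=1}. By Remark~\ref{rem:char0}, working over $\Q$ is harmless, since all ideals involved are defined over $\Q$.

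For part (1), first identify the radical ideal. By Lemma~\ref{lem:alambdahooks}, $\mathcal{A}_{(1,1,1)}=\{(2,1)\}$. Corollary~\ref{cor:linearspaces} then gives
\[
I(V_{(1,1,1)})=\mathfrak{p}_{\{12|3\}}\cap\mathfrak{p}_{\{13|2\}}\cap\mathfrak{p}_{\{23|1\}}\subseteq\mathcal{R}_{l,3},
\]
an intersection of three explicit linear primes. For each $l\in\{1,2,3\}$ I would compute this intersection by the standard elimination trick: introduce auxiliary variables $t_1,t_2,t_3$, form $\sum_k t_k\mathfrak{p}_k+\langle 1-t_1-t_2-t_3\rangle$, and eliminate the $t_k$. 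The result would then be compared with $\langle\mathcal{B}^{(l)}\rangle$ by checking that the reduced Gr\"obner bases, with respect to $<$, coincide.

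The inclusion $\langle\mathcal{B}^{(l)}\rangle\subseteq I(V_{(1,1,1)})$ can also be seen by hand. Each $q_{r,s}$ lies in $I_{(1,1,1)}$ by Proposition~\ref{prop:etinvl}. Each product $p_{r,12}p_{s,13}p_{t,23}$ vanishes on all three subspaces, because one of its three linear factors vanishes on each of them. So only the reverse inclusion genuinely needs the computation. The order $<$ is implemented in Sage as a block (matrix) order: columns are compared from $i=3$ down to $i=1$, and within a column by the partial sums $|\cdot|_m,|\cdot|_{m-1},\dots,|\cdot|_1$.

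For part (2), I would simply enumerate all pairs $f,g\in\mathcal{B}^{(l)}$ for $1\le l\le 6$, form $S(f,g)$, and reduce it modulo $\mathcal{B}^{(l)}$ with respect to $<$, checking that the remainder is $0$. The reason the bound $6$ is the right one is the following observation. Every element of $\mathcal{B}^{(m)}$ involves at most three row indices. Hence a pair $f,g$ involves at most six rows $r_1<\dots<r_l$ with $l\le 6$, and is the image under the map $\iota$ of Lemma~\ref{lem:iota} of a pair in $\mathcal{B}^{(l)}$. Since $\iota$ preserves initial terms, it carries $S$-polynomials and reduction steps in $\mathcal{R}_{l,3}$ to those in $\mathcal{R}_{m,3}$, and it maps $\mathcal{B}^{(l)}$ into $\mathcal{B}^{(m)}$. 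This is precisely how the finite check will be used in Lemma~\ref{lem:GBa=1}.

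The main obstacle is purely computational. The number of generators grows like $\binom{l+2}{3}$, so $l=6$ gives $56$ cubic products plus $15$ quadrics. It also requires a correct encoding of the nonstandard order $<$, which I would test on Example~\ref{ex:monomialorder} before running the full check.
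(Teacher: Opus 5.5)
Your proposal matches the paper: both parts are established purely by exact computation over $\Q$ in \texttt{SageMath}, with the order $<$ encoded as a block order on the columns. Your extra remarks (the by-hand inclusion $\langle\mathcal{B}^{(l)}\rangle\subseteq I(V_{(1,1,1)})$, and why $l\leq 6$ suffices via the map $\iota$ of Lemma~\ref{lem:iota}) are exactly what the paper uses when it applies this finite check in the proof of Lemma~\ref{lem:GBa=1}.
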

\begin{proof}
    The assertions were verified by exact computations over $\Q$ in \texttt{SageMath}~10.6. The code implementing the polynomial rings, the monomial order of Definition~\ref{def:monomialordergen}, and the Gr\"obner basis computations is available at \url{https://github.com/uggtos/diagonal_specht}.
\end{proof}
\begin{proof}[Proof of Lemma~\ref{lem:GBa=1}]
By Corollary~\ref{cor:closure}, $I\left(V_{(1,1,1)}\right)=I\left(\overline{H_{(2,1)}}\right)$. Let $p_{r,ij}:=x_{r,i}-x_{r,j}$ and $I_{ij}:=\langle p_{r,ij},\,r=1,\dots,m\rangle$, so that
\begin{equation*}
    I(V_{(1,1,1)})=I_{12}\cap I_{13}\cap I_{23}.
\end{equation*}
We want to prove that 
\begin{equation*}
\mathcal{B}^{(m)}:=\left\{q_{r,s}\,\mid\,r<s\right\}\cup\left\{p_{r,12}p_{s,13}p_{t,23}\,\mid\, 1\leq r\leq s\leq t\leq m\right\}
\end{equation*}
is a Gröbner basis of $I_{12}\cap I_{13}\cap I_{23}$.

We will show first that
$\langle\mathcal{B}^{(m)}\rangle=I(V_{(1,1,1)})$ for all $m$, and then that $\mathcal{B}^{(m)}$ is a Gröbner basis of the ideal it generates.

First, $p_{r,12}p_{s,13}p_{t,23}\in I_{12}I_{13}I_{23}\subseteq I_{12}\cap I_{13}\cap I_{23}=I(V_{(1,1,1)})$,
and $\varepsilon_A(x_{s,2}x_{r,3})\in I_{(1,1,1)}\subseteq I(V_{(1,1,1)})$ by Proposition~\ref{prop:etinvl}.

To prove $I(V_{(1,1,1)})\subseteq \langle\mathcal{B}^{(m)}\rangle$, we show first that $\langle\mathcal{B}^{(m)}\rangle$ is $\GL_m$-stable.
Let $g=(g_{sr})\in\GL_m$, $p_{r,ij}\in I_{ij}$, then $g\cdot p_{r,ij}:=\sum_{s=1}^m g_{sr}p_{s,ij}$, so $I_{ij}$ are $\GL_m$-stable, and their product is $\GL_m$-stable. 
Expanding $\varepsilon_A\cdot x_{s,2}x_{r,3}$ and using $p_{\cdot,13}=p_{\cdot,12}+p_{\cdot,23}$, we get
\begin{equation*}
q_{r,s}=p_{s,12}p_{r,13}-p_{r,12}p_{s,13}=p_{s,12}p_{r,23}-p_{r,12}p_{s,23}=p_{s,13}p_{r,23}-p_{r,13}p_{s,23},
\end{equation*}
in particular $q_{r,r}=0$, $q_{r,s}=-q_{s,r}$. For any $r,s,t$, we can write
\begin{eqnarray*}
p_{s,12}p_{r,13}p_{t,23}-p_{r,12}p_{s,13}p_{t,23}=q_{r,s}\,p_{t,23},\\
p_{t,12}p_{s,13}p_{r,23}-p_{r,12}p_{s,13}p_{t,23}=q_{r,t}\,p_{s,13},\\
p_{r,12}p_{t,13}p_{s,23}-p_{r,12}p_{s,13}p_{t,23}=q_{s,t}\,p_{r,12},
\end{eqnarray*}
so that exchanging any two of the indices $r,s,t$ of the product $p_{r,12}p_{s,13}p_{t,23}$ changes it by a multiple of $q_{r,s}$, $q_{r,t}$ or $q_{s,t}$, which lie in $\langle\mathcal{B}^{(m)}\rangle$ since $q_{r,r}=0$ and $q_{r,s}=-q_{s,r}$. Therefore $p_{r,12}p_{s,13}p_{t,23}\in\langle\mathcal{B}^{(m)}\rangle$ for all $r,s,t$.
Since for $g=(g_{sr})\in\GL_m$, $g\cdot p_{r,ij}:=\sum_{s=1}^m g_{sr}p_{s,ij}$, we have
\begin{align*}
    g\cdot q_{r,s}&=g\cdot(p_{s,12}p_{r,13}-p_{r,12}p_{s,13})
    =\sum_{k=1}^m\sum_{l=1}^m g_{ks}\,g_{lr}\,p_{k,12}\,p_{l,13}-\sum_{k=1}^m\sum_{l=1}^m g_{kr}\,g_{ls}\,p_{k,12}\,p_{l,13}\\
    &=\sum_{k=1}^m\sum_{l=1}^m g_{ks}\,g_{lr}\,\bigl(p_{k,12}\,p_{l,13}-p_{l,12}\,p_{k,13}\bigr)
    =\sum_{k=1}^m\sum_{l=1}^m g_{ks}g_{lr}\cdot q_{l,k}.
\end{align*}
Since $q_{l,l}=0$ and $q_{k,l}=-q_{l,k}$, this is a linear combination of the $q_{l,k}$ with $l<k$. Therefore $\langle\mathcal{B}^{(m)}\rangle$ is $\GL_m$-stable.

For $m\leq3$, $I(V_{(1,1,1)})=\langle\mathcal{B}^{(m)}\rangle$ by Lemma~\ref{lem:computationm=3}(1), so let $m>3$. Each $I(V_{(1,1,1)})_d$ is a $\GL_m$-submodule of $(\mathcal{R}_{m,3})_d$, generated by highest weight vectors by Proposition~\ref{prop:GLmfacts}(3), and if $h$ is a highest weight vector, $h\in\mathcal{R}_{3,3}$ by Lemma~\ref{lem:Rnn}. Then by Lemma~\ref{lem:intersections}, and Lemma~\ref{lem:computationm=3}(1),
\begin{equation*}
h\in I(V^{(m)}_{(1,1,1)})\cap\mathcal{R}_{3,3}=I(V^{(3)}_{(1,1,1)})=\langle\mathcal{B}^{(3)}\rangle
\subseteq\langle\mathcal{B}^{(m)}\rangle,
\end{equation*}
and since $\langle\mathcal{B}^{(m)}\rangle$ is $\GL_m$-stable, $I(V_{(1,1,1)})\subseteq\langle\mathcal{B}^{(m)}\rangle$.

Now we want to show that $\mathcal{B}^{(m)}$ is a Gröbner basis of $\langle\mathcal{B}^{(m)}\rangle$. By Buchberger's criterion, this is equivalent to showing that for all $f,g\in\mathcal{B}^{(m)}$, $S(f,g)$ reduces to $0$ modulo $ \mathcal{B}^{(m)}$. Every element of $\mathcal{B}^{(m)}$ involves at most three of the sets of variables $\mathbf{x}_1,\dots,\mathbf{x}_m$, so given $f,g\in\mathcal{B}^{(m)}$ there are indices $u_1<\cdots<u_l$ with $l\leq6$ such that $f,g\in\C[\mathbf{x}_{u_1},\dots,\mathbf{x}_{u_l}]$. Let $\iota:\mathcal{R}_{l,3}\longrightarrow\mathcal{R}_{m,3}$ be the map of Lemma~\ref{lem:iota}. We have
\begin{equation*}
\iota(p_{k,ij})=p_{u_k,ij},\qquad 
\iota(q_{r,s})=\iota\bigl(\varepsilon_A(x_{s,2}x_{r,3})\bigr)=\varepsilon_A\bigl(\iota(x_{s,2}x_{r,3})\bigr)=\varepsilon_A\bigl(x_{u_s,2}x_{u_r,3}\bigr)=q_{u_r,u_s},
\end{equation*}
where $\iota\circ\varepsilon_A=\varepsilon_A\circ\iota$ holds by Lemma~\ref{lem:iota}(2). Since $r<s$ implies $u_r<u_s$ and $r\leq s\leq t$ implies $u_r\leq u_s\leq u_t$, we have $\iota(\mathcal{B}^{(l)})\subseteq\mathcal{B}^{(m)}$, and $f=\iota(f')$, $g=\iota(g')$ for some $f',g'\in\mathcal{B}^{(l)}$. By Lemma~\ref{lem:iota}(1), $\iota$ maps initial terms to initial terms, so $\iota(S(f',g'))=S(f,g)$, and $\iota$ maps every step of the division of $S(f',g')$ by $\mathcal{B}^{(l)}$ in $\mathcal{R}_{l,3}$ to a step of the division of $S(f,g)$ by $\iota(\mathcal{B}^{(l)})\subseteq\mathcal{B}^{(m)}$ in $\mathcal{R}_{m,3}$. By Lemma~\ref{lem:computationm=3}(2), $S(f',g')$ reduces to $0$ modulo $\mathcal{B}^{(l)}$, therefore $S(f,g)$ reduces to $0$ modulo $\mathcal{B}^{(m)}$, and $\mathcal{B}^{(m)}$ is a Gröbner basis of $\langle\mathcal{B}^{(m)}\rangle=I(V_{(1,1,1)})$.

Since $\ini(q_{r,s})=x_{s,2}x_{r,3}$ and $\ini\left(p_{r,12}p_{s,13}p_{t,23}\right)=x_{r,2}x_{s,3}x_{t,3}$, $\ini(I(V_{(1,1,1)}))$ is generated by the families (i)--(v).
\end{proof}
The result for $m=1$ for all $n\geq3$ is the following. 
\begin{prop}\label{prop:GBm=1}
The initial ideal of $I(V_{(n-2,1,1)})\subseteq \mathcal{R}_{1,n}$ is
\begin{equation*}
\ini(I(V_{(n-2,1,1)}))=\langle x_ix_j^2\mid 1<i<j\leq n\rangle.
\end{equation*}
\end{prop}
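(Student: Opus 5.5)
For $m=1$ the monomial order $<$ of Definition~\ref{def:monomialordergen} is the lexicographic order with $x_n>x_{n-1}>\dots>x_1$. By Corollary~\ref{cor:linearspaces} and Lemma~\ref{lem:alambdahooks}, $V_{(n-2,1,1)}\subseteq\C^n$ is the set of points with at most two distinct coordinates. Writing $J:=I(V_{(n-2,1,1)})$ and $M:=\langle x_ix_j^2\mid 1<i<j\leq n\rangle$, I would prove $\ini(J)\supseteq M$ and $\ini(J)\subseteq M$ separately.

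\textbf{Step 1: $M\subseteq\ini(J)$.} For $1<i<j$, take
\[
f_{ij}:=(x_i-x_1)(x_j-x_1)(x_j-x_i).
\]
This is the Vandermonde in the variables $x_1,x_i,x_j$. It vanishes whenever $x_1,x_i,x_j$ take at most two values, so $f_{ij}\in J$. Its expansion is $x_ix_j^2-x_i^2x_j+\dots$, and under lex with $x_j$ largest the leading monomial is $x_ix_j^2$. Hence $M\subseteq\ini(J)$.

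\textbf{Step 2: $\ini(J)\subseteq M$, via a Hilbert function comparison.} Since $\ini(J)\supseteq M$, it suffices to show that in each degree $d$ the number of standard monomials of $M$ is at most $\dim_\C(\mathcal{R}_{1,n}/J)_d$. The standard monomials of $M$ are those $x_1^{a_1}\cdots x_n^{a_n}$ containing no $x_ix_j^2$ with $1<i<j$. So among $x_2,\dots,x_n$, let $j$ be the largest index that occurs. If $a_j\geq2$, the monomial is $x_1^ax_j^b$. If $a_j=1$, the other variables $x_i$ with $1<i<j$ may appear with arbitrary exponents.

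I would instead compute $\dim(\mathcal{R}_{1,n}/J)_d$ directly, since $J$ is the radical ideal of a union of the planes $W_\Pi$ with $\Pi\in\SP(n,2)$. A cleaner route is to show that the standard monomials are linearly independent modulo $J$. Evaluate on points of the planes: on $W_\Pi$ with blocks $B\ni1$ and $B'$, a monomial restricts to $s^{\deg_B}t^{\deg_{B'}}$. Then argue by a triangularity argument, ordering the standard monomials by lex and choosing suitable two-block partitions.

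\textbf{Alternative for Step 2: induction on $n$.} Suppose $g\in J$ has $\ini(g)$ standard. Set $x_n=x_1$; the result lies in $I(V_{(n-3,1,1)})$ in $n-1$ variables. Setting $x_n$ equal to other values and using the structure of standard monomials should force $g=0$. The base case $n=3$ follows from Lemma~\ref{lem:GBa=1} with $m=1$, where family (i) gives $x_2x_3^2$ and families (ii)–(v) are empty.

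\textbf{Expected main obstacle.} The hard part is Step 2. The most robust approach is to show directly that the $f_{ij}$ form a Gröbner basis and generate $J$. Buchberger's criterion requires reducing the $S$-polynomials $S(f_{ij},f_{ik})$ and $S(f_{ij},f_{kj})$. Generation of $J$ should follow from the Li–Li description of $J(3,n)$ quoted in the introduction, adapted through blocker duality. Failing that, I would use the induction sketched above, dividing $g$ by $x_n-x_1$ as a polynomial in $x_n$.
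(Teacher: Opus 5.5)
Your Step 1 is correct: for $m=1$ the order is lex with $x_n>\dots>x_1$, $f_{ij}\in J$, and $\ini(f_{ij})=x_ix_j^2$. The real content of the proposition, however, is the inclusion $\ini(J)\subseteq M$, and Step 2 does not prove it. It lists three strategies, none is carried out, and each has a concrete problem as stated.

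\emph{Standard monomials.} Your description of the standard monomials of $M$ is wrong. If $a_j=1$, the variables $x_i$ with $1<i<j$ cannot occur with arbitrary exponents: $x_2x_3^2x_4$ is divisible by $x_2x_3^2$. The correct description is that the part in $x_2,\dots,x_n$ has the form $x_k^bx_{j_1}\cdots x_{j_r}$ with $k<j_1<\dots<j_r$. A count based on your description overshoots, so the inequality you need would be false.

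\emph{Induction.} The substitution $x_n\mapsto x_1$ (and likewise division by $x_n-x_1$) replaces the largest variable by the smallest one, so it gives no control over $\ini(g)$.

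\emph{Li--Li.} Li--Li describe the ideal of the $3$-equal arrangement, which is generated by the products $\Delta_B\Delta_{B'}$ over two-block partitions. That is not the ideal of its blocker dual $V_{(n-2,1,1)}$. The statement $J=\langle f_{ij}\rangle$ is exactly the radicality of the one-variable Specht ideal $I_{(n-2,1,1)}$, since the $f_{ij}$ are, up to sign, its standard Specht polynomials. Your $S$-pair reductions are also not done. The paper supplies these two missing inputs from the literature: radicality from Murai--Ohsugi--Yanagawa, and a thesis result that the Specht polynomials of shape $(n-2,1,1)$ form a Gr\"obner basis for this order. It then reads off the leading terms.

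Your first route can be completed in a few lines, and this is the missing idea. Let $g=\sum_\alpha c_\alpha x^\alpha\in J$ be homogeneous of degree $d$ and supported on standard monomials of $M$.

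For $S\subseteq\{2,\dots,n\}$, vanishing on $W_\Pi$ with $\Pi=\{[n]\setminus S,S\}$ (or at $(1,\dots,1)$ when $S=\emptyset$) gives $\sum_\alpha c_\alpha t^{\sum_{i\in S}\alpha_i}=0$. Inclusion--exclusion over $S\subseteq U$ turns this into $\sum_\alpha c_\alpha\prod_{i\in U}(t^{\alpha_i}-1)=0$ for every $U\subseteq\{2,\dots,n\}$.

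Now choose $U$ maximal among the sets $\{i\geq2:\alpha_i>0\}$ with $c_\alpha\neq0$. Only the monomials $x_1^{d-b-r}x_k^bx_{j_1}\cdots x_{j_r}$ with $U=\{k<j_1<\dots<j_r\}$ survive; when $U=\emptyset$, only $x_1^d$ survives. Their contributions $c_\alpha(t^b-1)(t-1)^r$ have distinct degrees in $t$, so these $c_\alpha$ vanish, a contradiction. Hence the standard monomials of $M$ are independent modulo $J$, and together with $M\subseteq\ini(J)$ this gives $\ini(J)=M$ degree by degree.

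Alternatively, the induction that works is Proposition~\ref{prop:initial} specialized to $m=1$: expand $g$ in powers of $x_n$ instead of substituting for $x_n$.
\begin{enumerate}
\item The top coefficient vanishes on the small diagonal of $\C^{n-1}$ (Lemma~\ref{lem:inductionstepge}). So some $x_i$ with $1<i<n$ divides its leading monomial (Theorem~\ref{thm:Ia1}), which settles top $x_n$-degree at least $2$.
\item For top degree $1$, write $g=x_ng_1+g_0$. Appending either value of a two-valued point shows $g_1\in I(V_{(n-3,1,1)})$, and the induction hypothesis applies.
\item For top degree $0$, $g\in I(V_{(n-3,1,1)})$ directly.
\end{enumerate}
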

\begin{proof}
From~\cite[Theorem 4.2.1]{Arnethesis}, $\mathcal{B}=\{\spe_T,\,T\in\Tab((n-2,1,1))\}$, $\spe_T$ being the Specht polynomials, is a Gröbner basis of $I_{(n-2,1,1)}\subseteq\mathcal{R}_{1,n}$ with respect to the order of Definition~\ref{def:monomialordergen} for $m=1$ and for any field of characteristic $0$, and by~\cite{Murai2022}, $I_{(n-2,1,1)}$ is radical over any infinite field, that is $\rad(I_{(n-2,1,1)})=I_{(n-2,1,1)}$. Since $\C$ is algebraically closed, the Nullstellensatz gives $\rad(I_{(n-2,1,1)})=I(V_{(n-2,1,1)})=I_{(n-2,1,1)}$, therefore $\mathcal{B}$ is a Gröbner basis of both $I_{(n-2,1,1)}$ and $I(V_{(n-2,1,1)})$.

For $T=\mbox{\tiny$\begin{ytableau}
        k &\cdots &\\
        l\\
        s
    \end{ytableau}$}$, $\spe_T=(x_k-x_l)(x_k-x_s)(x_l-x_s)$ has initial term $\pm x_ix_j^2$ with $i,j\in\{k,l,s\}$ and $1<i<j\leq n$, $
    \ini(\mathcal{B})=\langle x_ix_j^2,\,1<i<j\leq n\rangle$.
\end{proof}

We will use the following notation from~\cite{Murai2022}. For a partition $\eta=(\eta_1,\dots,\eta_\ell)$ 
and an integer $i>0$, we write $\eta+\langle i\rangle$ for the partition obtained by rearranging the sequence $(\eta_1,\dots,\eta_i+1,\dots,\eta_\ell)$, where  $\eta+\langle i\rangle=(\eta_1,\dots,\eta_\ell,1)$ 
when $i>\ell$.
\begin{remark}\label{rmk:lambda-}
Let $\lambda=(a,1^{n-a})$ with $n-a>0$, and let $\lambda^-=(a,1^{n-a-1})$ be the partition obtained by removing the last box of $\lambda$. 
Then by Lemma~\ref{lem:lenhook},
\begin{equation*}
\{\mu\vdash n\mid\mu\not\trianglelefteq\lambda\}=\{\mu\vdash n\mid\len(\mu)<n-a+1\}\text{ and } \{\eta\vdash n-1\mid\eta\not\trianglelefteq\lambda^-\}=
\{\eta\vdash n-1\mid\len(\eta)<n-a\}.
\end{equation*} 
Note that if $\eta\not\trianglelefteq\lambda^-$, then $\eta+\langle i\rangle\not\trianglelefteq\lambda$, since
$\len(\eta+\langle i\rangle)\leq\len(\eta)+1< n-a+1$.
\end{remark}

\begin{lemma}\label{lem:inductionstepge}
Let $\lambda=(a,1^{n-a})$ with $n-a>0$, and let $\lambda^-=(a,1^{n-a-1})$. Any polynomial $f\in I(V_\lambda)\subseteq\mathcal{R}_{m,n}$ can be written as
\begin{equation}
f= \sum_{\boldsymbol{\beta}\in\mathbb{N}^m}f_{\boldsymbol{\beta}}\,\mathbf{x}_{\bullet,n}^{\boldsymbol{\beta}},\quad \text{ with }\quad f_{\boldsymbol{\beta}}\in I(V_{\lambda^-})\subseteq\mathcal{R}_{m,n-1}.
\end{equation}
\end{lemma}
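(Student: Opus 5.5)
Expand $f$ as a polynomial in the last column: writing $f=\sum_{\bbeta}f_{\bbeta}\,\mathbf{x}_{\bullet,n}^{\bbeta}$ with $f_{\bbeta}\in\mathcal{R}_{m,n-1}$ is always possible and unique, since $\mathcal{R}_{m,n}=\mathcal{R}_{m,n-1}[x_{1,n},\dots,x_{m,n}]$. The content of the lemma is therefore that each coefficient $f_{\bbeta}$ vanishes on $V_{\lambda^-}\subseteq\C^{m(n-1)}$. So I will fix a point $Q=(\mathbf{u}_{\bullet,1},\dots,\mathbf{u}_{\bullet,n-1})\in V_{\lambda^-}$ and show that $f_{\bbeta}(Q)=0$ for every $\bbeta$.

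The key step is to show that, for every $\mathbf{c}\in\C^m$, the extended point $P_{\mathbf{c}}:=(\mathbf{u}_{\bullet,1},\dots,\mathbf{u}_{\bullet,n-1},\mathbf{c})\in\C^{mn}$ lies in $V_\lambda$. By Theorem~\ref{thm:charzeros}, $Q\in H_\eta$ for some $\eta\vdash n-1$ with $\eta\not\trianglelefteq\lambda^-$, which by Remark~\ref{rmk:lambda-} means $\len(\eta)<n-a$.
\begin{itemize}
\item If $\mathbf{c}$ equals one of the distinct column values of $Q$, say the $i$-th, then $P_{\mathbf{c}}$ has type $\eta+\langle i\rangle$.
\item If $\mathbf{c}$ is a new value, then $P_{\mathbf{c}}$ has type $\eta+\langle \len(\eta)+1\rangle$.
\end{itemize}
In either case Remark~\ref{rmk:lambda-} gives that the type is not dominated by $\lambda$, because its length is at most $\len(\eta)+1<n-a+1=\len(\lambda)$. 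Lemma~\ref{lem:lenhook} then puts $P_{\mathbf{c}}$ in $H_\mu$ with $\mu\not\trianglelefteq\lambda$, and Theorem~\ref{thm:charzeros} gives $P_{\mathbf{c}}\in V_\lambda$.

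Hence $0=f(P_{\mathbf{c}})=\sum_{\bbeta}f_{\bbeta}(Q)\,\mathbf{c}^{\bbeta}$ for all $\mathbf{c}\in\C^m$. The polynomial $\sum_{\bbeta}f_{\bbeta}(Q)\,\mathbf{y}^{\bbeta}\in\C[y_1,\dots,y_m]$ therefore vanishes on all of $\C^m$. Since $\C$ is infinite, this polynomial is zero, so all its coefficients $f_{\bbeta}(Q)$ vanish. As $Q\in V_{\lambda^-}$ was arbitrary, every $f_{\bbeta}$ lies in $I(V_{\lambda^-})$, which proves the lemma.

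The only point requiring care is the type bookkeeping in the two bullet cases, specifically checking that adding one column raises the length by at most one. This is immediate, so I expect no real obstacle. The hook hypothesis is used precisely to reduce non-dominance to a length condition via Lemma~\ref{lem:lenhook}.
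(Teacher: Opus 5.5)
Your proposal is correct and follows essentially the same route as the paper's proof: expand $f$ in the last column, append an arbitrary column $\mathbf{c}$ to a point of $V_{\lambda^-}$, use Remark~\ref{rmk:lambda-} and Theorem~\ref{thm:charzeros} to place the extended point in $V_\lambda$, and conclude that the coefficients vanish because $\mathbf{c}$ is arbitrary. Your version is slightly more explicit in splitting the type bookkeeping into two cases and in invoking that a polynomial vanishing on all of $\C^m$ is zero.
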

\begin{proof}
First, we write $f\in \mathcal{R}_{m,n}$ as a polynomial in the variables $\mathbf{x}_{\bullet,n}=(x_{1,n},\dots,x_{m,n})$ with coefficients in $\mathcal{R}_{m,n-1}$:
\begin{equation*}
f=\sum_{\boldsymbol{\beta}\in\mathbb{N}^m}f_{\boldsymbol{\beta}}\,\mathbf{x}_{\bullet,n}^{\boldsymbol{\beta}},\quad f_{\boldsymbol{\beta}}\in \mathcal{R}_{m,n-1}.
\end{equation*}
By Theorem~\ref{thm:charzeros}, $V_\lambda=\bigcup_{\mu\not\trianglelefteq\lambda}H_\mu$, $V_{\lambda^-}=\bigcup_{\eta\not\trianglelefteq\lambda^-}H_\eta$. 
We want to prove that $f_{\boldsymbol{\beta}}(P)=0$ for all $P\in V_{\lambda^-}\subseteq\C^{m(n-1)}$.
Let $P=(\mathbf{u}_{\bullet,1},\dots,\mathbf{u}_{\bullet,n-1})\in V_{\lambda^-}$, that is, $P\in H_\eta$ with $\eta\not\trianglelefteq \lambda^-$. Let
$\mathbf{a}\in\C^m$, and $Q:=(\mathbf{u}_{\bullet,1},\dots,\mathbf{u}_{\bullet,n-1},\mathbf{a})\in\C^{mn}$ be the point obtained by appending $\mathbf{a}$ to $P$ as the $n$-th column. Then 
$Q\in H_{\eta+\langle i\rangle}$ for some $i$. By 
Remark~\ref{rmk:lambda-}, $\eta+\langle i\rangle\not\trianglelefteq\lambda$, therefore $Q\in V_\lambda$. Since $f\in I(V_\lambda)$, we have
\begin{equation*}
f(Q)=\sum_{\boldsymbol{\beta}\in\mathbb{N}^m}f_{\boldsymbol{\beta}}(P)\,\mathbf{a}^{\boldsymbol{\beta}}=0.
\end{equation*}
Lastly, since $\mathbf{a}$ is chosen freely, $f_{\boldsymbol{\beta}}(P)=0$ for all 
$\boldsymbol{\beta}$.
\end{proof}
\begin{lemma}\label{lem:phi}
Let $g\in\mathcal{R}_{m,n}$ be multihomogeneous, and assume $x_{r,i}\nmid\ini(g)$ 
for $1<i\leq n$, $r>l$. 
Then $\ini(\ev_{\mathbf{1}}(g))=\ev_{\mathbf{1}}(\ini(g))$, $\ev_{\mathbf{1}}:\mathcal{R}_{m,n}\longrightarrow\mathcal{R}_{l,n}$ being the map from Definition~\ref{def:maps} with $m'=l$ where $\mathbf{1}=(1,\dots,1)\in\C^{m-l}$.
\end{lemma}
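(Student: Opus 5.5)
The statement compares $\ev_{\mathbf{1}}$, which sets $x_{r,i}=1$ for $r>l$, with taking initial terms under the order of Definition~\ref{def:monomialordergen}. The plan is to show that $\ev_{\mathbf{1}}$ maps the monomials of $g$ to monomials of $\mathcal{R}_{l,n}$ (up to the coefficient), and that the image of $\ini(g)$ is strictly larger than the image of every other monomial of $g$. Then no cancellation can hit the image of $\ini(g)$, so it survives as the leading monomial of $\ev_{\mathbf{1}}(g)$.

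Write $\ini(g)=\prod_i\mathbf{x}_{\bullet,i}^{\balpha_i}$ and let $p=\prod_i\mathbf{x}_{\bullet,i}^{\bbeta_i}$ be any other monomial of $g$, so $p<\ini(g)$. For a multiexponent $\bgamma$, write $\bar\bgamma$ for its truncation to the first $l$ coordinates. Then $\ev_{\mathbf{1}}(p)=\prod_i\mathbf{x}_{\bullet,i}^{\bar\bbeta_i}$. The hypothesis gives $\bar\balpha_i=\balpha_i$ for $i>1$.

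The key comparison is as follows. Let $k$ be the largest index with $\balpha_k\neq\bbeta_k$.

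First take the case $k>1$. For $i>k$ we have $\bar\bbeta_i=\bar\balpha_i$, so it suffices to show $\bar\bbeta_k<\bar\balpha_k=\balpha_k$. We know $\bbeta_k<\balpha_k$, i.e.\ there is a $t$ with equal partial sums above $t$ and $|\bbeta_k|_t<|\balpha_k|_t$. Since $\balpha_k$ vanishes beyond $l$, we get $|\balpha_k|_s=|\balpha_k|_l$ for all $s\geq l$.
\begin{itemize}
\item If $t\geq l$, then $|\bbeta_k|_l\leq|\bbeta_k|_t<|\balpha_k|_t=|\balpha_k|_l$. The total degree of $\bar\bbeta_k$ is therefore strictly smaller, so $\bar\bbeta_k<\balpha_k$.
\item If $t<l$, we have $|\bbeta_k|_s=|\balpha_k|_s$ for $s>t$, in particular for $s=l,\dots,m$. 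Hence $\bbeta_k$ also vanishes beyond $l$, the truncation changes nothing, and the same $t$ works.
\end{itemize}
In both cases $\ev_{\mathbf{1}}(p)<\ev_{\mathbf{1}}(\ini(g))$.

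Now take the case $k=1$. This is where multihomogeneity enters, and I expect it to be the main point. For $i>1$ we have $\bbeta_i=\balpha_i$, and by multihomogeneity $\sum_i\bbeta_i=\sum_i\balpha_i$ coordinatewise, so $\bbeta_1=\balpha_1$, contradicting $k=1$. So this case cannot occur.

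It follows that every monomial $p\neq\ini(g)$ of $g$ satisfies $\ev_{\mathbf{1}}(p)<\ev_{\mathbf{1}}(\ini(g))$. In particular no other monomial of $g$ maps to $\ev_{\mathbf{1}}(\ini(g))$, so its coefficient survives with no cancellation, and it is the largest monomial of $\ev_{\mathbf{1}}(g)$. Hence $\ini(\ev_{\mathbf{1}}(g))=\ev_{\mathbf{1}}(\ini(g))$.

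One care point remains: when $t\geq l$ one should double-check the case $t>l$ using the monotonicity of partial sums, and the argument above already does this.
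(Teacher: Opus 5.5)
Your proof is correct and follows the same route as the paper: the same case split on the largest differing column index $k$ and on whether $t\geq l$ or $t<l$, with multihomogeneity used only for $k=1$. It is slightly cleaner than the paper's version, because strict order preservation already gives the injectivity that the paper proves separately, and you rule out $k=1$ entirely instead of reducing it to the $\mathcal{R}_{l,n}$-part as the paper does.
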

\begin{proof}
It suffices to show that if $\ev_{\mathbf{1}}(p)=\ev_{\mathbf{1}}(p')$, then $p=p'$, and $\ev_{\mathbf{1}}$ preserves 
the order $<$. Let $p:=\ini(g)$ with $i$-multiexponents $\boldsymbol{\alpha}_i$, $1\leq i\leq n$, 
and let $p'$ be another monomial of $g$ with $i$-multiexponents $\boldsymbol{\beta}_i$, $1\leq i\leq n$. 

Since $x_{r,i}\nmid p$ for $1<i\leq n$, $r>l$, 
we have $\alpha_{r,i}=0$ for $1<i\leq n$, $r>l$, and since $g$ is multihomogeneous, $\sum_{i=1}^n\beta_{r,i}=\sum_{i=1}^n\alpha_{r,i}$ for all $1\leq r\leq m$, 
in particular $\sum_{i=1}^n\beta_{r,i}=\alpha_{r,1}$ for all $r>l$.

Suppose $\ev_{\mathbf{1}}(p)=\ev_{\mathbf{1}}(p')$,
\begin{equation*}
\ev_{\mathbf{1}}(p)=\left(\prod_{r=1}^l\prod_{i=1}^n x_{r,i}^{\alpha_{r,i}}\right)
,\qquad
\ev_{\mathbf{1}}(p')=\left(\prod_{r=1}^l\prod_{i=1}^n x_{r,i}^{\beta_{r,i}}\right).
\end{equation*}
When $r\leq l$, $\alpha_{r,i}=\beta_{r,i}$ for all $1\leq i\leq n$. When $r>l$, assume $\beta_{r,i}>0$ 
for some $i>1$. We can assume that $i$ is maximal, and then $i$ is maximal such that $\boldsymbol{\alpha}_i\neq\boldsymbol{\beta}_i$, and since $\alpha_{r,i}=0$ for all $r>l$, $|\boldsymbol{\beta}_i|>|\boldsymbol{\alpha}_i|$ and $p<p'$, which contradicts $p=\ini(g)$, 
therefore $\beta_{r,i}=0$ for all $i>1$, $r>l$, and $\beta_{r,1}=\alpha_{r,1}$ by homogeneity. Therefore $p=p'$.

If $p'<p$, let $k$ be the largest index such that $\boldsymbol{\alpha}_k\neq\boldsymbol{\beta}_k$, 
and let $t$ be such that $|\boldsymbol{\alpha}_k|_s=|\boldsymbol{\beta}_k|_s$ for $s>t$ 
but $|\boldsymbol{\beta}_k|_t<|\boldsymbol{\alpha}_k|_t$.
\begin{enumerate}
    \item If $k>1$, $t\geq l$, then since $\alpha_{r,k}=0$ for $r>l$ 
$|\boldsymbol{\alpha}_k|_l=|\boldsymbol{\alpha}_k|_t>|\boldsymbol{\beta}_k|_t
\geq|\boldsymbol{\beta}_k|_l$, so $\ev_{\mathbf{1}}(p')<\ev_{\mathbf{1}}(p)$.
\item If $k>1$, $t<l$, then since $|\boldsymbol{\alpha}_k|_s=|\boldsymbol{\beta}_k|_s$ for $s>t$ 
and $|\boldsymbol{\alpha}_k|_t>|\boldsymbol{\beta}_k|_t$, we have 
$\ev_{\mathbf{1}}(p')<\ev_{\mathbf{1}}(p)$ by Definition~\ref{def:monomialordergen} applied to $\mathcal{R}_{l,n}$.
\item If $k=1$, $\boldsymbol{\alpha}_i=\boldsymbol{\beta}_i$ for all 
$i>1$, that is $\alpha_{r,i}=\beta_{r,i}$ for all $r$ and $i>1$. In 
particular, $\alpha_{r,i}=\beta_{r,i}=0$ for all $r>l$ and $i>1$. By 
multihomogeneity, $\alpha_{r,1}=\beta_{r,1}$ for all $r>l$, so the 
comparison of $\ev_{\mathbf{1}}(p')$ and $\ev_{\mathbf{1}}(p)$ reduces to their $\mathcal{R}_{l,n}$-parts 
at position $k=1$, and the same argument as case $k>1$ gives $\ev_{\mathbf{1}}(p')<\ev_{\mathbf{1}}(p)$.
\end{enumerate}

Therefore $\ev_{\mathbf{1}}$ preserves the order $<$, and $\ini(\ev_{\mathbf{1}}(g))=\ev_{\mathbf{1}}(\ini(g))$.
\end{proof}

\begin{prop}\label{prop:initial}
    $\ini(I(V_{(n-2,1,1)}))$ is contained in the ideal generated by the families:
    \begin{enumerate}
        \item[(i)] $x_{r,i}x_{r,j}^2$, $1\leq r\leq m$, $1<i<j\leq n$;
        \item[(ii)] $x_{r,i}x_{r,j}x_{s,j}$, $1\leq r<s\leq m$, $1<i<j\leq n$;
        \item[(iii)] $x_{r,i}x_{s,j}x_{t,j}$, $1\leq r<s<t\leq m$, $1<i<j\leq n$;
        \item[(iv)] $x_{r,i}x_{s,j}^2$, $1\leq r<s\leq m$, $1<i<j\leq n$;
        \item[(v)] $x_{s,i}x_{r,j}$, $1\leq r<s\leq m$, $1<i<j\leq n$.
    \end{enumerate}
\end{prop}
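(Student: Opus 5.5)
The plan is to argue by induction on $n\geq3$, using Lemma~\ref{lem:inductionstepge} to strip off the last column of variables and the block structure of the order $<$ to read off initial monomials. Write $J_n$ for the monomial ideal of $\mathcal{R}_{m,n}$ generated by the families (i)--(v). The index ranges only grow with $n$, so $J_{n-1}\mathcal{R}_{m,n}\subseteq J_n$ when $\mathcal{R}_{m,n-1}$ is viewed inside $\mathcal{R}_{m,n}$.

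\emph{Base case $n=3$.} Here $(n-2,1,1)=(1,1,1)$ and the only admissible pair is $(i,j)=(2,3)$. The five families of the statement are then exactly the families (i)--(v) of Lemma~\ref{lem:GBa=1}. That lemma shows they generate $\ini(I(V_{(1,1,1)}))$ for every $m$, so in particular $\ini(I(V_{(1,1,1)}))\subseteq J_3$.

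\emph{Induction step.} Let $n\geq4$, set $\lambda=(n-2,1,1)$, so that $\lambda^-=(n-3,1,1)$, and let $0\neq f\in I(V_\lambda)$. By Lemma~\ref{lem:inductionstepge}, with $a=n-2$ and $n-a=2>0$, we can write $f=\sum_{\bbeta}f_{\bbeta}\,\mathbf{x}_{\bullet,n}^{\bbeta}$ with each $f_{\bbeta}\in I(V_{\lambda^-})\subseteq\mathcal{R}_{m,n-1}$. Let $\bbeta_0$ be the largest multiexponent, in the order of Definition~\ref{def:orderexp}, with $f_{\bbeta_0}\neq0$.

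The key step is the identity $\ini(f)=\ini(f_{\bbeta_0})\,\mathbf{x}_{\bullet,n}^{\bbeta_0}$. This holds because the order of Definition~\ref{def:monomialordergen} compares $n$-multiexponents first. Every monomial of $f_{\bbeta}\mathbf{x}_{\bullet,n}^{\bbeta}$ has $n$-multiexponent $\bbeta$, and the monomials coming from different $\bbeta$ are distinct, so there is no cancellation. Consequently every monomial with $\bbeta\neq\bbeta_0$ is smaller than every monomial with $n$-multiexponent $\bbeta_0$. Among the latter, the order restricts to the order of $\mathcal{R}_{m,n-1}$ on the cofactor, so the largest one is $\ini(f_{\bbeta_0})\mathbf{x}_{\bullet,n}^{\bbeta_0}$.

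Now $f_{\bbeta_0}$ is a nonzero element of $I(V_{\lambda^-})\subseteq\mathcal{R}_{m,n-1}$. By the induction hypothesis (for $n-1\geq3$) we get $\ini(f_{\bbeta_0})\in J_{n-1}$. Hence $\ini(f)\in J_{n-1}\mathcal{R}_{m,n}\subseteq J_n$, which closes the induction.

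The main point to check carefully is the identity $\ini(f)=\ini(f_{\bbeta_0})\mathbf{x}_{\bullet,n}^{\bbeta_0}$, that is, that the block order really is lexicographic in the column $n$ first. Apart from that the argument only uses the restriction statement of Lemma~\ref{lem:inductionstepge} and the base case. Notably, no new generators involving column $n$ are needed for this containment. Those generators matter only for the reverse inclusion, which presumably is where Lemma~\ref{lem:phi} and explicit elements of $I_{(n-2,1,1)}$ come in.
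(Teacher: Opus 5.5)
Your induction step misapplies Lemma~\ref{lem:inductionstepge}. For $\lambda=(a,1^{n-a})$ that lemma takes $\lambda^-=(a,1^{n-a-1})$, i.e.\ it deletes the bottom box of the first column. For $\lambda=(n-2,1,1)$ this gives $\lambda^-=(n-2,1)\vdash n-1$, not $(n-3,1,1)$. So the coefficients $f_{\bbeta}$ are only known to lie in $I(V_{(n-2,1)})$, the ideal of the small diagonal in $n-1$ columns. The stronger membership $f_{\bbeta}\in I(V_{(n-3,1,1)})$ that you need is false in general. Appending an arbitrary last column $\mathbf{a}$ to a point whose columns take two values can create a third value, so the vanishing of $f$ on $V_{(n-2,1,1)}$ gives no information about $f_{\bbeta}$ on $V_{(n-3,1,1)}$.

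Concretely, take $m=1$, $n=4$ and the Specht polynomial $f=(x_1-x_2)(x_1-x_4)(x_2-x_4)\in I(V_{(2,1,1)})$. The coefficient of $x_4^2$ is $x_1-x_2$, which does not vanish at $(0,1,1)\in V_{(1,1,1)}$. Moreover $\ini(f)=x_2x_4^2$ does not lie in $J_3\mathcal{R}_{1,4}=\langle x_2x_3^2\rangle$. Hence the conclusion $\ini(f)\in J_{n-1}\mathcal{R}_{m,n}$ is false. So is your remark that no generators with $j=n$ are needed: they are needed already for this containment.

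Your identity $\ini(f)=\ini(f_{\bbeta_0})\,\mathbf{x}_{\bullet,n}^{\bbeta_0}$ is correct. Your argument also works when $\bbeta_0=0$: then $f\in\mathcal{R}_{m,n-1}$, and appending a copy of an existing column shows directly that $f\in I(V_{(n-3,1,1)})$. For $\bbeta_0\neq0$, what is missing is the case analysis the paper carries out.

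\begin{itemize}
\item Since $f_{\bbeta_0}\in I(V_{(n-2,1)})$, Theorem~\ref{thm:Ia1} gives a variable $x_{r,i}$ with $1<i\leq n-1$ dividing $\ini(f_{\bbeta_0})$.
\item If $|\bbeta_0|\geq2$, combine this variable with a square or a product of two variables from $\mathbf{x}_{\bullet,n}^{\bbeta_0}$. The result is a generator of type (i)--(v) with $j=n$; which type depends on how $r$ compares with the rows in the support of $\bbeta_0$.
\item Suppose $|\bbeta_0|=1$, with its nonzero entry in row $l$. If some $x_{r,i}$ with $r>l$ and $1<i\leq n-1$ divides $\ini(f_{\bbeta_0})$, then $x_{r,i}x_{l,n}$ of type (v) divides $\ini(f)$. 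Otherwise you must set the rows $r>l$ to $1$. Then $f$ becomes affine in $x_{l,n}$ and vanishes at the two points $Q_1,Q_2$, which shows $\ev_{\mathbf{1}}(f_{\bbeta_0})\in I(V_{(n-3,1,1)})\subseteq\mathcal{R}_{l,n-1}$. Finally you pull the inductive conclusion back to $\ini(f)$ via Lemma~\ref{lem:phi}.
\end{itemize}

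So the induction hypothesis on $I(V_{(n-3,1,1)})$ becomes usable only after this reduction, not directly for $f_{\bbeta_0}$.
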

\begin{proof}
We prove the result by induction on $n$. The case $n=3$ is proven 
in Lemma~\ref{lem:GBa=1} for all $m$.
Let $f\in I(V_{(n-2,1,1)})\subseteq\mathcal{R}_{m,n}$. Since $I(V_{(n-2,1,1)})$ is multigraded by Corollary~\ref{cor:GLminvariantvariety}, we can assume that $f$ is multihomogeneous, and by Lemma~\ref{lem:inductionstepge}, we can write $f$ as
\begin{equation*}
f=\sum_{\boldsymbol{\beta}\in\mathbb{N}^m}
f_{\boldsymbol{\beta}}\mathbf{x}_{\bullet,n}^{\boldsymbol{\beta}},\quad f_{\boldsymbol{\beta}}\in I(V_{(n-2,1)})\subseteq\mathcal{R}_{m,n-1},
\end{equation*}
where each $f_{\boldsymbol{\beta}}$ is multihomogeneous, with $\ini(f)=\mathbf{x}_{\bullet,n}^{\boldsymbol{\alpha}}\ini(g)$, 
where $\boldsymbol{\alpha}=(\alpha_1,\dots,\alpha_m)\in\mathbb{N}^m$ is maximal 
in the order on $\mathbb{N}^m$ of Definition~\ref{def:orderexp}, 
and $g:=f_{\boldsymbol{\alpha}}\in\mathcal{R}_{m,n-1}$.

Since $g\in I(V_{(n-2,1)})\subseteq\mathcal{R}_{m,n-1}$, by Theorem~\ref{thm:Ia1} we have $x_{r,i}\mid\ini(g)$ for some $1\leq r\leq m$, 
$1<i\leq n-1$.
We refer to the five types of generators as \textrm{(i)}--\textrm{(v)}.
\begin{itemize}

\item Case $\alpha_l\geq 2$ for some $l$.
If $r<l$, then $x_{r,i}x_{l,n}^2\mid\ini(f)$, $x_{r,i}x_{l,n}^2$ is of type \textrm{(iv)}. 
If $r=l$, then $x_{l,i}x_{l,n}^2\mid\ini(f)$, $x_{l,i}x_{l,n}^2$ is of type \textrm{(i)}. 
If $r>l$, then $x_{r,i}x_{l,n}\mid\ini(f)$, $x_{r,i}x_{l,n}$ is of type \textrm{(v)}.

\item Case $\alpha_s=1$, $\alpha_t=1$ for some $s<t$.
If $r<s$, then $x_{r,i}x_{s,n}x_{t,n}\mid \ini(f)$, $x_{r,i}x_{s,n}x_{t,n}$ is of type \textrm{(iii)}.
If $r>s$, then $x_{r,i}x_{s,n}\mid \ini(f)$, $x_{r,i}x_{s,n}$ is of type \textrm{(v)}.
If $r=s$, then $x_{r,i}x_{r,n}x_{t,n}\mid \ini(f)$, and $x_{r,i}x_{r,n}x_{t,n}$ is of type \textrm{(ii)}.

\item Case $\alpha_l=1$, $\alpha_r=0$ for all $r\neq l$. Write
\begin{equation*}
f=\sum_{r=l}^mx_{r,n}g_r+g_0
\end{equation*}
where $g_0,g_r\in\mathcal{R}_{m,n-1}$, and  $f$ does not depend 
on $x_{r,n}$ for $r<l$. Since $\ini(f)=x_{l,n}\ini(g_l)$, if $x_{r,i}\mid \ini(g_l)$ for some $r>l$, $1<i\leq n-1$, $x_{r,i}x_{l,n}\mid\ini(f)$, which is of type \textrm{(v)}.

We want to prove that if $x_{r,i}\nmid \ini(g_l)$ for all $r>l$, then there is a monomial of type (i)--(v) in $\mathcal{R}_{l,n-1}$ that divides $\ini(f)$.
Let $\ev_{\mathbf{1}}: \mathcal{R}_{m,n-1}\to\mathcal{R}_{l,n-1}$ and $\iota_{\mathbf{1}}: \C^{l(n-1)}\to \C^{m(n-1)}$ be the maps from Definition~\ref{def:maps}. Let $P=(\mathbf{u}_{\bullet,1},\dots,\mathbf{u}_{\bullet,n-1})
\in V_{(n-3,1,1)}\subseteq\C^{l(n-1)}$. By Lemma~\ref{lem:lenhook} and 
Theorem~\ref{thm:charzeros}, the columns of $P$, $\mathbf{u}_{\bullet,i}$, can take at most 2 different values, $\mathbf{u}_{\bullet,i}\in
\{\mathbf{a},\mathbf{b}\}$ for some $\mathbf{a},\mathbf{b}\in\C^l$.

Consider the points $Q_1$ and $Q_2$ obtained by appending $\iota_{\mathbf{1}}(\mathbf{a})$ and $\iota_{\mathbf{1}}(\mathbf{b})$ to $\iota_{\mathbf{1}}(P)$ as the $n$-th column, respectively:
\begin{equation*}
Q_1:=(\iota_{\mathbf{1}}(P),\iota_{\mathbf{1}}(\mathbf{a})),\qquad Q_2:=(\iota_{\mathbf{1}}(P),\iota_{\mathbf{1}}(\mathbf{b})).
\end{equation*}
If $P=(\mathbf{a},\dots,\mathbf{a})\in H_{(n-1)}$, then choose any $\mathbf{b}\neq\mathbf{a}$. 
Since the columns of $Q_1$ and $Q_2$ can take at most 2 different values, $\iota_{\mathbf{1}}(\mathbf{u}_{\bullet,i}),\iota_{\mathbf{1}}(\mathbf{a}),\iota_{\mathbf{1}}(\mathbf{b})\in\{\iota_{\mathbf{1}}(\mathbf{a}),\iota_{\mathbf{1}}(\mathbf{b})\}$, 
by the same results as before we have $Q_1,Q_2\in V_{(n-2,1,1)}$.

Since $Q_1,Q_2\in V_{(n-2,1,1)}$ and $f\in I(V_{(n-2,1,1)})$, $Q_1$ and $Q_2$ are two roots of the degree-one polynomial in $x_{l,n}$
\begin{equation*}
h(x_{l,n}):=x_{l,n}(g_l\circ\iota_{\mathbf{1}})(P)+\sum_{r=l+1}^m(g_r\circ\iota_{\mathbf{1}})(P)+(g_0\circ\iota_{\mathbf{1}})(P),
\end{equation*}
$h(a_l)=f(Q_1)=0$ and $h(b_l)=f(Q_2)=0$. When $a_l\neq b_l$, this forces $h\equiv0$, so $(g_l\circ\iota_{\mathbf{1}})(P)=\ev_{\mathbf{1}}(g_l)(P)=0$. The locus $a_l\neq b_l$ (together with the constant points, where $\mathbf{b}$ can be chosen freely) is dense in every irreducible component of the variety, so $g_l\circ\iota_{\mathbf{1}}$ vanishes on a dense subset of each component and hence everywhere, giving $\ev_{\mathbf{1}}(g_l)\in I(V_{(n-3,1,1)})\subseteq \mathcal{R}_{l,n-1}$. By the induction hypothesis, there is a monomial of type (i)--(v) in $\mathcal{R}_{l,n-1}$ that divides $\ini(\ev_{\mathbf{1}}(g_l))$, and by Lemma~\ref{lem:phi}, $\ev_{\mathbf{1}}(\ini(g_l))=\ini(\ev_{\mathbf{1}}(g_l))$, so the same monomial divides $\ini(f)$.
\item Case $\boldsymbol{\alpha}=0$. In this case, $f\in\mathcal{R}_{m,n-1}$. Let $P=(\mathbf{u}_{\bullet,1},\dots,\mathbf{u}_{\bullet,n-1})\in V_{(n-3,1,1)}\subseteq\C^{m(n-1)}$, that is $\mathbf{u}_{\bullet,i}\in\{\mathbf{a},\mathbf{b}\}$ for some $\mathbf{a},\mathbf{b}\in\C^m$ by Lemma~\ref{lem:lenhook} and Theorem~\ref{thm:charzeros}. Consider the point $Q=(\mathbf{u}_{\bullet,1},\dots,\mathbf{u}_{\bullet,n-1},\mathbf{a})\in\C^{mn}$. Again by Lemma~\ref{lem:lenhook} and Theorem~\ref{thm:charzeros}, $Q\in V_{(n-2,1,1)}$ and $f(P)=f(Q)=0$, hence $f\in I(V_{(n-3,1,1)})$ and the induction hypothesis applies.
\end{itemize}
\end{proof}
The reverse inclusion holds as well (see the proof of Theorem~\ref{thm:a11}), so the families (i)--(v) in fact generate $\ini(I(V_{(n-2,1,1)}))$.
\begin{thm}\label{thm:a11}
$I_{(n-2,1,1)}\subseteq\mathcal{R}_{m,n}$ is radical.
\end{thm}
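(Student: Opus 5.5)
The plan is a Gröbner-basis sandwich. We have $I_{(n-2,1,1)}\subseteq I(V_{(n-2,1,1)})$, and Proposition~\ref{prop:initial} gives an upper bound on $\ini(I(V_{(n-2,1,1)}))$, namely the monomial ideal $M$ generated by the families (i)--(v). It therefore suffices to produce explicit elements of $I_{(n-2,1,1)}$ whose initial monomials are exactly these generators. Then
\[
M\subseteq\ini(I_{(n-2,1,1)})\subseteq\ini(I(V_{(n-2,1,1)}))\subseteq M,
\]
so the two ideals have the same initial ideal. Since one contains the other, the standard fact that $J\subseteq J'$ with $\ini(J)=\ini(J')$ forces $J=J'$ gives $I_{(n-2,1,1)}=I(V_{(n-2,1,1)})=\rad(I_{(n-2,1,1)})$. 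As a byproduct, the chosen elements form a Gröbner basis, which is the one promised in Theorem~\ref{thmC}.

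To produce the elements, transport the base case $n=3$ to each triple $\{1,i,j\}$ with $1<i<j\leq n$. Let $T_{ij}$ be the tableau of shape $(n-2,1,1)$ whose first column is $1,i,j$ and whose first row is filled with the remaining entries. For a monomial $p$ in the variables of columns $i,j$ only, the row symmetrizer $\beta(T_{ij})$ fixes $p$ up to the factor $|R(T_{ij})|$, since $i$ and $j$ are not in the first row. Hence $\varepsilon_{T_{ij}}(p)$ is a nonzero multiple of the $(1,1,1)$-antisymmetrization of $p$ over the columns $1,i,j$. Taking $p=x_{s,i}x_{r,j}$ yields the analogue $q^{ij}_{r,s}$ of $q_{r,s}$ from Lemma~\ref{lem:GBa=1}, and it lies in $I_{(n-2,1,1)}$ by Proposition~\ref{prop:etinvl}. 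The products $p_{r,1i}\,p_{s,1j}\,p_{t,ij}$ are the $(1,1,1)$-antisymmetrizations of suitable degree-three monomials, which can be read off from the $m=1$ Specht polynomial via multilinear polarization in the rows. Alternatively, mimic the identities in the proof of Lemma~\ref{lem:GBa=1}: show that $\langle\mathcal{B}^{(m)}\rangle$ on columns $\{1,i,j\}$ is generated over $\mathcal{R}_{m,n}$ by $\varepsilon_{T_{ij}}$-images. Indeed, Lemma~\ref{lem:GBa=1} together with Lemma~\ref{lem:computationm=3}(1) shows that for $n=3$ the ideal $I(V_{(1,1,1)})$ equals $\langle\mathcal{B}^{(m)}\rangle$. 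Moreover, each cubic $p_{r,12}p_{s,13}p_{t,23}$ is congruent modulo the $q$'s to its symmetrization in $r,s,t$, which is a $\GL_m$-translate of $p_{1,12}p_{1,13}p_{1,23}=\spe_A$. So the whole base-case ideal lies in $I_{(1,1,1)}$ for $n=3$, and this embeds into column triple $\{1,i,j\}$ via the column relabeling $2\mapsto i$, $3\mapsto j$.

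Next, compute the initial terms for the order $<$. Relabeling columns $1,2,3\mapsto1,i,j$ with $1<i<j$ preserves the comparison structure of Definition~\ref{def:monomialordergen} on these three columns, because columns are compared from the largest index down. Hence $\ini(q^{ij}_{r,s})=x_{s,i}x_{r,j}$ and $\ini(p_{r,1i}p_{s,1j}p_{t,ij})=x_{r,i}x_{s,j}x_{t,j}$, matching families (i)--(v) exactly as in Lemma~\ref{lem:GBa=1}. This gives $M\subseteq\ini(I_{(n-2,1,1)})$, and the sandwich closes.

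The main obstacle is the membership step: checking that the cubic products $p_{r,1i}p_{s,1j}p_{t,ij}$, and not just the $q$'s, lie in $I_{(n-2,1,1)}$ rather than only in its radical. Here I would first use $\GL_m$-stability of $I_{(n-2,1,1)}$ (Lemma~\ref{lem:GLminv}) applied to $\spe_{T_{ij}}$ in the row-$1$ variables. Polarization then gives all symmetrized cubics, and the three exchange identities from the proof of Lemma~\ref{lem:GBa=1} correct the non-symmetric ones by multiples of $q^{ij}$'s. A further point needs care: the relabeling must genuinely preserve initial terms when the other columns are absent, which follows from the block structure of the order.
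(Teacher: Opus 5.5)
Your proposal is correct. Its skeleton is the paper's: the sandwich $\langle\text{(i)--(v)}\rangle\subseteq\ini(I_{(n-2,1,1)})\subseteq\ini(I(V_{(n-2,1,1)}))\subseteq\langle\text{(i)--(v)}\rangle$, closed by Proposition~\ref{prop:initial}. The difference is in which elements of $I_{(n-2,1,1)}$ you use to realize the generators.

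The paper applies $\varepsilon_{T_{ij}}$ directly to each monomial $p$ of types (i)--(v). Since $R(T_{ij})$ fixes $p$, and every nonidentity permutation of $\{1,i,j\}$ sends $p$ to a strictly smaller monomial for $<$, one gets $\ini(\varepsilon_{T_{ij}}(p))=p$. Membership in $I_{(n-2,1,1)}$ is then free by Proposition~\ref{prop:etinvl}.

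You instead transport the base-case basis $\mathcal{B}^{(m)}$ to the column triple $\{1,i,j\}$. The $q^{ij}_{r,s}$ are $\varepsilon_{T_{ij}}$-images, but the cubics $p_{r,1i}p_{s,1j}p_{t,ij}$ are not. That is why you need the extra step through $\GL_m$-stability, polarization of the first-row $\spe_{T_{ij}}$, and the exchange identities modulo the $q^{ij}$'s. That step is sound. It buys a Gr\"obner basis made of products of linear forms and $2\times2$ minors of difference vectors, visibly contained in $I(V_{(n-2,1,1)})$. The paper's choice is shorter and needs no identities.

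Some of your side remarks are wrong, though the final argument does not depend on them.
\begin{itemize}
\item The cubics are not antisymmetrizations unless $r=s=t$. For instance, if $s\neq t$, the transposition $(1\,i)$ sends $p_{r,1i}p_{s,1j}p_{t,ij}$ to $-p_{r,1i}p_{s,ij}p_{t,1j}$. This differs from minus the original by $\pm p_{r,1i}\,q^{ij}_{s,t}\neq0$, so the cubic is not alternating.
\item The symmetrization in $(r,s,t)$ is a linear combination of $\GL_m$-translates of the first-row Specht polynomial, not a single translate.
\item Column relabeling $\mathcal{R}_{m,3}\to\mathcal{R}_{m,n}$ does not carry the $(1,1,1)$-isotypic component into the $(n-2,1,1)$-isotypic one. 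For $m\geq3$ and $n\geq4$, the $3\times3$ minor on columns $1,i,j$ is alternating but has a nonzero $(n-3,1,1,1)$-component.
\end{itemize}
So the phrase ``this embeds into column triple $\{1,i,j\}$'' is not a valid justification on its own. Membership has to be checked on your explicit generators inside $\mathcal{R}_{m,n}$, which is exactly what your final paragraph does.
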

\begin{proof}
Since $I\subseteq\rad(I)$ for any ideal $I$, we want to show that $I(V_{(n-2,1,1)})\subseteq I_{(n-2,1,1)}$. For $1<i<j\leq n$, let $T_{ij}\in\Tab((n-2,1,1))$ be the tableau with first column $1,i,j$, and let $\mathcal{M}_{ij}$ be the set of monomials of type (i)--(v) of Proposition~\ref{prop:initial} with these $i$ and $j$.  For each $p \in \mathcal{M}_{ij}$, the row symmetrizer of $T_{ij}$ acts on $p$ by a nonzero scalar. In the expansion of the column antisymmetrizer, the identity permutation contributes $p$, whereas every nonidentity column permutation produces a monomial strictly smaller than $p$ for the order of Definition~\ref{def:monomialordergen}. Hence $\ini(\varepsilon_{T_{ij}}(p))=p$. Let $\mathcal{B}=\{\varepsilon_{T_{ij}}(p)\mid 1<i<j\leq n,\,p\in\mathcal{M}_{ij}\}$, then $\ini(\mathcal{B})$ is the set of
monomials (i)--(v) of Proposition~\ref{prop:initial}. 
By Proposition~\ref{prop:etinvl}, $\mathcal{B}\subseteq I_{(n-2,1,1)}\subseteq I(V_{(n-2,1,1)})$, so $\langle\ini(\mathcal{B})\rangle\subseteq\ini(I(V_{(n-2,1,1)}))$, and the reverse inclusion is Proposition~\ref{prop:initial}. Hence $\mathcal{B}$ is a Gröbner basis of $I(V_{(n-2,1,1)})$, so $\rad(I_{(n-2,1,1)})=I(V_{(n-2,1,1)})=\langle\mathcal{B}\rangle\subseteq I_{(n-2,1,1)}$, and $I_{(n-2,1,1)}$ is radical.
\end{proof}

The results of this section naturally suggest the following conjecture, which is verified for hooks of length at most 3.

\begin{conjecture}\label{conj:hook}
    If $\lambda$ is a hook partition, then $I_\lambda\subseteq\mathcal{R}_{m,n}$ is radical for all $n$ and $m$.
\end{conjecture}
\begin{remark}\label{rmk:haiman}
The conjecture holds when $\len(\lambda)\leq 3$ by Theorems~\ref{thm:Ia1} and~\ref{thm:a11}, and it holds for $\lambda=(1^n)$ and $m=2$, since $I_{(1^n)}\subseteq\C[\mathbf{x},\mathbf{y}]$ is the ideal generated by the $S_n$-alternating polynomials, and by Theorem~\ref{thm:charzeros} and Lemma~\ref{lem:alambdahooks}, $V_{(1^n)}=\{P\in\C^{2n}\,\mid\,\mathbf{u}_{\bullet,i}=\mathbf{u}_{\bullet,j}\text{ for some }1\leq i<j\leq n\}$, so \begin{equation*}
I(V_{(1^n)})=\bigcap_{i<j}\langle x_i-x_j,\,y_i-y_j\rangle.
\end{equation*}
Haiman \cite[Corollary~3.8.3, $d=1$]{Haiman2001} proved $I_{(1^n)}=I(V_{(1^n)})$. By Proposition~\ref{prop:bound}, $m((1^n))>2$ for all $n$. For $\lambda=(1^4)$, a computation for $m=4$ and Theorem~\ref{thm:radicalinRnn} give $m((1^4))=\infty$. Similarly, a computation for $m=5$ and Theorem~\ref{thm:radicalinRnn} give $m((2,1^3))=\infty$.
\end{remark}

\subsection{Non-radicality for non-hook partitions}\label{sec:nonradicality}
The preceding positive results suggest that hooks behave differently from non-hook partitions.  We now develop three obstructions to radicality.  They compare the first possible occurrences of different $S_n$-types by successively coarser invariants: content, multidegree, and total degree.

\subsubsection{Minimum degree in \texorpdfstring{$I_\lambda$}{I\_lambda}}
Our criteria for non-radicality compare the minimum degrees, multidegrees and contents of $I_\lambda$ and $I_\mu$ for $\mu\triangleleft\lambda$. We first compute the minimum degree of $I_\lambda$.
Given $T\in\Tab(\lambda)$, consider a tableau $E$ of shape $\lambda$,
with entries $E(b)=(E(b)_1,\dots,E(b)_m)\in\mathbb{N}^m$, that we call multiexponents. Then $T$ and $E$ define a monomial
\begin{equation*}
\mathbf{x}_T^E:=\prod_{b\in\lambda}\prod_{r=1}^{m}x_{r,T(b)}^{E(b)_r},
\end{equation*}
the $i$-multiexponent of $\mathbf{x}_T^E$ is $E(b)$ where $b$ is the box of $\lambda$ with entry $T(b)=i$, and the total degree is
$\deg(\mathbf{x}_T^E)=\sum_{b\in\lambda}|E(b)|$, where $|E(b)|=\sum_{r=1}^mE(b)_r$.

Let
$c_j=\binom{m+j-1}{j}$ be the number of monomials of degree $j$ in $m$ variables, and $N_d=\sum_{j=0}^dc_j$ be the number of monomials of degree at most $d$ in $m$ variables, with the convention $N_{-1}:=0$. For $b\geq0$, we call the rows $N_{b-1}+1,\dots,N_b$ of $\lambda$ the $b$-th \emph{band} of $\lambda$.
The \emph{minimal filling} of $\lambda$ (with respect to $m$)
is the tableau $E$ of shape $\lambda$ whose boxes in row $i$ all contain the
$i$-th smallest element of $\N^m$ for the order of
Definition~\ref{def:orderexp}. Note that the rows of the $b$-th band of
$\lambda$ are filled with multiexponents of degree $b$.
\begin{prop}\label{prop:degreegen}
Let $\lambda=(\lambda_1,\dots,\lambda_\ell)\vdash n$, let $d$ be such that $N_{d-1}< \ell \leq N_d$, then 
\begin{equation}\label{eq:minimdeg}
D_\lambda:=\min_{\substack{f\in I_\lambda\\ f\neq 0}}\{\deg(f)\}=\sum_{j=1}^dj\sum_{i=N_{j-1}+1}^{N_j}\lambda_i.
\end{equation}
\end{prop}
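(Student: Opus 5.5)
We will prove the two inequalities $D_\lambda\geq\sum_j j\sum_{i=N_{j-1}+1}^{N_j}\lambda_i$ and $\leq$ separately, using the generators of Proposition~\ref{prop:genIlambda}. Since $I_\lambda$ is generated by the homogeneous polynomials $\varepsilon_T(p)$, $p$ a monomial, the minimal degree of a nonzero element of $I_\lambda$ equals the minimal degree of a monomial $p$ with $\varepsilon_T(p)\neq0$ for some $T\in\Tab(\lambda)$. Write $p=\mathbf{x}_T^E$ for a filling $E$ of $\lambda$ by multiexponents; this is always possible. The task is therefore to determine which fillings $E$ give $\varepsilon_T(\mathbf{x}_T^E)\neq0$, and to minimize $\sum_b|E(b)|$ over them.

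\textbf{Lower bound.} Put $q:=\beta(T)\mathbf{x}_T^E$, so that $\varepsilon_T(\mathbf{x}_T^E)=\alpha(T)q$. The polynomial $q$ is a sum of monomials $\mathbf{x}_T^{E'}$, where $E'$ is obtained from $E$ by permuting entries within rows. Hence it suffices to show that $\alpha(T)\mathbf{x}_T^{E'}=0$ whenever some column of $E'$ contains a repeated multiexponent. Indeed, if boxes $b,b'$ lie in the same column and $E'(b)=E'(b')$, then the transposition $\rho=(T(b)\,T(b'))\in C(T)$ fixes $\mathbf{x}_T^{E'}$. Pairing $\tau$ with $\tau\rho$ in $\alpha(T)$ then cancels everything, as in Lemma~\ref{lem:projectionyoung}.

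So any $E'$ that contributes nonzero has pairwise distinct multiexponents down each column. A column of height $h$ then has degree at least the sum of the $h$ smallest values of $|\cdot|$ over distinct elements of $\N^m$. That minimum is achieved by taking $c_0$ elements of degree $0$, then $c_1$ of degree $1$, and so on, and it equals $\sum_{i=1}^h\delta(i)$, where $\delta(i)=j$ for $N_{j-1}<i\leq N_j$. Summing over the columns and regrouping by rows gives
\[
\deg\geq\sum_{i=1}^{\ell}\delta(i)\lambda_i,
\]
which is the right-hand side of \eqref{eq:minimdeg}. The degree is preserved by the row permutations, so the bound applies to $p$ itself.

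\textbf{Upper bound.} We take $E$ to be the minimal filling and $T$ the superstandard tableau, and we must show $\varepsilon_T(\mathbf{x}_T^E)\neq0$. This is the main obstacle. The plan is to identify a surviving monomial via the monomial order $<$ of Definition~\ref{def:monomialordergen}, or more simply via a coefficient argument. Since the rows of $E$ are constant, $\beta(T)\mathbf{x}_T^E=|R(T)|\,\mathbf{x}_T^E$, and we need $\alpha(T)\mathbf{x}_T^E\neq0$. The set of boxes decomposes along the columns, and
\[
\alpha(T)\mathbf{x}_T^E=\prod_{C}\sum_{\tau\in S_C}\sgn(\tau)\,\tau\Bigl(\prod_{i\in C}\mathbf{x}_{\bullet,i}^{E(b_T(i))}\Bigr),
\]
where each factor is a generalized Vandermonde determinant $\det\bigl(\mathbf{x}_{\bullet,i}^{\mathbf{e}_k}\bigr)_{i\in C,\,k}$ built from the distinct multiexponents $\mathbf{e}_1,\dots,\mathbf{e}_h$. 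Each such determinant is nonzero, because the monomials $\prod_{i}\mathbf{x}_{\bullet,i}^{\mathbf{e}_{\tau(i)}}$ are pairwise distinct for distinct $\tau$ (the $\mathbf{e}_k$ being distinct), so no cancellation can occur. The factors involve disjoint sets of variables, so their product is nonzero. This yields an element of $I_\lambda$ of degree exactly the right-hand side of \eqref{eq:minimdeg}, which completes the argument.

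The delicate point is the lower bound's reduction through $\beta(T)$: one must note that the vanishing criterion is applied to each row-permuted monomial separately. The counting of minimal column degrees is then a straightforward greedy argument.
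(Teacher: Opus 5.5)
Your proof is correct and follows essentially the same route as the paper. The upper bound takes the superstandard tableau with the minimal filling and uses the fact that only the identity in $C(T)$ fixes $\mathbf{x}_T^E$; your per-column alternant factorization is the same observation. The lower bound kills $\alpha(T)$ by a transposition fixing a monomial with a repeated multiexponent in some column. Your explicit treatment of the row symmetrizer, applying the vanishing to each row-permuted monomial $\mathbf{x}_T^{E'}$ of the same degree, spells out the step that the paper compresses into ``In particular, $\varepsilon_S\cdot p=0$''.
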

\begin{proof}
Fix $m\geq 1$. First, we prove existence. We find a nonzero polynomial of degree $D_\lambda$ that is in $I_\lambda$. Let $A$ be the superstandard tableau of shape $\lambda$ and $E$ be the minimal filling of $\lambda$, and consider $\mathbf{x}_A^E$, which has degree $D_\lambda$ (see Example~\ref{ex:mindeg}). We will show that $\varepsilon_A\cdot\mathbf{x}_A^E\neq0$. For all $\sigma\in R(A)$ we have $\sigma\cdot \mathbf{x}_A^E=\mathbf{x}_A^E$, 
\begin{equation*}    
\varepsilon_A\cdot\mathbf{x}_A^E=\alpha(A)\beta(A)\cdot\mathbf{x}_A^E=|R(A)|\alpha(A)\cdot \mathbf{x}_A^E.
\end{equation*}
Let $c$ be the coefficient of $\mathbf{x}_A^E$ in $\alpha(A)\cdot \mathbf{x}_A^E$. Since $\tau=\id$ is the only possibility for $\tau\cdot\mathbf{x}_A^E=\mathbf{x}_A^E$, $c=1$, $\varepsilon_A\cdot\mathbf{x}_A^E\neq0$. 

Now we want to prove minimality, that is, if $p\in\Mon(\mathcal{R}_{m,n})$, $\deg(p)<D_\lambda$, $S\in\Tab(\lambda)$, then $\alpha(S)\cdot p=0$. Let $F$ be the tableau of multiexponents such that $p=\mathbf{x}_S^F$, and decompose both $\mathbf{x}_S^F$ and $\mathbf{x}_A^E$ into columns,
\begin{equation*}
\mathbf{x}_S^F=\prod_{i}\mathbf{x}_{S_i}^{F_i},\quad\mathbf{x}_A^E=\prod_{i}\mathbf{x}_{A_i}^{E_i}.
\end{equation*}
Since $\sum_{i}\deg(\mathbf{x}_{S_i}^{F_i})=\deg(\mathbf{x}_S^F)<\deg(\mathbf{x}_A^E)=\sum_i \deg(\mathbf{x}_{A_i}^{E_i})$, there exist some $i$ for which $\deg(\mathbf{x}_{S_i}^{F_i})<\deg(\mathbf{x}_{A_i}^{E_i})$. But $E$ is filled with the smallest possible distinct multiexponents in each column, so if $\deg(\mathbf{x}_{S_i}^{F_i})<\deg(\mathbf{x}_{A_i}^{E_i})$, there must be at least one multiexponent repeated in $F_i$, therefore there exists $\pi\in C_i(S)$ such that $\pi\cdot p=p$, and as we have seen in the proof of Lemma~\ref{lem:projectionyoung}, this implies $\alpha(S)\cdot p=0$. In particular, $\varepsilon_S\cdot p=0$. Since this is true for any monomial of degree $<D_\lambda$, it is also true for any polynomial of degree $<D_\lambda$, and $D_\lambda$ is the minimum degree in $I_\lambda$.
\end{proof}

\begin{example}\label{ex:mindeg}
    When $m=2$, the multiexponents are $(0,0)<(0,1)<(1,0)<(0,2)<(1,1)<(2,0)<\cdots$, and the minimum degree of polynomials in $I_{(3,3,2,2)}$ is the one corresponding to the tableau of multiexponents
    \begin{equation*}
        \begin{ytableau}
            00 & 00 & 00 \\
            01 & 01 & 01\\
            10 & 10 \\
            02 & 02
        \end{ytableau}, \quad \min_{f\in I_{(3,3,2,2)}}\{\deg(f)\}=9=1\cdot3+1\cdot2+2\cdot2.
    \end{equation*}
\end{example}

\begin{remark}\label{rmk:IiffR}
Note that if $p\in\mathcal{R}_{m,n}$ and $D=\deg(p)$ is the minimum degree in $I_\lambda$, then $p\in I_\lambda$ if and only if $p\in\mathcal{R}_{m,n,\lambda}$. Since $I_\lambda=\langle\mathcal{R}_{m,n,\lambda}\rangle$, and $(\mathcal{R}_{m,n,\lambda})_d=0$ for $d<D$, the degree $D$ part of $I_\lambda$ comes only from the degree $D$ part of $\mathcal{R}_{m,n,\lambda}$, so $(I_\lambda)_D=(\mathcal{R}_{m,n,\lambda})_D$.
\end{remark}

\begin{corollary}\label{cor:posetideals}
Let $\lambda\vdash n$ and $I_\lambda\subseteq\mathcal{R}_{m,n}$. The map $\lambda\mapsto I_\lambda$ from $(\partn,\trianglelefteq)$ to $(\{I_\lambda\mid\lambda\in \partn\},\subseteq)$ is an isomorphism of posets if and only if $m=1$ or $n\leq 3$.
\end{corollary}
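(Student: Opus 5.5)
We have to prove both directions: the map $\lambda\mapsto I_\lambda$ is a poset isomorphism when $m=1$ or $n\leq3$, and it fails to be one when $m\geq2$ and $n\geq4$.

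\textbf{Reflection of dominance.} First, the map always reflects dominance, and is therefore injective. Suppose $I_\mu\subseteq I_\lambda$. Taking zero sets gives $V_\lambda\subseteq V_\mu$, and Corollary~\ref{cor:posetvar} then yields $\mu\trianglelefteq\lambda$. If $I_\mu=I_\lambda$, applying this in both directions gives $\mu=\lambda$. So bijectivity onto the image and the implication ``$I_\mu\subseteq I_\lambda\Rightarrow\mu\trianglelefteq\lambda$'' hold for every $m$ and $n$. What remains is whether $\mu\trianglelefteq\lambda$ implies $I_\mu\subseteq I_\lambda$.

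\textbf{The case $m=1$ or $n\leq 3$.} For $m=1$ this is the classical result of \cite{moustrou2021symmetric}, together with Proposition~\ref{prop:defequiv}, which identifies our ideals with theirs. For $n\leq3$ every partition is a hook, so $I_\lambda$ is radical for all $m$ by Theorems~\ref{thm:Ia1} and~\ref{thm:a11}; the case $(n)$ is trivial. Then for $\mu\trianglelefteq\lambda$,
\[
I_\mu\subseteq I(V_\mu)\subseteq I(V_\lambda)=I_\lambda,
\]
where the middle inclusion uses Corollary~\ref{cor:posetvar}.

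\textbf{Failure for $m\geq2$ and $n\geq4$.} The plan is to show $I_{\mu}\not\subseteq I_{\lambda}$ for
\[
\mu=(2,1^{n-2}),\qquad \lambda=(2,2,1^{n-4}),
\]
which satisfy $\mu\triangleleft\lambda$. We compare the minimal degrees from Proposition~\ref{prop:degreegen}. For $m\geq2$ we have $N_0=1$, $N_1=m+1\geq3$, and the bands have sizes $1,m,c_2,\dots$. The two partitions differ only in that $\lambda$ moves a box from row $n-1$ up to row $2$. That box sits in band $1$ for $\lambda$, while for $\mu$ row $n-1$ lies in a band of index at least $1$ (it equals $1$ only if $n-1\leq m+1$). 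Hence $D_\mu\geq D_\lambda$, with equality when $n-1\leq m+1$.

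Suppose $D_\mu=D_\lambda=:D$. By Remark~\ref{rmk:IiffR}, $(I_\mu)_D=(\mathcal{R}_{m,n,\mu})_D$ and $(I_\lambda)_D=(\mathcal{R}_{m,n,\lambda})_D$. Both are nonzero, and distinct isotypic components intersect trivially, so $(I_\mu)_D\not\subseteq I_\lambda$.

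If $D_\mu>D_\lambda$, this argument breaks down. In that case I would restrict to $n=4$ variables in a subfamily, or switch the pair to one with equal minimal degrees. The safest choice that works uniformly seems to be $\mu=(2,1,1,\dots)$ versus $\lambda=(2,2,\dots)$ with the same tail, chosen so that the differing box lies in the same band in both. Concretely, take
\[
\lambda=(2,2,1^{n-4}),\qquad \mu=(2,1^{n-2}),
\]
and compare multidegree or content rather than total degree. Multidegrees are finer, and the argument of Remark~\ref{rmk:IiffR} works the same way multidegree by multidegree, since $I_\lambda$ is multigraded and Proposition~\ref{prop:degreegen}'s extremal monomials determine the least multidegrees.

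\textbf{Main obstacle.} The main obstacle is producing, for every $n\geq4$ and $m\geq2$, a multidegree in which the $\mu$-component is nonzero but $I_\lambda$ is zero. My first attempt is the multidegree of $\mathbf{x}_A^E$ for the minimal filling of $\mu$. I would then check, using the same column-repetition argument as in the proof of Proposition~\ref{prop:degreegen}, that no tableau of shape $\lambda$ admits a filling with distinct multiexponents in each column whose multidegree is coordinatewise at most this one.
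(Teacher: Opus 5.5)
Your arguments for injectivity, for the reflection of dominance, and for the cases $m=1$ and $n\leq 3$ are correct and match the paper. The gap is in the failure direction for $m\geq 2$, $n\geq 4$. Your degree argument with $\mu=(2,1^{n-2})\triangleleft\lambda=(2,2,1^{n-4})$ is complete only for $4\leq n\leq m+2$, as you observe yourself.

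For $n\geq m+3$, the rescue you propose cannot work, because for this pair the inclusion can actually hold. Take $m=2$ and $n=5$. The minimal filling of $\mu=(2,1,1,1)$ has multidegree $(1,3)$. The minimal filling of $\lambda=(2,2,1)$ has multidegree $(1,2)$, which lies in $\Mdeg(\lambda)$ and is strictly below $(1,3)$, so your first attempt already fails. Worse, the paper's computation in Section~\ref{subsec:summary} shows that $I_{(2,2,1)}\subseteq\mathcal{R}_{2,5}$ is radical. Hence, by Remark~\ref{rmk:inclusionradicality}, $I_{(2,1,1,1)}\subseteq I(V_{(2,1,1,1)})\subseteq I(V_{(2,2,1)})=I_{(2,2,1)}$, and no witness of non-inclusion exists in any multidegree or content. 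What is needed is a different pair, not a finer invariant. Your instinct to ``switch the pair to one with equal minimal degrees'' was right, but you then restated the same pair.

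The fix is to move a box that stays inside band $1$ for every $m\geq 2$. Take $\mu=(n-2,1,1)\triangleleft\lambda=(n-2,2)$, which is a partition since $n\geq 4$. Because $N_1=m+1\geq 3$, rows $2$ and $3$ both lie in band $1$. Proposition~\ref{prop:degreegen} then gives $D_\mu=D_\lambda=2$ uniformly in $m\geq 2$ and $n\geq 4$. Your own argument, via Remark~\ref{rmk:IiffR} and the trivial intersection of the two isotypic components, then shows $I_\mu\not\subseteq I_\lambda$. This is exactly the paper's proof, and for $n=4$ it coincides with your pair.
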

\begin{proof} For $m=1$, the result is proved in~\cite{moustrou2021symmetric}. When  $n\leq 3$, all partitions $\lambda\vdash n$ are hooks with $\len(\lambda)\leq 3$, therefore $I_\lambda$ is radical by Theorems~\ref{thm:Ia1} and~\ref{thm:a11}, that is $I_\lambda=I(V_\lambda)$. By Corollary~\ref{cor:posetvar}, $\lambda\mapsto V_\lambda$ from $(\partn,\trianglelefteq)$ to $(\{V_\lambda\mid\lambda\in \partn\},\supseteq)$ is an isomorphism of posets, and since $I_\lambda=I(V_\lambda)$, $\lambda\mapsto I_\lambda$ from $(\partn,\trianglelefteq)$ to $(\{I_\lambda\mid\lambda\in \partn\},\subseteq)$ is also an isomorphism of posets. Assume $m\geq 2$, $n>3$, and consider $(n-2,1,1)\trianglelefteq(n-2,2)$. By Proposition~\ref{prop:degreegen}, both ideals have minimum degree $D=2$. Let $f\in I_{(n-2,1,1)}$, $\deg(f)=2$. By way of contradiction, assume $\lambda\mapsto I_\lambda$ is an isomorphism of posets, then $I_{(n-2,1,1)}\subseteq I_{(n-2,2)}$, and
$f\in I_{(n-2,2)}$. 
By Remark~\ref{rmk:IiffR}, since $f$ has minimum degree in $I_{(n-2,1,1)}$, $f\in \mathcal{R}_{m,n,(n-2,1,1)}$, and since $f$ has minimum degree in $I_{(n-2,2)}$, $f\in \mathcal{R}_{m,n,(n-2,2)}$.
But $\mathcal{R}_{m,n,(n-2,1,1)}\cap\mathcal{R}_{m,n,(n-2,2)}=\{0\}$, which is a contradiction.
\end{proof}
\begin{remark}\label{rmk:inclusionradicality}
For all $m$ and $n$, the map $\lambda\mapsto I_\lambda$ is injective and reflects the order, since $I_\mu\subseteq I_\lambda$ implies $V_\lambda\subseteq V_\mu$, hence $\mu\trianglelefteq\lambda$ by Corollary~\ref{cor:posetvar}. What fails for $m\geq2$ and $n\geq4$ is the other direction, and only where $I_\lambda$ is not radical. Indeed, if $\mu\trianglelefteq\lambda$, then $V_\lambda\subseteq V_\mu$ and $I(V_\mu)\subseteq I(V_\lambda)$ by Corollary~\ref{cor:posetvar}, so if $I_\lambda$ is radical, $I_\mu\subseteq I(V_\mu)\subseteq I(V_\lambda)=I_\lambda$.  We will use this in Section~\ref{sec:nonradicalitycriteria}.
\end{remark}

\subsubsection{Non-radicality criteria}\label{sec:nonradicalitycriteria}
Let $p=\prod_{i=1}^n\mathbf{x}_{\bullet,i}^{\balpha_i}\in\mathcal{R}_{m,n}$
be a monomial with $i$-multiexponents $\balpha_i\in\N^m$. The content of
$p$ is the multiset of its multiexponents,
$c(p):=\{\{\balpha_1,\dots,\balpha_n\}\}$, the multidegree of $p$ is
their sum in $\N^m$, $\mdeg(p):=\sum_{i=1}^n\balpha_i\in \N^m$, and the degree of
$p$ is the sum of their total degrees,
$\deg(p):=\sum_{i=1}^n|\balpha_i|=|\mdeg(p)|\in\N$. In general, a content is a
multiset $\gamma=\{\{\bgamma_1,\dots,\bgamma_n\}\}$ of $n$ elements of
$\N^m$, its multidegree is $\mdeg(\gamma):=\sum_{i=1}^n\bgamma_i\in\N^m$, and
its degree is $\deg(\gamma):=|\mdeg(\gamma)|\in\N$. The action of $S_n$ preserves content, multidegree and degree.
We denote by $(\mathcal{R}_{m,n})_{\gamma}$ the subspace of polynomials
spanned by monomials of content $\gamma$, by
$(\mathcal{R}_{m,n})_{\balpha}$ the subspace of multihomogeneous
polynomials of multidegree $\balpha$ and by
$(\mathcal{R}_{m,n})_d$ the subspace of homogeneous
polynomials of degree $d$. 

We will give three criteria to establish non-radicality, using contents,
multidegrees and total degrees, respectively. They are presented from
most general to least general, but also from hardest to easiest to apply
to a given $(m,n,\lambda)$.

Since the action of $S_n$ preserves content, multidegree and degree, the subspaces $(\mathcal{R}_{m,n})_{\gamma}$, $(\mathcal{R}_{m,n})_{\balpha}$ and $(\mathcal{R}_{m,n})_d$ are $S_n$-submodules of $\mathcal{R}_{m,n}$, and the projection $\pi_\lambda$ maps each of them into itself. Therefore
\begin{equation*}
(\mathcal{R}_{m,n})_d=\bigoplus_{|\balpha|=d}(\mathcal{R}_{m,n})_{\balpha}
=\bigoplus_{|\balpha|=d}\left(\bigoplus_{\mdeg(\gamma)=\balpha}(\mathcal{R}_{m,n})_{\gamma}\right),
\end{equation*}
and for a subspace $M\subseteq\mathcal{R}_{m,n}$ we write
$M_{\gamma}:=M\cap(\mathcal{R}_{m,n})_{\gamma}$,
$M_{\balpha}:=M\cap(\mathcal{R}_{m,n})_{\balpha}$ and 
$M_d=M\cap(\mathcal{R}_{m,n})_d$.

We say that a content $\gamma$ is $\lambda$-compatible if
$(\mathcal{R}_{m,n,\lambda})_{\gamma}\neq0$, and we denote the set of
$\lambda$-compatible contents by $\Cont(\lambda)$.
Let $\gamma=\{\{\bgamma_1,\dots,\bgamma_n\}\}$ and
$\delta=\{\{\bdelta_1,\dots,\bdelta_n\}\}$ be contents. We say
$\delta<\gamma$ if $\delta\neq\gamma$ and there exists
$\sigma\in S_n$ such that $\bdelta_i\leq\bgamma_{\sigma(i)}$
for all $1\leq i\leq n$. 

Analogously, we say that a multidegree $\balpha$ is $\lambda$-compatible
if $(\mathcal{R}_{m,n,\lambda})_{\balpha}\neq0$, and we denote the set of
$\lambda$-compatible multidegrees by $\Mdeg(\lambda)$.
Let $\balpha,\bbeta\in\N^m$ be multidegrees. From now on, we say $\bbeta<\balpha$ if
$\bbeta\neq\balpha$ and $\beta_r\leq\alpha_r$ for all $1\leq r\leq m$. 

Note that if $\gamma\in\Cont(\lambda)$, then $\mdeg(\gamma)\in \Mdeg(\lambda)$, and if $\delta<\gamma$, then
$\mdeg(\delta)<\mdeg(\gamma)$.

The key observation is that multiplication by a monomial can only increase the content.

\begin{lemma}\label{lem:contentpiece}
Let $\lambda\vdash n$ and let $\gamma$ be a content such that
$\delta\notin\Cont(\lambda)$ for all $\delta<\gamma$. Then
\begin{equation*}
(I_\lambda)_{\gamma}=(\mathcal{R}_{m,n,\lambda})_{\gamma}.
\end{equation*}
\end{lemma}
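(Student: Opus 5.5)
The inclusion $\supseteq$ is immediate, since $\mathcal{R}_{m,n,\lambda}\subseteq I_\lambda$ and intersecting both sides with $(\mathcal{R}_{m,n})_\gamma$ gives $(\mathcal{R}_{m,n,\lambda})_\gamma\subseteq(I_\lambda)_\gamma$. For the reverse inclusion we use that $I_\lambda$ is spanned, as a vector space, by products $q\cdot f$ with $q$ a monomial and $f\in\mathcal{R}_{m,n,\lambda}$. Since $\mathcal{R}_{m,n,\lambda}$ is stable under the content projections (the decomposition $(\mathcal{R}_{m,n})_d=\bigoplus(\mathcal{R}_{m,n})_\gamma$ is $S_n$-stable, so $\pi_\lambda$ commutes with the projection onto each content piece), we may take $f\in(\mathcal{R}_{m,n,\lambda})_\delta$ for a single content $\delta$. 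Such products span $I_\lambda$, and every monomial of $q f$ has content obtained from $\delta$ by adding the multiexponents of $q$ columnwise.

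The key step is then to check when $q f$ can contribute to content $\gamma$. Consider a monomial $p'$ of $f$ with content $\delta$, i.e.\ $p'=\prod_i\mathbf{x}_{\bullet,i}^{\bdelta_{\tau(i)}}$ for some permutation $\tau$, and write $q=\prod_i\mathbf{x}_{\bullet,i}^{\boldsymbol{\kappa}_i}$. Then $qp'$ has multiexponents $\bdelta_{\tau(i)}+\boldsymbol{\kappa}_i\geq\bdelta_{\tau(i)}$ componentwise. If this content equals $\gamma$, then there is a permutation matching each $\bdelta_j$ with some $\bgamma_{\sigma(j)}\geq\bdelta_j$. Hence either $\delta=\gamma$ (so $q=1$, since degrees agree and all increments vanish), or $\delta<\gamma$. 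Note that the content of $qp'$ depends on which monomial $p'$ of $f$ is used, but in every case the relation to $\gamma$ is one of these two.

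Now we decompose an arbitrary $h\in(I_\lambda)_\gamma$ as $h=\sum_k q_k f_k$ with $f_k\in(\mathcal{R}_{m,n,\lambda})_{\delta_k}$ nonzero, and project onto the content-$\gamma$ piece, which fixes $h$. If $\delta_k=\gamma$ and $q_k\ne 1$, then $\deg(q_kf_k)>\deg\gamma$, so the projection of that term is zero. If $\delta_k\neq\gamma$ and the projection of $q_kf_k$ is nonzero, the previous step gives $\delta_k<\gamma$. But then $\delta_k\in\Cont(\lambda)$ since $f_k\ne0$, contradicting the hypothesis. Therefore only the terms with $\delta_k=\gamma$ and $q_k$ constant survive, and $h$ is a linear combination of elements of $(\mathcal{R}_{m,n,\lambda})_\gamma$.

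The main obstacle is the careful justification that the content projection of $q f$ is nonzero only when $\delta\le\gamma$ in the stated order. In particular, one must handle the case $\delta=\gamma$ with $q$ nonconstant; this is settled by the degree count above.
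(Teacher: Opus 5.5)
Your proposal is correct and follows the same route as the paper: write elements of $I_\lambda$ as sums of products of monomials with content-homogeneous pieces of $\mathcal{R}_{m,n,\lambda}$, use that multiplying by a monomial can only increase content, and discard the $\delta<\gamma$ contributions using the hypothesis. You also spell out three points the paper's one-line argument leaves implicit: $\mathcal{R}_{m,n,\lambda}$ splits into content pieces, a product $qf$ need not be content-homogeneous, and $\delta=\gamma$ forces $q$ to be constant by a degree count.
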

\begin{proof}
Since multiplication by a monomial
can only increase content, taking content-$\gamma$ components gives
\begin{equation*}
(I_\lambda)_{\gamma}\subseteq(\mathcal{R}_{m,n,\lambda})_{\gamma}
+\sum_{\substack{\delta\in\Cont(\lambda)\\ \delta<\gamma}}
\left((\mathcal{R}_{m,n})(\mathcal{R}_{m,n,\lambda})_{\delta}\right)_{\gamma}.
\end{equation*}
Since $(\mathcal{R}_{m,n,\lambda})_{\delta}=0$ for all $\delta<\gamma$,
$(I_\lambda)_{\gamma}\subseteq(\mathcal{R}_{m,n,\lambda})_{\gamma}$, and $(\mathcal{R}_{m,n,\lambda})_{\gamma}\subseteq(I_\lambda)_{\gamma}$ by definition.
\end{proof}

\begin{prop}\label{prop:contentcriterion}
Let $\lambda\vdash n$. If there exists $\gamma\in\Cont(\mu)$,
$\mu\triangleleft\lambda$, such that $\delta\notin\Cont(\lambda)$
for all $\delta<\gamma$, then $I_\lambda\subseteq\mathcal{R}_{m,n}$ is
not radical.
\end{prop}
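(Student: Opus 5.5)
Since $\gamma\in\Cont(\mu)$, pick a nonzero $f\in(\mathcal{R}_{m,n,\mu})_\gamma$. We will show that $f\in\rad(I_\lambda)\setminus I_\lambda$, which proves non-radicality.

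\emph{Step 1: $f\in\rad(I_\lambda)$.} We have $f\in\mathcal{R}_{m,n,\mu}\subseteq I_\mu\subseteq I(V_\mu)$. Because $\mu\triangleleft\lambda$, Corollary~\ref{cor:posetvar} gives $V_\lambda\subseteq V_\mu$, hence $I(V_\mu)\subseteq I(V_\lambda)=\rad(I_\lambda)$ by the Nullstellensatz. This is exactly the mechanism of Remark~\ref{rmk:inclusionradicality}.

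\emph{Step 2: $f\notin I_\lambda$.} Suppose $f\in I_\lambda$. All monomials of $f$ have content $\gamma$, so $f$ lies in the content-$\gamma$ component of $I_\lambda$. Here we need $I_\lambda$ to be spanned by content-homogeneous pieces, that is, $I_\lambda=\bigoplus_\gamma (I_\lambda)_\gamma$. This holds because $I_\lambda$ is generated by the elements $\varepsilon_T(p)$ with $p$ a monomial (Proposition~\ref{prop:genIlambda}). Each such $\varepsilon_T(p)$ has the single content $c(p)$, since $S_n$ preserves content. Multiplying by a monomial $q$ sends content $\delta$ to the content of the product, which depends only on how the multiexponents of $q$ are distributed; in any case the product is again content-homogeneous. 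So $I_\lambda$ is spanned by content-homogeneous elements, and $f\in(I_\lambda)_\gamma$. By the hypothesis on $\gamma$, Lemma~\ref{lem:contentpiece} gives $(I_\lambda)_\gamma=(\mathcal{R}_{m,n,\lambda})_\gamma$. Then $f\in\mathcal{R}_{m,n,\lambda}\cap\mathcal{R}_{m,n,\mu}=\{0\}$, since $\lambda\neq\mu$. This contradicts $f\neq0$.

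The main point to check carefully is the content-homogeneity used in Step 2, which is what makes Lemma~\ref{lem:contentpiece} apply. It holds because a generator $\varepsilon_T(p)$ times a monomial is a sum of monomials $\sigma(p)q$, and these do not all share one content. So $I_\lambda$ is not obviously content-graded. Instead, we argue as in the proof of Lemma~\ref{lem:contentpiece}: write $f=\sum_k h_k g_k$ with $g_k\in\mathcal{R}_{m,n,\lambda}$ content-homogeneous, which is possible because $\pi_\lambda$ preserves each $(\mathcal{R}_{m,n})_\delta$. Then take the content-$\gamma$ projection of both sides. The projection onto monomials of content $\gamma$ of $h g_k$ only involves $g_k$ with $\delta=c(g_k)\leq\gamma$. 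The terms with $\delta<\gamma$ vanish by hypothesis. The terms with $\delta=\gamma$ require $h$ to contribute a constant, so they lie in $(\mathcal{R}_{m,n,\lambda})_\gamma$. Hence $f\in(\mathcal{R}_{m,n,\lambda})_\gamma$, and the contradiction follows as in Step 2.
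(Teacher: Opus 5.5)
Your argument is correct and is the paper's proof. Step 1 is the same chain $I_\mu\subseteq I(V_\mu)\subseteq I(V_\lambda)$, and the final version of Step 2 re-derives inline the proof of Lemma~\ref{lem:contentpiece}, which the paper simply cites.

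Delete the first paragraph of Step 2, though. The claim $I_\lambda=\bigoplus_\gamma(I_\lambda)_\gamma$ is actually false, not just unproven. For $m=1$ and $n=2$, the product $x_1(x_1-x_2)$ lies in $I_{(1,1)}=\langle x_1-x_2\rangle$, but its content-$\{\{2,0\}\}$ part $x_1^2$ does not. You also never need it. The paper defines $(I_\lambda)_\gamma$ as $I_\lambda\cap(\mathcal{R}_{m,n})_\gamma$, so it contains $f$ as soon as $f\in I_\lambda$, and you can then cite Lemma~\ref{lem:contentpiece} directly.
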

\begin{proof}
Since $\gamma\in\Cont(\mu)$, there exists a nonzero
$f\in(\mathcal{R}_{m,n,\mu})_{\gamma}$. Then
$f\in I_\mu\subseteq I(V_\mu)$, and by Corollary~\ref{cor:posetvar} (see
Remark~\ref{rmk:inclusionradicality}), $I(V_\mu)\subseteq I(V_\lambda)$, therefore
$f\in I(V_\lambda)$. On the other hand, if $f\in I_\lambda$, then
$f\in(I_\lambda)_{\gamma}=(\mathcal{R}_{m,n,\lambda})_{\gamma}$ by
Lemma~\ref{lem:contentpiece}, which contradicts
$\mathcal{R}_{m,n,\mu}\cap\mathcal{R}_{m,n,\lambda}=\{0\}$. Then
$f\in I(V_\lambda)\setminus I_\lambda$, and $I_\lambda$ is not radical.
\end{proof}

\begin{corollary}\label{cor:mdegcriterion}
Let $\lambda\vdash n$. If there exists $\balpha\in \Mdeg(\mu)$,
$\mu\triangleleft\lambda$, such that $\bbeta\notin \Mdeg(\lambda)$ for all
$\bbeta<\balpha$, then $I_\lambda\subseteq\mathcal{R}_{m,n}$ is not
radical.
\end{corollary}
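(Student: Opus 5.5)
The plan is to reduce Corollary~\ref{cor:mdegcriterion} to Proposition~\ref{prop:contentcriterion}, or equivalently to repeat its argument one level coarser, at the level of multidegrees. Since $\balpha\in\Mdeg(\mu)$, there is a nonzero $f\in(\mathcal{R}_{m,n,\mu})_{\balpha}$. Because $\pi_\mu$ preserves each content-graded piece, $(\mathcal{R}_{m,n,\mu})_{\balpha}=\bigoplus_{\mdeg(\gamma)=\balpha}(\mathcal{R}_{m,n,\mu})_\gamma$. Hence some content $\gamma$ with $\mdeg(\gamma)=\balpha$ satisfies $(\mathcal{R}_{m,n,\mu})_\gamma\neq0$, that is, $\gamma\in\Cont(\mu)$.

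Next I verify the hypothesis of Proposition~\ref{prop:contentcriterion} for this $\gamma$. Let $\delta<\gamma$. As noted before Lemma~\ref{lem:contentpiece}, this gives $\mdeg(\delta)<\mdeg(\gamma)=\balpha$ in the componentwise order. To check this: after reordering we have $\bdelta_i\leq\bgamma_{\sigma(i)}$ componentwise for every $i$, so $\mdeg(\delta)\leq\balpha$. Equality would force $\bdelta_i=\bgamma_{\sigma(i)}$ for all $i$, hence $\delta=\gamma$, which is excluded.

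By hypothesis $\mdeg(\delta)\notin\Mdeg(\lambda)$, so $(\mathcal{R}_{m,n,\lambda})_{\mdeg(\delta)}=0$. Since $(\mathcal{R}_{m,n,\lambda})_\delta\subseteq(\mathcal{R}_{m,n,\lambda})_{\mdeg(\delta)}$, this gives $\delta\notin\Cont(\lambda)$. Proposition~\ref{prop:contentcriterion} then applies to $\gamma\in\Cont(\mu)$ with $\mu\triangleleft\lambda$ and yields that $I_\lambda$ is not radical.

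The only point needing care is the claim that $<$ on contents strictly decreases multidegree. This relies on the order on $\N^m$ used in the definition of $\delta<\gamma$ being componentwise rather than the order of Definition~\ref{def:orderexp}. With the componentwise reading, which the paper adopts at that point, the argument above goes through directly.
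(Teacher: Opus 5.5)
Your proposal is correct and follows the paper's proof. In both, you extract $\gamma\in\Cont(\mu)$ with $\mdeg(\gamma)=\balpha$ from the content decomposition of $(\mathcal{R}_{m,n,\mu})_{\balpha}$, use that $\delta<\gamma$ forces $\mdeg(\delta)<\balpha$ componentwise and hence $\delta\notin\Cont(\lambda)$, and then invoke Proposition~\ref{prop:contentcriterion}; your explicit verification that the content order strictly lowers multidegree fills in a step the paper only asserts. That verification needs $\bdelta_i\leq\bgamma_{\sigma(i)}$ to be read componentwise, which is also the reading that underlies Lemma~\ref{lem:contentpiece}.
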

\begin{proof}
Since $\balpha\in \Mdeg(\mu)$ and
$(\mathcal{R}_{m,n,\mu})_{\balpha}
=\bigoplus_{\mdeg(\gamma)=\balpha}(\mathcal{R}_{m,n,\mu})_{\gamma}$,
there exists $\gamma\in\Cont(\mu)$ with
$\mdeg(\gamma)=\balpha$. For any $\delta<\gamma$, we have
$\mdeg(\delta)<\mdeg(\gamma)=\balpha$, so
$\mdeg(\delta)\notin \Mdeg(\lambda)$, therefore
$\delta\notin\Cont(\lambda)$ and by
Proposition~\ref{prop:contentcriterion}, $I_\lambda$ is not radical.
\end{proof}

\begin{corollary}\label{cor:totaldegreecriterion}
Let $\lambda\vdash n$. If there exists $\mu\triangleleft\lambda$ with
$D_\mu=D_\lambda$, then $I_\lambda\subseteq\mathcal{R}_{m,n}$ is not
radical.
\end{corollary}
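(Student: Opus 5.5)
I will derive this directly from Corollary~\ref{cor:mdegcriterion}, or equivalently from the argument of Proposition~\ref{prop:contentcriterion} run with total degree instead of content.

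\textbf{Choosing the polynomial.} Let $\mu\triangleleft\lambda$ with $D_\mu=D_\lambda=:D$. By Proposition~\ref{prop:degreegen} there is a nonzero $f\in I_\mu$ of degree $D$. Since $D$ is the minimum degree of $I_\mu$, Remark~\ref{rmk:IiffR} gives $(I_\mu)_D=(\mathcal{R}_{m,n,\mu})_D$, so $f\in\mathcal{R}_{m,n,\mu}$. We may take $f$ homogeneous of degree $D$.

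\textbf{$f$ lies in the radical.} As in Remark~\ref{rmk:inclusionradicality}, $\mu\trianglelefteq\lambda$ gives $V_\lambda\subseteq V_\mu$ by Corollary~\ref{cor:posetvar}. Hence $f\in I_\mu\subseteq I(V_\mu)\subseteq I(V_\lambda)=\rad(I_\lambda)$.

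\textbf{$f$ is not in $I_\lambda$.} If $f\in I_\lambda$, then $f\in(I_\lambda)_D$. Since $D=D_\lambda$ is the minimum degree of $I_\lambda$, Remark~\ref{rmk:IiffR} gives $(I_\lambda)_D=(\mathcal{R}_{m,n,\lambda})_D$, so $f\in\mathcal{R}_{m,n,\lambda}\cap\mathcal{R}_{m,n,\mu}$. Since $\mu\neq\lambda$, this intersection is $\{0\}$, contradicting $f\neq0$. Therefore $f\in\rad(I_\lambda)\setminus I_\lambda$, and $I_\lambda$ is not radical.

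There is no real obstacle here. The only point to check is that Remark~\ref{rmk:IiffR} applies to both ideals simultaneously, which holds exactly because the two minimum degrees coincide.

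Alternatively, one could apply Corollary~\ref{cor:mdegcriterion}: choose a multidegree $\balpha\in\Mdeg(\mu)$ with $|\balpha|=D$. Any $\bbeta<\balpha$ has $|\bbeta|<D=D_\lambda$, so $\bbeta\notin\Mdeg(\lambda)$ by Proposition~\ref{prop:degreegen}. The direct argument above is shorter.
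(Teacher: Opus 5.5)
Your proof is correct. The paper's proof is exactly your closing alternative: it picks $\balpha\in\Mdeg(\mu)$ with $|\balpha|=D_\mu$, notes that every $\bbeta<\balpha$ has $|\bbeta|<D_\lambda$ and hence $\bbeta\notin\Mdeg(\lambda)$, and then invokes Corollary~\ref{cor:mdegcriterion}, which in turn rests on Proposition~\ref{prop:contentcriterion}.

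Your main argument runs the same mechanism directly at the level of total degree. Remark~\ref{rmk:IiffR} plays the role that Lemma~\ref{lem:contentpiece} plays in the content criterion. This is also the argument the paper itself uses in the proof of Corollary~\ref{cor:posetideals}.

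Your route is self-contained. It needs no content or multidegree bookkeeping, only two facts: both ideals coincide with their isotypic components in degree $D$, and distinct isotypic components meet trivially. The paper's route is shorter once the finer criteria are in place, and it presents the total-degree test as the coarsest member of the content/multidegree/degree hierarchy.
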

\begin{proof}
Since $D_\mu$ is the minimum degree of $\mathcal{R}_{m,n,\mu}$, there
exists $\balpha\in \Mdeg(\mu)$ with $|\balpha|=D_\mu$. For any
$\bbeta<\balpha$, we have $|\bbeta|<|\balpha|=D_\lambda$, so
$\bbeta\notin \Mdeg(\lambda)$, since $D_\lambda$ is the minimum degree of
$\mathcal{R}_{m,n,\lambda}$, and by 
Corollary~\ref{cor:mdegcriterion}, $I_\lambda$ is not radical.
\end{proof}
\begin{thm}\label{thm:nonradical}
Let $\lambda=(\lambda_1,\dots,\lambda_\ell)\vdash n$, and assume there
exist a band index $b$ and rows $i<j$ with $N_{b-1}<i<j\leq \min(N_b,\ell+1)$ and
$\lambda_j+1<\lambda_i$. Then $I_\lambda\subseteq\mathcal{R}_{m,n}$ is not
radical.
\end{thm}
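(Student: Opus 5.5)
The plan is to apply Corollary~\ref{cor:totaldegreecriterion}. That means constructing a partition $\mu\triangleleft\lambda$ with $D_\mu=D_\lambda$ and then reading off the degrees from formula~\eqref{eq:minimdeg} of Proposition~\ref{prop:degreegen}. The formula says $D_\lambda=\sum_k w(k)\lambda_k$, where the weight $w(k)$ of row $k$ is the band index $b$ with $N_{b-1}<k\leq N_b$. So $D_\lambda$ depends only on how many boxes of $\lambda$ lie in each band, and moving a box between two rows of the same band leaves the degree unchanged.

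The construction is to move one box from row $i$ to row $j$. Define $\mu$ by $\mu_i=\lambda_i-1$, $\mu_j=\lambda_j+1$, and $\mu_k=\lambda_k$ for all other $k$; if $j=\ell+1$ this appends a new row of length $1$. I would then verify the following, in order:
\begin{enumerate}
\item $\mu\vdash n$.
\item $\mu$ is a partition, possibly after reordering. Reordering does not matter for the degree count, since only the number of boxes per band enters $D$. To avoid subtlety, I would instead choose $i$ as the \emph{last} row in band $b$ with $\lambda_i=\lambda_{i}$'s value, and $j$ as the \emph{first} row with $\lambda_j=\lambda_j$'s value, still within band $b$ and still with $i<j$. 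The condition $\lambda_j+1<\lambda_i$ should make these choices compatible, so that $\mu$ is weakly decreasing.
\item $\mu\triangleleft\lambda$: moving a box from a higher row to a lower row is a standard dominance-decreasing move. Partial sums of $\mu$ drop by $1$ for $i\leq k<j$ and agree with those of $\lambda$ otherwise, and $\mu\neq\lambda$.
\item $\len(\mu)$ is either $\ell$ or $\ell+1$, and in both cases $\len(\mu)\leq N_b$ whenever a new row is created, because $j\leq N_b$. Hence the band structure used in Proposition~\ref{prop:degreegen} still applies. The parameter $d$ in that formula may change, but the formula is just $\sum_k w(k)\mu_k$ over all rows, so this is harmless.
\end{enumerate}

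Since rows $i$ and $j$ both lie in band $b$, we get $D_\mu-D_\lambda=b\cdot(-1)+b\cdot(+1)=0$. Corollary~\ref{cor:totaldegreecriterion} then gives that $I_\lambda$ is not radical.

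The step I expect to need the most care is the monotonicity check in (2), namely choosing $i,j$ so that $\mu$ is genuinely a partition. The condition $\lambda_j+1<\lambda_i$ guarantees $\mu_i=\lambda_i-1>\lambda_j+1-1\geq\mu_j-1$, so $\mu_i\geq\mu_j$. Replacing $i$ by the last row of its value-block and $j$ by the first row of its value-block keeps them inside the same band, since both stay between the original $i$ and $j$, and keeps the inequality. After this replacement, decreasing $\mu_i$ does not fall below $\mu_{i+1}$ unless $i+1=j$, and that case is covered by $\mu_i\geq\mu_j$. Increasing $\mu_j$ does not exceed $\mu_{j-1}$ for the same reason.
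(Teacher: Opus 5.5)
Your proposal is correct and is essentially the paper's proof. The paper also replaces $i$ and $j$ by the last row $I$ of $\lambda_i$'s value block and the first row $J$ of $\lambda_j$'s value block, both still in band $b$, and moves one box from row $I$ to row $J$. This gives $\mu\triangleleft\lambda$ with $D_\mu=D_\lambda$ by the band formula of Proposition~\ref{prop:degreegen}, and Corollary~\ref{cor:totaldegreecriterion} then finishes the argument; your monotonicity and dominance checks just spell out details the paper states briefly.
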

\begin{proof}
    Let $I$ be the last row with $\lambda_I=\lambda_i$ and $J$ the first row
with $\lambda_J=\lambda_j$. Since $\lambda$ is weakly decreasing and
$\lambda_J<\lambda_I$, we have $i\leq I<J\leq j$, and therefore
$N_{b-1}<I<J\leq N_b$.
Since $I$ is
the last row with value $\lambda_i$, $\lambda_{I+1}<\lambda_I$, and since $J$ is the
first row with its value, $\lambda_{J-1}>\lambda_J$. We have $\lambda_J+1\leq\lambda_I-1$ by hypothesis, so
\begin{equation*}
\mu:=(\lambda_1,\dots,\lambda_I-1,\dots,\lambda_J+1,\dots,\lambda_\ell)
\end{equation*}
is a partition (with the convention $\lambda_{\ell+1}=0$, so that if
$J=\ell+1$, $\mu=(\lambda_1,\dots,\lambda_I-1,\dots,\lambda_\ell,1)$),
obtained from $\lambda$ by moving one box from row $I$ to row
$J$, and $\mu\triangleleft\lambda$. By Proposition~\ref{prop:degreegen}, the minimum degree depends on the number of boxes in each band of the partition. Since $I$ and $J$ are in the same band $(N_{b-1},N_b]$, moving one box from row $I$ to row $J$ does not change minimum degree, $D_\mu=D_\lambda$. By
Corollary~\ref{cor:totaldegreecriterion}, $I_\lambda$ is not radical.
\end{proof}
\begin{corollary}\label{cor:nonhooks}
Let $\lambda=(\lambda_1,\dots,\lambda_\ell)\vdash n$, $\lambda_2>1$, $m\geq\ell$, then $I_\lambda\subseteq\mathcal{R}_{m,n}$ is not radical. In particular, for a fixed non-hook partition $\lambda$, $I_\lambda\subseteq\mathcal{R}_{m,n}$ is not radical for all but a finite number of $m$.
\end{corollary}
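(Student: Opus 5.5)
The plan is to apply Theorem~\ref{thm:nonradical} with the band $b=1$. For $m$ sets of variables, $N_0=c_0=1$ and $N_1=1+m$. The first band therefore consists of the rows $2,\dots,m+1$ of $\lambda$. We need rows $i<j$ with $1<i<j\leq\min(m+1,\ell+1)$ and $\lambda_j+1<\lambda_i$.

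Since $m\geq\ell$, we have $\min(m+1,\ell+1)=\ell+1$. So the admissible rows are $2,\dots,\ell+1$, with the convention $\lambda_{\ell+1}=0$. Take $i=2$ and $j=\ell+1$. Then $\lambda_j+1=1<\lambda_2$, which holds exactly because $\lambda_2>1$. We also need $2<\ell+1$, i.e.\ $\ell\geq2$. This is automatic since $\lambda_2>1$ forces $\ell\geq2$.

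With these rows all hypotheses of Theorem~\ref{thm:nonradical} are satisfied, so $I_\lambda\subseteq\mathcal{R}_{m,n}$ is not radical. Concretely, $\mu$ is obtained by moving one box from the last row of length $\lambda_2$ to a new row $\ell+1$. Both rows lie in the first band because $\ell+1\leq m+1=N_1$. Hence $D_\mu=D_\lambda$, which is where Corollary~\ref{cor:totaldegreecriterion} enters.

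The only point needing care is that Theorem~\ref{thm:nonradical} allows $j=\ell+1$, which its statement does through the bound $\min(N_b,\ell+1)$ and the stated convention. Beyond that, the computation $N_1=m+1$ must be checked against the definition of the bands: band $0$ is row $1$ (multiexponent $\mathbf 0$), and band $1$ is rows $2,\dots,m+1$, filled with the $m$ unit vectors.

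For the final sentence, a non-hook partition is exactly one with $\lambda_2>1$. By the above, $I_\lambda$ is non-radical for every $m\geq\ell$. Hence radicality can only hold for $m\in\{1,\dots,\ell-1\}$, a finite set. Equivalently, $m(\lambda)\leq\ell$, consistent with Proposition~\ref{prop:bound}.
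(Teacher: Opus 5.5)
Your proposal is correct and matches the paper's proof. Like the paper, you apply Theorem~\ref{thm:nonradical} with band $b=1$ and rows $i=2$, $j=\ell+1$, using $N_0=1$ and $N_1=m+1\geq\ell+1$ together with the convention $\lambda_{\ell+1}=0$.
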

\begin{proof}
If $m\geq \ell$, $1=N_0<\ell< N_1=1+m$. Take $i=2$, $j=\ell+1$. Then  Theorem~\ref{thm:nonradical} applies with $b=1$.
\end{proof}
\subsubsection{Examples}
The following examples illustrate the use of the multidegree criteria (Corollary~\ref{cor:mdegcriterion}) and total degree criteria (Corollary~\ref{cor:totaldegreecriterion}). All the examples are for $m=2$.
\begin{example}
Let $\lambda=(a,b)$ with $b\geq2$ and $\mu=(a,b-1,1)\triangleleft\lambda$. We have $D_\lambda=D_\mu=b$, and Corollary~\ref{cor:totaldegreecriterion} shows that $I_{(a,b)}\subseteq\mathcal{R}_{2,n}$ is not radical. Explicitly, if $A$ is the superstandard tableau of shape $\mu$ and $E$ its minimal filling, then $f=\varepsilon_A(\mathbf{x}_A^E)$ is a nonzero element of $I(V_{(a,b)})\setminus I_{(a,b)}$.
\end{example}
\begin{example}\label{ex:222}
Let $\lambda=(2,2,2)$. Note that for any $\mu\triangleleft \lambda$, $D_\mu>D_\lambda$, and we cannot use Corollary~\ref{cor:totaldegreecriterion}. We have $D_\lambda=4$. A minimal degree filling of $\lambda$ has $(0,0),(0,1),(1,0)$ in each of the two columns, so the unique
multidegree in $\Mdeg(\lambda)$ of total degree $4$ is $(2,2)$. Let $\mu=(2,2,1,1)$, and let $E$ be the minimal filling of $\mu$, then $\mdeg(E)=(1,4)\in \Mdeg(\mu)$. Any
$\bbeta<(1,4)$ has total degree at most $4$, and the only element of
$\Mdeg(\lambda)$ of total degree at most $4$ is $(2,2)\not<(1,4)$, so
$\bbeta\notin \Mdeg(\lambda)$. By Corollary~\ref{cor:mdegcriterion},
$I_{(2,2,2)}\subseteq\mathcal{R}_{2,6}$ is not radical.
\end{example}
\begin{example}\label{ex:abc}
More generally, let $\lambda=(a,b,c)$ with $c\geq2$. Then
$D_\lambda=b+c$, realized exactly in the multidegrees $(c+i,\,b-i)$
for $0\leq i\leq b-c$. Let $\mu=(a,b,c-1,1)\triangleleft\lambda$, and let $E$ be the minimal filling of $\mu$. Then $\balpha:=\mdeg(E)=(c-1,\,b+2)\in \Mdeg(\mu)$, and since $c+i>c-1$ for all
$i\geq0$, no element of $\Mdeg(\lambda)$ is $<\balpha$. As in
Example~\ref{ex:222}, Corollary~\ref{cor:mdegcriterion} shows that
$I_{(a,b,c)}\subseteq\mathcal{R}_{2,n}$ is not radical.
\end{example}
\begin{example}
Corollary~\ref{cor:mdegcriterion} also applies when Theorem~\ref{thm:nonradical}
does not. Let $\lambda=(3,3,3,3,3,2,1)\vdash18$. Then $D_\lambda=25$, realized
exactly in the multidegrees $(10+i,15-i)$, $0\leq i\leq5$, while for
$\mu=(3,3,3,3,2,2,2)\triangleleft\lambda$ the minimal filling gives
$\balpha=(9,17)\in \Mdeg(\mu)$. Any $\bbeta<\balpha$ has $\beta_1\leq9$ and total
degree at most $25$, so $\bbeta\notin \Mdeg(\lambda)$, and
$I_\lambda\subseteq\mathcal{R}_{2,18}$ is not radical.
\end{example}
\begin{example}
The criteria do not always apply. For $\lambda=(a,2,1)$ with $a \geqslant 2$, one has
$D_\mu>D_\lambda$ for every $\mu\triangleleft\lambda$, so
Corollary~\ref{cor:totaldegreecriterion} cannot be used. For
$\lambda=(2,2,1,1)$, Corollary~\ref{cor:mdegcriterion} fails as well: the
elements of $\Mdeg(\lambda)$ of degree $D_\lambda=5$ are $(1,4)$, $(2,3)$, $(3,2)$,
$(4,1)$, while for $\mu=(2,1,1,1,1)$ the fillings of minimal degree have
multidegrees $(2,4)$, $(3,3)$, $(4,2)$, each of which dominates one of them.
Whether $I_{(2,2,1,1)}\subseteq\mathcal{R}_{2,6}$ is radical is open (see
Table~\ref{tab:threshold}).
\end{example}
\subsection{Summary}\label{subsec:summary}
Note that Theorem~\ref{thm:nonradical} is stronger than Corollary~\ref{cor:nonhooks}. For example, let $\lambda=(3,3,3,3,2,1)\vdash15$, so $\len(\lambda)=6$. For $m=2$, $N_0=1$, $N_1=3$, $N_2=6$. Taking $b=2$, $i=4$, $j=6$, both in the band $(N_1,N_2]=(3,6]$, we have $\lambda_j+1=\lambda_6+1=2<3=\lambda_4=\lambda_i$, so Theorem~\ref{thm:nonradical} shows that $I_\lambda\subseteq\mathcal{R}_{2,15}$ is not radical, even though $m=2<6=\len(\lambda)$.

Corollary~\ref{cor:nonhooks} shows that for any non-hook partition $\lambda$,
the ideal $I_\lambda\subseteq\mathcal{R}_{m,n}$ is not radical
for all $m\geq\ell$,
and this is the best we can do in the sense that for $m<\ell$, 
the ideal $I_\lambda\subseteq\mathcal{R}_{m,n}$ may be radical. Note that by 
Corollary~\ref{cor:closure}, $\rad(I_\lambda)$ can be computed explicitly as
\begin{equation*}
\rad(I_\lambda)=I(V_\lambda)=
I\left(\bigcup_{\mu\in\mathcal{A}_\lambda}\overline{H_\mu}
\right)
=\bigcap_{\mu\in\mathcal{A}_\lambda}I\left(\overline{H_\mu}\right),\quad \mathcal{A}_\lambda=\{\mu\not\trianglelefteq\lambda,\,\mu\text{ minimal by }\prec\}.
\end{equation*}
Consider, for example, $I_{(2,2,1)}\subseteq\mathcal{R}_{2,5}$. We can use \texttt{SageMath} to compute $\rad(I_{(2,2,1)})=I(\overline{H_{(3,1,1)}})$ and verify that for some $d$, 
\begin{equation*}
\rad(I_{(2,2,1)})\subseteq\langle\varepsilon_T(p),\,p\in\Mon(\mathcal{R}_{2,5}),\,\deg(p)\leq d,\,T\in\SYT((2,2,1))\rangle. 
\end{equation*}
We conclude that $I_{(2,2,1)}\subseteq\mathcal{R}_{2,5}$ is radical. The same computation shows that $I_{(3,2,1)}\subseteq\mathcal{R}_{2,6}$,  $I_{(1^4)}\subseteq\mathcal{R}_{4,4}$, and $I_{(2,1^3)}\subseteq\mathcal{R}_{5,5}$ are radical.

The characterization 
of all $(m,n,\lambda)$ for which $I_\lambda\subseteq\mathcal{R}_{m,n}$ 
is radical remains open.

Table~\ref{tab:threshold} summarizes the radicality results for $n\leq7$.

\begin{table}[H]
\centering
\footnotesize
\begin{tabular}{@{}l cccc c l@{}}
\toprule
$\lambda$ & $m=2$ & $m=3$ & $m=4$ & $m=5$ & $m(\lambda)$ & Reference \\
\midrule
\multicolumn{7}{@{}l}{\emph{Hook partitions, any $n$}}\\
$(n)$              & \checkmark & \checkmark & \checkmark & \checkmark & $\infty$ & $I_{(n)}=\mathcal{R}_{m,n}$ \\
$(n-1,1)$          & \checkmark & \checkmark & \checkmark & \checkmark & $\infty$ & Theorem~\ref{thm:Ia1} \\
$(n-2,1,1)$        & \checkmark & \checkmark & \checkmark & \checkmark & $\infty$ & Theorem~\ref{thm:a11} \\
$(1^4)$            & \checkmark & \checkmark & \checkmark & \checkmark & $\infty$ & computations in $\mathcal{R}_{4,4}$ and Theorem~\ref{thm:radicalinRnn} \\

$(2,1^3)$            & \checkmark & \checkmark & \checkmark & \checkmark & $\infty$ & computations in $\mathcal{R}_{5,5}$ and Theorem~\ref{thm:radicalinRnn} \\
$(3,1^3)$            & \checkmark & \checkmark & ? & ? & $\geq4$ & computations in $\mathcal{R}_{3,6}$\\
$(4,1^3)$            & \checkmark & ? & ? & ? & $\geq3$ & computations in $\mathcal{R}_{2,7}$ \\
$(1^n)$, $n\geq5$  & \checkmark & ?          & ?          & ?          & $>2$     & Remark~\ref{rmk:haiman} \\
remaining hooks    & ?          & ?          & ?          & ?          & open     & Conjecture~\ref{conj:hook} \\
\midrule
\multicolumn{7}{@{}l}{\emph{Non-hook partitions, $n=4$}}\\
$(2,2)$            & $\times$ & $\times$ & $\times$ & $\times$ & $2$ & Corollary~\ref{cor:nonhooks} \\
\midrule
\multicolumn{7}{@{}l}{\emph{Non-hook partitions, $n=5$}}\\
$(3,2)$            & $\times$   & $\times$ & $\times$ & $\times$ & $2$ & Corollary~\ref{cor:nonhooks} \\
$(2,2,1)$          & \checkmark & $\times$ & $\times$ & $\times$ & $3$ & \S\ref{subsec:summary}; Corollary~\ref{cor:nonhooks} \\
\midrule
\multicolumn{7}{@{}l}{\emph{Non-hook partitions, $n=6$}}\\
$(4,2)$, $(3,3)$   & $\times$   & $\times$ & $\times$ & $\times$ & $2$      & Corollary~\ref{cor:nonhooks} \\
$(3,2,1)$          & \checkmark & $\times$ & $\times$ & $\times$ & $3$      & \S\ref{subsec:summary}; Corollary~\ref{cor:nonhooks} \\
$(2,2,2)$          & $\times$   & $\times$ & $\times$ & $\times$ & $2$      & Example~\ref{ex:222} \\
$(2,2,1,1)$        & ?          & ?        & $\times$ & $\times$ & $\leq4$  & Corollary~\ref{cor:nonhooks} \\
\midrule
\multicolumn{7}{@{}l}{\emph{Non-hook partitions, $n=7$}}\\
$(5,2)$, $(4,3)$   & $\times$ & $\times$ & $\times$ & $\times$ & $2$      & Corollary~\ref{cor:nonhooks} \\
$(4,2,1)$          & ?        & $\times$ & $\times$ & $\times$ & $\leq3$  & Corollary~\ref{cor:nonhooks} \\
$(3,3,1)$          & $\times$ & $\times$ & $\times$ & $\times$ & $2$      & Theorem~\ref{thm:nonradical} \\
$(3,2,2)$          & $\times$ & $\times$ & $\times$ & $\times$ & $2$      & Example~\ref{ex:abc} \\
$(3,2,1,1)$, $(2,2,2,1)$ & ? & ?        & $\times$ & $\times$ & $\leq4$  & Corollary~\ref{cor:nonhooks} \\
$(2,2,1,1,1)$      & ?        & ?        & ?        & $\times$ & $\leq5$  & Corollary~\ref{cor:nonhooks} \\
\bottomrule
\end{tabular}
\caption{Radicality of $I_\lambda\subseteq\mathcal{R}_{m,n}$ for $n\leq7$ and $2\leq m\leq5$: \checkmark\ radical, $\times$ not radical, ? open. For $m=1$ every Specht ideal is radical~\cite{Murai2022}, and for $n\leq3$ every partition is a hook of length at most three. By Proposition~\ref{prop:bound}, a $\times$ propagates to larger $m$ and a \checkmark\ to smaller $m$; the rows marked $\infty$ are radical for every $m$. The last column names the result establishing the first $\times$, and the computation establishing a \checkmark\ that is not covered by a theorem. All computations are done using \texttt{SageMath} 10.6~\cite{sagemath} and available at \url{https://github.com/uggtos/diagonal_specht}.}
\label{tab:threshold}
\end{table}

\section{Cohen--Macaulayness}
\label{sec:cohen-macaulayness}
For $m=1$ and in characteristic $0$, Yanagawa~\cite{Yanagawa2021} showed that $\mathcal{R}_{1,n}/I_\lambda$ is Cohen--Macaulay if and only if $\lambda$ is of the form $(a,1^{n-a})$, $(a,b)$ or $(a,a,1)$. In this section we prove that for $m\geq2$, $\mathcal{R}_{m,n}/I_\lambda$ and $\mathcal{R}_{m,n}/\rad(I_\lambda)$ are Cohen--Macaulay if and only if $\lambda=(n)$ or $\lambda=(n-1,1)$ (Theorem~\ref{thm:CM}). The argument only uses the minimal primes of the ideal, which are the same for $I_\lambda$ and $\rad(I_\lambda)$, therefore the proof applies to both. The key tool is the following result.
\begin{thm}[{\cite[Proposition~2.1]{Hartshorne1962}}]\label{thm:hartshorne}
Let $(R,\mathfrak{m})$ be a Noetherian local ring with $\depth(R)\geq2$. Then the punctured spectrum $\Spec(R)\setminus\{\mathfrak{m}\}$ is connected.
In particular, if $R$ is Cohen--Macaulay and $\dim(R)\geq 2$, then $\Spec(R)\setminus\{\mathfrak{m}\}$ is connected.
\end{thm}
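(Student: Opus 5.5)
The plan is to argue by contradiction, using local cohomology with support in $\mathfrak{m}$ to compare $R$ with the ring of sections on the punctured spectrum. Set $X=\Spec(R)$, $U=X\setminus\{\mathfrak{m}\}$, and suppose $U=U_1\sqcup U_2$ with $U_1,U_2$ nonempty and closed in $U$.

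\textbf{Step 1: a nontrivial idempotent on $U$.} The structure sheaf $\mathcal{O}_U$ restricted to the disjoint open pieces $U_1$ and $U_2$ glues to give
\[
\Gamma(U,\mathcal{O}_X)\cong\Gamma(U_1,\mathcal{O}_X)\times\Gamma(U_2,\mathcal{O}_X).
\]
Each factor is nonzero because $U_i\neq\emptyset$. Hence the section $e$ equal to $1$ on $U_1$ and $0$ on $U_2$ is an idempotent of $\Gamma(U,\mathcal{O}_X)$ different from $0$ and $1$.

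\textbf{Step 2: depth forces $R\cong\Gamma(U,\mathcal{O}_X)$.} I will use the standard exact sequence
\[
0\to H^0_{\mathfrak{m}}(R)\to R\to\Gamma(U,\mathcal{O}_X)\to H^1_{\mathfrak{m}}(R)\to 0,
\]
which comes from the long exact sequence of cohomology with supports for the closed point $\{\mathfrak{m}\}\subseteq X$, together with $H^1(X,\mathcal{O}_X)=0$ since $X$ is affine. By Grothendieck's depth characterization, $H^i_{\mathfrak{m}}(R)=0$ for $i<\depth(R)$. Since $\depth(R)\geq2$, this gives $H^0_{\mathfrak{m}}(R)=H^1_{\mathfrak{m}}(R)=0$, so the restriction map $R\to\Gamma(U,\mathcal{O}_X)$ is a ring isomorphism.

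\textbf{Step 3: contradiction.} A local ring has no idempotents other than $0$ and $1$: if $e^2=e$, then $e(1-e)=0$ and one of $e$, $1-e$ is a unit. This contradicts Step 1, so $U$ is connected.

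\textbf{The ``in particular''.} If $R$ is Cohen--Macaulay, then $\depth(R)=\dim(R)\geq2$, and the first part applies.

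\textbf{Main obstacle.} The only nontrivial input is the vanishing $H^i_{\mathfrak{m}}(R)=0$ for $i<\depth(R)$ in Step 2. I would cite it as a standard fact rather than reprove it. If a more elementary route is preferred, there is an alternative I would try first. Write $\overline{U_1}=V(I)$ and $\overline{U_2}=V(J)$. Then $IJ$ is nilpotent and $\rad(I+J)=\mathfrak{m}$. Take an $R$-regular sequence $x,y\in\mathfrak{m}$ and use it to lift the idempotent pair to an honest decomposition of $R$. I expect the local-cohomology version to be cleaner, and I would take it as the main proof.
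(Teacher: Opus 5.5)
Your proof is correct. The paper gives no argument for this statement. It quotes Hartshorne and uses only the Cohen--Macaulay case, for the localization $\mathcal{T}$ in the proof of Theorem~\ref{thm:CM}. So you are supplying a proof of a cited result, not an alternative to one in the text.

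Your route is the standard local-cohomology one. The sequence $0\to H^0_{\mathfrak m}(R)\to R\to\Gamma(U,\mathcal{O}_X)\to H^1_{\mathfrak m}(R)\to0$, together with the vanishing of $H^i_{\mathfrak m}(R)$ below the depth, identifies $R$ with $\Gamma(U,\mathcal{O}_X)$ as rings. A disconnection of $U$ would then give a nontrivial idempotent in the local ring $R$. This buys brevity and the stronger conclusion $R\cong\Gamma(U,\mathcal{O}_X)$, at the price of putting all the depth-theoretic content into two cited theorems.

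Your elementary fallback also works with only a regular sequence, but as sketched it leaves out the one real choice, which is the choice of $x$. Pick $\mathfrak p_1\in U_1$ and $\mathfrak p_2\in U_2$. By prime avoidance, take $x\in\mathfrak m$ outside $\mathfrak p_1$, $\mathfrak p_2$ and every prime in $\operatorname{Ass}(R)$, and then $y\in\mathfrak m$ regular on $R/xR$. Write $e|_{D(x)}=a/x^k$ and $e|_{D(y)}=b/y^l$.
\begin{itemize}
\item Since $x$ and $y$ are both $R$-regular, $ay^l=bx^k$ holds in $R$.
\item Since $x^k,y^l$ is again a regular sequence, $a\in x^kR$.
\item Hence $e|_{D(x)}$ is the image of some $c\in R$, and $c^2=c$ because $x$ is a nonzerodivisor, so $c\in\{0,1\}$.
\end{itemize}
This contradicts the fact that the germs of $e$ at $\mathfrak p_1$ and $\mathfrak p_2$, both of which lie in $D(x)$, are $1$ and $0$. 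This version avoids sheaf cohomology. It also never needs $D(x)\cup D(y)$ to cover $U$, which fails when $\dim R>2$.
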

We will also use the following standard commutative algebra results.
\begin{prop}[{\cite[Proposition~1.1, Proposition~3.11 (iv)]{atiyamacdonald}}]\label{prop:standard}
Let $R$ be a ring.
    \begin{enumerate}
        \item Let $\mathfrak{a}$ be an ideal of $R$, and $\varphi:R\longrightarrow R/\mathfrak{a}$ be the projection map. Then $\mathfrak{b}\mapsto\varphi(\mathfrak{b})$ is a one-to-one order-preserving correspondence between ideals $\mathfrak{b}$ of $R$ with $\mathfrak{a}\subseteq\mathfrak{b}$ and ideals of $R/\mathfrak{a}$.
        \item Let $\mathfrak{q}$ be a prime ideal of $R$, and let $R_{\mathfrak{q}}$ be the localization of $R$ at $\mathfrak{q}$. Then $\mathfrak{p}\mapsto\mathfrak{p}R_{\mathfrak{q}}$ is a one-to-one order-preserving correspondence between prime ideals $\mathfrak{p}$ of $R$ with $\mathfrak{p}\subseteq\mathfrak{q}$ and prime ideals of $R_{\mathfrak{q}}$. 
    \end{enumerate}
\end{prop}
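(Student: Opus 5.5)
The statement consists of two standard facts, and I would prove each directly from the definitions, without any of the Specht-specific machinery.

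For (1), I would work with the surjective ring homomorphism $\varphi:R\to R/\mathfrak{a}$. First I would check that if $\mathfrak{b}\supseteq\mathfrak{a}$ is an ideal, then $\varphi(\mathfrak{b})$ is an ideal of $R/\mathfrak{a}$. It is an additive subgroup, and it absorbs multiplication because every element of $R/\mathfrak{a}$ has the form $\varphi(r)$, with $\varphi(r)\varphi(b)=\varphi(rb)$.

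The inverse map is $\mathfrak{c}\mapsto\varphi^{-1}(\mathfrak{c})$. Its value is an ideal containing $\ker\varphi=\mathfrak{a}$. The two compositions are the identity for the following reasons:
\begin{itemize}
\item $\varphi(\varphi^{-1}(\mathfrak{c}))=\mathfrak{c}$ holds by surjectivity of $\varphi$.
\item $\varphi^{-1}(\varphi(\mathfrak{b}))=\mathfrak{b}+\mathfrak{a}=\mathfrak{b}$ uses $\mathfrak{a}\subseteq\mathfrak{b}$. This is the only place the hypothesis $\mathfrak{a}\subseteq\mathfrak{b}$ is needed, so it is the step to be careful about.
\end{itemize}
Both maps visibly preserve inclusion, so the correspondence is order-preserving in both directions.

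For (2), I would use the localization map $j:R\to R_{\mathfrak{q}}$ and $S=R\setminus\mathfrak{q}$. The forward map sends a prime $\mathfrak{p}\subseteq\mathfrak{q}$ to $\mathfrak{p}R_{\mathfrak{q}}=\{p/s : p\in\mathfrak{p},\, s\in S\}$. The key step is to show $j^{-1}(\mathfrak{p}R_{\mathfrak{q}})=\mathfrak{p}$. Suppose $r/1=p/s$. Then there is $u\in S$ with $u(sr-p)=0$, so $usr\in\mathfrak{p}$. Since $\mathfrak{p}\cap S=\emptyset$, meaning $\mathfrak{p}\subseteq\mathfrak{q}$, and $\mathfrak{p}$ is prime, this gives $r\in\mathfrak{p}$. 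The same computation shows that $\mathfrak{p}R_{\mathfrak{q}}$ is a proper ideal and is prime: if $(a/s)(b/t)\in\mathfrak{p}R_{\mathfrak{q}}$, then $ab\in\mathfrak{p}$ after clearing by an element of $S$.

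Conversely, for a prime $P$ of $R_{\mathfrak{q}}$, the contraction $j^{-1}(P)$ is prime. It is disjoint from $S$ because elements of $S$ become units in $R_{\mathfrak{q}}$, so it is contained in $\mathfrak{q}$. Moreover $j^{-1}(P)R_{\mathfrak{q}}=P$, since $a/s\in P$ iff $a/1\in P$.

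Thus extension and contraction are mutually inverse and inclusion-preserving. The main (minor) obstacle is the bookkeeping with the extra factor $u$ in the equality of fractions, which is handled exactly by primality together with $\mathfrak{p}\cap S=\emptyset$.
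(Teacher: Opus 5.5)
Your proof is correct and is the standard textbook argument. The paper gives no proof of its own; it simply cites Atiyah--Macdonald, Propositions 1.1 and 3.11(iv). The only, inessential, difference is in how primality of $\mathfrak{p}R_{\mathfrak{q}}$ is obtained: Atiyah--Macdonald deduce it from $R_{\mathfrak{q}}/\mathfrak{p}R_{\mathfrak{q}}$ being a nonzero localization of the domain $R/\mathfrak{p}$, whereas you check it directly from $j^{-1}(\mathfrak{p}R_{\mathfrak{q}})=\mathfrak{p}$, which is equally valid.
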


Given a set partition $\Pi$, its \emph{support} $\Supp(\Pi)$ is the union of all subsets $\Pi_l\in\Pi$ such that $|\Pi_l|>1$.

Let $\lambda\vdash n$, $\lambda_1\leq n-2$, and let $I$ denote either $I_\lambda$ or $\rad(I_\lambda)$. Let $\mathcal{S}:=\mathcal{R}_{m,n}/I$, and let $\varphi:\mathcal{R}_{m,n}\to\mathcal{S}$ be the projection. By Corollary~\ref{cor:linearspaces}, both ideals have the same minimal primes, $\mathfrak{p}_\Pi$ with $\sh(\Pi)\in\mathcal{A}_\lambda$. Let $k:=\lambda_1+1\leq n-1$ and consider the set partitions
\begin{equation*}
    \Theta:=\{\{1,\dots,k+1\},\{k+2\},\dots,\{n\}\}\quad\text{and}\quad 
    \Pi^\sharp:=\{\{1,\dots,k\},\{k+1\},\dots,\{n\}\},
\end{equation*}
note that $\sh(\Pi^\sharp)=\lambda^\sharp$, and let $\fP:=\mathfrak{p}_{\Pi^\sharp}$, which is a minimal prime by Corollary~\ref{cor:linearspaces} and Lemma~\ref{lem:lambdasharp}, and $\fQ:=\mathfrak{p}_\Theta$. We have
\begin{equation*}
    I_\lambda \subseteq \rad(I_\lambda) \subseteq \fP \subseteq \fQ.
\end{equation*}
By Proposition~\ref{prop:standard}(1), $\overline{\fP}:=\varphi(\fP)$ and $\overline{\fQ}:=\varphi(\fQ)$ are prime ideals of $\mathcal{S}$ with $\overline{\fP}\subsetneq\overline{\fQ}$. Consider the local ring $\mathcal{T}=\mathcal{S}_{\overline{\fQ}}$, whose maximal ideal is $\mathfrak{m}=\overline{\fQ}\mathcal{S}_{\overline{\fQ}}$, and let $X:=\Spec(\mathcal{S}_{\overline{\fQ}})$. Let $\fP'=\overline{\fP}\mathcal{T}$.

\begin{lemma}\label{lem:specdisc}
    With the notation above. If $\lambda_1<n-1$ then $X\setminus\{\mathfrak{m}\}$ is disconnected.
\end{lemma}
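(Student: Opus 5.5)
The plan is to show that, after localizing at $\overline{\fQ}$, the minimal primes of $\mathcal{T}$ cut $X\setminus\{\mathfrak{m}\}$ into pairwise disjoint closed pieces. Throughout, $k=\lambda_1+1\leq n-1$, so $\Theta$ and $\Pi^\sharp$ are well defined.

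\textbf{Step 1: the minimal primes of $\mathcal{T}$.} By Proposition~\ref{prop:standard}(1),(2), the primes of $\mathcal{T}$ correspond to the primes $\mathfrak{p}$ of $\mathcal{R}_{m,n}$ with $I\subseteq\mathfrak{p}\subseteq\fQ$. Hence the minimal primes of $\mathcal{T}$ are the images of the minimal primes $\mathfrak{p}_\Pi$ of $I$ (with $\sh(\Pi)\in\mathcal{A}_\lambda$, Corollary~\ref{cor:linearspaces}) that satisfy $\mathfrak{p}_\Pi\subseteq\fQ=\mathfrak{p}_\Theta$. By Lemma~\ref{lem:strata}(2), taking vanishing ideals, the condition $\mathfrak{p}_\Pi\subseteq\mathfrak{p}_\Theta$ is equivalent to $W_\Theta\subseteq W_\Pi$, that is, to $\Pi\prec\Theta$.

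\textbf{Step 2: there are at least two of them.} The prime $\fP=\mathfrak{p}_{\Pi^\sharp}$ is one, and so is $\mathfrak{p}_{\Pi'}$, where $\Pi'$ is the image of $\Pi^\sharp$ under the transposition $(k\;k{+}1)$. Indeed $\Pi'$ has shape $\lambda^\sharp\in\mathcal{A}_\lambda$ and satisfies $\Pi'\prec\Theta$. Moreover $\Pi'\neq\Pi^\sharp$. Each such prime is strictly contained in $\fQ$, since $\Pi\neq\Theta$ in both cases. So each corresponding closed set $V(\mathfrak{p}_\Pi\mathcal{T})\setminus\{\mathfrak{m}\}$ is nonempty: it contains $\mathfrak{p}_\Pi\mathcal{T}$ itself.

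\textbf{Step 3 (the key step): distinct minimal primes meet only at $\mathfrak{m}$.} Let $\Pi_1\neq\Pi_2$ both be refinements of $\Theta$ with shapes in $\mathcal{A}_\lambda$, and let $\Sigma:=\Pi_1\vee\Pi_2$ be their join, the finest common coarsening. Then $\Sigma\prec\Theta$ and $\mathfrak{p}_{\Pi_1}+\mathfrak{p}_{\Pi_2}=\mathfrak{p}_\Sigma$. Suppose $\Sigma\neq\Theta$. We have $\sh(\Pi_1)\prec\sh(\Sigma)$ by Remark~\ref{rem:coarserrefines}, hence $\sh(\Pi_1)\trianglelefteq\sh(\Sigma)$. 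Since $\sh(\Pi_1)\not\trianglelefteq\lambda$, this gives $\sh(\Sigma)\not\trianglelefteq\lambda$. Minimality of $\sh(\Pi_1)$ in $\mathcal{A}_\lambda$ then forces $\sh(\Sigma)=\sh(\Pi_1)$. Since $\Pi_1\prec\Sigma$ and both have the same number of blocks, $\Sigma=\Pi_1$; by the same argument $\Sigma=\Pi_2$, a contradiction. Hence $\Sigma=\Theta$ and $\mathfrak{p}_{\Pi_1}+\mathfrak{p}_{\Pi_2}=\fQ$. After localizing, this gives $V(\mathfrak{p}_{\Pi_1}\mathcal{T})\cap V(\mathfrak{p}_{\Pi_2}\mathcal{T})=\{\mathfrak{m}\}$.

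Note that Step 3 never uses the hypothesis $\lambda_1<n-1$; that hypothesis is needed only so that $\Theta$ exists, and Step 3 works uniformly for hooks and non-hooks.

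\textbf{Conclusion.} Every prime of $\mathcal{T}$ contains some minimal prime, so $X$ is the finite union of the closed sets $V(\mathfrak{p}_\Pi\mathcal{T})$ over the minimal primes found in Step 1. By Step 3, removing $\mathfrak{m}$ makes these closed sets pairwise disjoint, and by Step 2 there are at least two of them, each nonempty. Group one of them against the union of the rest: this writes $X\setminus\{\mathfrak{m}\}$ as a disjoint union of two nonempty closed subsets, so it is disconnected.
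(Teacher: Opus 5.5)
\textbf{Gap in Step 3.} The inference ``minimality of $\sh(\Pi_1)$ in $\mathcal{A}_\lambda$ then forces $\sh(\Sigma)=\sh(\Pi_1)$'' is invalid. The set $\mathcal{A}_\lambda$ consists of the \emph{minimal} elements for $\prec$, that is, the finest non-dominated shapes. Minimality of $\sh(\Pi_1)$ therefore only rules out non-dominated shapes that strictly \emph{refine} $\sh(\Pi_1)$. Your $\sh(\Sigma)$ is a coarsening of $\sh(\Pi_1)$, and, as you observe yourself, $\{\mu:\mu\not\trianglelefteq\lambda\}$ is closed under coarsening, so nothing is contradicted. You are using minimality as if it were maximality.

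The argument also proves too much. It never uses the assumption $\Sigma\neq\Theta$, so you may apply it with $\Sigma=\Theta$, where $\sh(\Theta)=(\lambda_1+2,1^{n-\lambda_1-2})\not\trianglelefteq\lambda$. It would then give $\Pi_1=\Theta$, which is absurd since $\fP\subsetneq\fQ$.

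\textbf{The missing ingredient.} The conclusion of Step 3 is true, but you need something that bounds how fine $\Pi_1$ and $\Pi_2$ can be. Two repairs work.

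\emph{First repair (keeps your pairwise approach).} Use Lemma~\ref{lem:length}: every $\mu\not\trianglelefteq\lambda$ has at most $n-\lambda_1$ parts. The partition $\Theta$ has $n-\lambda_1-1$ blocks. No minimal prime comes from $\Theta$ itself, because $\sh(\Theta)$ is strictly coarsened from $\lambda^\sharp$, so $\sh(\Theta)\notin\mathcal{A}_\lambda$. Hence every $\Pi\prec\Theta$ with $\sh(\Pi)\in\mathcal{A}_\lambda$ splits the block $\{1,\dots,k+1\}$ into exactly two blocks. The join of two distinct such splittings lies between them and $\Theta$. It cannot have $n-\lambda_1$ blocks, since it would then equal both splittings. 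So it equals $\Theta$, and $\mathfrak{p}_{\Pi_1}+\mathfrak{p}_{\Pi_2}=\fQ$.

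\emph{Second repair (the paper's route).} You only need to separate $V(\fP')$ from the union of the other components. Let $\Pi\prec\Theta$ with $\sh(\Pi)\in\mathcal{A}_\lambda$. If $\Supp(\Pi)\subseteq\{1,\dots,k\}$, then $\Pi\prec\Pi^\sharp$. Minimality of $\lambda^\sharp$, now applied correctly to a refinement, gives $\Pi=\Pi^\sharp$. Otherwise some $j\leq k$ satisfies $j\sim_\Pi k+1$, and then $\fP+\mathfrak{p}_\Pi=\fQ$.

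With either repair, your Steps 1 and 2 and your final grouping argument go through unchanged. The first repair additionally shows that all components of $X$ pairwise meet only at $\mathfrak{m}$.
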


\begin{proof}
    By Corollary~\ref{cor:linearspaces}, the minimal primes of $I$ are the $\mathfrak{p}_\Pi$ with $\sh(\Pi)\in\mathcal{A}_\lambda$. By Proposition~\ref{prop:standard}(1), the minimal primes of $\mathcal{S}$ are their images $\varphi(\mathfrak{p}_\Pi)$, and by Proposition~\ref{prop:standard}(2), the minimal primes of $\mathcal{T}$ are the ideals $\mathfrak{T}_\Pi:=\varphi(\mathfrak{p}_\Pi)\mathcal{T}$ for which $\varphi(\mathfrak{p}_\Pi)\subseteq\overline{\fQ}$, that is, with $\mathfrak{p}_\Pi\subseteq\fQ$.

We have $\mathfrak{p}_\Pi\subseteq\fQ$ if and only if $W_\Theta\subseteq W_\Pi$, if and only if $\Pi\prec\Theta$, if and only if $\Supp(\Pi)\subseteq\{1,\dots,k+1\}$, since $\{1,\dots,k+1\}$ is the only subset of $\Theta$ with more than one element.

Define
    \begin{equation*}
        X_1:=V(\fP')=\{\textrm{ prime ideals of }\mathcal{T}\textrm{ containing }\fP'\},\quad X_2:=\bigcup_{\substack{\mathfrak{T} \in \{\textrm{minimal primes of }\mathcal{T}\} \\ \mathfrak{T} \neq \fP'}}V(\mathfrak T).
    \end{equation*}
We will show that both $X_1$ and $X_2$ are closed and nonempty, and $X=X_1\cup X_2$, $X_1\cap X_2=\{\mathfrak{m}\}$. By definition $X_1$ is closed and $X_2$ is closed because it is a finite union of closed sets. Since $X_1\cup X_2$ is the set of all primes that contain one of the minimal primes,  $X=X_1 \cup X_2$. We have $\fP'\in X_1\setminus\{\mathfrak{m}\}$, as $\fP\subsetneq\fQ$. For $\Pi':=\{\{1\},\{2,\dots,k+1\},\{k+2\},\dots,\{n\}\}$ we have $\sh(\Pi')=\lambda^\sharp\in\mathcal{A}_\lambda$, $\Supp(\Pi')\subseteq\{1,\dots,k+1\}$ and $\Pi'\neq\Pi^\sharp$, so $\mathfrak{T}_{\Pi'}$ is a minimal prime of $\mathcal{T}$ different from $\fP'$, and $\mathfrak{T}_{\Pi'}\in X_2\setminus\{\mathfrak{m}\}$, as $\mathfrak{p}_{\Pi'}\subsetneq\fQ$. Lastly, we want to show that $X_1 \cap X_2= \{\mathfrak{m}\}.$ Let $\mathfrak{n}\in X_1\cap X_2$, then $\mathfrak{n}$ contains all the images of the generators of $\fP$. It also contains the images of the generators of some minimal prime ideal $\mathfrak{T} \neq \fP'$ of $\mathcal{T}$ occurring in the definition of $X_2$.
This minimal prime ideal corresponds to a set partition $\Pi$ of $[n]$, whose support is included in $\{1,\cdots,k+1\}$. If its support was included in $\{1,\cdots,k\}$, then its shape $\nu$ would refine $\lambda^\sharp=(1+\lambda_1,1,\cdots,1) \in \mathcal{A}_\lambda$, which means that $\nu = \lambda^\sharp $, and in turn $\mathfrak{T} = \fP'$. So, there exists $j \in \{1,\cdots,k\}$ such that $j \sim_\Pi k+1$. Then, for every $1\leq r\leq m$, the images in $\mathcal{T}$ of $x_{r,j}-x_{r,k+1}$ and $x_{r,1}-x_{r,j}$ lie in $\mathfrak{T}\subseteq\mathfrak{n}$ and in $\fP'\subseteq\mathfrak{n}$ respectively, so the image of $x_{r,1}-x_{r,k+1}$ lies in $\mathfrak{n}$. Hence $\mathfrak{n}$ contains the images of $x_{r,1}-x_{r,a}$ for all $1\leq r\leq m$ and $2\leq a\leq k+1$, which generate $\mathfrak{m}$, so $\mathfrak{n}=\mathfrak{m}$. Therefore $X\setminus\{\mathfrak{m}\}=(X_1\setminus\{\mathfrak{m}\})\sqcup(X_2\setminus\{\mathfrak{m}\})$ is a union of two disjoint nonempty closed subsets, hence disconnected.
 \end{proof}

\begin{thm}\label{thm:CM}
Let $\lambda\vdash n$ and $m\geq2$. Then $\mathcal{R}_{m,n}/I_\lambda$ is
Cohen--Macaulay if and only if $\lambda=(n)$ or $\lambda=(n-1,1)$.
The same holds for $\mathcal{R}_{m,n}/\rad(I_\lambda)$.
\end{thm}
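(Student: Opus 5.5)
The plan is to handle the two directions separately, with the converse carried by Lemma~\ref{lem:specdisc} and Theorem~\ref{thm:hartshorne}.

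\emph{The "if" direction.} For $\lambda=(n)$ we have $I_{(n)}=\rad(I_{(n)})=\mathcal{R}_{m,n}$. The quotient is then the zero ring, which is Cohen--Macaulay by the usual convention. For $\lambda=(n-1,1)$, Theorem~\ref{thm:Ia1} shows that $I_{(n-1,1)}=\rad(I_{(n-1,1)})$ is generated by the linear forms $x_{r,i}-x_{r,1}$. The quotient is therefore isomorphic to a polynomial ring in $m$ variables, which is Cohen--Macaulay.

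\emph{The "only if" direction.} Suppose $\lambda\neq(n),(n-1,1)$; then $\lambda_1\leq n-2$, so the setup preceding Lemma~\ref{lem:specdisc} applies with $k=\lambda_1+1\leq n-1$. Let $I$ be either $I_\lambda$ or $\rad(I_\lambda)$. I argue by contradiction: assume $\mathcal{S}=\mathcal{R}_{m,n}/I$ is Cohen--Macaulay. Then its localization $\mathcal{T}=\mathcal{S}_{\overline{\fQ}}$ is a Cohen--Macaulay local ring. By Lemma~\ref{lem:specdisc}, the punctured spectrum of $\mathcal{T}$ is disconnected. By Theorem~\ref{thm:hartshorne}, this gives a contradiction as soon as $\dim\mathcal{T}\geq2$.

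\emph{The dimension estimate.} The main remaining step is to show $\dim\mathcal{T}\geq2$. Since $\mathcal{T}$ is the localization at $\overline{\fQ}$, $\dim\mathcal{T}$ is the height of $\overline{\fQ}$ in $\mathcal{S}$. This height is at least the length of any chain of primes $\overline{\fP}\subsetneq\cdots\subsetneq\overline{\fQ}$, where $\fP$ is a minimal prime of $I$. Both $\fP=\mathfrak{p}_{\Pi^\sharp}$ and $\fQ=\mathfrak{p}_\Theta$ are generated by linear forms, with
\[
\dim W_{\Pi^\sharp}=m(n-k+1),\qquad \dim W_\Theta=m(n-k).
\]
So in $\mathcal{R}_{m,n}$ the ideal $\fQ$ has height exactly $m$ over $\fP$. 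Between them sits a chain of linear primes
\[
\fP\subsetneq\fP+\langle x_{1,1}-x_{1,k+1}\rangle\subsetneq\cdots\subsetneq\fQ,
\]
obtained by adding the forms $x_{r,1}-x_{r,k+1}$ one row at a time. This chain has length $m\geq2$. Each of its primes contains $I$, so by Proposition~\ref{prop:standard} it passes to a chain in $\mathcal{S}$ and then, by localization, to a chain in $\mathcal{T}$. Hence $\dim\mathcal{T}\geq m\geq2$.

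This is exactly where the hypothesis $m\geq2$ is used. For $m=1$ the chain would have length $1$ and Hartshorne's theorem would not apply, consistent with Yanagawa's positive results in that case.

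The argument never uses $I$ beyond its minimal primes and the fact that $I\subseteq\fP$. It therefore works verbatim for both $I_\lambda$ and $\rad(I_\lambda)$. The main obstacle was the disconnectedness of the punctured spectrum, which Lemma~\ref{lem:specdisc} already supplies, so what remains is only this dimension count.
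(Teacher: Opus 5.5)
Your proposal is correct and follows the paper's proof essentially step for step. You dispose of $(n)$ and $(n-1,1)$ via $I_{(n)}=\mathcal{R}_{m,n}$ and Theorem~\ref{thm:Ia1}, localize at $\overline{\fQ}$, bound $\dim\mathcal{T}\geq m\geq2$ by the height of $\fQ$ over $\fP$, and contradict Hartshorne's connectedness theorem using Lemma~\ref{lem:specdisc}; your explicit chain of linear primes just spells out the paper's one-line claim that $\operatorname{ht}(\fQ/\fP)=m$.
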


\begin{proof}
For $\lambda=(n)$ we have $I_\lambda=\mathcal{R}_{m,n}$, $\mathcal{R}_{m,n}/I_\lambda=\mathcal{R}_{m,n}/\rad(I_\lambda)=0$, which is Cohen--Macaulay by convention. For $\lambda=(n-1,1)$, by Theorem~\ref{thm:Ia1}, $I_\lambda=\rad(I_\lambda)=\langle x_{r,i}-x_{r,1}\mid 1\leq r\leq m,\ 2\leq i\leq n\rangle$, and $\mathcal{R}_{m,n}/I_\lambda=\mathcal{R}_{m,n}/\rad(I_\lambda)\cong \C[x_{1,1},\dots,x_{m,1}]$, $x_{r,i}\mapsto x_{r,1}$, $\mathcal{R}_{m,n}/I$ is a polynomial ring, therefore it is Cohen--Macaulay.

Let $\lambda_1\leq n-2$, let $I$, $\mathcal{S}$, $\mathcal{T}$, $\fP$, $\fQ$, $\mathfrak{m}$ be as before. Suppose that $\mathcal{S}$ is Cohen--Macaulay. Then so is $\mathcal{T}=\mathcal{S}_{\overline{\fQ}}$. We want to show that $\dim(\mathcal{T})\geq2$. Since $\fQ/\fP$ is generated by the $m$ independent linear forms $x_{r,k+1}-x_{r,1}$ in $\mathcal{R}_{m,n}/\fP$, we have $\dim\mathcal{T}\geq\operatorname{ht}(\fQ/\fP)=m\geq2$.

Since $\mathcal{T}$ is a Cohen--Macaulay local ring of dimension at least $2$, Theorem~\ref{thm:hartshorne} shows that $\Spec(\mathcal{T})\setminus\{\mathfrak{m}\}$ is connected, contradicting Lemma~\ref{lem:specdisc}. Hence $\mathcal{S}$ is not Cohen--Macaulay.
\end{proof}

\section{Open questions}\label{sec:open}
We close with some questions suggested by the results above.

\subsubsection*{Radicality for hooks.} Is $I_\lambda\subseteq\mathcal{R}_{m,n}$ radical for every hook partition $\lambda=(a,1^{n-a})$ and every $m$? This holds for hooks of length at most three (Theorem~\ref{thmC}), and for $\lambda=(1^n)$ when $m=2$ (Remark~\ref{rmk:haiman}), but the general case remains open.

\subsubsection*{The radicality threshold.}
Determine the threshold $m(\lambda)$ of Definition~\ref{def:threshold} combinatorially. The bound $m(\lambda)\leq\len(\lambda)$ for non-hook $\lambda$ (Theorem~\ref{thmD}) is general but not always sharp, see Table~\ref{tab:threshold}.

\subsubsection*{Generators of the radical.}
The radical $\rad(I_\lambda)$ has an explicit description as an intersection of linear primes (Corollary~\ref{cor:linearspaces}). Is there a natural representation-theoretic generating set for $\rad(I_\lambda)$? More specifically, can one describe the $\GL_m\times S_n$-types that occur minimally in $\rad(I_\lambda)/I_\lambda$?

\subsubsection*{Refined $\GL_m\times S_n$-isotypic ideals.}
The $S_n$-isotypic component $\mathcal{R}_{m,n,\lambda}$ is already $\GL_m$-stable and decomposes into $\GL_m\times S_n$-isotypic summands indexed by pairs $(\nu,\lambda)$. For such a pair, one may study the ideal generated by the corresponding $(\nu,\lambda)$-isotypic component. It would be interesting to determine their varieties, radicals, and possible relations with diagonal collision arrangements.

\subsubsection*{Homological invariants.}
Beyond Cohen--Macaulayness, determine the depth, projective dimension, regularity, and Betti tables of $I_\lambda$ and $\rad(I_\lambda)$. Since the minimal primes are indexed by set partitions, these invariants may admit a combinatorial formulation.

\subsection*{Declaration of generative AI and AI-assisted technologies in the manuscript preparation process.}
$ $\\
\noindent Statement: During the preparation of this work the authors used OpenAI (chat.uit.no)  in order to improve grammar, spelling, and readability, as well as to improve the SageMath script. After using these tools, the authors reviewed and edited the content as needed and take full responsibility for the content of the published article.
\printbibliography
\end{document}